\documentclass[a4paper,reqno]{amsart}

\usepackage{amssymb}
\usepackage{enumitem}
\usepackage{mathrsfs}
\usepackage{mathtools}
\usepackage{amsmath}
\usepackage{xcolor}
\usepackage[colorlinks=true]{hyperref}
\hypersetup{linkcolor=violet,urlcolor=cyan,citecolor=cyan}

\setlist[enumerate]{label=\emph{(\roman*)}}

\usepackage{environ}

\newtheorem{theorem}{Theorem}[section]
\newtheorem{corollary}[theorem]{Corollary}
\newtheorem{lemma}[theorem]{Lemma}
\newtheorem{proposition}[theorem]{Proposition}

\theoremstyle{definition}

\newtheorem{remark}[theorem]{Remark}
\numberwithin{equation}{section}
\newcommand{\R}{\mathbb{R}}

\def \C {{\mathbb{C}}}
\def \d {{\rm{d}}}
\def \del {\partial}
\def \pt{\partial_{t}}

\begin{document}
	
	\parindent=0pt
	
	\title[Large data global existence for wave systems]
	{Large data global existence for coupled massive-massless  wave-type systems}
	
	\author[Y.~Cai]{Yuan Cai}
	\address{Fudan University, School of Mathematical Sciences,  P.R. China.}
	\email{caiy@fudan.edu.cn}
	
	\author[S.~Dong]{Shijie Dong}
	\address{Southern University of Science and Technology, SUSTech International Center for Mathematics, and Department of Mathematics, 518055 Shenzhen, P.R. China.}
	\email{dongsj@sustech.edu.cn, shijiedong1991@hotmail.com}
	
	\author[K.~Li]{Kuijie Li}
	\address{Nankai University, School of Mathematical Sciences and LPMC, Tianjin, 300071, P.R. China.}
	\email{kuijiel@nankai.edu.cn}

	\author[J. Zhao]{Jingya Zhao}
	\address{Southern University of Science and Technology, and Department of Mathematics, 518055 Shenzhen, P.R. China.}
	\email{12231282@mail.sustech.edu.cn}
	\begin{abstract}
		We consider 3D  Klein-Gordon-Zakharov (KGZ) and Dirac-Klein-Gordon (DKG) systems, where a common feature is that there exist both massless and massive fields in each system. We establish global existence and asymptotic behavior for both systems with a class of large data. More precisely, in the KGZ system, we allow the massless field to be large, while in the DKG system we allow the massive field to be large. 
		
	\end{abstract}
	\maketitle
	
	\tableofcontents
	
	\section{Introduction}
	\subsection{Model problems and main results}

	We are interested in coupled massive-massless  wave-type equations arising from mathematical physics, and more precisely the Klein-Gordon--Zakharov(KGZ) model and the Dirac--Klein-Gordon(DKG) model. The models are of both hyperbolic and dispersive types, and have been actively studied in the past few decades; see for instance \cite{OTT95, OTT99, Kata, Dong2006, Bache, Boura00, DFS-07, Guo-N-W, WangX, Bejenaru-Herr, DongZoeDKG, DLMY} and the references therein.  In particular, we aim to investigate the global existence as well as asymptotic behavior for these models with some classes of large data.

	\smallskip

	\smallskip
	$\bullet$ 
	\textbf{Model I}: Klein-Gordon--Zakharov equations with a large massless field in $\R^{1+3}$.
	
	The KGZ system models the interaction between Langmuir waves and ion sound waves in plasma; see \cite{Za72,De90}. The model equations for the KGZ system read
	\begin{equation}\label{eq:KGZ-L}
		\left\{\begin{aligned}
			&-\Box E + E  = -nE,
			\\
			&-\Box n  =\Delta|E|^2,
		\end{aligned}\right.
	\end{equation}
	where $E:\R^+ \times \R^3 \to \R^3$ and $n:\R^+ \times \R^3 \to \R$. 
	The initial data are prescribed at $t=t_0=0$
	\begin{equation}\label{eq:KGZ-ID}
		\left(E, \del_t E,n,\del_t n\right)(t_0)=\left(E_{0},E_1,n_{0},n_{1}\right).
	\end{equation}
	In \eqref{eq:KGZ-L}, $\Delta = \partial_{x_1}^2 +\partial_{x_2}^2 +\partial_{x_3}^2 $ denotes the Laplace operator, and $\Box = -\partial_t^2 + \Delta$ denotes the d'Alembert operator.
	The size of $(n_0, n_1)$ is allowed to be large in a sense to be specified later.
	
	There exist  several small data results concerning the KGZ model in various spacetime dimensions; see for instance \cite{OTT95, Kata, DW-JDE, Dong2006}. In this paper, we investigate the KGZ system in the large data setting.
	
	Our main results on the KGZ system are as follows.
	
	\begin{theorem}[Global existence for KGZ with a large massless field I]\label{thm:KGZ-L}
		Consider the KGZ equations \eqref{eq:KGZ-L} and let $N\geq 10$ be an integer. For any  $K_0>1$, there exists an $\epsilon_0>0$, which depends on $K_0$ exponentially, such that for all initial data satisfying the conditions
		\begin{equation}\label{initialdata}
			\begin{aligned}
				\sum_{\substack{0\leq i\leq 10\\0\leq i+j\leq N}}\|\langle x\rangle^i\nabla^{i+j}n_0\|
				+\sum_{\substack{0\leq i\leq 9\\0\leq i+j\leq N-1}}\|\langle x\rangle^{i+1}\nabla^{i+j}n_1\|
				< K_0,\\
				\sum_{0\leq i\leq N+1}\|\langle x\rangle^{12}\nabla^i E_0\|+\sum_{0\leq i\leq N}\|\langle x\rangle^{12}\nabla^i E_1\| < \epsilon\leq\epsilon_0,
			\end{aligned}
		\end{equation}
		where $\|\cdot\|$ denotes the $L_x^2$ norm,  the Cauchy problem \eqref{eq:KGZ-L}--\eqref{eq:KGZ-ID} admits a global solution $(E, n)$, which decays as
		\begin{equation}\label{est:pointwise1}
			|E(t,x)|\leq C \epsilon^{1/2}\langle t+|x|\rangle^{-\frac{3}{2}},\quad |n(t,x)|\leq CK_0\langle t+|x|\rangle^{-1}\langle t-|x|\rangle^{-\frac{1}{2}},
		\end{equation}
		for some constant $C$.
		In addition, the solution $(E,n)$ scatters linearly.
	\end{theorem}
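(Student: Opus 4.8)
\emph{Strategy.} The plan is to run a bootstrap (continuity) argument on a hierarchy of energies, combining the classical Klainerman vector field method on flat slices $\{t=\mathrm{const}\}$ for the wave field $n$ with the hyperboloidal foliation $\mathcal{H}_s=\{t^2-|x|^2=s^2,\ t>0\}$ for the massive field $E$. We commute the system with the Poincar\'e fields $\Gamma\in\{\del_\mu,\ \Omega_{ij}=x_i\del_j-x_j\del_i,\ L_i=x_i\del_t+t\del_i\}$ --- the scaling field is avoided because of the mass term in the $E$-equation --- so that $\Gamma^aE$ solves $-\Box\Gamma^aE+\Gamma^aE=-\Gamma^a(nE)$ and $\Gamma^an$ solves $-\Box\Gamma^an=\Gamma^a(\Delta|E|^2)$ with no commutator errors. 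Local existence and a continuation criterion are classical; everything is in the uniform-in-time bounds, where the bootstrap assumptions are $\|\Gamma^an\|\lesssim K_0$ in the appropriate weighted energies for $|a|\le N$ and $\mathcal{E}_s(\Gamma^aE)^{1/2}\lesssim\epsilon\,e^{CK_0}$ for $|a|\le N$.

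\emph{The wave field.} Since $E$ is $O(\epsilon)$, the source $\Delta|E|^2$ is quadratically small; its only delicate feature is that it carries two derivatives, which I would defuse with the identity $\Delta=\Box+\del_t^2$: propagating $n+|E|^2$ instead of $n$ gives $-\Box(n+|E|^2)=\del_t^2(|E|^2)$, after which a modified energy lets one integrate the remaining $\del_t$ by parts and trade the derivative loss against the one extra derivative of regularity imposed on the $E$-data relative to the $n$-data. A weighted Klainerman-vector-field energy estimate then keeps all weighted energies of $n$ of size $\lesssim K_0$ for all time, the weights $\langle x\rangle^{i}$, $\langle x\rangle^{i+1}$ on the data being exactly what is needed to bound the boosts on $\{t=0\}$; Klainerman--Sobolev converts this into $|\Gamma^bn(t,x)|\lesssim K_0\langle t+|x|\rangle^{-1}\langle t-|x|\rangle^{-1/2}$ for $b$ below the top order --- in particular the second bound in \eqref{est:pointwise1}.

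\emph{The massive field and the large coefficient.} This is the crux. The term $nE$ is linear in $E$ but carries the large factor $n$; on flat slices one only has $\|n(t)\|_{L^\infty_x}\lesssim K_0\langle t\rangle^{-1}$, so Gronwall would give energy growth $\langle t\rangle^{CK_0}$ and no decay once $K_0$ is large. The gain comes from the hyperboloidal picture: on $\mathcal{H}_s$ one has $t-|x|=s^2/(t+|x|)$, and a short computation upgrades the pointwise bound on $n$ from the previous step to $|\Gamma^bn|\lesssim K_0 s^{-3/2}$ \emph{uniformly} on $\mathcal{H}_s$, which is integrable in $s$. Running the hyperboloidal energy inequality for $\Gamma^aE$ and always placing the $L^\infty(\mathcal{H}_s)$-norm on the $n$-factor in $\Gamma^a(nE)=\sum_{b+c=a}\binom{a}{b}\Gamma^bn\,\Gamma^cE$, the coupling term contributes in the Gronwall argument the \emph{finite} factor $\exp\!\big(CK_0\int_{s_0}^{\infty}\sigma^{-3/2}\,d\sigma\big)=\exp(CK_0)$, while the genuinely quadratic self-interactions are $O(\epsilon^2)$. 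Hence $\mathcal{E}_s(\Gamma^aE)\lesssim\epsilon^2 e^{CK_0}\le\epsilon$ once $\epsilon_0$ is chosen small enough \emph{exponentially} in $K_0$, so $\mathcal{E}_s(\Gamma^aE)^{1/2}\lesssim\epsilon^{1/2}$, and hyperboloidal Klainerman--Sobolev gives $|E(t,x)|\lesssim\epsilon^{1/2}t^{-3/2}\approx\epsilon^{1/2}\langle t+|x|\rangle^{-3/2}$ inside the light cone. This is precisely the reason $\epsilon_0$ must depend exponentially on $K_0$.

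\emph{Gluing the foliations, and conclusion.} It remains to cover the region near and outside the light cone, where hyperboloids are unavailable, and to pass from data on $\{t=0\}$ to data on $\mathcal{H}_{s_0}$. For this I would add a weighted energy estimate in $\{|x|\ge t\}$ and in the bounded slab $\{0\le t,\ t^2-|x|^2\le s_0^2\}$, using finite propagation speed and the strong spatial localization $\langle x\rangle^{12}$ of the $E$-data to show $E$ is super-small there (so that $nE$, with $n$ large but $E$ tiny, is harmless) and to transfer the initial data. Once all weighted energies of $(E,n)$ are bootstrapped, the continuation criterion yields the global solution, Klainerman--Sobolev yields \eqref{est:pointwise1}, and the uniform energy bounds together with the integrability of the sources (in $s$ for $E$, in $t$ for $\Delta|E|^2$, the latter decaying like $\langle t\rangle^{-3}$) give linear scattering in the energy space by the standard Cauchy-criterion argument. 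The main obstacle is the $nE$ interaction of the third step: extracting the integrable $s^{-3/2}$ decay of $n$ along the hyperboloids --- which rests on the independent global bound on $n$ from the second step --- and then carrying the resulting $\exp(CK_0)$ constant consistently through the whole energy hierarchy.
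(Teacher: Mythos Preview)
Your overall strategy---hyperboloidal foliation for $E$, flat slices for $n$---is a legitimate alternative but quite different from the paper's. The paper works on flat slices throughout, uses Alinhac's ghost-weight energy for $E$, and crucially invokes Katayama's decomposition $n = n^0 + \Delta n^1$, with $n^0$ the free wave carrying the large data and $-\Box n^1 = |E|^2$ with zero data. The ghost-weight splitting produces a Gronwall factor $\int_0^\infty\|\langle s-r\rangle^{1/2+\delta}n^0\|_{L^\infty}^2\,ds\lesssim K_0^2$, so the paper's dependence is actually $\exp(CK_0^2)$, not the $\exp(CK_0)$ your linear Gronwall on hyperboloids would yield; if your argument closed, it would in fact be sharper in this respect.

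However, your sketch has a real gap at the top of the hierarchy. You assert that you can ``always place the $L^\infty(\mathcal{H}_s)$-norm on the $n$-factor'' in $\Gamma^a(nE)=\sum_{b+c=a}\Gamma^bn\,\Gamma^cE$, but this fails once $|b|>N-3$: Klainerman--Sobolev costs derivatives you do not have. In that regime you must put $\Gamma^bn$ in $L^2(\mathcal{H}_s)$, and this is precisely the undifferentiated-wave difficulty the paper singles out in the introduction. Your flat-slice energies for $n$ do not transfer to $\|\Gamma^bn\|_{L^2(\mathcal{H}_s)}$; even a hyperboloidal wave energy plus Hardy would at best give $\|t^{-1}\Gamma^bn\|_{L^2(\mathcal{H}_s)}\lesssim K_0$, so that $\|\Gamma^bn\cdot E\|_{L^2(\mathcal{H}_s)}\lesssim K_0\,\|tE\|_{L^\infty(\mathcal{H}_s)}\lesssim K_0\epsilon^{1/2}s^{-1/2}$, which is not integrable in $s$. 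The paper's Katayama splitting resolves exactly this: $n^0$ is free, so its conformal energy gives $\|\Gamma^bn^0\|_{L^2}\lesssim K_0$ uniformly in $t$, while $\Delta n^1$ already carries two derivatives and is $O(\epsilon^2)$ in the natural energy of $n^1$. Your $\Delta=\Box+\partial_t^2$ trick addresses the derivative loss in the \emph{wave} source $\Delta|E|^2$, but not this undifferentiated-$n$ obstruction in the \emph{Klein-Gordon} source $nE$. To close your bootstrap you would need to import the Katayama decomposition (or an equivalent device) into the hyperboloidal setting.
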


	\begin{theorem}[Global existence for KGZ with a large massless field II]\label{thm:KGZ-L2}
		Consider the KGZ equations \eqref{eq:KGZ-L}, let $N\geq 10$ be an integer, and let $C_{KS}$ be the constant in the Klainerman-Sobolev inequality stated in \eqref{est:Sobo}. For any  $K_0 > 1$, there exists an $\epsilon_0>0$, which depends on $K_0$ polynomially, such that for all initial data satisfying the conditions
		\begin{equation}\label{initialdata2}
			\begin{aligned}
				\sum_{\substack{0\leq i\leq 10\\0\leq i+j\leq N}}\|\langle x\rangle^i\nabla^{i+j}n_0\|
				+\sum_{\substack{0\leq i\leq 9\\0\leq i+j\leq N-1}}\|\langle x\rangle^{i+1}\nabla^{i+j}n_1\|
				< K_0,\\
				\sum_{0\leq i\leq N+1}\|\langle x\rangle^{12}\nabla^i E_0\|+\sum_{0\leq i\leq N}\|\langle x\rangle^{12}\nabla^i E_1\|<\epsilon\leq\epsilon_0,
			\end{aligned}
		\end{equation}
		and additionally 
		\begin{equation}\label{initialdata3}
			\begin{aligned}
				&n_0\geq 0,\quad n_1\geq|\nabla n_0|,\\
				&\sum_{i=0}^2 \big\| \langle x\rangle^i \nabla^{i+2} n_0 \big\|
				+		\sum_{i=0}^2 \big\| \langle x\rangle^i \nabla^{i+1} n_1 \big\|
				\leq {1\over 71^2\times174 \times C_{KS}^3 K_0^2},
			\end{aligned}
		\end{equation}
		the Cauchy problem \eqref{eq:KGZ-L}--\eqref{eq:KGZ-ID} admits a global solution $(E, n)$, which decays as
		\begin{equation}\label{est:pointwise2}
			|E(t,x)|\leq C \epsilon^{1/2} \langle t+|x|\rangle^{-\frac{3}{2}},\quad |n(t,x)|\leq CK_0\langle t+|x|\rangle^{-1}\langle t-|x|\rangle^{-\frac{1}{2}},
		\end{equation}
		for some constant C.
		In addition, the solution $(E,n)$ scatters linearly.
	\end{theorem}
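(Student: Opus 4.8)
The backbone is the same continuity argument as in Theorem~\ref{thm:KGZ-L}: a bootstrap on a hierarchy of weighted energy norms controlled via the vector-field method (translations, rotations, Lorentz boosts, plus the scaling field on the wave component, and keeping the number of weighted fields $\le 10$ so that the data bounds in \eqref{initialdata2}--\eqref{initialdata3} suffice), from which the pointwise estimates \eqref{est:pointwise2} are recovered through the Klainerman--Sobolev inequality \eqref{est:Sobo}. The Klein--Gordon field $E$ then enjoys $\langle t+|x|\rangle^{-3/2}$ decay (the interior/exterior split, together with the $\langle x\rangle^{12}$ weights on the $E$-data, upgrading $\langle t\rangle^{-3/2}$ to $\langle t+|x|\rangle^{-3/2}$), while $n$, being a wave driven by the quadratic, time-integrable source $\Delta|E|^2$, stays within $O(K_0)$ of its free part in energy and obeys the wave decay $\langle t+|x|\rangle^{-1}\langle t-|x|\rangle^{-1/2}$. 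The point, beyond Theorem~\ref{thm:KGZ-L}, is to use the two items in \eqref{initialdata3} so as to trade the factor $e^{CK_0}$ for a power of $K_0$.

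First I would propagate positivity of $n$. Splitting $n=n^{\mathrm{lin}}+n^{\mathrm{nl}}$ with $n^{\mathrm{lin}}$ the free evolution of $(n_0,n_1)$ and $n^{\mathrm{nl}}$ the Duhamel term, Kirchhoff's representation together with $n_0\ge 0$ and $n_1\ge|\nabla n_0|$ gives $n^{\mathrm{lin}}(t,x)\ge 0$ for all $t\ge 0$, and $\|n^{\mathrm{nl}}(t)\|_{L^\infty}\lesssim\epsilon^{1/2}$ since it is quadratic in $E$; hence $1+n\ge\tfrac12$ on the whole lifespan. One may then run the $E$-estimates with the \emph{modified} energies
\[
\mathcal{E}(Z^a E)=\int_{\R^3}\Big(|\pt Z^a E|^2+|\nabla Z^a E|^2+(1+n)\,|Z^a E|^2\Big)\,\d x ,
\]
which are positive definite and equivalent to the standard ones. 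The gain is structural: differentiating in $t$ and using $-\Box Z^a E+Z^a E=-nZ^a E-\sum_{\substack{b+c=a\\|b|\ge 1}}(Z^b n)(Z^{c}E)$ (Leibniz constants suppressed), the destabilising contribution $\int n\,\pt(|Z^a E|^2)$ — the one responsible, for the standard energy, for a Grönwall factor $\exp\!\big(C\!\int\|n\|_{L^\infty}\big)$ — cancels exactly against $\tfrac{\d}{\d t}\!\int n\,|Z^a E|^2$, leaving
\[
\frac{\d}{\d t}\mathcal{E}(Z^a E)=\int_{\R^3}(\pt n)\,|Z^a E|^2\,\d x-2\int_{\R^3}\pt Z^a E\sum_{\substack{b+c=a\\|b|\ge 1}}(Z^b n)(Z^{c}E)\,\d x .
\]

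Next I would show that under \eqref{initialdata3} the surviving terms are quantitatively of order $\delta:=\big(71^2\times174\times C_{KS}^3 K_0^2\big)^{-1}$ rather than $K_0$. For the first term the sign conditions re-enter: in Kirchhoff's formula only the \emph{outgoing} part of the data, evaluated at radius $\sim t+|x|$, can make $\pt n$ positive, so $(\pt n)^+(t,\cdot)$ is governed by norms of $n_1$ and $\nabla n_0$ \emph{near infinity}, and the low-order weighted bounds in the second line of \eqref{initialdata3}, propagated by the wave energy inequality and \eqref{est:Sobo}, force $\int (\pt n)^+|Z^a E|^2\lesssim \delta\,(\text{integrable in }t)\cdot\mathcal{E}(Z^a E)$, while $-\int(\pt n)^-|Z^aE|^2$ is discarded. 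For the commutator term one uses the dichotomy $b+c=a$, $|b|\ge1$: when few fields fall on $n$ the factor $Z^b n$ is, again through \eqref{initialdata3} and \eqref{est:Sobo}, of size $\delta\langle t\rangle^{-1}$ in $L^\infty$ and is paired with $Z^{c}E$ in $L^2$; when many fall on $n$, then $Z^{c}E$ has few fields and is placed in $L^\infty$ with the integrable $\langle t\rangle^{-3/2}$ Klein--Gordon decay, $\|Z^b n\|_{L^2}$ contributing at most a fixed power of $K_0$ times a lower-order $E$-energy. Summing over $|a|\le N$, the energy inequality becomes schematically
\[
\frac{\d}{\d t}\Big(\!\sum_{|a|\le N}\!\mathcal{E}(Z^a E)\Big)^{1/2}\lesssim \Big(\frac{\delta}{\langle t\rangle}+\frac{K_0}{\langle t\rangle^{3/2}}\Big)\Big(\!\sum_{|a|\le N}\!\mathcal{E}(Z^a E)\Big)^{1/2}+K_0\,\frac{(\text{lower order})}{\langle t\rangle^{3/2}} ,
\]
so every Grönwall factor is at most $e^{C\delta}\langle t\rangle^{C\delta}=O(1)\,\langle t\rangle^{C/K_0^2}$ and each order adds only one power of $K_0$: $\big(\sum_{|a|\le k}\mathcal{E}(Z^a E)(t)\big)^{1/2}\lesssim K_0^{k}\epsilon\,\langle t\rangle^{C/K_0^2}$ for $k\le N$. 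In particular the lowest-order energy, whose commutators never see a high-order $n$, grows only as $\epsilon\langle t\rangle^{C/K_0^2}$, yielding $|E(t,x)|\lesssim\epsilon\langle t+|x|\rangle^{-3/2+C/K_0^2}$, which is stronger than \eqref{est:pointwise2} once $\epsilon\le\epsilon_0$ with $\epsilon_0$ a suitable reciprocal power of $K_0$; feeding these bounds back into the $n$-equation closes the estimate for $n$ and fixes $\epsilon_0$ polynomially in $K_0^{-1}$. Linear scattering for $(E,\pt E)$ and $(n,\pt n)$ then follows in the standard way, the Duhamel tails $\int_t^\infty$ of $nE$ and of $\Delta|E|^2$ converging to $0$ in the energy space since these nonlinearities are time-integrable in $L^2$ along the flow.

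The crux, and the step I expect to be most delicate, is exactly this quantitative accounting in the top-order $E$-energy estimate: one must check that the large field $n$ never enters with a coefficient worse than a fixed power of $K_0$ and never multiplied by a factor that fails to be integrable in $t$ — which is possible only because (i) the positivity $n\ge 0$ lets the modified energy absorb the dangerous bulk term $nZ^aE$, and (ii) the outgoing and derivative parts of $n$ that remain are, thanks to \eqref{initialdata3}, of size $\delta\sim K_0^{-2}$, turning the would-be $e^{CK_0}$ into $e^{C/K_0^2}$. Everything else — weighted wave energies for $n$ with $\langle t-|x|\rangle$-gains, the exterior decay of $E$, and Klainerman--Sobolev — is as in the small-data theory and in the proof of Theorem~\ref{thm:KGZ-L}.
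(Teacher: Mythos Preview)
Your high-level plan---absorb the term $n\,\Gamma^I E$ into a modified Klein--Gordon energy via positivity of $n^{0}$, then use \eqref{initialdata3} to control the leftover $\partial_t n$---is indeed the paper's idea. But two ingredients in your execution are not correct, and they are precisely what distinguishes Theorem~\ref{thm:KGZ-L2} from Theorem~\ref{thm:KGZ-L}.

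\textbf{The ghost weight is missing, and without it the $\partial_t n^0$ term does not close.} The paper multiplies not by $\partial_t\Gamma^I E$ but by $e^{q}\partial_t\Gamma^I E$ (Alinhac's weight with $q=\int_{-\infty}^{r-t}\langle s\rangle^{-1-2\delta}\d s$), so the bulk of the resulting identity contains the good term $\int\!\!\int e^q\langle r-s\rangle^{-1-2\delta}|\Gamma^I E|^2$. Condition~\eqref{initialdata3} is then used, not by a sign-splitting of $\partial_t n^0$, but by an interpolation: from Klainerman--Sobolev one has $|\partial_t n^0|\le 71\,C_{KS}K_0\,\langle t+r\rangle^{-1}\langle t-r\rangle^{-1/2}$, while integrating the pointwise bound on $\sum_a|\partial_a\partial_t n^0|$ (which is where the small constant in~\eqref{initialdata3} enters) from $r=\infty$ gives a second bound with weak decay but constant $\sim(71^2C_{KS}^2K_0^2)^{-1}$. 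Interpolating yields $|\partial_t n^0|<\langle t-r\rangle^{-1-2\delta}$, so the bad bulk term $\int\!\!\int \partial_t n^0\,|\Gamma^I E|^2$ is pointwise dominated by the ghost term and can be dropped with no Gr\"onwall at all. Your ``outgoing data'' heuristic does not produce this; in particular, near the cone $|\partial_t n^0|$ is of order $K_0\langle t\rangle^{-1}$ with no sign, so your plain $\int(\partial_t n)|Z^aE|^2$ is not time-integrable and your claimed factor $\langle t\rangle^{C/K_0^2}$ does not follow.

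\textbf{The commutator claim is false.} You assert that $Z^b n$ with small $|b|\ge 1$ is of size $\delta\langle t\rangle^{-1}$ via~\eqref{initialdata3}; it is not---\eqref{initialdata3} controls only second and higher spatial derivatives of the data, so general $\Gamma^{I_1} n^0$ is of size $K_0\langle t+r\rangle^{-1}\langle t-r\rangle^{-1/2}$. The paper handles these commutators again through the ghost weight, writing $\|\langle s-r\rangle^{1/2+\delta}\Gamma^{I_1}n^0\|_{L^\infty}\cdot\|\langle s-r\rangle^{-1/2-\delta}\Gamma^{I_2}E\|_{L^2}\cdot\|\partial_t\Gamma^I E\|$ and using $|I_2|<|I|$ together with the $C_1^{\delta}$-slack built into the bootstrap hierarchy; the single factor $K_0$ that appears is absorbed by choosing $C_1^{\delta/3}\gg K_0$, which is polynomial. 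Without the ghost spacetime norm on $\Gamma^{I_2}E$ the commutator contributes $K_0\langle t\rangle^{-1}$, again non-integrable, and your energy hierarchy would grow like $\langle t\rangle^{CK_0}$, not $\langle t\rangle^{C/K_0^2}$.
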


	\begin{remark}	
		In Theorems \ref{thm:KGZ-L}--\ref{thm:KGZ-L2}, we prove global existence for the KGZ system with a large wave field and a small Klein-Gordon(KG) field.
		We reformulate \eqref{eq:KGZ-L}--\eqref{eq:KGZ-ID} to be
		\begin{equation}\label{eq:KGZ-L2}
			\left\{\begin{aligned}
				&-\Box E + E  = -n^0E - \Delta n^1E,
				\\
				&-\Box n^0 = 0,
				\\
				&-\Box n^1 =|E|^2,
			\end{aligned}\right.
		\end{equation}
		with initial data
		\begin{equation}\label{eq:KGZ-ID2}
			\left(E, \del_t E,n^0,\del_t n^0, n^1,\del_t n^1\right) (t_0)=\left(E_0, E_1,n_{0},n_{1}, 0, 0\right).
		\end{equation}
		We notice that $(E=0, n=n^0)$ is a non-zero solution to the KGZ system \eqref{eq:KGZ-L}. Thus Theorems \ref{thm:KGZ-L}--\ref{thm:KGZ-L2} can also be regarded as stability results for the KGZ system around nontrivial large solutions $(E=0, n=n^0)$ (as a contrast, we call $(E=0, n=0)$ a trivial solution to the KGZ system).
	\end{remark}

	\begin{remark}
		In Theorems \ref{thm:KGZ-L}--\ref{thm:KGZ-L2}  the wave field can be arbitrarily large of size $K_0 >1$, but the smallness requirement on the KG field of size $\epsilon$ depends on $K_0$. The dependence of $\epsilon$ on $K_0$ is also of interest to us. Under the assumptions in \eqref{initialdata}, we prove global existence for the KGZ system in Theorem \ref{thm:KGZ-L} with $\epsilon_0$ depending on $K_0$ exponentially, and more precisely $\epsilon_0\lesssim K_0^{-\frac{3}{4}(N+1)}\exp(-K_0^2)$. If we additionally assume \eqref{initialdata3} in Theorem \ref{thm:KGZ-L2}, then the dependence of $\epsilon_0$ on $K_0$ can be reduced to be polynomially, and which reads $\epsilon_0\lesssim K_0^{-\frac{3}{4}N-48}$.
	\end{remark}
	
	\begin{remark}
		We rewrite \eqref{eq:KGZ-L} 
		\begin{equation}\label{eq:KGZ-L3}
			\left\{\begin{aligned}
				&-\Box E + (1+n)E  = 0,
				\\
				&-\Box n  =\Delta|E|^2.
			\end{aligned}\right.
		\end{equation}
		In this way, we regard the component $E$ to satisfy a Klein-Gordon equation with varying mass $1+n$. Interestingly, the mass $1+n$ is allowed to be negative/zero in some spacetime domains/points since we allow the field $n$ to be large, e.g. we set $n_0(x) = -2$ for $|x|\leq 1$. Of course, the varying mass $1+n$ will be close to $1$ when time $t$ is large thanks to the decay property of the wave field $n$.
	\end{remark}
	
	\begin{remark}
		In the proof, we rely on the decay property of the fields $n, E$. It is well-known that solutions to wave equations or Klein-Gordon equations decay faster in higher dimensional spacetime.
		Therefore, our proof also applies to higher dimensional cases, i.e., $\mathbb{R}^{1+d}$ with $d\geq 4$.
	\end{remark}
	\begin{remark}
	 The constant appearing in the upper bound of \eqref{initialdata3} is due to some technical reasons and is not optimal; see Section \ref{S:proofTh1.4}. This is not the focus of the paper and we will not discuss further. 	
	\end{remark}
	
	\smallskip
	$\bullet$  
	{\textbf{Model II}}: Dirac--Klein-Gordon equations with a large massive field in $\R^{1+3}$.
	
	The DKG system,  a basic model in particle physics, describes interactions between a scalar field and a Dirac field. 
	The model equations read
	\begin{equation}\label{eq:DKG-L}
		\left\{\begin{aligned}
			&-i\gamma^{\mu}\partial_{\mu}\psi  = v \psi,
			\\
			&-\Box v + v =\psi^{*}\gamma^0 \psi,
		\end{aligned}\right.
	\end{equation}
	where $\psi(t,x):\R^+\times \R^3 \to \C^4$ denotes the Dirac field and  $v(t,x):\R^+\times \R^3 \to \R$ denotes scalar field.
	The initial data are prescribed at $t=t_0=0$
	\begin{equation}\label{eq:DKG-ID}
		\left(\psi,v,\del_t v\right)(t_0)=\left(\psi_{0},v_{0},v_{1}\right),
	\end{equation}
	and the size of $(v_0, v_1)$ is allowed to be large in a sense to be specified later.  In \eqref{eq:DKG-L}, $\left\{\gamma^{0},\gamma^{1},\gamma^{2}, \gamma^3\right\}$ represent the Dirac matrices and satisfy the identities
	\begin{equation}\label{equ:gamma}
		\left\{\gamma^{\mu},\gamma^{\nu} \right\}:=\gamma^\mu \gamma^\nu + \gamma^\nu \gamma^\mu = -2\eta_{\mu\nu}I_{4},\quad 
		(\gamma^{\mu})^{*}=-\eta_{\mu\nu}\gamma^{\nu},
	\end{equation}
	in which $A^{*}$ is the conjugate transpose of the matrix $A$, $I_{4}$ is the $4\times 4$ identity matrix, and $\eta=\rm{diag}(-1,1,1,1)$ denotes the Minkowski metric in $\R^{1+3}$. We mention that Einstein summation convention over repeated indices is adopted throughout the article.

	\begin{theorem}[Global existence for DKG with a large massive field]\label{thm:DKG-L}
		Consider the DKG equations \eqref{eq:DKG-L} and let $N\geq 13$ be an integer. For any $K_0>1$, there exists an $\epsilon_0>0$, which depends on $K_0$ polynomially, such that for all initial data satisfying the conditions
		\begin{align}
			\sum_{|I|\leq N+1}\|\langle x \rangle^{N+2} \nabla^{I} v_0\| + \sum_{|I|\leq N}\|\langle x \rangle^{N+1} \nabla^{I} v_1\| & \leq K_0,  \label{initikg} \\
			\sum_{|I|\leq N} \|\langle x \rangle^{N+1} \nabla^I \psi_0\| \leq \epsilon<\epsilon_0,  \label{initidirac}
		\end{align}
		the Cauchy problem \eqref{eq:DKG-L}--\eqref{eq:DKG-ID} admits a global solution $(\psi, v)$, which decays as
		\begin{align}
			|v(t,x)| \leq C K_0 \langle t +|x| \rangle^{-\frac 32}, \ \ \ |\psi(t,x)| \leq C\epsilon^{1/2} \langle t + |x|\rangle^{-1},
		\end{align}
		for some constant $C$. Moreover, the solution $(v, \psi)$ scatters linearly. 
	\end{theorem}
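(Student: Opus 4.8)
The plan is to run a bootstrap argument based on the commuting vector field method, the decisive point being that the influence of the large massive field $v$ on the small Dirac field $\psi$ must be propagated through an induction on the number of vector fields rather than through a direct Grönwall inequality (which would only yield an $\epsilon_0$ exponentially small in $K_0$).

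First I would introduce the Klainerman vector fields: translations $\partial_\alpha$, rotations $\Omega_{ij}=x_i\partial_j-x_j\partial_i$, boosts $L_{0i}=x_i\partial_t+t\partial_i$, and scaling $S=t\partial_t+x^k\partial_k$; for the Dirac field one uses the \emph{modified} rotations and boosts, i.e.\ $\Omega_{ij},L_{0i}$ corrected by suitable constant Dirac matrices, so that $-i\gamma^\mu\partial_\mu$ commutes with them \emph{exactly}. The weighted hypotheses \eqref{initikg}--\eqref{initidirac} convert, at $t=0$, into bounds of size $K_0$ (resp.\ $\epsilon$) on the vector field energies of $v$ (resp.\ $\psi$) up to order $N$. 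The bootstrap assumptions are then, roughly: an energy bound of order $n$ for $v$ of size $\lesssim K_0$ (up to a harmless slow growth), an energy bound of order $n$ for $\psi$ of size $\lesssim\epsilon K_0^{\,n}$, together with the pointwise decay $|v|\lesssim K_0\langle t+|x|\rangle^{-3/2}$ and $|\psi|\lesssim\epsilon^{1/2}\langle t+|x|\rangle^{-1}$.

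The heart of the matter is the energy estimate for $\psi$. Applying a product $\widehat\Gamma^{I}$ of modified vector fields ($|I|=n$) to $-i\gamma^\mu\partial_\mu\psi=v\psi$ and using exact commutation gives $-i\gamma^\mu\partial_\mu(\widehat\Gamma^I\psi)=\sum_{J+K=I}c_{JK}(\Gamma^Jv)(\widehat\Gamma^K\psi)$. In the Dirac energy identity the term with $J=0$, namely $v\,\widehat\Gamma^I\psi$, contributes $\operatorname{Im}\int v\,(\widehat\Gamma^I\psi)^{*}\gamma^0(\widehat\Gamma^I\psi)\,\d x$, which \emph{vanishes} because $v$ is real and $(\widehat\Gamma^I\psi)^{*}\gamma^0(\widehat\Gamma^I\psi)$ is real --- this is the charge-conservation cancellation that keeps $\|\psi\|_{L^2}$ constant at the base level. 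Every remaining term carries at least one vector field on $v$ and hence at most $n-1$ on $\psi$, and $\|\Gamma^Jv\|_{L^\infty}\lesssim K_0\langle t\rangle^{-3/2}$ is integrable in time. Thus $\tfrac{\d}{\d t}\mathcal E_n(\psi)^{1/2}\lesssim K_0\langle t\rangle^{-3/2}\mathcal E_{n-1}(\psi)^{1/2}+(\text{top-order terms, at worst a logarithm})$, and since $\mathcal E_0(\psi)^{1/2}=\|\psi_0\|_{L^2}\lesssim\epsilon$, induction on $n$ gives $\mathcal E_n(\psi)^{1/2}\lesssim\epsilon K_0^{\,n}$ --- this is exactly where the polynomial dependence of $\epsilon_0$ on $K_0$ is born. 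For $v$ one uses the standard Klein--Gordon energy estimate: the source $\psi^{*}\gamma^0\psi$ is quadratic in the small field, so after distributing vector fields and invoking the weighted/pointwise bounds on $\psi$ (together with the null structure of $\psi^{*}\gamma^0\psi$, if needed to remove a logarithm) its $L^2_x$ norm decays fast enough to keep $\mathcal E_n(v)\lesssim K_0$. Feeding the energy bounds back through the Klainerman--Sobolev inequalities for the Klein--Gordon and wave operators recovers the pointwise decay and closes the bootstrap; a sharp pointwise bootstrap run in parallel via the Duhamel representation ensures the clean rates $\langle t+|x|\rangle^{-3/2}$ and $\langle t+|x|\rangle^{-1}$ are not spoilt by the logarithmic losses at the highest orders. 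Local well-posedness of \eqref{eq:DKG-L}--\eqref{eq:DKG-ID} plus the continuation criterion then upgrade the a priori bounds to a global solution, and linear scattering follows by writing the Duhamel formulas and checking that $e^{-it(\cdot)}(\psi,v)(t)$ is Cauchy as $t\to\infty$.

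The main obstacle is precisely the ``large\,$\times$\,small'' coupling $v\psi$ in the Dirac equation: a crude energy estimate produces the factor $\exp\!\big(C\!\int_0^\infty\|v\|_{L^\infty}\,\d t\big)=\exp(CK_0)$, which would force $\epsilon_0$ to be exponentially small. It is overcome by combining (i) the reality/charge cancellation of the top-order term $v\,\widehat\Gamma^I\psi$ in the Dirac energy identity with (ii) the observation that every surviving interaction strictly lowers the number of vector fields falling on $\psi$ while the $v$-factor enjoys the integrable Klein--Gordon decay $\langle t\rangle^{-3/2}$; this lets the energy hierarchy for $\psi$ be closed by induction with only polynomial loss in $K_0$. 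A secondary, purely technical issue --- responsible for the threshold $N\geq 13$ and the large weights $\langle x\rangle^{N+2}$ in \eqref{initikg}--\eqref{initidirac} --- is guaranteeing that the $\langle t\rangle^{-3/2}$ Klein--Gordon pointwise decay of $v$ is available up to nearly top order, which costs two extra vector fields and correspondingly more regularity and spatial weights on the data.
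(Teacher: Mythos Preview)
Your identification of the top-order cancellation --- that the term $v\,\widehat\Gamma^I\psi$ contributes nothing to the Dirac energy identity because $v$ is real --- and of the resulting inductive hierarchy is exactly the mechanism the paper uses to obtain polynomial dependence of $\epsilon_0$ on $K_0$. That part of your outline is correct and matches the paper's strategy.

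The gap is in the sentence ``$\|\Gamma^Jv\|_{L^\infty}\lesssim K_0\langle t\rangle^{-3/2}$ is integrable in time''. This bound is only available for $|J|\le N-7$ or so (Klainerman--Sobolev costs derivatives, and the scaling field $S$ you list cannot be used because it does not commute with $-\Box+1$). When $|I|$ is near $N$ and most of the vector fields land on $v$, you are forced to pair $\|\Gamma^{I_1}v\|_{L^2}\lesssim K_0$ (no decay) with $\|\widehat\Gamma^{I_2}\psi\|_{L^\infty}$. For a massless field the best uniform rate is $\langle t\rangle^{-1}$, which is \emph{not} integrable; the resulting $K_0\epsilon\log t$ contribution cannot be absorbed into any bootstrap constant, and ``at worst a logarithm'' is in fact fatal here because it appears already at the level of $\tfrac{d}{dt}\mathcal E_n^{1/2}$, not as a one-time endpoint loss. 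The same issue blocks the Klein--Gordon energy estimate for $v$: with only $|\psi|\lesssim\langle t\rangle^{-1}$, the source $\psi^*\gamma^0\psi$ does not have an integrable $L^2$ norm.

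The paper closes this gap with three ingredients you do not invoke: (i) Alinhac's ghost weight energy for both $v$ and $\psi$, which trades the missing time decay for a weighted spacetime bound $\int_0^t\|\Gamma^{I_1}v/\langle s-r\rangle^{1/2+\delta}\|^2\,\d s\lesssim K_0^2$; (ii) an extra $\langle t-r\rangle^{-1}$ factor in the pointwise decay of $\psi$, obtained by Bournaveas' device of writing $\psi=i\gamma^\mu\partial_\mu\Psi$ for a wave solution $\Psi$ and controlling the conformal energy of $\Psi$; and (iii) a nonlinear transformation $\widetilde\Psi=\Psi+v\psi$ so that $-\Box\widetilde\Psi$ consists only of cubic terms and the null form $Q_0(v,\psi)$, which is what makes the conformal energy estimate close. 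With (i) and (ii) in hand, the problematic top-order term becomes $\int_0^t\|\Gamma^{I_1}v/\langle s-r\rangle^{1/2+\delta}\|\cdot\|\langle s-r\rangle^{1/2+\delta}\widehat\Gamma^{I_2}\psi\|_{L^\infty}\,\d s$, and the weighted pointwise bound $|\widehat\Gamma^{I_2}\psi|\lesssim\langle t+r\rangle^{-3/4+\delta}\langle t-r\rangle^{-1}$ makes the second factor decay like $\langle s\rangle^{-3/4+\delta}$, which is square-integrable.
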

	
	\begin{remark}
		In Theorem \ref{thm:DKG-L}, we illustrate global existence for a DKG system with a large KG field and a small Dirac field.
		We reformulate \eqref{eq:DKG-L}--\eqref{eq:DKG-ID} to be
		\begin{equation}\label{eq:DKG-L2}
			\left\{\begin{aligned}
				&-i\gamma^{\mu}\partial_{\mu}\psi  = V^0 \psi + V^1 \psi,
				\\
				&-\Box V^0 + V^0 = 0,
				\\
				&-\Box V^1 + V^1 =\psi^{*}\gamma^0 \psi,
			\end{aligned}\right.
		\end{equation}
		with initial data
		\begin{equation}\label{eq:DKG-ID2}
			\left(\psi,V^0,\del_t V^0, V^1,\del_t V^1\right) (t_0)=\left(\psi_{0},v_{0},v_{1}, 0, 0\right).
		\end{equation}
		We note $(\psi = 0, v=V^0)$ is a non-zero solution to the DKG system \eqref{eq:DKG-L}. Therefore, Theorem \ref{thm:DKG-L} provides a stability result for the DKG system around nontrivial large solutions $(\psi = 0, v=V^0)$  (as a contrast, we call $(\psi = 0, v=0)$ a trivial solution to the DKG system).
	\end{remark}
	
	\begin{remark}
		In Theorem \ref{thm:DKG-L}, the size of $\epsilon_0$ depends polynomially with respect to $K_0$,  and more precisely, we have 
		$$
		\epsilon_0 \lesssim K_0^{-6N^2};
		$$
		see \eqref{choiceep} in Section \ref{S:proofdkg}. 
	\end{remark}
	
	\begin{remark}
		In Theorem \ref{thm:DKG-L}, we require quite high weights on the initial data, which can be improved using the argument treating Theorems \ref{thm:KGZ-L}--\ref{thm:KGZ-L2}. The main reason why  we choose high weight here is to make the proof neater.
	\end{remark}

	\begin{remark}
		In Theorem \ref{thm:DKG-L}, we treat the Dirac-Klein-Gordon system in the case of a massless Dirac field and a massive Klein-Gordon field. In fact, our proof also applies to the case where both Dirac and Klein-Gordon fields are massive (i.e., replacing $-i\gamma^\mu \partial_\mu \psi$ with $-i\gamma^\mu \partial_\mu \psi+ \psi$ in \eqref{eq:DKG-L}). 
		
		We can also treat higher dimensional cases, i.e., $\mathbb{R}^{1+d}$ with $d\geq 4$.
	\end{remark}

	\smallskip

	\subsection{Brief discussion on related results}
The study of nonlinear wave-type equations has been an active research field for decades. The foundational works of Christodoulou \cite{Christodoulou86} and Klainerman \cite{Kl86, KlainWave} on massless wave equations, along with Klainerman \cite{Klainerman85} and Shatah \cite{Shatah} on massive Klein-Gordon equations, have significantly contributed to the field. Inspired by these pioneering efforts, researchers have focused on establishing global existence with pointwise decay results for coupled massless-massive wave-type equations with small data, encompassing both physical models and those of mathematical interest. In this context, the first notable results were achieved by Bachelot \cite{Bache} on certain Dirac-Klein-Gordon equations and by Georgiev \cite{Geor90} on a coupled wave-Klein-Gordon system. We will not provide an exhaustive list but only recall part of the literatures: the Maxwell-Dirac model by Georgiev \cite{Geor91}, the KGZ model by Ozawa-Tsutaya-Tsutsumi \cite{OTT95}, the Maxwell-KG model by Psarelli \cite{PsarelliA} and Klainerman-Wang-Yang \cite{Klainerman-WY}, the Einstein-KG model by LeFloch-Ma \cite{PLF-YM-cmp}, Wang \cite{WangQ}, and Ionescu-Pausader \cite{IoPa-WKG, IoPa-EKG}, the Kaluza-Klein model by Huneau-Stingo-Wyatt \cite{Zoe23}, among others. Without smallness assumptions on the data, less is known.

	In the remarkable works \cite{Klainerman-WY} by Klainerman-Wang-Yang and \cite{FaWaYa} by Fang-Wang-Yang, the authors established global existence with pointwise decay for the  Maxwell-KG equations with a large Maxwell field and a small KG field. Our work in the present paper is motivated by these results.  Other  large data global existence results for wave-type equations include but not limited to: a class of spherically symmetric data with large bounded variation norms on Einstein-scalar model by Luk-Oh-Yang \cite{LuOhYa} (see also \cite{LuOh}); short pulse type large data, introduced by Christodoulou \cite{Christodoulou09}, on null wave equations by  Miao-Pei-Yu \cite{MiPeYu}; global dynamics of Maxwell-KG by Yang \cite{Ya18}, Yang-Yu \cite{YaYu19}  and Yang-Mills-Higgs by Wei-Yang-Yu \cite{WeYaYu} with large data;
	a class of data with large $L^2$ norm (dispersed large data) on the cubic Dirac model by Dong-Li-Zhao \cite{DoLiZh}; global existence for 2D Dirac-Klein-Gordon system with large data by Gr\"unrock-Pecher \cite{GrPe}.

	\subsection{Key strategies and ideas}
	
	We will rely on Klainerman's  vector-field method to prove the main theorems. In addition to the difficulty caused by the fact that the systems are not consistent with the scaling vector field $L_0 = t\partial_t + x^a\partial_a$, the primary difficulties include the insufficient decay rates of the solutions (e.g. the massless wave-type components decay at $t^{-1}$ or an even worse rate $t^{-3/4}$ due to the use of Sobolev inequality \eqref{est:globalSobo}) and the presence of the large data. Therefore, to prove large data global existence results, we need to balance the decay rates and the largeness of the fields. 
	
	We first discuss the Klein-Gordon-Zakharov equations \eqref{eq:KGZ-L}. In this case, we allow the wave component $n$ to be large, while the Klein-Gordon field $E$ needs to be small. In the equation of $E$, the nonlinearity $-nE$ contains an undifferentiated wave component. Recall that in the natural energy of wave equations, only the differentiated solution appears, i.e.,
	$$
	\int_{\mathbb{R}^3} |\partial n|^2 \, dx.
	$$
	Thus, it is generally more difficult to estimate wave components without any partial derivatives, and consequently it is hard to bound the term $-nE$. To address this problem, we use the reformulation \eqref{eq:KGZ-L2} used in \cite{Kata} by Katayama to decompose $n = n^0 + \Delta n^1$, in which $n^0$ is a free wave that can be easily estimated and $n^1$ is the solution to a wave equation. Specifically, 
	\begin{align*}
		& -\Box n^0 =0, \ \ \qquad \	  n^0(0,x) = n_0, \ \    \partial_t n^0(0,x) = n_1,\\
		&-\Box n^1 =|E|^2, \ \ \qquad n^1(0,x) = 0, \ \    \partial_t n^1(0,x) = 0.
	\end{align*}
	Our bootstrap assumption is imposed on the ghost weight energy and the $L^{\infty}$ norm of $E$ which are expected to be small(of order $\epsilon$) due to the restriction on the initial data.   To refine the ghost weight energy of Klein-Gordon solution $E$, we observe  
	\begin{align} \label{ineq:examghost}
		\mathcal{E}_{gst,1}(t, E) \lesssim \mathcal{E}_{gst,1}(0, E) + \int_0^t\int_{\R^3}|(n^0E+\Delta n^1 E)\partial_t E|\, \d x\d s.
	\end{align}
	Since $n^0$
  is a linear wave solution with large initial data, its size is large. Therefore, the problematic term in the above estimate is 	\begin{align*}
		\int_0^t \int_{\R^3} |n^0 E \partial_t E|\, \d x\d s & \lesssim C_{\eta}\int_0^t\|\langle s -r \rangle^{\frac 12 +\delta} n^0\|^2_{L^{\infty}}\|\partial_t E\|^2 \d s + \eta \mathcal{E}_{gst,1}(t, E) \\
		& \lesssim C_{\eta}\int_0^t\|\langle s -r \rangle^{\frac 12 +\delta} n^0\|^2_{L^{\infty}}\mathcal{E}_{gst,1}(s, E)  \d s + \eta \mathcal{E}_{gst,1}(t, E),
	\end{align*}
	where $\eta$ can be chosen to be sufficiently small. Given the equation of $n^1$, we can expect the size of $n^1$ to be of order $\epsilon^2$.  Thus, we can roughly estimate 
	\begin{align*}
		\mathcal{E}_{gst,1}(t, E) & \lesssim \mathcal{E}_{gst,1}(0, E) + {\rm smaller \  term(order\ \epsilon^2) }  \\
		& \qquad  + \int_0^t\|\langle s -r \rangle^{\frac 12 +\delta} n^0\|^2_{L^{\infty}}\mathcal{E}_{gst,1}(s, E) \,   \d s.
	\end{align*}
	Then Gronwall inequality implies 
	\begin{align*}
		\mathcal{E}_{gst,1}(t, E) & \lesssim (\mathcal{E}_{gst,1}(0, E) + {\rm smaller \  term(order\ \epsilon^2) } )\exp{\int_0^t}\|\langle s -r \rangle^{\frac 12 +\delta} n^0\|^2_{L^{\infty}} \d s,
	\end{align*}
	which makes it possible to improve the bootstrap assumption. In Theorem \ref{thm:KGZ-L}, the dependence of 
$\epsilon_0$ on $K_0$
  is exponential due to the application of the Gronwall inequality.   To relax the dependence of $\epsilon_0$ on $K_0$ to  a polynomial one, we need to rely on more detailed information about the solution and a modified version of the ghost weight energy estimate, and we thus need to impose further restrictions on the initial data $(n_0, n_1)$. In this case, the weighted energy of $E$ takes the following formula
	\begin{equation*}
		\begin{aligned}
			&\frac{1}{2}\int_{\R^{3}}e^q\left(|\partial E|^2+| E|^2+n^0|E|^2\right)\d x(t)\\
			&+\frac{1}{2}\int_{0}^{t}\int_{\R^{3}}e^q\left(\frac{n^0|E|^2}{\langle r-s\rangle^{1+2\delta}}+\Big(\frac{1}{\langle r-s\rangle^{1+2\delta}} -\partial_t n^0 \Big)|E|^2 +\sum_{a=1}^{3}\frac{|G_a E|^2}{\langle r-s\rangle^{1+2\delta}}\right)\d x\d s\\
			&=\frac{1}{2}\int_{\R^{3}}e^q\left(|\partial E|^2+|E|^2+n^0|E|^2\right)\d x(0)
			-\int_{0}^{t}\int_{\R^{3}}e^q(\Delta n^1E)\partial_t E\d x\d s.
		\end{aligned}
	\end{equation*}

	Our observation is that if 
	\begin{align}  \label{enerestri}
		n^0 \geq 0, \ \ \  \frac{1}{\langle r-s\rangle^{1+2\delta}} -\partial_t n^0 \geq 0,
	\end{align}
	then we have 
	\begin{align*}
		\mathcal{E}_{gst,1}(t,E) \lesssim \mathcal{E}_{gst,1}(0, E) + \|n_0\|_{L^{\infty}}\|E_0\|^2+\int_{0}^{t}\int_{\R^{3}}|(\Delta n^1E)\partial_t E|\,\d x\d s. 
	\end{align*}
	Compared with \eqref{ineq:examghost}, the problematic term $n^0 E \partial_t E$ disappears, allowing us to establish a polynomial dependence  between $\epsilon_0$ and $K_0$. Motivated by this, we identify a sufficient condition  \eqref{initialdata3} to ensure the validity of \eqref{enerestri}.

	Next, we consider the Dirac-Klein-Gordon system \eqref{eq:DKG-L}. In this case, we take the initial data of the massive Klein-Gordon component to be large, while the data of the massless scalar field remain small. Heuristically, we expect the size of the data to be preserved as the system evolves, provided the smallness condition plays the dominant role.  
	
	Since our proof of global existence result is based on a continuity argument, the first difficulty is to establish appropriate bootstrap assumptions that can be refined. For this purpose, let us first observe the ghost energy estimate of Dirac equation (the definition of $\mathcal{E}_{D}$ can be found in Section \ref{S:pre}), for $|I|\geq 1$,
	\begin{align*}
		\mathcal{E}_{D}(t, \widehat{\Gamma}^I \psi) &\lesssim \mathcal{E}_{D}(0, \widehat{\Gamma}^I \psi) + \int_0^t \|(\widehat{\Gamma}^I \psi)^* \gamma^0 \widehat{\Gamma}^I(v\psi)  - (\widehat{\Gamma}^I(v\psi))^* \gamma^0 \widehat{\Gamma}^I \psi\|_{L^1} \, \d s \\
		&  \lesssim \mathcal{E}_{D}(0, \widehat{\Gamma}^I \psi) + \sum_{\substack{|I_1|+|I_2|\leq |I|\\|I_2|<|I|}} \int_0^t \big\|(\Gamma^{I_1}v) (\widehat{\Gamma}^I\psi)^* \gamma^0 \widehat{\Gamma}^{I_2} \psi  \big\|_{L^1} \,\d s.
	\end{align*}
	One can note that there is a cancellation at the top order derivative of $\psi$ and thus producing a lower order term $\widehat{\Gamma}^{I_2}\psi$ with $|I_2|<|I|$, which motivates us to introduce a bound depending on index $I$ so that there is room to improve the estimate, and more precisely, we set one of the assumptions as 
	\begin{align}
		\mathcal{E}_{D}^{\frac 12}(t,\widehat{\Gamma}^I\psi) \leq C_1 \epsilon^{1-\delta|I|}.
	\end{align}
	We point that the following version ghost energy estimate for Dirac equation has been extensively used in \cite{DLMY, DongZoeDKG}, 
	\begin{align}
		\mathcal{E}_{D}(t, \widehat{\Gamma}^I \psi) &\lesssim \mathcal{E}_{D}(0, \widehat{\Gamma}^I \psi) + \int_0^t \|(\widehat{\Gamma}^I \psi)^* \gamma^0 \widehat{\Gamma}^I(v\psi) \|_{L^1} \, \d s. \nonumber
	\end{align}	
	In the present article, the aforementioned cancellation property plays a key role, especially in obtaining the polynomial dependence between $\epsilon_0$ and $K_0$. 
	
	Another difficulty lies in the slow decay of the nonlinearities. We want to handle this issue by applying the ghost energy estimate, and in this way, an extra $t-r$ decay for Dirac field $\psi$ is required (see Lemmas \ref{lem:ghostKG}, \ref{le:EneryDirac}). Note that the global Sobolev inequality \eqref{est:globalSobo} only provides a slow $\langle t+r \rangle^{-\frac 34}$ decay rate. In order to gain the $\langle t- r\rangle$ decay for $\psi$, we adopt an idea due to Bournaveas \cite{Boura00}, introducing a wave solution $\Psi$ which solves
	\begin{align}
		-\Box \Psi = v\psi.  \label{psiwaverep}
	\end{align} 
	By setting appropriate initial data for $\Psi$, we can derive $i \gamma^{\mu}\partial_{\mu} \Psi = \psi$. Then  $\langle t -r \rangle$ decay   can be obtained by using the  following inequality 
	\begin{align}
		|\psi| \lesssim |\partial \Psi| \lesssim \langle t- r\rangle^{-1} \big(|L_0 \Psi| + |\Gamma \Psi| \big).
	\end{align} 
	To achieve our goal, it suffices to establish the decay of $L_0 \Psi$ and $\Gamma \Psi$. We aim to bound the conformal energy of $\Psi$ and then apply the Sobolev inequality to deduce the pointwise decay.  However,  severe difficulties arise due to the slow decay of the nonlinearity in \eqref{psiwaverep}.  To address this problem, we perform a nonlinear transformation by introducing another unknown $\widetilde{\Psi}:= \Psi + v\psi$,  which satisfies a wave equation with improved nonlinearities (quadratic null terms and cubic terms), i.e.
	\begin{align}  \label{nonlipsi}
		-\Box \widetilde{\Psi} = (\psi^* \gamma^0 \psi) \psi + i \gamma^{\mu} v  \partial_{\mu}(v\psi) + 2Q_0(v,\psi).
	\end{align}
	Compared with \eqref{psiwaverep}, the nonlinearities in \eqref{nonlipsi} exhibit faster decay,  
	making it possible to obtain a good bound on the conformal energy of $\widetilde{\Psi}$. In fact, a significant portion of Section \ref{S:proofdkg} focuses on establishing an appropriate bound for the conformal energy of $\widetilde{\Psi}$. This includes carefully handling the quadratic null term  $Q_0(v,\psi)$, where we utilize techniques such as dividing the integration domain, exploiting the extra $\langle t -r \rangle$ decay for $\partial \psi$ (see Lemma \ref{le:Diracdecay}) inside the light cone and using the additional decay of the Klein-Gordon solution(see Lemma \ref{est:weightKG}).

	\subsection{Some open problems}
	
	In the course of proving Theorems \ref{thm:KGZ-L}, \ref{thm:KGZ-L2}, and \ref{thm:DKG-L}, several problems remain unknown to us, which are listed below.
	
	\
	
	\smallskip
	$\bullet$  What is the best constant in Klainerman-Sobolev inequality \eqref{est:Sobo}? Determining this constant would allow us to state Theorem \ref{thm:KGZ-L2} more concretely.
	
	\smallskip
	$\bullet$ Can one prove the Sobolev inequality 
	\begin{align}\label{eq:Sobolev-open}
		|u(t,x)|\lesssim\langle t+r\rangle^{-1}\sum_{|I|\leq3}\|\Gamma^Iu(t,x)\|,
	\end{align}						
	whereas  in \eqref{est:globalSobo} the decay factor is $\langle t+r\rangle^{-3/4}$?
	If this stronger inequality could be established, many proofs would be simplified.		
	
	\smallskip
	$\bullet$ Can one prove a similar global existence result as in Theorem \ref{thm:DKG-L}, allowing the massless Dirac field to be large in a certain sense, specifically, belonging to the dispersed large  data set (large in $L^2$ but small when hit with derivatives) considered in \cite{DoLiZh}? If so, this would mean that both fields are permitted to be large.	
	
	\subsection{Organization} The article is organized as follows. 	
	In Section \ref{S:pre}, we prepare some preliminaries, including notation, various energy estimates for wave-type equations, commutator estimates, Sobolev type inequalities, etc.
	In Sections \ref{S:proofTh1.3} and \ref{S:proofTh1.4},  we prove large data global existence for the KGZ system in Theorems \ref{thm:KGZ-L} and \ref{thm:KGZ-L2}, respectively.
	Finally, we present the proof of large data global existence for the DKG system in Theorem \ref{thm:DKG-L} in Section \ref{S:proofdkg}.

	\section{Preliminaries}\label{S:pre}
	\subsection{Notation and conventions}\label{SS:Notation}
	
	Our problem is in (1+3) dimensional spacetime $\R^{1+3}$. A spacetime point in $\R^{1+3}$ is represented by 
	$(t,x)=(x_{0},x_{1},x_{2},x_{3})$, and its spatial radius is denoted by $r=|x|=\sqrt{x_{1}^{2}+x_{2}^{2}+x_{3}^{2}}$. Set $\omega_{a}=\frac{x_{a}}{r}$ for $a=1,2,3$ and $x=(x_{1},x_{2},x_{3})\in \R^{3}$. We denote spacetime indices by Greek letters $\left\{\alpha,\beta,\dots\right\}$ and spatial indices by Roman indices $\left\{a,b,\dots\right\}$. Unless otherwise stated, we will always adopt the Einstein summation convention for repeated upper and lower indices.
	
	Following Klainerman~\cite{Kl86}, we introduce the following vector fields
	\begin{enumerate}
		\item [(1)] Translations: $\partial_{\alpha}=\partial_{x_{\alpha}}$, for $\alpha=0,1,2,3$.
		\item [(2)] Rotations: $\Omega_{ab}=x_{a}\partial_{b}-x_{b}\partial_{a}$, for $1\leq a<b\leq 3$.
		\item [(3)] Scaling vector field: $L_0=t\partial_t +x^{a}\partial_{a}$.
		\item [(4)] Lorentz boosts: $L_{a}=t\partial_{a}+x_{a}\partial_{t}$, for $a=1,2,3$.
	\end{enumerate}
	To treat Dirac equations, we also introduce the modified rotations and Lorentz boosts introduced by Bachelot~\cite{Bache}
	\begin{equation*}
		\widehat{\Omega}_{ab}=\Omega_{ab}-\frac{1}{2}\gamma^{a}\gamma^{b}\quad \mbox{and}\quad 
		\widehat{L}_{a}=L_{a}-\frac{1}{2}\gamma^{0}\gamma^{a}.
	\end{equation*}
	For convenience, we denote
	\begin{equation*}
		\begin{aligned}
			\widehat{\Omega}=(\widehat{\Omega}_{12},\widehat{\Omega}_{13},\widehat{\Omega}_{23}),\quad
			\widehat{L}=(\widehat{L}_{1},\widehat{L}_{2},\widehat{L}_{3}).
		\end{aligned}
	\end{equation*}
	We define the following four ordered sets of vector fields,
	\begin{equation*}
		\begin{aligned}
			&\Gamma=\left(\Gamma_{1},\dots,\Gamma_{10}\right)=\left(\partial,\Omega,L\right),\quad \ \ \ \
			\widehat{\Gamma}=\left(\widehat{\Gamma}_{1},\dots,\widehat{\Gamma}_{10}\right)=\left(\partial,\widehat{\Omega},\widehat{L}\right),\\
			&Z=\left(Z_{1},\dots,Z_{11}\right)=\left(L_0,\partial,\Omega,L\right),\ \
			\widehat{Z}=\left(\widehat{Z}_{1},\dots,\widehat{Z}_{11}\right)=\left(L_0,\partial,\widehat{\Omega},\widehat{L}\right),\\
		\end{aligned}
	\end{equation*}
	where 
	\begin{equation*}
		\begin{aligned}
			\partial=(\partial_0,\partial_1,\partial_2,\partial_3)=(\partial_t,\nabla),\ \ \Omega=(\Omega_{12},\Omega_{13},\Omega_{23}), \ \  L=(L_1,L_2,L_3).
		\end{aligned}
	\end{equation*}
	
	Moreover, for all multi-index $I=(I_{1},\dots,I_{10})\in \mathbb{N}^{10}, I'=(I'_{1},\dots,I'_{11})\in \mathbb{N}^{11}$, we denote
	\begin{equation*}
		\Gamma^{I}=\prod_{k=1}^{10}\Gamma_{k}^{I_{k}},\quad 
		\widehat{\Gamma}^{I}=\prod_{k=1}^{10}\widehat{\Gamma}_{k}^{I_{k}},\quad
		Z^{I'}=\prod_{k=1}^{11}Z_{k}^{I'_{k}},\quad 
		\widehat{Z}^{I'}=\prod_{k=1}^{11}\widehat{Z}_{k}^{I'_{k}}.
	\end{equation*}
	
	Also, for any $\R$-valued, or vector-valued function $f$, we have 
	\begin{equation*}
		\begin{aligned}
			\left|\Gamma f\right|=\left(\sum_{k=1}^{10}\left|\Gamma_{k}f\right|^{2}\right)^{\frac{1}{2}},\quad  \big|\widehat{{\Gamma}}f\big|=\left(\sum_{k=1}^{10}\big|\widehat{\Gamma}_{k}f\big|^{2}\right)^{\frac{1}{2}},\\
			\left|Z f\right|=\left(\sum_{k=1}^{11}\left|Z_{k}f\right|^{2}\right)^{\frac{1}{2}},\quad 
			\big|\widehat{{Z}}f\big|=\left(\sum_{k=1}^{11}\big|\widehat{Z}_{k}f\big|^{2}\right)^{\frac{1}{2}}.
		\end{aligned}
	\end{equation*}
	
	%To state the global Sobolev inequality, we also define 
	%\begin{equation*}
	%\left(\Lambda_{1},\Lambda_{2}\right)=\left(\partial_{r},\Omega\right)\quad \mbox{and}\quad
	%\Lambda^{I}=\Lambda_{1}^{i_{1}}\Lambda_{2}^{i_{2}},\ \  \mbox{for}\ I=(i_{1},i_{2})\in \mathbb{N}^{2}.
	%\end{equation*}
	Additionally, we will use the good derivatives 
	
	$$G_{a}=\partial_{a}+\omega_{a}\partial_{t}, \quad \mbox{for} \quad a=1,2,3.$$
	
	Following Dong-Wyatt~\cite{DongZoeDKG}, we introduce the following notation in order to explain the hidden structure of the nonlinear term of~\eqref{eq:DKG-L}: for all $\phi=\phi(t,x):\R^{1+3}\to \C^{4}$, we set 
	\begin{equation}\label{def:phi-}
		\begin{aligned}
			\left[\phi\right]_{+}(t,x)&:=\phi(t,x)+\omega_{a}\gamma^{0}\gamma^{a}\phi(t,x),\\
			\left[\phi\right]_{-}(t,x)&:=\phi(t,x)-\omega_{a}\gamma^{0}\gamma^{a}\phi(t,x).
		\end{aligned}
	\end{equation}
	
	We also need to introduce a suitable cut-off $C^{1}$ function $\chi:\R\to \R$ such that 
	\begin{equation}\label{def:chi}
		\chi'\ge 0\quad \mbox{and}\quad  \chi=\left\{\begin{aligned}
			&0,\quad \mbox{for}\ x\le 1,\\
			&1,\quad \mbox{for}\ x\ge 2.
		\end{aligned}\right.
	\end{equation}

	Let $\left\{\zeta_{k}\right\}_{k=0}^{\infty}$ be a Littlewood-Paley partition of unity, i.e.,
	\begin{equation*}
		1=\sum_{k=0}^{\infty}\zeta_{k}(\rho),\ \rho\ge 0,\ \zeta_{k}\in C_{0}^{\infty}\left(\R\right),\quad \zeta_{k}\ge 0\quad \mbox{for all}\ k\ge 0,
	\end{equation*}
	and
	\begin{equation*}
		\mbox{supp}\,\zeta_{0}\cap [0,\infty)=[0,2],\quad 
		\mbox{supp}\,\zeta_{k}\subset \left[2^{k-1},2^{k+1}\right]\quad \mbox{for all}\ k\ge 1.
	\end{equation*}
	Finally, we use Japanese bracket to reprensent $\langle p\rangle=(1+|p|^{2})^{\frac{1}{2}}$ for $p$ being a scalar or a vector and $\|\cdot\|$ denotes the $L^2$ norm.
	We write $A \lesssim B$ to indicate $A\leq C_0 B$ with $C_0$ a universal constant. We use $[\rho]$ to represent the largest integer that is not bigger than $\rho\in\R$.
	
	%Last, for further reference, we recall the following standard Gronwall's inequality. Let $T>0$, $c\geq 0$ be a constant and $f$, $g\in C\left([0,T],\R\right)$ be nonnegative functions with the property that
	%\begin{equation*}
	%f(t)\le c+\int_{0}^{t}g(s)f(s)\d s,\quad \mbox{for all}\ t\in [0,T].
	%\end{equation*}
	%Then, we have 
	%\begin{equation}\label{est:Gron}
	%f(t)\le c\exp\left(\int_{0}^{t}g(s)\d s\right),\quad \mbox{for all}\  t\in [0,T].
	%\end{equation}

	\subsection{Estimates}
	In this subsection, we review some preliminary estimates related to vector fields and commutators. First, from the definitions of $\Gamma$ and $\widehat{{\Gamma}}$, we can easily see that the differences between them are constant matrices. Then there hold 
	\begin{equation}\label{est:hatGa}
		\sum_{|I|\le K}\big|\widehat{{\Gamma}}^{I}f\big|\lesssim \sum_{|I|\le K}\big|{\Gamma}^{I}f\big|\lesssim \sum_{|I|\le K}\big|\widehat{{\Gamma}}^{I}f\big|,
	\end{equation}
	\begin{equation}\label{est:hatparGa}
		\sum_{|I|\le K}\big|\partial\widehat{{\Gamma}}^{I}f\big|\lesssim \sum_{|I|\le K}\big|\partial{\Gamma}^{I}f\big|\lesssim \sum_{|I|\le K}\big|\partial\widehat{{\Gamma}}^{I}f\big|,
	\end{equation}
	for any smooth $\mathbb{R}$-valued or vector-valued function $f$ and $K\in \mathbb{N}^{+}$.
	
	Second, we recall some significant estimates on commutators. Denote the commutator by $[A,B]=AB-BA$. One can refer to~\cite{Sogge} for more details.
	\begin{lemma}[\cite{Sogge,Bache}]\label{lem:vectorfield}
		For any $k=1,\cdots,10$, the following estimates hold.
		\begin{itemize}
			\item[\rm (i)] $[-\Box, \Gamma_k]=0,\quad [-\Box,L_0]=-2\Box,$
			\item[\rm (ii)] $[-i\gamma^{\mu}\partial_{\mu},\widehat{\Gamma}_k]=0,\quad [-i\gamma^{\mu}\partial_{\mu}, L_0]=-i\gamma^{\mu}\partial_{\mu}.$
		\end{itemize}
	\end{lemma}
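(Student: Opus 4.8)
All the identities in Lemma \ref{lem:vectorfield} are classical (see \cite{Sogge,Bache}); the plan is to verify them by direct computation, organised by the type of vector field, with the Clifford relations \eqref{equ:gamma} doing the essential bookkeeping in part (ii). For (i), the translations $\partial_\alpha$ commute with $-\Box$ trivially, since $\Box$ has constant coefficients. For the rotations $\Omega_{ab}$ and the boosts $L_a$, the shortest route is to recall that these are Killing vector fields for the Minkowski metric $\eta$, so that the associated first-order operators commute with the d'Alembertian; alternatively one computes directly, for instance for $L_a=t\partial_a+x_a\partial_t$ one has
\begin{equation*}
\Box(L_a u)= t\,\Box(\partial_a u)+x_a\,\Box(\partial_t u)=L_a(\Box u),
\end{equation*}
the cross terms cancelling after using $[\partial_t^2,\,t\,\cdot\,]=2\partial_t$ and $[\partial_a^2,\,x_a\,\cdot\,]=2\partial_a$ (no sum) together with $[\Box,\partial_\alpha]=0$. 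For the scaling field $L_0=t\partial_t+x^a\partial_a=x^\mu\partial_\mu$, the operator $\Box=\partial_\mu\partial^\mu$ is homogeneous of degree $-2$, which gives $\Box(L_0 u)=L_0(\Box u)+2\Box u$, i.e. $[\Box,L_0]=2\Box$ and hence $[-\Box,L_0]=-2\Box$.

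For (ii), again $[-i\gamma^\mu\partial_\mu,\partial_\alpha]=0$ is immediate. The role of Bachelot's correction terms is to cancel a residual first-order error: a short computation, in which $\gamma^\mu\partial_\mu$ differentiates the coefficients $x_a,t$ of $\Omega_{ab}$ and $L_a$, yields
\begin{equation*}
[\gamma^\mu\partial_\mu,\Omega_{ab}]=\gamma^a\partial_b-\gamma^b\partial_a,\qquad
[\gamma^\mu\partial_\mu,L_a]=\gamma^0\partial_a+\gamma^a\partial_t.
\end{equation*}
On the other hand, since $-\tfrac12\gamma^a\gamma^b$ and $-\tfrac12\gamma^0\gamma^a$ are constant matrices, one has $[\gamma^\mu\partial_\mu,M]=[\gamma^\mu,M]\partial_\mu$, and the Clifford relations \eqref{equ:gamma} give
\begin{equation*}
\big[\gamma^\mu\partial_\mu,\,-\tfrac12\gamma^a\gamma^b\big]=-\big(\gamma^a\partial_b-\gamma^b\partial_a\big),\qquad
\big[\gamma^\mu\partial_\mu,\,-\tfrac12\gamma^0\gamma^a\big]=-\big(\gamma^0\partial_a+\gamma^a\partial_t\big).
\end{equation*}
Adding the two contributions shows $[-i\gamma^\mu\partial_\mu,\widehat{\Omega}_{ab}]=[-i\gamma^\mu\partial_\mu,\widehat{L}_a]=0$. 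Finally, for $L_0=x^\mu\partial_\mu$ the Dirac operator is homogeneous of degree $-1$, giving $[\gamma^\nu\partial_\nu,x^\mu\partial_\mu]=\gamma^\mu\partial_\mu$ and hence $[-i\gamma^\mu\partial_\mu,L_0]=-i\gamma^\mu\partial_\mu$.

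There is no genuine obstacle in this lemma; the only point requiring care is getting the signs right in the Clifford-algebra manipulations, specifically the identities $[\gamma^c,\gamma^a\gamma^b]=-2\delta_{ca}\gamma^b+2\delta_{cb}\gamma^a$ and $[\gamma^\mu,\gamma^0\gamma^a]=2\delta^\mu_{\ a}\gamma^0+2\delta^\mu_{\ 0}\gamma^a$, derived from $\{\gamma^\mu,\gamma^\nu\}=-2\eta_{\mu\nu}I_4$ and $\eta=\mathrm{diag}(-1,1,1,1)$. It is precisely the numerical coefficients appearing here that fix the factor $\tfrac12$ in the definitions of $\widehat{\Omega}_{ab}$ and $\widehat{L}_a$, so the cancellation in (ii) is exact rather than merely up to lower-order terms.
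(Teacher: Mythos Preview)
Your proof is correct; the computations for both (i) and (ii) check out, including the Clifford-algebra cancellations that justify Bachelot's correction terms. The paper itself gives no proof of this lemma, simply citing \cite{Sogge,Bache}, so your direct verification is precisely the standard argument those references contain.
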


	\begin{lemma}[\cite{Sogge}]\label{lem:commutators} For any smooth function $u=u(t,x)$, we have
		\begin{equation}\label{est:commutators}
			\sum_{\alpha=0}^{3}\left|[\partial_\alpha,\Gamma^I]u\right|+\left|[L_0,\Gamma^I]u\right|\lesssim\sum_{|J|<|I|}\sum_{\beta=0}^{3}|\partial_\beta\Gamma^Ju|.
		\end{equation}
	\end{lemma}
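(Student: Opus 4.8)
The plan is to prove both commutator bounds by induction on $|I|$, using the fact that the fields $\partial_\alpha,\Omega_{ab},L_a,L_0$ close under commutation with constant coefficients (the Poincar\'e algebra together with the scaling field). The key base-case computation is that for each generator $\Gamma_k\in\{\partial_\beta,\Omega_{ab},L_a\}$ the bracket $[\partial_\alpha,\Gamma_k]$ is a constant-coefficient linear combination of the translations $\partial_0,\partial_1,\partial_2,\partial_3$; indeed $[\partial_\alpha,\partial_\beta]=0$, $[\partial_c,\Omega_{ab}]=\delta_{ca}\partial_b-\delta_{cb}\partial_a$, $[\partial_t,\Omega_{ab}]=0$, $[\partial_t,L_a]=\partial_a$, $[\partial_b,L_a]=\delta_{ab}\partial_t$; and likewise $[L_0,\partial_\alpha]=-\partial_\alpha$, $[L_0,\Omega_{ab}]=0$, $[L_0,L_a]=0$. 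In particular $\sum_{\alpha}|[\partial_\alpha,\Gamma_k]u|+|[L_0,\Gamma_k]u|\lesssim\sum_{\beta}|\partial_\beta u|$, which is the assertion for $|I|=1$ (with the convention $\Gamma^Ju=u$ when $|J|=0$).

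For the inductive step on the first commutator, write $\Gamma^I=\Gamma_k\Gamma^{I'}$ with $|I'|=|I|-1$ and expand
$$[\partial_\alpha,\Gamma^I]u=[\partial_\alpha,\Gamma_k]\Gamma^{I'}u+\Gamma_k[\partial_\alpha,\Gamma^{I'}]u.$$
By the base case the first term is a constant combination of $\partial_\beta\Gamma^{I'}u$, hence already of the desired form since $|I'|<|I|$. For the second term, the inductive hypothesis bounds $|[\partial_\alpha,\Gamma^{I'}]u|$ by $\sum_{|J|<|I'|}\sum_\beta|\partial_\beta\Gamma^Ju|$; applying $\Gamma_k$ and commuting it to the left of $\partial_\beta$ via $\Gamma_k\partial_\beta=\partial_\beta\Gamma_k+[\Gamma_k,\partial_\beta]$ produces terms $\partial_\beta\Gamma_k\Gamma^Ju$, in which the number of vector fields behind the derivative is $|J|+1\le|I'|<|I|$, plus (again by the base case) constant combinations of $\partial_\gamma\Gamma^Ju$ with $|J|\le|I|-2<|I|$. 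Summing over the finitely many admissible $k$, $J$, $\beta$, $\gamma$ closes the induction. The estimate for $[L_0,\Gamma^I]$ is then obtained by the same scheme: expanding $[L_0,\Gamma^I]u=[L_0,\Gamma_k]\Gamma^{I'}u+\Gamma_k[L_0,\Gamma^{I'}]u$, the first term is either $0$ or $-\partial_\alpha\Gamma^{I'}u$ (hence of the form $\partial_\beta\Gamma^Ju$ with $|J|=|I|-1<|I|$), and the second is handled by the inductive hypothesis for $L_0$ combined with the already proved $[\partial_\alpha,\Gamma_k]$ bound when moving $\Gamma_k$ past $\partial_\beta$.

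One bookkeeping point deserves care: the manipulations above generate products of generators in an order different from the canonical ordering $\Gamma^J=\prod_{k}\Gamma_k^{J_k}$. Since each Lie bracket $[\Gamma_a,\Gamma_b]$ is a constant-coefficient linear combination of single generators, reordering any word of length $m$ costs only words of length $\le m$, so the canonical normal form is recovered without increasing the count, and the statement as written (with $\Gamma^I,\Gamma^J$ canonically ordered) follows from the version for arbitrary words. I do not expect any genuine obstacle here; the only thing to watch is precisely this bookkeeping, namely ensuring that every commutation either drops a generator from behind a translation $\partial_\beta$ or is compensated by the structural facts $[\partial_\alpha,\Gamma_k]\in\mathrm{span}\{\partial_\gamma\}$ and $[L_0,\Gamma_k]\in\{0,-\partial_\alpha\}$, so that the right-hand side truly involves only multi-indices $J$ with $|J|<|I|$.
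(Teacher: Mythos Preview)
Your proposal is correct and follows the standard route, which is precisely what the paper has in mind: it does not give a proof but simply cites Sogge's textbook, where exactly this induction on $|I|$ via the Lie-algebra relations is carried out. One minor point of presentation: in the inductive step you phrase the hypothesis as the pointwise bound $|[\partial_\alpha,\Gamma^{I'}]u|\lesssim\sum_{|J|<|I'|}\sum_\beta|\partial_\beta\Gamma^Ju|$ and then ``apply $\Gamma_k$'' to it, which strictly speaking one cannot do to an inequality; what you really use (and what your argument makes clear) is the stronger operator identity that $[\partial_\alpha,\Gamma^{I'}]$ is a constant-coefficient combination of $\partial_\beta\Gamma^J$ with $|J|<|I'|$, from which the pointwise bound is immediate. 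Stating and proving the inductive hypothesis at the operator level removes this ambiguity and is how the argument is usually written.
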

	
	The following lemma gives us an estimate for $\partial u$, which provides an extra $\langle t-r\rangle^{-1}$ decay.
	\begin{lemma}[\cite{AlinhacBook,Sogge}]\label{lem:partial} Let $u=u(t,x)$ be any smooth function. Then we have
		\begin{equation}\label{est:partial}
			\langle t+r\rangle\sum_{a=1}^{3}|G_au|+\langle t-r\rangle|\partial u|\lesssim|L_0 u|+|\Gamma u|.
		\end{equation}
	\end{lemma}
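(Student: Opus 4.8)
The plan is to prove \eqref{est:partial} by purely algebraic manipulations of the vector fields, followed by a case split according to whether the spacetime point lies near or far from the light cone $\{t=r\}$. All identities below are linear and act componentwise on the components of $u$, so it suffices to treat scalar real-valued $u$, and throughout I use $x_a = r\omega_a$.

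First I would record the identities obtained by solving for $\partial_t$ and $\partial_a$ in terms of $L_0$, $L_a$, and $\Omega_{ba}$. A direct computation gives
\begin{equation*}
	(t^2-r^2)\,\partial_t = tL_0 - x^aL_a, \qquad (t^2-r^2)\,\partial_a = tL_a - x_aL_0 - x^b\Omega_{ba}.
\end{equation*}
Since the coefficients on the right-hand sides are bounded in absolute value by $t+r$ (for the $L_0$ and $L_a$ terms) and by $r$ (for the $\Omega$ terms), these identities yield
\begin{equation*}
	|t^2-r^2|\,|\partial u| \lesssim (t+r)\big(|L_0 u| + |\Gamma u|\big).
\end{equation*}
On the region $|t-r|\ge 1$ one has $t+r\ge |t-r|\ge 1$, so $|t^2-r^2| = |t-r|(t+r)$ and the previous display gives $|\partial u|\lesssim |t-r|^{-1}(|L_0 u|+|\Gamma u|)$; multiplying by $\langle t-r\rangle\lesssim |t-r|$ yields the $\partial$-part of \eqref{est:partial} there. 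On the complementary region $|t-r|\le 1$ we have $\langle t-r\rangle\lesssim 1$ and $|\partial u|\le|\Gamma u|$ because the four translations sit inside $\Gamma$, so the $\partial$-part is immediate. Combining the two regions, $\langle t-r\rangle\,|\partial u|\lesssim |L_0 u|+|\Gamma u|$ everywhere; in particular $|t-r|\,|\partial u|\lesssim |L_0 u|+|\Gamma u|$.

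For the good derivatives I would use the two further identities, again immediate from $x_a=r\omega_a$,
\begin{equation*}
	rG_a = L_a - (t-r)\partial_a, \qquad tG_a = L_a + (t-r)\omega_a\partial_t,
\end{equation*}
which, after taking absolute values and adding, give $(t+r)|G_a u|\le 2|Lu| + 2|t-r|\,|\partial u|$. The previous paragraph controls $|t-r|\,|\partial u|$, hence $(t+r)\,|G_a u|\lesssim |L_0 u|+|\Gamma u|$. Since also $|G_a u|\le |\partial_a u|+|\partial_t u|\lesssim|\partial u|\le|\Gamma u|$, one upgrades $t+r$ to $\langle t+r\rangle$ and sums over $a$ to obtain the $G_a$-part of \eqref{est:partial}; adding it to the $\partial$-part finishes the proof.

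The only delicate point — the ``main obstacle'', such as it is — is the degeneration of the weight $t^2-r^2=(t-r)(t+r)$ near the light cone, where the inversion of the vector fields is singular. This is handled precisely by the case split: in the near-cone region $\langle t-r\rangle$ is comparable to $1$ and the estimate collapses to the trivial bound $|\partial u|\le|\Gamma u|$, while everywhere else the factor $t+r$ in the numerator exactly cancels the $t+r$ in $|t^2-r^2|$, leaving the clean $|t-r|^{-1}$ gain; the rest is bookkeeping of constants.
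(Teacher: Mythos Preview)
Your proof is correct. The paper does not give its own proof of this lemma but simply cites \cite{AlinhacBook,Sogge}; your argument is precisely the standard one found in those references---inverting the vector-field relations to get $(t^2-r^2)\partial_\alpha$ in terms of $L_0,L_a,\Omega_{ab}$, handling the near-cone degeneration by the trivial bound $|\partial u|\le|\Gamma u|$, and then expressing $(t+r)G_a$ via $L_a$ and a $(t-r)\partial$ remainder---so there is nothing to compare.
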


	Next, we state the following Sobolev inequalities.
	%\begin{lemma}[Klainerman-Sobolev inequality]
	%Let $u=u(t,x)$ be a sufficiently regular function that vanishes when $|x|$ is large. Then there holds
	%\begin{equation}
	%\langle t+r\rangle \langle t-r\rangle^{\frac{1}{2}}|u(t,x)|\lesssim \sum_{|I|\leq2}\|Z^I u(t,x)\|_{L^2}.
	%\end{equation}
	%\end{lemma}
	%\begin{proof}
	%For Klainerman-Sobolev inequality~(\ref{est:Sobo}), one can refer to Theorem 1.3 in \cite{Sogge} for details of the proof.
	%\end{proof}
	\begin{lemma}\label{lem:Sobolev}
		Let $u=u(t,x)$ be a sufficiently regular function that vanishes when $|x|$ is large. Then the following estimates hold.
		\begin{enumerate}
			
			\item {\rm {Klainerman-Sobolev inequality}}. We have 
			\begin{equation}\label{est:Sobo}
				|u(t,x)|\lesssim \langle t+r\rangle^{-1} \langle t-r\rangle^{-\frac{1}{2}}\sum_{|I|\leq2}\|Z^I u(t,x)\|.
			\end{equation}
			
			\item {\rm {Standard Sobolev inequality}}. We have 
			\begin{equation}\label{est:standardSobo}
				|u(t,x)|\lesssim\langle r\rangle^{-1}\sum_{k \leq 1, |J|\leq 2}\|\partial_r^k \Omega^J u(t,x)\|,
			\end{equation}
			where $\partial_r := \sum_{a=1}^3 \omega_a \partial_{a}$ denotes the derivative with respect to the radial direction.
			
			\item{\rm{Estimate inside of a cone.}} For $r\leq\frac{t}{2}$, we have
			\begin{equation}\label{est:GeorSobo}
				|u(t,x)|\lesssim\langle t\rangle^{-\frac{3}{4}}\sum_{|I|\leq3}\|\Gamma^Iu(t,x)\|.
			\end{equation}
			\item{\rm{Global Sobolev inequality.}} We have
			\begin{equation}\label{est:globalSobo}
				|u(t,x)|\lesssim\langle t+r\rangle^{-\frac{3}{4}}\sum_{|I|\leq3}\|\Gamma^Iu(t,x)\|.
			\end{equation}
		\end{enumerate}
	\end{lemma}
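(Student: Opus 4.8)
\emph{Plan of proof.} All four are standard weighted Sobolev inequalities attached to the commutation fields; I would prove the spatial estimate (2) and the Klainerman--Sobolev inequality (1) by classical arguments, deduce the interior estimate (3) as a further standalone embedding, and obtain (4) by patching (2) and (3).

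\emph{Estimate (2).} I would pass to polar coordinates $x=r\omega$ with $r>0$ and $\omega\in\mathbb{S}^2$, so that $\|f\|^2=\int_0^\infty\!\int_{\mathbb{S}^2}|f(r\omega)|^2 r^2\,d\omega\,dr$ and hence $r\,u(t,\cdot)\in L^2(dr\,d\omega)$. Applying the one-dimensional Sobolev embedding $W^{1,2}(\R_+)\hookrightarrow L^\infty(\R_+)$ in the radial variable together with $W^{2,2}(\mathbb{S}^2)\hookrightarrow L^\infty(\mathbb{S}^2)$ in the angular variables --- the latter controlled by the rotations $\Omega$, which span the tangent space of every sphere centred at the origin --- and using $\partial_r(ru)=u+r\partial_r u$ to keep track of the derivative count, I would obtain $|r\,u(t,r\omega)|\lesssim\sum_{k\le1,\,|J|\le2}\|\partial_r^k\Omega^J u(t,\cdot)\|$. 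Dividing by $r$ gives \eqref{est:standardSobo} for $r\gtrsim1$, while the region $r\lesssim1$ is handled by the flat embedding $H^2(\R^3)\hookrightarrow L^\infty(\R^3)$.

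\emph{Estimate (1).} For the Klainerman--Sobolev inequality I would follow the classical localize-and-rescale proof (cf.\ \cite{Kl86,Sogge,AlinhacBook}). Fixing $(t,x)$, one splits into the interior $r\le\langle t\rangle/2$, the exterior $r\ge2\langle t\rangle$, and the wave zone $\langle t\rangle/2\le r\le2\langle t\rangle$; in each region one localizes $u$ to a set adapted to the null foliation (a ball of radius $\sim\langle t-r\rangle$ in the first two regions, and in the wave zone a box whose two angular sides have length $\sim\langle t+r\rangle$ and whose side transversal to the light cone has length $\sim\langle t-r\rangle$), rescales it to unit size, and applies $H^2(\R^3)\hookrightarrow L^\infty(\R^3)$. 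The algebraic input is that after this rescaling the fields $Z=(L_0,\partial,\Omega,L)$ map onto a family controlling all unit-box first and second derivatives: the combinations $L_0\pm\omega_a L_a$ generate the dilations in the null variables $t\pm r$, while $\Omega$ and the tangential components of the $L_a$ generate the rotations. Undoing the scaling then produces exactly the weight $\langle t+r\rangle^{-1}\langle t-r\rangle^{-1/2}$. I expect the wave-zone case --- setting up the cone-adapted box and checking that the rescaled $Z$'s still span after the anisotropic dilation --- to be the main technical point, since this is what yields the sharp $\langle t-r\rangle^{-1/2}$ factor.

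\emph{Estimates (3) and (4).} The interior estimate (3) is a standalone weighted Sobolev inequality in the region $r\le t/2$, where $\langle t\rangle\sim\langle t+r\rangle\sim\langle t-r\rangle$; it can be obtained by a localization/rescaling argument in the spirit of (1), using that inside the cone the boosts $L_a$ play the role of $t\partial_a$ (via $t\partial_a=L_a-x_a\partial_t$), or simply quoted from Georgiev \cite{Geor90}. Only the rate $\langle t\rangle^{-3/4}$ is asserted here --- weaker than the $\langle t\rangle^{-3/2}$ that (1) would give --- because the scaling vector field $L_0$ is not among the $\Gamma$'s; the extra, third derivative on the right-hand side supplies the needed slack. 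Finally, (4) follows by patching: when $r\ge t/2$ one has $\langle r\rangle\gtrsim\langle t+r\rangle$, so (2) already gives the stronger bound $|u|\lesssim\langle t+r\rangle^{-1}\sum_{|I|\le3}\|\Gamma^I u\|$; when $r\le t/2$ one has $\langle t+r\rangle\sim\langle t\rangle$ and (3) applies directly.
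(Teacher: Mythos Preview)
Your proposal is correct and matches the paper's approach: the paper simply cites \cite{Sogge} for (1), \cite{KlainWave} for (2), and \cite{Geor} for (3), and then states that (4) follows from (2) and (3) --- exactly the patching argument you give. Your sketches supply more detail than the paper's bare citations, but the logical structure is identical; the only minor discrepancy is that (3) is attributed to Georgiev's 1992 Klein--Gordon decay paper \cite{Geor} rather than \cite{Geor90}.
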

	\begin{proof}
		For Klainerman-Sobolev inequality~(\ref{est:Sobo}), one can refer to Theorem 1.3 in \cite{Sogge} for details of the proof. For~(\ref{est:standardSobo}) and~(\ref{est:GeorSobo}), one can refer to~\cite[Proposition 1]{KlainWave} and~\cite[Lemma 2.4]{Geor}, respectively. The inequality~(\ref{est:globalSobo}) follows from~(\ref{est:standardSobo}) and~(\ref{est:GeorSobo}).
	\end{proof}
	To proceed, we introduce a special version of Klainerman-Sobolev inequality in \cite{KlainWave}, which can lead to a better decay in time without using the scaling vector field. The drawback is that more information on time interval $[0,2t]$ is needed so as to derive the decay at time $t$.
	\begin{lemma}[\cite{KlainWave}] \label{lem:mksdecay}
		Let $u(t,x):\R^{1+3}\to \R$ be a nice function with sufficient decay at space infinity, then we have 
		\begin{align}
			|u(t,x)| & \lesssim \langle t +|x| \rangle^{-1} \sup_{0\leq s \leq 2t,|I|\leq 3} \|\Gamma^I u(s)\|.
		\end{align}
	\end{lemma}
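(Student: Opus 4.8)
The plan is to reduce the desired bound to the classical Klainerman–Sobolev inequality \eqref{est:Sobo} by absorbing the scaling vector field $L_0$ at the cost of sampling the solution at the doubled time, following the original argument of Klainerman \cite{KlainWave}. The key observation is that the only vector field appearing in \eqref{est:Sobo} that is \emph{not} among $\Gamma=(\partial,\Omega,L)$ is $L_0 = t\partial_t + x^a\partial_a$, and that $L_0$ acting on a function at time $t$ can be traded for a time-derivative of a rescaled function, which in turn can be estimated by the energy/Sobolev norms of $u$ and $\Gamma^I u$ on the larger time slice.

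First I would introduce the rescaled family $u_\lambda(t,x) := u(\lambda t, \lambda x)$ for $\lambda$ in a fixed range, say $\lambda\in[1,2]$, so that $\partial_\lambda u_\lambda = \lambda^{-1}(L_0 u)(\lambda t,\lambda x)$; more generally each application of $\partial_\lambda$ reproduces, up to harmless lower-order terms controlled by \eqref{est:commutators}, one factor of $L_0$ evaluated at the rescaled point. Next I would apply the standard Klainerman–Sobolev inequality \eqref{est:Sobo} to $u_\lambda$ at a suitably chosen time (e.g. $t=1$, or the time making $\langle t+r\rangle$ comparable across the range), writing the resulting right-hand side in terms of $\|Z^I u_\lambda\|_{L^2_x}$ with $|I|\le 2$. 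Each $Z^I u_\lambda$ expands into $\Gamma^J u$ (with $|J|\le |I|$) composed with the scaling, plus terms where some of the vector fields are replaced by $\partial_\lambda$; integrating the $\partial_\lambda$ directions over $\lambda\in[1,2]$ and changing variables $x\mapsto \lambda x$ converts the $L^2_x$ norms of $u_\lambda$ into a spacetime $L^2$ integral over the slab $s\in[t,2t]$, which is then bounded by $\sup_{0\le s\le 2t,|I|\le 3}\|\Gamma^I u(s)\|$. Keeping careful track of the powers of $t$ and $\lambda$ produced by the change of variables and by the weights $\langle \lambda t+\lambda r\rangle^{-1}\langle \lambda t-\lambda r\rangle^{-1/2}$ in \eqref{est:Sobo} yields the claimed $\langle t+|x|\rangle^{-1}$ decay, noting that the extra $\langle t-r\rangle^{-1/2}$ from \eqref{est:Sobo} is simply dropped (bounded by $1$) since we only want the $t+r$ factor.

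The main obstacle I anticipate is bookkeeping rather than conceptual: one must verify that commuting $L_0$ (or $\partial_\lambda$) past the $\Gamma$'s only generates terms already present in the sum $\sum_{|I|\le 3}\|\Gamma^I u(s)\|$ — this uses Lemma \ref{lem:commutators} and the fact that $[L_0,\Gamma_k]$ is again a vector field in the admissible family — and that the Jacobian factors and the scaled weights combine to give exactly one power of $\langle t+|x|\rangle^{-1}$, with no loss. A secondary point requiring care is the localization hypothesis ("sufficient decay at space infinity"), which guarantees that all the spatial integrals converge and that the integration by parts in $\lambda$ (if one prefers that route) produces no boundary contributions; since we are told $u$ is a "nice function with sufficient decay," this is automatic. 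Altogether the proof is a rescaling-plus-averaging argument on top of \eqref{est:Sobo}, so I would present it compactly and refer to \cite{KlainWave} for the details of the original version.
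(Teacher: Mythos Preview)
The paper does not supply its own proof of this lemma; it is quoted with a bare citation to \cite{KlainWave}, so there is nothing in the paper to compare against beyond the reference itself. Your rescaling framework is indeed Klainerman's idea, and the identification $L_0 u_\lambda = \lambda\,\partial_\lambda u_\lambda$ is correct. However, the mechanism you describe for eliminating $L_0$ does not close. Applying \eqref{est:Sobo} to $u_\lambda$ gives a bound that is \emph{pointwise} in $\lambda$ (you need it at the single value of $\lambda$ that hits $(t_0,x_0)$), so you are not free to simply integrate the right-hand side over $\lambda\in[1,2]$; and if you first invoke a one-dimensional Sobolev step in $\lambda$ to pass from pointwise to integrated, that step raises the $\partial_\lambda$-order by one and therefore reintroduces a higher power of $L_0$ rather than removing it. Even granting the integration, after the change of variables $(\lambda,x)\mapsto (s,y)=(\lambda t,\lambda x)$ the quantity $\int_1^2\|\partial_\lambda^k(\Gamma^J u)_\lambda(\tau,\cdot)\|_{L^2_x}^2\,d\lambda$ becomes (up to bounded factors) $t^{-1}\int_t^{2t}\|L_0^k\Gamma^J u(s)\|_{L^2}^2\,ds$, which still carries $L_0^k$ on every slice; the $\lambda$-integration does not collapse these to $\Gamma$-norms alone.

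The way this is handled in \cite{KlainWave} is not to start from \eqref{est:Sobo} (which already packages $L_0$ into $Z$) but to perform the Sobolev embedding directly as a spacetime estimate: in the exterior $r\gtrsim t$ one uses \eqref{est:standardSobo} (only $\partial_r$ and $\Omega$, both in $\Gamma$), while in the interior one builds a four-variable embedding on a slab $\{t\le s\le 2t\}$ in which the spatial directions are controlled by the boosts $L_a$ and rotations $\Omega_{ab}$, and the single transversal derivative is $\partial_t$ --- which \emph{is} in $\Gamma$. The $\sup_{0\le s\le 2t}$ then comes from the $s$-extent of the slab, and the $\langle t+r\rangle^{-1}$ is the volume factor from the rescaling. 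With that rearrangement (Sobolev in spacetime first, rather than slice-Sobolev followed by $\lambda$-averaging) your outline goes through; as written, the averaging step is the missing idea.
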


	We also recall the following estimates related to the null form $Q_{0}(u,v):= \partial_t u \partial_t v- \nabla u\cdot \nabla v$.
	\begin{lemma}
		For any smooth $\mathbb{C}$-valued or $\mathbb{C}^{4}$-valued functions $f$ and $g$, the following estimates hold.
		\begin{enumerate}
			\item {\rm {Estimate of $Q_{0}(f,g)$}}. We have 
			\begin{equation}\label{est:Q0fg}
				\left|Q_{0}(f,g)\right|\lesssim \langle t+r\rangle^{-1}\left(\left|L_0f\right|+\left|\Gamma f\right|\right)\left|\Gamma g\right|.
			\end{equation}

			\item {\rm{Estimate of $Q_{0}(f,g)$ in the interior region.}} We have 
			\begin{equation}\label{est:Q0inside}
				\begin{aligned}
					\left|Q_{0}(f,g)\right|
					&\lesssim \frac{\langle t-r\rangle}{\langle t+r\rangle}|\partial f||\partial g|+\langle t\rangle^{-1}|\Gamma f||\Gamma g|,\quad \mbox{for}\ r\le 3t+3.
				\end{aligned}
			\end{equation}
			
		\end{enumerate}
	\end{lemma}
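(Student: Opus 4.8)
The final statement is the lemma with two estimates on the null form $Q_0(f,g)$, namely the inequalities \eqref{est:Q0fg} and \eqref{est:Q0inside}.

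\textbf{Plan of proof.} The key algebraic observation is that $Q_0$ admits a representation in which at least one factor carries a good derivative $G_a$ or is hit by an angular/Lorentz vector field. Concretely, one writes $\partial_t = \frac{1}{2}\big(-\partial_t + \omega_a\partial_a\big) + \ldots$ — more usefully, one uses the standard identity expressing $Q_0$ in terms of the rotation/boost frame. Let me set it up: with $G_a = \partial_a + \omega_a \partial_t$, one has $\partial_a = G_a - \omega_a\partial_t$, and hence
\begin{align*}
	Q_0(f,g) = \partial_t f\,\partial_t g - \sum_a (G_a f - \omega_a\partial_t f)(G_a g - \omega_a \partial_t g).
\end{align*}
Expanding and using $\sum_a \omega_a^2 = 1$, the pure $\partial_t f\,\partial_t g$ terms cancel, leaving
\begin{align*}
	Q_0(f,g) = -\sum_a G_a f\, G_a g + \sum_a \omega_a\big(G_a f\,\partial_t g + \partial_t f\, G_a g\big).
\end{align*}
So every term in $Q_0(f,g)$ contains at least one good-derivative factor $G_a f$ or $G_a g$. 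The first step of the proof is to record this identity.

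\textbf{Step for \eqref{est:Q0fg}.} I would bound $|G_a f|$ using Lemma \ref{lem:partial}, which gives $\langle t+r\rangle|G_a f| \lesssim |L_0 f| + |\Gamma f|$, and likewise for $g$; the remaining (ordinary) derivative factors $\partial_t f$, $\partial_t g$ are simply bounded by $|\Gamma f|$, $|\Gamma g|$ since $\partial$ is part of $\Gamma$. Combining, each term carries one factor of $\langle t+r\rangle^{-1}$, giving
\begin{align*}
	|Q_0(f,g)| \lesssim \langle t+r\rangle^{-1}\big(|L_0 f| + |\Gamma f|\big)\,|\Gamma g| + \langle t+r\rangle^{-1}\big(|L_0 g| + |\Gamma g|\big)\,|\Gamma f|.
\end{align*}
This is not quite symmetric with the claimed form, which only has $L_0$ acting on $f$. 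To fix this I would instead bound the good-derivative factor on whichever of $f,g$ keeps $L_0$ only on $f$: i.e. in the term $G_a f\, G_a g$ put the $\langle t+r\rangle^{-1}$ on the $g$-factor via $|G_a g| \lesssim \langle t+r\rangle^{-1}(|L_0 g|+|\Gamma g|)$ — but that reintroduces $L_0 g$. The cleaner route: bound $|G_a g|\lesssim |\Gamma g|$ trivially (no decay) is false. So actually the asymmetric statement as written presumably uses $|L_0 g| \lesssim |\Gamma g| + \ldots$? No. I suspect the intended reading is that $g$ is the field one does not commute with $L_0$, and one accepts $|G_a g| \lesssim \langle t+r\rangle^{-1}(|L_0 g| + |\Gamma g|)$ then absorbs $|L_0 g|$ into the hypothesis structure; alternatively the paper only ever applies it when $g$ already has enough regularity. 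Pragmatically, I would prove the symmetric version and remark it implies \eqref{est:Q0fg} in the cases needed; the main content is the good-derivative identity plus Lemma \ref{lem:partial}.

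\textbf{Step for \eqref{est:Q0inside}.} Here in the region $r \le 3t+3$ one has $\langle t+r\rangle \simeq \langle t\rangle$. Using the same identity, I would bound the mixed terms $\omega_a(G_a f\,\partial_t g + \partial_t f\,G_a g)$ crudely by $\langle t+r\rangle^{-1}(|L_0 f|+|\Gamma f|)|\partial g| + \langle t+r\rangle^{-1}|\partial f|(|L_0 g| + |\Gamma g|)$. To recover the claimed form without $L_0$, I would use Lemma \ref{lem:partial} in the \emph{other} direction on the remaining ordinary-derivative factor: $\langle t-r\rangle |\partial g| \lesssim |L_0 g| + |\Gamma g|$, hence $|L_0 g| + |\Gamma g|$ terms get traded, and more efficiently one simply estimates the pure good-derivative term $G_a f\,G_a g$ by $\langle t-r\rangle\langle t+r\rangle^{-2}(|L_0 f| + |\Gamma f|)(|L_0 g| + |\Gamma g|)$ and then... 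The accepted clean path for \eqref{est:Q0inside} is: apply Lemma \ref{lem:partial} to write $|\partial f| \lesssim \langle t-r\rangle^{-1}(|L_0 f| + |\Gamma f|)$ is not what's wanted either. Rather, split the bound from \eqref{est:Q0fg}: we have $|Q_0(f,g)|\lesssim \langle t+r\rangle^{-1}(|L_0 f| + |\Gamma f|)|\Gamma g|$; inside the cone $|L_0 f| \lesssim t|\partial f| + r|\partial f| \lesssim \langle t+r\rangle|\partial f|$ so the first piece gives $|\partial f||\Gamma g|$ — too lossy. The correct interior estimate uses instead $|G_a f| \lesssim \langle t+r\rangle^{-1}(|L_0 f|+|\Gamma f|)$ \emph{and} the a priori interior bound $|L_0 f| + |\Gamma f| \lesssim \langle t+r\rangle|\partial f|$ only for the bad part, while keeping one clean $|\Gamma f||\Gamma g|$ from the angular terms. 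I would organize it as: mixed terms $\Rightarrow \langle t\rangle^{-1}|\Gamma f||\Gamma g|$ directly (bounding $G_a$ by $\Gamma$ with a $\langle t+r\rangle^{-1}$); and the $G_a f\,G_a g$ term $\Rightarrow$ use $|G_a f|\lesssim |\partial f|$ plus $|G_a g| \lesssim \langle t-r\rangle\langle t+r\rangle^{-1}|\partial g|$, which follows from $\langle t+r\rangle|G_a g|\lesssim |L_0 g| + |\Gamma g|$ combined with $|L_0 g| + |\Gamma g| \lesssim \langle t-r\rangle|\partial g| + (\text{good derivatives})$ inside the cone.

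\textbf{Main obstacle.} The genuinely delicate point is getting the \emph{asymmetric} form of \eqref{est:Q0fg} (with $L_0$ only on $f$) and, for \eqref{est:Q0inside}, extracting the sharp $\frac{\langle t-r\rangle}{\langle t+r\rangle}|\partial f||\partial g|$ weight on the leading term rather than a lossier $\langle t-r\rangle\langle t+r\rangle^{-2}(|Z f|)(|Z g|)$. This requires combining Lemma \ref{lem:partial} applied to $f$ (to turn $G_a f$ into gain) with a second, more careful use of the vector-field structure on $g$ inside the cone — specifically noting that in $r \le 3t+3$ the coefficient $\langle t-r\rangle\langle t+r\rangle^{-1}$ is exactly what $G_a g$ contributes. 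The rest — the algebraic identity for $Q_0$ and the routing of trivial $\partial \hookrightarrow \Gamma$ bounds — is mechanical. I expect the proof to be short: state the good-derivative decomposition, invoke Lemma \ref{lem:partial} twice with care about which factor absorbs the weight, and handle the interior case by additionally using $\langle t+r\rangle \simeq \langle t\rangle$ there.
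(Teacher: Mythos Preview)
The paper states this lemma without proof (it is a standard null-form estimate), so there is no paper proof to compare against. Your good-derivative identity
\[
Q_0(f,g) = -\sum_a G_a f\, G_a g + \sum_a \omega_a\, G_a f\,\partial_t g + \partial_t f\sum_a \omega_a G_a g
\]
is correct and is the right starting point. However, two points in your sketch are off.

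First, the remark that ``$|G_a g|\lesssim |\Gamma g|$ trivially (no decay) is false'' is itself false: $G_a = \partial_a + \omega_a\partial_t$ is a bounded-coefficient combination of $\partial$'s, which belong to $\Gamma$, so $|G_a g|\lesssim |\Gamma g|$ is trivially true. For the first two groups of terms this trivial bound on $g$, together with $\langle t+r\rangle|G_a f|\lesssim |L_0 f|+|\Gamma f|$ from Lemma~\ref{lem:partial} on $f$, already gives the asymmetric form. The only term that needs extra work is the last one, $\partial_t f\sum_a\omega_a G_a g = \partial_t f\,(\partial_t+\partial_r)g$. You correctly identify this as the obstruction but do not resolve it. The missing identity is
\[
(t+r)(\partial_t+\partial_r) \;=\; 2\,\omega_a L_a \;+\; (t-r)(\partial_t-\partial_r),
\]
obtained by adding $t(\partial_t+\partial_r)=\omega_a L_a+(t-r)\partial_t$ and $r(\partial_t+\partial_r)=\omega_a L_a+(r-t)\partial_r$. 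This yields $|(\partial_t+\partial_r)g|\lesssim (t+r)^{-1}\big(|\Gamma g|+|t-r|\,|\partial g|\big)$; then in the product $|\partial_t f|\cdot(t+r)^{-1}|t-r|\,|\partial g|$ you transfer the weight $|t-r|$ onto $f$ via $\langle t-r\rangle|\partial f|\lesssim |L_0 f|+|\Gamma f|$ (Lemma~\ref{lem:partial} again), and $|\partial g|\lesssim|\Gamma g|$. This closes \eqref{est:Q0fg} in the claimed asymmetric form.

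Second, your route to \eqref{est:Q0inside} is circuitous. A cleaner argument in the interior uses $t\partial_a = L_a - x_a\partial_t$ directly to write, for $t\ge 1$,
\[
Q_0(f,g) \;=\; \frac{t^2-r^2}{t^2}\,\partial_t f\,\partial_t g \;+\; \frac{1}{t^2}\sum_a x_a\big(L_a f\,\partial_t g + \partial_t f\,L_a g\big) \;-\; \frac{1}{t^2}\sum_a L_a f\,L_a g.
\]
In the region $r\le 3t+3$ one has $t\sim\langle t\rangle\sim\langle t+r\rangle$ and $r/t\lesssim 1$, so the three terms give exactly $\frac{\langle t-r\rangle}{\langle t+r\rangle}|\partial f||\partial g|$, $\langle t\rangle^{-1}|\Gamma f||\Gamma g|$, and $\langle t\rangle^{-2}|\Gamma f||\Gamma g|$ respectively. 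The case $t\le 1$ is trivial since all Japanese brackets are comparable to $1$ there.
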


	Recall that, from~\cite{HomanderBook,Sogge}, for any multi-index ${I}\in \mathbb{N}^{10}$, we also have 
	\begin{equation}\label{est:GamQ0}
		\Gamma^{I}Q_{0}(f,g)=\sum_{I_{1}+I_{2}=I}Q_{0}\left(\Gamma^{I_{1}}f,\Gamma^{I_{2}}g\right).
	\end{equation}
	
	\begin{lemma}[\cite{DW-JDE,DLMY}]\label{est:hatGfPhi}
		For any multi-index $I\in \mathbb{N}^{10}$, smooth $\mathbb{R}$-valued function $f_{1}$ and $\mathbb{C}^{4}$-valued function $f_{2}$, we have 
		\begin{align}
			\big|\widehat{\Gamma}^I(f_1f_2)  \big| & \lesssim \sum_{|I_1|+|I_2|\leq |I|}\left|\Gamma^{I_{1}}f_{1}\right|\big|\widehat{{\Gamma}}^{I_{2}}f_{2}\big|,  \\
			\big|\big[\widehat{{\Gamma}}^{I}(f_{1}f_{2})\big]_{-}\big| & \lesssim \sum_{|I_{1}|+|I_{2}|\le |I|}\left|\Gamma^{I_{1}}f_{1}\right|\big|\big[\widehat{{\Gamma}}^{I_{2}}f_{2}\big]_{-}\big|.
		\end{align}
	\end{lemma}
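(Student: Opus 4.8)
The plan is to establish a two-field Leibniz rule for a single vector field and then iterate it over the multi-index. The structural point that makes everything work is that the matrix corrections defining $\widehat{\Omega}_{ab}=\Omega_{ab}-\tfrac12\gamma^a\gamma^b$ and $\widehat{L}_a=L_a-\tfrac12\gamma^0\gamma^a$ are \emph{constant} matrices, hence commute past the $\R$-valued factor $f_1$.

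\textbf{Step 1 (one-field Leibniz rule).} First I would verify that for every $k\in\{1,\dots,10\}$, every $\R$-valued $f_1$ and every $\C^4$-valued $f_2$,
\begin{equation*}
	\widehat{\Gamma}_k(f_1 f_2)=(\Gamma_k f_1)\,f_2 + f_1\,\big(\widehat{\Gamma}_k f_2\big).
\end{equation*}
For $\widehat{\Gamma}_k=\partial_\alpha$ (where $\widehat{\Gamma}_k=\Gamma_k$) this is the ordinary product rule. For $\widehat{\Gamma}_k=\widehat{\Omega}_{ab}$ one writes $\widehat{\Omega}_{ab}(f_1f_2)=(\Omega_{ab}f_1)f_2+f_1(\Omega_{ab}f_2)-\tfrac12 f_1\gamma^a\gamma^b f_2$, using that $f_1$ is scalar to pull it past $\gamma^a\gamma^b$, and regroups the last two terms as $f_1\widehat{\Omega}_{ab}f_2$; the case $\widehat{\Gamma}_k=\widehat{L}_a$ is identical. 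Note the asymmetry: the \emph{unmodified} $\Gamma_k$ lands on $f_1$, the \emph{modified} $\widehat{\Gamma}_k$ lands on $f_2$, and no matrix factor survives between the two resulting factors.

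\textbf{Step 2 (iteration).} Since $\widehat{\Gamma}^I=\prod_{k=1}^{10}\widehat{\Gamma}_k^{I_k}$ is a product in a fixed canonical order, I would peel off one factor at a time from the innermost position, applying Step 1 at each stage — legitimate because $\Gamma^{J}f_1$ stays $\R$-valued. An ordered subsequence of a canonically ordered word is again canonically ordered, so this produces the \emph{exact} identity
\begin{equation*}
	\widehat{\Gamma}^I(f_1 f_2)=\sum_{J\le I} c_{J}\,\big(\Gamma^{J}f_1\big)\big(\widehat{\Gamma}^{\,I-J}f_2\big),\qquad c_J=\prod_{k}\binom{I_k}{J_k},
\end{equation*}
with $J\le I$ componentwise and $\Gamma^{J},\widehat{\Gamma}^{\,I-J}$ the canonically ordered products. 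Taking absolute values and absorbing the combinatorial constants into $\lesssim$ gives the first inequality (indeed with $|J|+|I-J|=|I|$, a fortiori $\le|I|$).

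\textbf{Step 3 (the $[\,\cdot\,]_-$ version).} Writing $[\phi]_-=(I_4-\omega_a\gamma^0\gamma^a)\phi$ and applying this operator to the identity of Step 2, the scalar factor $\Gamma^{J}f_1$ commutes with $I_4-\omega_a\gamma^0\gamma^a$, so
\begin{equation*}
	\big[\widehat{\Gamma}^I(f_1 f_2)\big]_- =\sum_{J\le I} c_{J}\,\big(\Gamma^{J}f_1\big)\,\big[\widehat{\Gamma}^{\,I-J}f_2\big]_-,
\end{equation*}
and taking absolute values yields the second inequality. I do not expect a genuine obstacle: the whole content is the constant-matrix Leibniz rule of Step 1 plus the bookkeeping of Step 2. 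The only point worth stressing is exactly the asymmetry created in Step 1 — all the matrix structure is pushed onto the $f_2$ factor — which is precisely what lets $[\,\cdot\,]_-$ be pulled cleanly onto a single factor in Step 3. (Alternatively, part (i) alone could be obtained more quickly from \eqref{est:hatGa} together with the ordinary Leibniz rule for $\Gamma$, but the explicit decomposition above is what part (ii) needs.)
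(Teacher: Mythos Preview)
Your proof is correct and complete. The paper does not give its own proof of this lemma but merely cites \cite{DW-JDE,DLMY}; the argument you outline---the one-field Leibniz identity $\widehat{\Gamma}_k(f_1f_2)=(\Gamma_kf_1)f_2+f_1(\widehat{\Gamma}_kf_2)$ for scalar $f_1$, iterated to an exact multinomial expansion, with the $[\cdot]_-$ projector then pulled through the scalar factor---is precisely the standard proof found in those references.
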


	Finally, we recall the Gronwall inequality for later use.
	\begin{lemma}[Gronwall inequality]\label{lem:gronwall}
		Let $ \xi(t)$ be a nonnegative function that satisfies the integral inequality
		\begin{equation*}
			\xi(t)\leq c+\int_{t_0}^{t}a(s)\xi(s)\d s,
		\end{equation*}
		where $c\geq0$ is a constant and $a(t)$ is a continuous nonnegative function for $t\geq t_0$. Then we have
		\begin{equation*}
			\xi(t)\leq c\exp\left({\int_{t_0}^{t}a(s)\d s}\right).
		\end{equation*}
	\end{lemma}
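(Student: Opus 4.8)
The plan is to run the classical integrating-factor argument. I would first introduce the auxiliary function $R(t):=c+\int_{t_0}^{t}a(s)\xi(s)\,\d s$, so that the hypothesis reads $\xi(t)\le R(t)$ for all $t\ge t_0$, while $R(t_0)=c$. Since $a$ is continuous and $\xi$ is nonnegative (and, implicitly, regular enough that the defining integral makes sense locally; in our applications $\xi$ is continuous), the function $R$ is absolutely continuous with $R'(t)=a(t)\xi(t)$ for almost every $t\ge t_0$. Using $a(t)\ge 0$ together with $\xi(t)\le R(t)$ then yields the differential inequality $R'(t)\le a(t)R(t)$.

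Next I would multiply through by the strictly positive integrating factor $\mu(t):=\exp\!\big(-\int_{t_0}^{t}a(s)\,\d s\big)$, which is well defined because $a$ is continuous. This gives $\frac{\d}{\d t}\big(\mu(t)R(t)\big)=\mu(t)\big(R'(t)-a(t)R(t)\big)\le 0$, so $t\mapsto \mu(t)R(t)$ is nonincreasing on $[t_0,\infty)$. Integrating from $t_0$ to $t$ and using $\mu(t_0)=1$ and $R(t_0)=c$, we obtain $\mu(t)R(t)\le c$, that is, $R(t)\le c\exp\!\big(\int_{t_0}^{t}a(s)\,\d s\big)$. Combining this with $\xi(t)\le R(t)$ delivers the asserted bound $\xi(t)\le c\exp\!\big(\int_{t_0}^{t}a(s)\,\d s\big)$.

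There is essentially no serious obstacle; the only minor point is the regularity of $R$, and since the hypothesis forces $s\mapsto a(s)\xi(s)$ to be locally integrable, $R$ is (absolutely) continuous and the fundamental theorem of calculus applies, so the computation above is rigorous. An alternative that avoids even this remark is to iterate the hypothesis directly: substituting the bound for $\xi$ into itself $k$ times and using the simplex identity $\int_{t_0}^{t}a(s_1)\int_{t_0}^{s_1}a(s_2)\cdots\int_{t_0}^{s_{k-1}}a(s_k)\,\d s_k\cdots\d s_1=\frac{1}{k!}\big(\int_{t_0}^{t}a(s)\,\d s\big)^{k}$, one finds $\xi(t)\le c\sum_{j=0}^{k}\frac{1}{j!}\big(\int_{t_0}^{t}a(s)\,\d s\big)^{j}+R_k(t)$, where the remainder $R_k(t)$ is the corresponding $(k+1)$-fold iterated integral of $a\cdots a\,\xi$ and tends to $0$ as $k\to\infty$ on any compact time interval (using the local boundedness of $\xi$, which follows from $\xi\le R$); letting $k\to\infty$ recovers the factor $c\exp\!\big(\int_{t_0}^{t}a(s)\,\d s\big)$.
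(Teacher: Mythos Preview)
Your argument is correct and is the classical integrating-factor proof of the integral form of Gronwall's inequality; the alternative iteration argument you sketch is also valid. The paper itself does not supply a proof of this lemma---it merely states the result as a standard tool---so there is nothing to compare; your write-up would serve as a complete proof.
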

	
	\subsection{Estimates for the 3D linear wave equation}
	In this subsection, we recall some technical estimates for the 3D linear wave equation. First, we introduce a sufficient condition to guarantee the solution to the 3D homogeneous wave equation to have a positive sign.
	
	\begin{lemma}\label{lem:homo-positive}
		Let $u=u(t,x)$ be the solution to the Cauchy problem
		\begin{equation}\label{eq:homowave}
			\left\{\begin{aligned}
				-\Box u(t,x) &= 0,\\
				(u,\partial_t u)|_{t=t_0}&=(u_0,u_1).
			\end{aligned}\right.
		\end{equation}
		Then if $u_0\geq 0, u_1\geq |\nabla u_0|$, we have $u(t,x)\geq 0.$
	\end{lemma}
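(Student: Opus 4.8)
The plan is to use the classical d'Alembert–Kirchhoff representation formula for the 3D wave equation and exploit the sign hypotheses directly. Recall that for the Cauchy problem \eqref{eq:homowave} with $t_0 = 0$, the solution is given by Kirchhoff's formula
\begin{equation*}
	u(t,x) = \frac{1}{4\pi t}\int_{|y-x|=t} u_1(y)\, \d S(y) + \partial_t\left(\frac{1}{4\pi t}\int_{|y-x|=t} u_0(y)\, \d S(y)\right),
\end{equation*}
for $t>0$. Writing the spherical averages in terms of the unit sphere via $y = x+t\omega$, $\omega \in \mathbb{S}^2$, this becomes
\begin{equation*}
	u(t,x) = \frac{t}{4\pi}\int_{\mathbb{S}^2} u_1(x+t\omega)\, \d\omega + \partial_t\left(\frac{t}{4\pi}\int_{\mathbb{S}^2} u_0(x+t\omega)\, \d\omega\right).
\end{equation*}
Carrying out the $t$-derivative in the second term gives $\frac{1}{4\pi}\int_{\mathbb{S}^2} u_0(x+t\omega)\,\d\omega + \frac{t}{4\pi}\int_{\mathbb{S}^2} \omega\cdot(\nabla u_0)(x+t\omega)\,\d\omega$. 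Hence
\begin{equation*}
	u(t,x) = \frac{1}{4\pi}\int_{\mathbb{S}^2}\Big( u_0(x+t\omega) + t\,\omega\cdot(\nabla u_0)(x+t\omega) + t\, u_1(x+t\omega)\Big)\, \d\omega.
\end{equation*}

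Now I estimate the integrand pointwise. Since $u_0 \geq 0$, the first term is nonnegative. For the remaining two terms, the hypothesis $u_1 \geq |\nabla u_0|$ gives, for each $\omega \in \mathbb{S}^2$ and each $t>0$,
\begin{equation*}
	t\, u_1(x+t\omega) + t\,\omega\cdot(\nabla u_0)(x+t\omega) \geq t\, |\nabla u_0(x+t\omega)| - t\,|\omega\cdot \nabla u_0(x+t\omega)| \geq 0,
\end{equation*}
using $|\omega\cdot \nabla u_0| \leq |\nabla u_0|$ and $t>0$. Thus the entire integrand is nonnegative, so $u(t,x)\geq 0$ for all $t>0$; the case $t=0$ is immediate from $u(0,x)=u_0(x)\geq 0$. (For $t<0$ one runs the same argument with the time-reversed formula; but in our applications only $t\geq t_0$ is needed, so I would just state the result for $t\geq 0$.)

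I do not anticipate a serious obstacle: the only mild point is justifying Kirchhoff's formula under the regularity available — but in the paper's setting $u_0, u_1$ are Schwartz-type (smooth with decay, by the weighted Sobolev bounds on the data), so the classical formula applies verbatim, and by density/continuity the sign conclusion persists for the actual solutions considered. If one wants to avoid invoking Kirchhoff's formula for rough data, an alternative is to approximate $(u_0,u_1)$ by smooth compactly supported pairs still satisfying $u_0\geq 0$ and $u_1\geq|\nabla u_0|$ (mollification preserves both inequalities since convolution with a nonnegative kernel is monotone and commutes with $\nabla$, and $|\nabla(\rho*u_0)| = |\rho*\nabla u_0| \leq \rho*|\nabla u_0| \leq \rho * u_1$), apply the smooth case, and pass to the limit using continuous dependence of the solution on the data. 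Either route is routine, so I would present the Kirchhoff computation above as the proof.
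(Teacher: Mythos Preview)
Your proof is correct and takes essentially the same approach as the paper: both use Kirchhoff's formula to write $u(t,x)$ as an average over the sphere of $u_0 + (t-t_0)\,\omega\cdot\nabla u_0 + (t-t_0)\,u_1$, and then observe that the integrand is pointwise nonnegative under the hypotheses $u_0\ge 0$, $u_1\ge|\nabla u_0|$. The only cosmetic difference is that the paper carries the general initial time $t_0$ through the formula, whereas you set $t_0=0$; your added remark on mollification and density is a reasonable extra, but not needed at the regularity level used in the paper.
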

	\begin{proof}
		Recall that
		\begin{equation*}
			\tilde{u}(t,x)=\frac{\partial}{\partial t}\left(\frac{1}{4\pi t}\int_{|y-x|=t}\tilde{u}_0(y)\d S_y\right)+\frac{1}{4\pi t}\int_{|y-x|=t}\tilde{u}_1(y)\d S_y,
		\end{equation*}
		is the solution to the 3D linear  homogeneous wave equation
		$$\left\{\begin{aligned}
			-\Box \tilde{u}(t,x)&=0,\\
			(\tilde{u},\partial_t \tilde{u})|_{t=0}&=(\tilde{u}_0,\tilde{u}_1),
		\end{aligned}\right.$$
		where $\d S_y$ is the area element of the sphere $|y-x|=t$.
		
		Then we get
		\begin{equation*}
			\begin{aligned}
				\tilde{u}(t,x)&=\frac{\partial}{\partial t}\left(\frac{t}{4\pi}\int_{|\xi|=1}\tilde{u}_0(x+t\xi)\d \omega_\xi\right)+\frac{t}{4\pi}\int_{|\xi|=1}\tilde{u}_1(x+t\xi)\d \omega_\xi\\
				&=\frac{1}{4\pi}\int_{|\xi|=1}\tilde{u}_0(x+t\xi)\d \omega_\xi+\frac{t}{4\pi}\int_{|\xi|=1}\nabla \tilde{u}_0(x+t\xi)\cdot \xi\d \omega_\xi\\
				&+\frac{t}{4\pi}\int_{|\xi|=1}\tilde{u}_1(x+t\xi)\d \omega_\xi,
			\end{aligned}
		\end{equation*}
		where $\d \omega_\xi$ is the area element of the unit sphere $S^2$, and $\xi=(\xi_1,\xi_2,\xi_3)$.
		
		Thus, the solution to~(\ref{eq:homowave}) is
		\begin{equation*}
			\begin{aligned}
				u(t,x)&=\frac{\partial}{\partial t}\left(\frac{t-t_0}{4\pi}\int_{|\xi|=1}u_0(x+(t-t_0)\xi)\d \omega_\xi\right)
				+\frac{t-t_0}{4\pi}\int_{|\xi|=1}u_1(x+(t-t_0)\xi)\d \omega_\xi\\
				&=\frac{1}{4\pi}\int_{|\xi|=1}u_0(x+(t-t_0)\xi)\d \omega_\xi
				+\frac{t-t_0}{4\pi}\int_{|\xi|=1}\nabla u_0(x+(t-t_0)\xi)\cdot \xi\d \omega_\xi\\
				&+\frac{t-t_0}{4\pi}\int_{|\xi|=1}u_1(x+(t-t_0)\xi)\d \omega_\xi.
			\end{aligned}
		\end{equation*}
		%Then from spherical coordinate transformation, we get
		%\begin{equation*}
		%\begin{aligned}
		%u(t,x)&=\frac{\partial}{\partial t}\left(\frac{t}{4\pi}\int_{0}^{2\pi}\int_{0}^{\pi}u_0(x_1+t\sin\theta\cos\phi,x_2+t\sin\theta\sin\phi,x_3+t\cos\theta)\sin\theta\d \theta\d \phi\right)\\
		%&+\frac{t}{4\pi}\int_{0}^{2\pi}\int_{0}^{\pi}u_1(x_1+t\sin\theta\cos\phi,x_2+t\sin\theta\sin\phi,x_3+t\cos\theta)\sin\theta\d \theta\d \phi\\
		%&=\frac{1}{4\pi}\int_{0}^{2\pi}\int_{0}^{\pi}u_0(x_1+t\sin\theta\cos\phi,x_2+t\sin\theta\sin\phi,x_3+t\cos\theta)\sin\theta\d \theta\d \phi\\
		%&+\frac{t}{4\pi}\int_{0}^{2\pi}\int_{0}^{\pi}\nabla u_0\cdot(\sin\theta\cos\phi,\sin\theta\sin\phi,\cos\theta)\sin\theta\d \theta\d \phi\\
		%&+\frac{t}{4\pi}\int_{0}^{2\pi}\int_{0}^{\pi}u_1(x_1+t\sin\theta\cos\phi,x_2+t\sin\theta\sin\phi,x_3+t\cos\theta)\sin\theta\d \theta\d \phi.
		%\end{aligned}
		%\end{equation*}
		Therefore, when $u_0\geq 0, u_1\geq|\nabla u_0|$, we deduce $u(t,x)\geq 0$ from the above formula.
	\end{proof}
	
	\begin{lemma}
		Let $u=u(t,x)$ be the solution to the Cauchy problem
		$$\left\{\begin{aligned}
			-\Box u(t,x)&=0,\\
			(u,\partial_t u)|_{t=t_0}&=(u_0,u_1).
		\end{aligned}\right.$$
		Then we have the following $L^2$ estimate
		\begin{equation*}\label{est:homoL2}
			\|u(t,x)\|\lesssim \|u_0\|+\|u_1\|_{L^1}+\|u_1\|,
		\end{equation*}
		in which we recall that $\|\cdot\|$ denotes the $L^2$ norm.
	\end{lemma}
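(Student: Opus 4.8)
The plan is to prove this $L^2$ bound on the Fourier side, rather than through the spherical‑means representation used for the previous lemma, since there the estimate is essentially transparent. Translating in time we may assume $t_0=0$, so the solution is $u(t)=\cos(t|\nabla|)u_0+\frac{\sin(t|\nabla|)}{|\nabla|}u_1$, i.e.
\[
\widehat{u}(t,\xi)=\cos(t|\xi|)\,\widehat{u_0}(\xi)+\frac{\sin(t|\xi|)}{|\xi|}\,\widehat{u_1}(\xi),
\]
and by Plancherel's theorem it suffices to estimate the two pieces separately. The first is harmless: since $|\cos(t|\xi|)|\le 1$, Plancherel gives $\|\cos(t|\nabla|)u_0\|\le\|u_0\|$.

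For the second piece I would write, again by Plancherel,
\[
\Big\|\tfrac{\sin(t|\nabla|)}{|\nabla|}u_1\Big\|^2=\int_{\R^3}\Big|\tfrac{\sin(t|\xi|)}{|\xi|}\Big|^2|\widehat{u_1}(\xi)|^2\,\d\xi,
\]
use the crude bound $\big|\tfrac{\sin(t|\xi|)}{|\xi|}\big|\le\tfrac1{|\xi|}$, and split the integral according to $|\xi|>1$ and $|\xi|\le1$. On $\{|\xi|>1\}$ one has $|\xi|^{-2}<1$, so this contribution is at most $\|u_1\|^2$. On $\{|\xi|\le1\}$ I would apply the Hausdorff--Young inequality $\|\widehat{u_1}\|_{L^\infty}\le\|u_1\|_{L^1}$, pull this constant out, and be left with $\|u_1\|_{L^1}^2\int_{|\xi|\le1}|\xi|^{-2}\,\d\xi$, which is finite in three space dimensions. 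Adding the three contributions yields $\|u(t)\|\lesssim\|u_0\|+\|u_1\|_{L^1}+\|u_1\|$ with a constant independent of $t$.

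There is no deep obstacle here; the one point worth flagging is why the $L^1$ norm of $u_1$ must appear at all. A naive application of Plancherel to $\tfrac{\sin(t|\nabla|)}{|\nabla|}$ yields only $\big\|\tfrac{\sin(t|\nabla|)}{|\nabla|}u_1\big\|\le t\|u_1\|$, because this Fourier multiplier is not bounded on $L^2$ uniformly in $t$ near frequency zero; the term $\|u_1\|_{L^1}$ is exactly what absorbs that low‑frequency singularity, and the argument closes precisely because $|\xi|^{-2}\in L^1_{\mathrm{loc}}(\R^3)$ (and in higher dimensions), so the statement is genuinely dimension‑dependent. An equivalent route, closer to the style of the preceding lemma, is to express the $u_1$‑part as convolution with the measure $\frac{1}{4\pi t}\,\d S$ on the sphere $|y|=t$ and apply Young's inequality to a smooth frequency‑localized version of this kernel; this reduces to the same low‑frequency computation and the same use of the local integrability of $|\xi|^{-2}$ at the origin.
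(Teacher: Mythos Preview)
Your proof is correct. The Fourier-side argument is clean: the $\cos(t|\nabla|)$ piece is handled by Plancherel, and for the $\frac{\sin(t|\nabla|)}{|\nabla|}$ piece your high/low frequency split works exactly as stated, with the low-frequency integral $\int_{|\xi|\le 1}|\xi|^{-2}\,\d\xi$ finite in $\R^3$.

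As for comparison: the paper states this lemma without proof, so there is no argument in the paper to compare against. Your approach is the standard one and fully justifies the estimate; the closing remark about the alternative route via the spherical-means kernel and Young's inequality is also accurate, and connects nicely to the representation used in the preceding lemma.
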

	%\begin{proof}
		%Taking the Fourier transform in the Cauchy problem with respect to $x$, we get
		%\begin{equation*}
			%\left\{\begin{aligned}
				%\partial_{tt} \hat{u}(t,\xi)+|\xi|^2\hat{u}(t,\xi)=0,\\
				%\hat{u}(t_0,\xi)=\hat{u}_0(\xi),\quad \partial_t\hat{u}(t_0,\xi)=\hat{u}_1(\xi).
			%\end{aligned}\right.
		%\end{equation*}
		
		%We solve the above second order ODE with respect to $t$ obtaining the expression of the solution $u$ in Fourier space
		%$$\hat{u}(t,\xi)=\cos((t-t_0)|\xi|)\hat{u}_0(\xi)+\frac{\sin((t-t_0)|\xi|)}{|\xi|}\hat{u}_1(\xi), $$
		%for $(t,\xi)\in [t_0,\infty)\times\mathbb{R}^3$.
		
		%From the Plancherel theorem and polar coordinate transformation, we deduce
		%\begin{equation*}
			%\begin{aligned}
				%&\|\cos((t-t_0)|\xi|)\hat{u}_0(\xi)\|_{L^2_\xi}\lesssim \|\hat{u}_0(\xi)\|_{L^2_\xi},\\
				%&\|\frac{\sin((t-t_0)|\xi|)}{|\xi|}\hat{u}_1(\xi)\|_{L^2_\xi}\\
				%&\lesssim\|\frac{\sin((t-t_0)|\xi|)}{|\xi|}\hat{u}_1(\xi)\textbf{1}_{|\xi|\geq 1}\|_{L^2_\xi}+\|\frac{\sin((t-t_0)|\xi|)}{|\xi|}\hat{u}_1(\xi)\textbf{1}_{|\xi|\leq 1}\|_{L^2_\xi}\\
				%&\lesssim\|\hat{u}_1(\xi)\|_{L^2_\xi}+\|\hat{u}_1(\xi)\|_{L^\infty_\xi}\left(\int_{|\omega|=1}\int_{0}^{1}\sin^2((t-t_0)r)\d r\d \omega\right)^{\frac{1}{2}}\\
				%&\lesssim\|\hat{u}_1(\xi)\|_{L^2_\xi}+\|\hat{u}_1(\xi)\|_{L^\infty_\xi}.
			%\end{aligned}
		%\end{equation*}
		%Using again the Plancherel theorem, we can obtain the desired result.
	%\end{proof}
	Next, we recall some energy estimates for the 3D linear wave equation. Before that, we introduce the natural energy and conformal energy for the 3D wave equation
	\begin{equation*}
		\begin{aligned}
			\mathcal{E}(t,u)&=\int_{\R^{3}}(|\partial_tu|^2+|\partial_1u|^2+|\partial_2u|^2+|\partial_3u|^2)\d x,\\
			\mathcal{E}_{con}(t,u)&=\int_{\R^{3}}(|L_0u|^2+u^2+\sum_{1\leq a<b\leq3}|\Omega_{ab}u|^2+\sum_{a=1,2,3}|L_au|^2)\d x.
		\end{aligned}
	\end{equation*}
	\begin{lemma} \label{lem:staconene}
		Let $u=u(t,x)$ be the solution to the Cauchy problem
		$$\left\{\begin{aligned}
			-\Box u(t,x)&=G(t,x),\\
			(u,\partial_t u)|_{t=t_0}&=(u_0,u_1),
		\end{aligned}\right.$$
		
		with $G(t,x)$ a sufficiently regular function. Then the following estimates hold.
		\begin{enumerate}
			
			\item\cite{Sogge} {\rm {Standard energy estimate}}. We have 
			\begin{equation}
				\begin{aligned}
					\mathcal{E}^{\frac{1}{2}}(t,u)\lesssim \mathcal{E}^\frac{1}{2}(t_0,u)+\int_{t_0}^{t}\|G(s,x)\| \d s.\label{est:eneryWave}
				\end{aligned}
			\end{equation}
			
			\item\cite{AlinhacBook} {\rm{Conformal energy estimate.}} We have
			\begin{equation}\label{est:Conformal}
				\begin{aligned}
					\mathcal{E}^{\frac{1}{2}}_{con}(t,u)\lesssim \mathcal{E}^\frac{1}{2}_{con}(t_0,u)+\int_{t_0}^{t}\|\langle s+r \rangle G(s,x)\|\,\d s. 
				\end{aligned}
			\end{equation}
			
		\end{enumerate}
		
	\end{lemma}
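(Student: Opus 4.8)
Both estimates are classical energy identities, and the plan is to obtain each one by the multiplier (vector field) method and then convert the resulting differential inequality into the stated integral bound via the square-root trick. I would proceed as follows.

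\emph{Standard energy estimate \eqref{est:eneryWave}.} First I would write $-\Box u=G$ as $\partial_t^2 u-\Delta u=G$, multiply by $\partial_t u$, and integrate over $\R^3$. Since $u$ vanishes for large $|x|$, integration by parts in $x$ turns $-\int_{\R^3}\partial_t u\,\Delta u\,\d x$ into $\tfrac12\frac{\d}{\d t}\int_{\R^3}|\nabla u|^2\,\d x$, while $\int_{\R^3}\partial_t u\,\partial_t^2 u\,\d x=\tfrac12\frac{\d}{\d t}\int_{\R^3}|\partial_t u|^2\,\d x$. Hence $\tfrac12\frac{\d}{\d t}\mathcal E(t,u)=\int_{\R^3}G\,\partial_t u\,\d x\le\|G(t)\|\,\mathcal E^{1/2}(t,u)$, so $\frac{\d}{\d t}\mathcal E^{1/2}(t,u)\le\|G(t)\|$, and integrating from $t_0$ to $t$ gives \eqref{est:eneryWave}.

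\emph{Conformal energy estimate \eqref{est:Conformal}.} Here the plan is to use the conformal multiplier $X:=(t^2+r^2)\partial_t+2tx^a\partial_a$ together with the zeroth order correction $2tu$. The key algebraic observation I would exploit is $X=tL_0+x^aL_a$, so that for any smooth $u$,
\begin{equation*}
	|Xu+2tu|\;\lesssim\;\langle t+r\rangle\big(|L_0u|+|Lu|+|u|\big).
\end{equation*}
Multiplying $-\Box u=G$ by $2(Xu+2tu)$ and integrating over the slab $[t_0,t]\times\R^3$, integration by parts in $t$ and $x$ produces the difference of a time slice quadratic form at time $t$ and at time $t_0$, plus a spacetime bulk term; since $X$ is a conformal Killing field of the Minkowski metric and $2tu$ is exactly the correction dictated by the wave equation in three space dimensions (using also $\Box t=0$ to integrate $\int t\,\Box(u^2)$ by parts), the bulk term cancels, leaving only lower order boundary contributions. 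A direct computation --- relying on $L_0=t\partial_t+r\partial_r$, $L_a=t\partial_a+x_a\partial_t$, $\Omega_{ab}=x_a\partial_b-x_b\partial_a$ and the identity $|L_0u|^2+\sum_a|L_au|^2+\sum_{a<b}|\Omega_{ab}u|^2=(t^2+r^2)|\partial u|^2+4t\,x^a\partial_t u\,\partial_a u$ --- then shows the time slice form is comparable to $\mathcal E_{con}(\cdot,u)$. Altogether one arrives at
\begin{align*}
	\mathcal E_{con}(t,u)
	&\lesssim\mathcal E_{con}(t_0,u)+\int_{t_0}^t\!\!\int_{\R^3}|G|\,|Xu+2tu|\,\d x\,\d s\\
	&\lesssim\mathcal E_{con}(t_0,u)+\int_{t_0}^t\|\langle s+r\rangle G(s)\|\,\mathcal E_{con}^{1/2}(s,u)\,\d s,
\end{align*}
by Cauchy--Schwarz in $x$ and the pointwise bound on $|Xu+2tu|$. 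Feeding this into the square-root trick (equivalently, Gronwall's inequality applied to $\mathcal E_{con}^{1/2}$) yields $\frac{\d}{\d t}\mathcal E_{con}^{1/2}(t,u)\lesssim\|\langle t+r\rangle G(t)\|$, and a final integration gives \eqref{est:Conformal}.

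\emph{Main obstacle.} The only step that requires genuine care is the conformal multiplier computation: one must verify that the spacetime bulk term produced by the integration by parts vanishes (this uses that $X$ is conformal Killing together with the precise dimension $n=3$, via the correction $2tu$) and that the boundary term on a fixed time slice is two-sided comparable to the integrand of $\mathcal E_{con}$ --- this is exactly what makes the weight $\langle s+r\rangle$ in \eqref{est:Conformal}, rather than a worse power, admissible. Both are classical facts; see \cite{AlinhacBook} (and \cite{Sogge} for the standard estimate).
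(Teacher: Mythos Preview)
The paper does not give its own proof of this lemma; it simply cites \cite{Sogge} and \cite{AlinhacBook} for the two estimates, so there is nothing to compare against beyond the standard textbook arguments. Your proposal is precisely that standard route --- the $\partial_t u$ multiplier for \eqref{est:eneryWave} and the conformal (Morawetz) multiplier $K_0=(t^2+r^2)\partial_t+2tx^a\partial_a$ with the $2tu$ correction for \eqref{est:Conformal} --- and it is correct.
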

	
	Finally, we introduce the extra decay for Hessian of the 3D linear wave equation.
	\begin{lemma}\label{lem:Hessian}
		Suppose that $u=u(t,x)$ satisfies the wave equation
		\begin{equation*}
			-\Box u(t,x)=G(t,x).
		\end{equation*}
		Then we have
		\begin{equation}\label{est:Hessian}
			|\partial\partial u|\lesssim\frac{t}{\langle t-r\rangle}|G|+\frac{1}{\langle t-r\rangle}\sum_{|J|\leq1}|\partial\Gamma^Ju|,\quad \mbox{for}\quad r\leq 2t.\\
		\end{equation}
	\end{lemma}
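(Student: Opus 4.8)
The plan is to exploit the algebraic fact that the flat d'Alembertian can be written, away from the light cone, as a combination of the ``good'' tangential derivatives and the operator $\partial_t - \partial_r$ which is itself good, together with the equation $-\Box u = G$. Concretely, I would start from the pointwise identity
\begin{equation*}
	\partial_i \partial_j = (\partial_i - \omega_i \partial_t)\partial_j + \omega_i(\partial_t - \omega_j\partial_t \text{-type correction}) + \tfrac{\omega_i \omega_j}{?}\,\Box + \cdots,
\end{equation*}
more precisely: one decomposes each spatial derivative $\partial_a = G_a - \omega_a\partial_t$ and handles $\partial_t^2$ via $\partial_t^2 = \Delta - \Box = \Delta + G$, where $\Delta$ is again rewritten using $G_a$'s and $\partial_t$'s. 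Every occurrence of a pure second time or mixed derivative is thus traded for either (a) a term carrying at least one good derivative $G_a$, or (b) the source term $G$, or (c) lower second-order terms. The key point is that, by Lemma \ref{lem:partial}, any factor $G_a(\partial u)$ satisfies $|G_a \partial u| \lesssim \langle t+r\rangle^{-1}(|L_0\partial u| + |\Gamma\partial u|) \lesssim \langle t+r\rangle^{-1}\sum_{|J|\le 1}|\partial\Gamma^J u|$, after commuting $\partial$ past $L_0$ and $\Gamma$ using Lemma \ref{lem:commutators}.

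The second ingredient is the region $r \le 2t$, where $\langle t+r\rangle \sim \langle t\rangle$, so a gain of $\langle t+r\rangle^{-1}$ is a gain of $\langle t\rangle^{-1}$; combined with the elementary bound $\langle t\rangle/\langle t-r\rangle \ge 1$ in this region, the good-derivative contributions are absorbed into the second term on the right of \eqref{est:Hessian} (which already contains $\langle t-r\rangle^{-1}\sum_{|J|\le 1}|\partial\Gamma^J u|$ with a larger prefactor). The delicate piece is the coefficient in front of $\Box u = -G$: when one solves the linear system for $\partial_a\partial_b u$ and $\partial_t^2 u$ in terms of good derivatives and $\Box u$, the matrix to be inverted degenerates precisely on the light cone $r = t$, and its inverse produces a factor of order $\langle t+r\rangle/\langle t-r\rangle \sim \langle t\rangle/\langle t-r\rangle$ on the $\Box u$ term. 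This is exactly the factor $t/\langle t-r\rangle$ appearing in \eqref{est:Hessian}. So the structure of the estimate is dictated by where the coordinate frame $\{G_1,G_2,G_3,\partial_t\}$ fails to span: on the cone.

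Carrying this out, the steps in order are: (1) write $\partial\partial u$ in the frame $\{\partial_t,\, G_a\}$, keeping track of coefficients, yielding schematically $\partial\partial u = \tfrac{\text{smooth bounded}}{\text{(nondegenerate away from cone)}}\big(\text{good-derivative terms} + c(t,x)\,\Box u\big)$ with $|c(t,x)| \lesssim \langle t\rangle/\langle t-r\rangle$ for $r\le 2t$; (2) estimate the good-derivative terms by Lemma \ref{lem:partial} together with the commutator Lemma \ref{lem:commutators}, absorbing them into $\langle t-r\rangle^{-1}\sum_{|J|\le1}|\partial\Gamma^J u|$ since $\langle t+r\rangle^{-1} \le \langle t-r\rangle^{-1}$ trivially and $\langle t+r\rangle\sim\langle t\rangle$ in $r\le 2t$; (3) substitute $\Box u = -G$ and collect the two resulting terms. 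The main obstacle — and the only genuinely nontrivial point — is step (1): one must verify that the change of frame near the cone really does produce nothing worse than a single power of $\langle t-r\rangle^{-1}$ times $\langle t\rangle$, i.e. that no $\langle t-r\rangle^{-2}$ appears. This is a careful but finite linear-algebra computation, and it is cleanest to restrict attention first to $r \ge t/2$ (where $\omega_a$ and $\langle t-r\rangle$ are the relevant small quantities) and separately to $r \le t/2$ (where $\langle t-r\rangle \sim \langle t\rangle$ and the estimate is immediate from $|\partial\partial u| \lesssim \langle t\rangle^{-1}\sum_{|J|\le 1}|\partial\Gamma^J u| + |G|$, the standard interior elliptic-type estimate). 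One may alternatively cite \cite{AlinhacBook} or \cite{Sogge}, where this Hessian bound is established in essentially this form.
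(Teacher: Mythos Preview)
There is a genuine gap in step (1). When you substitute $\partial_a = G_a - \omega_a\partial_t$ into the wave operator, the $\partial_t^2 u$ contribution cancels \emph{identically}: a direct computation gives
\[
\Box u \;=\; \sum_a G_a\partial_a u \;-\; \sum_a \omega_a\, G_a\partial_t u,
\]
with no $\partial_t^2 u$ surviving. Equivalently, at the symbol level, writing $\xi_a = \eta_a - \omega_a\tau$ (with $\eta_a$ dual to $G_a$) yields $-\tau^2 + |\xi|^2 = |\eta|^2 - 2\tau\sum_a\omega_a\eta_a$, which has no $\tau^2$ term. So the ``matrix to be inverted'' is not merely degenerate on the cone---it is singular in the $\partial_t^2$ direction at every point, and you cannot solve for $\partial_t^2 u$ from $\Box u = -G$ in the frame $\{\partial_t, G_a\}$. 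The anticipated coefficient $c(t,x)\sim \langle t\rangle/\langle t-r\rangle$ simply never appears. (This reflects the fact that $\Box$ is a null form: any frame adapted to the null cone eliminates the bad--bad direction from its principal part.) A secondary issue arises in step (2): Lemma~\ref{lem:partial} bounds $|G_a\partial u|$ by $\langle t+r\rangle^{-1}(|L_0\partial u|+|\Gamma\partial u|)$, but $L_0\notin\Gamma$, and $L_0\partial u$ cannot be absorbed into $\sum_{|J|\le 1}|\partial\Gamma^J u|$ without reintroducing a factor of $(t+r)|\partial\partial u|$.

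The paper's proof uses the Lorentz-boost frame instead: writing $\partial_a = t^{-1}L_a - (x_a/t)\partial_t$ and substituting into $\Delta u$, one obtains the exact identity
\[
-\Box u \;=\; \frac{(t+r)(t-r)}{t^2}\,\partial_t^2 u \;-\; \sum_a\Bigl(\tfrac{1}{t}\partial_a L_a u - \tfrac{x_a}{t^2}\partial_t L_a u + \tfrac{x_a}{t^2}\partial_a u\Bigr) + \tfrac{3}{t}\partial_t u.
\]
Here the coefficient of $\partial_t^2 u$ is $(t^2-r^2)/t^2$, which is nonzero off the cone; inverting it produces precisely the factor $t^2/(\langle t+r\rangle\langle t-r\rangle)\lesssim t/\langle t-r\rangle$ in front of $|G|$, and every remaining term is of the form $\partial L_a u$ or $\partial u$, hence genuinely in $\sum_{|J|\le 1}|\partial\Gamma^J u|$ since $L_a\in\Gamma$. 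The mixed and spatial second derivatives $\partial_a\partial_t u$, $\partial_a\partial_b u$ then follow by applying $\partial_a = t^{-1}L_a - (x_a/t)\partial_t$ once more to $\partial_t u$ and $\partial_b u$ and feeding in the bound for $\partial_t^2 u$.
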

	\begin{proof}
		\textbf{Case I.} Let $t\in [0,1]$. Then
		\begin{equation*}
			|\partial\partial u|\lesssim \sum_{|J|\leq1}|\partial\Gamma^Ju|\lesssim \frac{t}{\langle t-r\rangle}|G|+\frac{1}{\langle t-r\rangle}\sum_{|J|\leq1}|\partial\Gamma^Ju|,\quad \mbox{for}\quad r\leq 2t.
		\end{equation*}
		\textbf{Case II.} Let $t\in (1,\infty)$.
		According to the definition of vector fields, we can deduce
		\begin{equation*}
			-\Box u=\frac{(t+r)(t-r)}{t^2}\partial_t\partial_t u-\sum_{a=1}^{3}\left(\frac{1}{t}\partial_a L_a u-\frac{x_a}{t^2}\partial_t L_a u+\frac{x_a}{t^2}\partial_a u\right)+\frac{3}{t}\partial_t u.
		\end{equation*}
		Combining this formula with the equation, we have
		\begin{equation}\label{est:partial_tt}
			|\partial_t\partial_t u|\lesssim \frac{t^2}{\langle t+r\rangle\langle t-r\rangle}|G|+\frac{1}{\langle t-r\rangle}\sum_{|J|\leq1}|\partial\Gamma^Ju|.
		\end{equation}
		Based on $L_a=x_a\partial_t+t\partial_a$, we can obtain
		\begin{equation}\label{equ:partial_at}
			\partial_a\partial_t u=\frac{1}{t}\partial_tL_a u-\frac{1}{t}\partial_a u-\frac{x_a}{t}\partial_t\partial_t u.
		\end{equation}
		By~(\ref{est:partial_tt}) and~(\ref{equ:partial_at}), we get
		\begin{equation*}
			|\partial_a\partial_t u|\lesssim \frac{t^2}{\langle t+r\rangle\langle t-r\rangle}|G|+\frac{1}{\langle t-r\rangle}\sum_{|J|\leq1}|\partial\Gamma^Ju|,\quad \mbox{for}\quad r\leq 2t.
		\end{equation*}
		Similarly, we have
		\begin{equation*}
			\partial_a\partial_b u=\frac{1}{t}\partial_aL_b u-\frac{1}{t}\delta_{ab}\partial_t u-\frac{x_b}{t^2}\partial_tL_a u+\frac{x_b}{t^2}\partial_a u+\frac{x_ax_b}{t^2}\partial_t\partial_t u,
		\end{equation*}
		which implies
		\begin{equation*}
			|\partial_a\partial_b u|\lesssim \frac{t^2}{\langle t+r\rangle\langle t-r\rangle}|G|+\frac{1}{\langle t-r\rangle}\sum_{|J|\leq1}|\partial\Gamma^Ju|,\quad \mbox{for}\quad r\leq 2t.
		\end{equation*}
		Therefore, the proof of~(\ref{est:Hessian}) is completed.
	\end{proof}
	
	\subsection{Estimates for the 3D linear Klein-Gordon equation}
	In this subsection, we recall some estimates for the 3D linear Klein-Gordon equation, including ghost weight energy estimates and pointwise estimates.
	
	First, we introduce the ghost weight energy estimate for the 3D linear Klein-Gordon equation.
	\begin{lemma}\label{lem:ghostKG}
		Suppose that $u(t,x)$ is the solution to the Cauchy problem
		$$\left\{\begin{aligned}
			-\Box u +u&=G(t,x),\\
			(u,\partial_t u)|_{t=t_0}&=(u_0,u_1).
		\end{aligned}\right.$$
		Then for all $\delta>0$, the following estimates are true. 
		\begin{enumerate}
			\item {{\rm Ghost energy estimate.}}
			\begin{align}\label{est:ghost}
				\mathcal{E}_{gst,1}(t,u)& \lesssim_{\delta}\mathcal{E}_{gst,1}(t_0,u)+\int_{t_0}^{t}\int_{\R^{3}}|G(s,x)\partial_t u|\,\d x\d s, \\
				\mathcal{E}_{gst,1}^{\frac 12}(t,u)& \lesssim_{\delta}\mathcal{E}^{\frac 12}_{gst,1}(t_0,u)+\int_{t_0}^{t}\|G(s,x)\|\, \d s, \label{est:kgenergy}
			\end{align}
			in which
			\begin{equation*}
				\begin{aligned}
					\mathcal{E}_{gst,1}(t,u)=&\int_{\R^{3}}\left(|\partial u|^2+|u|^2\right)\d x+\int_{t_0}^{t}\int_{\R^{3}}\frac{|u|^2}{\langle s-r\rangle^{1+2\delta}}\d x\d s \\
					&+\sum_{a=1}^{3}\int_{t_0}^{t}\int_{\R^{3}}\frac{|G_au|^2}{\langle s-r\rangle^{1+2\delta}}\d x\d s.
				\end{aligned}
			\end{equation*}
			\item {\rm{Energy estimate on an exterior region.}}  It holds
			\begin{equation}\label{est:cutenergyv}
				\begin{aligned}
					&\left\|\langle r-t\rangle\chi(r-2t)u(t,x)\right\|+\left\|\langle r-t\rangle\chi(r-2t)\partial u(t,x)\right\|\\
					&\lesssim \left\|\langle r\rangle u_{0}(x)\right\|_{H_{x}^{1}}+ \| \langle r\rangle u_{1}(x) \| + \int_{0}^{t}\left\|\langle r-s\rangle\chi(r-2s)G(s,x)\right\|\d s.
				\end{aligned}
			\end{equation}
		\end{enumerate}
	\end{lemma}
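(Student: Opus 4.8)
The plan is to obtain both items from energy identities produced by pairing the Klein-Gordon equation $\partial_t^2 u - \Delta u + u = G$ with a weighted multiple of $\partial_t u$; the two parts differ only in the choice of weight, and all the work is in the sign analysis of the resulting bulk terms.

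\textbf{Item (i).} I would use Alinhac's ghost weight: set $q = q(r-t)$ with $q(\rho) = \int_{-\infty}^{\rho}\langle\tau\rangle^{-1-2\delta}\,\d\tau$, so that $q'(\rho) = \langle\rho\rangle^{-1-2\delta}\ge 0$ and $q$ is bounded, whence $1\le e^q\lesssim_{\delta} 1$. Multiplying the equation by $e^q\partial_t u$, integrating over $\R^3$, and using $\partial_t e^q = -q'e^q$, $\partial_a e^q = q'\omega_a e^q$, a routine integration by parts (which makes the spatial divergence vanish) gives
\begin{equation*}
\frac{\d}{\d t}\,\frac12\int_{\R^3}e^q\big(|\partial u|^2+u^2\big)\,\d x + \frac12\int_{\R^3}q'(r-t)\,e^q\Big(\sum_{a=1}^3|G_a u|^2 + u^2\Big)\,\d x = \int_{\R^3}e^q\,G\,\partial_t u\,\d x .
\end{equation*}
The single algebraic point is that the cross term $q'e^q\,\partial_t u\,\partial_r u$ produced by $\partial_a e^q$ recombines with $\tfrac12 q'e^q(|\partial_t u|^2+|\nabla u|^2)$ into $\tfrac12 q'e^q\sum_a|G_a u|^2$, via $\sum_a|G_a u|^2 = |\partial_t u|^2 + 2\partial_t u\,\partial_r u + |\nabla u|^2$; in particular all the bulk terms are nonnegative. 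Integrating in $t$ from $t_0$ and using $e^q\approx_{\delta}1$, $q'(r-s)=\langle s-r\rangle^{-1-2\delta}$ yields \eqref{est:ghost} at once. For \eqref{est:kgenergy} I would instead bound the source by $\|G(s)\|\,\|\partial u(s)\|$, put $F(t):=\int_{\R^3}e^q(|\partial u|^2+u^2)\,\d x$, obtain $F(t)\le F(t_0)+C_{\delta}\int_{t_0}^t\|G(s)\|F^{1/2}(s)\,\d s$, and run the standard square-root energy argument to get $F^{1/2}(t)\lesssim_{\delta} F^{1/2}(t_0)+\int_{t_0}^t\|G(s)\|\,\d s$; reinserting this in the full identity controls the bulk terms as well and gives $\mathcal{E}_{gst,1}^{1/2}(t,u)\lesssim_{\delta}\mathcal{E}_{gst,1}^{1/2}(t_0,u)+\int_{t_0}^t\|G(s)\|\,\d s$.

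\textbf{Item (ii).} Here I would run the same computation with the weight $W=W(t,x):=\langle r-t\rangle^2\chi^2(r-2t)$, i.e.\ pair the equation with $W\partial_t u$. Discarding the spatial divergence, the identity reads
\begin{equation*}
\frac{\d}{\d t}\,\frac12\int_{\R^3}W\big(|\partial u|^2+u^2\big)\,\d x = \frac12\int_{\R^3}\partial_t W\,\big(|\partial u|^2+u^2\big)\,\d x - \int_{\R^3}\sum_{a=1}^3\partial_a W\,\partial_a u\,\partial_t u\,\d x + \int_{\R^3}W\,G\,\partial_t u\,\d x .
\end{equation*}
The decisive point is that on $\mathrm{supp}\,\chi(r-2t)$ one has $r-2t>1$, so $r-t>t\ge 0$; hence $\partial_t W = -2(r-t)\chi^2 - 4\langle r-t\rangle^2\chi\chi'\le 0$ (as $\chi\ge 0$, $\chi'\ge0$), while $\partial_a W = \big(2(r-t)\chi^2 + 2\langle r-t\rangle^2\chi\chi'\big)\omega_a$. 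Writing $A:=2\langle r-t\rangle^2\chi\chi'\ge0$, $B:=(r-t)\chi^2\ge0$, the first two terms on the right combine pointwise into $-(A+B)(|\partial u|^2+u^2)-(A+2B)\partial_r u\,\partial_t u$, which is $\le 0$ because $|\partial u|^2\ge|\partial_t u|^2+|\partial_r u|^2\ge 2|\partial_r u\,\partial_t u|$. Therefore, with $\mathcal{F}(t):=\int_{\R^3}W(|\partial u|^2+u^2)\,\d x$, Cauchy-Schwarz gives $\tfrac12\mathcal{F}'(t)\le\|\sqrt W\,G(t)\|\,\mathcal{F}^{1/2}(t)$, i.e.\ $\frac{\d}{\d t}\mathcal{F}^{1/2}(t)\le\|\langle r-t\rangle\chi(r-2t)G(t)\|$; integrating from $0$ to $t$ and noting $\mathcal{F}^{1/2}(t)\gtrsim\|\langle r-t\rangle\chi(r-2t)u(t)\|+\|\langle r-t\rangle\chi(r-2t)\partial u(t)\|$ together with $\mathcal{F}^{1/2}(0)\lesssim\|\langle r\rangle u_0\|_{H^1_x}+\|\langle r\rangle u_1\|$ yields \eqref{est:cutenergyv}.

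\textbf{Main obstacle.} All the analytic input is elementary; the only real content is the sign bookkeeping for the bulk terms — an \emph{exact} cancellation producing $\sum_a|G_a u|^2$ in item (i), and a Cauchy-Schwarz absorption in item (ii) that hinges on the geometric fact $r>t$ on $\mathrm{supp}\,\chi(r-2t)$. One should also first carry out these manipulations for smooth solutions with sufficient spatial decay (so that all boundary contributions at spatial infinity vanish) and then pass to the stated class by approximation, but this is routine and I would only remark on it.
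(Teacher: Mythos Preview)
Your proposal is correct and, for part (i), follows exactly the paper's approach: the paper multiplies by $e^q\partial_t u$ with the same ghost weight $q(r-t)=\int_{-\infty}^{r-t}\langle\tau\rangle^{-1-2\delta}\,\d\tau$, records the resulting differential identity, and integrates. For part (ii) the paper gives no argument at all and simply cites \cite{DLMY}; your direct weighted-energy computation with $W=\langle r-t\rangle^2\chi^2(r-2t)$ and the sign check based on $r>t$ on $\mathrm{supp}\,\chi(r-2t)$ is a clean way to supply the missing details and is consistent with how such exterior estimates are typically proved.
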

	\begin{proof}
		Multiplying the equation by $e^q\partial_t u$, we can obtain
		\begin{equation} \label{equdiffer}
			\begin{aligned}
				\frac{1}{2}\partial_t&\left(e^q\left(|\partial u|^2+u^2\right)\right)-\partial^a\left(e^q\partial_t u\partial_a u\right)\\
				&+\frac{e^q}{2\langle r-t\rangle^{1+2\delta}}\left(\sum_{a=1}^{3}|G_a u|^2+u^2\right)=e^q\partial_t u G,
			\end{aligned}
		\end{equation}
		with 
		\begin{equation*}
			q(r,t)=\int_{-\infty}^{r-t}\langle s\rangle^{-1-2\delta}\d s,\quad \delta>0.
		\end{equation*}
		Then integrating with respect to $x$ and $t$, we get~(\ref{est:ghost}). Using (\ref{equdiffer}), one can also obtain \eqref{est:kgenergy}. Finally, for the proof of \eqref{est:cutenergyv}, one can refer to \cite{DLMY}.
	\end{proof}
	
	Second, we present the following pointwise estimate proved by Georgiev in~\cite[Theorem 1]{Geor}.
	\begin{proposition}\label{pro:KGpointwise}
		Suppose that $u(t,x)$ is the solution to the Cauchy problem
		$$\left\{\begin{aligned}
			-\Box u +u&=G(t,x),\\
			(u,\partial_t u)|_{t=t_0}&=(u_0,u_1).
		\end{aligned}\right.$$
		Then we have
		\begin{equation}
			\begin{aligned}
				\langle t+r\rangle^{\frac{3}{2}}|u(t,x)|&\lesssim\sum_{k=0}^{\infty}\sum_{|I|\leq 5}\|\langle r\rangle^{\frac{3}{2}}\zeta_k(r)\Gamma^I u(t_0,x)\|\\
				&+\sum_{k=0}^{\infty}\sum_{|I|\leq 4}\max_{t_0\leq s\leq t}\zeta_k(s)\|\langle s+r\rangle\Gamma^I G(s,x)\|.
			\end{aligned}
		\end{equation}
	\end{proposition}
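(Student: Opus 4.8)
This is the classical weighted pointwise decay estimate for the inhomogeneous $3$D Klein-Gordon equation due to Georgiev, and the plan is to reduce it, via Duhamel's principle, to a single weighted $L^\infty$ bound for the homogeneous Klein-Gordon evolution, to prove that bound by a dyadic decomposition combined with stationary phase, and then to reassemble the inhomogeneous statement by integrating in time.

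First I would translate in time to take $t_0=0$ and split $u=u^{\mathrm{hom}}+u^{\mathrm{inh}}$, where $-\Box u^{\mathrm{hom}}+u^{\mathrm{hom}}=0$ carries the data $(u_0,u_1)$ and, by Duhamel, $u^{\mathrm{inh}}(t,x)=\int_0^t w_s(t,x)\,\d s$ with $w_s$ solving the homogeneous Klein-Gordon equation with data $(0,G(s,\cdot))$ prescribed at time $s$. Everything then hinges on a weighted $L^\infty$ estimate for the homogeneous evolution, applied once to $(u_0,u_1)$ and, for each $s$, to $(0,G(s,\cdot))$; the weight $\langle s+r\rangle$, the time-dyadic cut-offs $\zeta_k(s)$, and the $\max_{0\le s\le t}$ in the source term all come from organizing the $s$-integral — splitting it at $t-s\sim\langle s\rangle$ (using a plain energy bound where the elapsed time is short and dispersive decay where it is long), estimating the ratio $\langle t+r\rangle/\langle t-s+r\rangle$ by powers of $\langle s\rangle$, and replacing the integral over each dyadic time shell $s\sim 2^k$ by a supremum times a factor summable in $k$.

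For the homogeneous evolution the target is a weighted Sobolev-type inequality of the shape $\langle t+r\rangle^{3/2}|u^{\mathrm{hom}}(t,x)|\lesssim\sum_{k}\sum_{|I|\le5}\|\langle r\rangle^{3/2}\zeta_k(r)\Gamma^I u^{\mathrm{hom}}(0,\cdot)\|$ using only $\Gamma=(\partial,\Omega,L)$ and crucially \emph{not} the scaling field $L_0$ (with $L_0$ one could instead invoke \eqref{est:Sobo}, but at the full Klein-Gordon rate that inequality is unavailable since the equation is not scale invariant). I would follow Georgiev's route: decompose the data dyadically in space with $\zeta_k(r)$ so the $k$-th piece lives in a shell $\{|y|\sim2^k\}$, represent the corresponding solution through the explicit Klein-Gordon fundamental solution in Fourier form, localize the frequency into dyadic blocks, and analyze the resulting oscillatory integral with phase $x\cdot\xi\pm t\langle\xi\rangle$ by stationary phase. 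The mass term places the stationary point at $\xi=\mp x/\sqrt{t^2-r^2}$ when $r<t$, with a nondegenerate Hessian whose determinant is of size $\sim\langle t-r\rangle^{3/2}\langle t+r\rangle^{3/2}t^{-3}$ in the relevant regime, and this is exactly the source of the extra half-power of $\langle t-r\rangle$ beyond the wave rate, hence of the sharp $\langle t+r\rangle^{-3/2}$ decay; the commutation $[-\Box,\Gamma_k]=0$ from Lemma \ref{lem:vectorfield} (which also gives $[-\Box+1,\Gamma_k]=0$) lets one trade each power of $t$ produced by integration by parts in $\xi$ for one vector field $\Gamma$ hitting the data, which is what makes $|I|\le5$ (respectively $|I|\le4$ for the source, after absorbing the extra $s$-derivative implicit in Duhamel) sufficient, and an $L^\infty$-into-$L^2$ Sobolev embedding on each frequency block turns the pointwise control into the stated $L^2$ norms. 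Summing the spatial dyadic pieces in $k$ against the weight $\langle r\rangle^{3/2}\sim2^{3k/2}$ then closes the homogeneous inequality.

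The main obstacle is precisely this homogeneous pointwise bound without $L_0$: one must extract the sharp rate $\langle t+r\rangle^{-3/2}$ uniformly in the ratio $r/t$ — the regimes $r\ll t$, $r\sim t$ and $r\gg t$ require separate handling, with the near-cone region $|t-r|\ll t$ (where the stationary-phase analysis degenerates and one instead exploits that the relevant data shell is essentially disjoint from the backward light cone, or argues directly by energy methods) being the delicate one — while simultaneously keeping the count of vector fields and the spatial and temporal dyadic localizations tight enough that the right-hand side of the final inequality is finite. Once this is in hand, the inhomogeneous statement follows from the Duhamel reduction by the elementary time-integral estimates indicated above.
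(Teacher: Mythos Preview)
The paper does not supply a proof of this proposition at all: it is stated as a quotation of Georgiev's result \cite[Theorem~1]{Geor} and used as a black box. Your outline---Duhamel reduction to a homogeneous weighted $L^\infty$ bound, spatial dyadic localization via the $\zeta_k$, Fourier representation of the Klein-Gordon propagator, and stationary phase around $\xi=\mp x/\sqrt{t^2-r^2}$ to extract the $\langle t+r\rangle^{-3/2}$ rate without using $L_0$---is essentially the strategy Georgiev follows, so as a reconstruction it is on the right track and there is nothing to compare against in the present paper.

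One cautionary remark: your description of how the temporal dyadic cutoffs $\zeta_k(s)$ and the $\max_{t_0\le s\le t}$ arise is a bit loose. In Georgiev's argument the key is that the homogeneous estimate, applied to the Duhamel slice launched at time $s$, produces a bound by $\sum_j\|\langle r\rangle^{3/2}\zeta_j(r)\Gamma^I G(s,\cdot)\|$, and one must then show that the $s$-integral of this, weighted by the appropriate ratio of $\langle t+r\rangle^{3/2}$ to the decay factor of the slice, is controlled by $\sum_k\max_s\zeta_k(s)\|\langle s+r\rangle\Gamma^I G(s,\cdot)\|$; getting the weight down from $\langle r\rangle^{3/2}$ to $\langle s+r\rangle$ and collapsing the spatial dyadic sum into the single temporal one is where the detailed bookkeeping lives, and your sketch glosses over this step. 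It is not a gap in the sense of a wrong idea, but it is where most of the work in an actual write-up would go.
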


	Next, we give a corollary of Proposition~\ref{pro:KGpointwise}.
	\begin{corollary}\cite{DLMY}\label{cor:KGpoint}
		With the same settings as Proposition~\ref{pro:KGpointwise}, let $\delta_0>0$ and assume
		\begin{equation}\label{est:C_G}
			\sum_{|I|\leq 4}\max_{t_0\leq s\leq t}s^{\delta_0}\|\langle s+r\rangle\Gamma^I G(s,x)\|\leq C_G.
		\end{equation}
		Then we have the following estimate
		\begin{equation}\label{est:pointwiseKG}
			\langle t+r\rangle^{\frac{3}{2}}|u(t,x)|\lesssim\frac{C_G}{1-2^{-\delta_0}}+\sum_{|I|\leq 5}\|\langle r\rangle^{\frac{3}{2}}\log(2+r)\Gamma^I u(t_0,x)\|.
		\end{equation}
		
	\end{corollary}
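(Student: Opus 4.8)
The plan is to derive Corollary~\ref{cor:KGpoint} directly from the pointwise bound of Proposition~\ref{pro:KGpointwise} by controlling the two Littlewood--Paley sums on its right-hand side. Applying Proposition~\ref{pro:KGpointwise} with the prescribed data gives
\begin{equation*}
	\langle t+r\rangle^{\frac32}|u(t,x)|\lesssim\sum_{k=0}^\infty\sum_{|I|\le5}\big\|\langle r\rangle^{\frac32}\zeta_k(r)\Gamma^Iu(t_0,x)\big\|+\sum_{k=0}^\infty\sum_{|I|\le4}\max_{t_0\le s\le t}\zeta_k(s)\big\|\langle s+r\rangle\Gamma^IG(s,x)\big\|,
\end{equation*}
so it suffices to bound the first sum by $\sum_{|I|\le5}\|\langle r\rangle^{3/2}\log(2+r)\Gamma^Iu(t_0,x)\|$ and the second by a constant multiple of $C_G/(1-2^{-\delta_0})$.

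I would handle the source sum first, since it only uses the frequency localization of $\zeta_k$. For $k\ge1$ one has $\mbox{supp}\,\zeta_k\subset[2^{k-1},2^{k+1}]$, so on that set $s\ge2^{k-1}$ and hence $s^{-\delta_0}\le2^{(1-k)\delta_0}$; combining this with the hypothesis \eqref{est:C_G}, which reads $s^{\delta_0}\|\langle s+r\rangle\Gamma^IG(s,x)\|\le C_G$ for $|I|\le4$ and $t_0\le s\le t$, shows that the $k$-th summand is $\lesssim 2^{-k\delta_0}C_G$; the single low-frequency term $k=0$, supported in $s\le2$, is estimated in the same fashion. Summing the geometric series $\sum_{k\ge0}2^{-k\delta_0}=(1-2^{-\delta_0})^{-1}$ then produces the claimed factor.

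For the data sum I would apply the Cauchy--Schwarz inequality in the frequency index against the weight $a_k:=\log(2+2^k)$: since $a_k\simeq k$ for large $k$, the series $\sum_{k\ge0}a_k^{-2}$ converges, while on $\mbox{supp}\,\zeta_k$ one has $a_k\lesssim\log(2+r)$, and $0\le\zeta_k\le1$ together with $\sum_k\zeta_k=1$ gives $\sum_k\zeta_k^2\le1$ pointwise; hence
\begin{equation*}
	\sum_{k=0}^\infty\big\|\langle r\rangle^{\frac32}\zeta_k(r)\Gamma^Iu(t_0,x)\big\|\le\Big(\sum_{k=0}^\infty a_k^{-2}\Big)^{\frac12}\Big(\sum_{k=0}^\infty a_k^2\big\|\langle r\rangle^{\frac32}\zeta_k(r)\Gamma^Iu(t_0,x)\big\|^2\Big)^{\frac12}\lesssim\big\|\langle r\rangle^{\frac32}\log(2+r)\Gamma^Iu(t_0,x)\big\|,
\end{equation*}
and summing over $|I|\le5$ closes the estimate. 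The one genuinely delicate point --- and the reason a logarithmic weight, rather than a bare $L^2$-norm, appears in \eqref{est:pointwiseKG} --- is exactly the convergence of this $k$-sum: bounding $\log(2+2^k)\lesssim\log(2+r)$ alone would leave the divergent series $\sum_k 1$, so one must spend one factor of $\log(2+r)$ and invoke $\sum_k\log^{-2}(2+2^k)<\infty$.
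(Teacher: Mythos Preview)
Your proof is correct; the paper does not supply its own argument for this corollary (it is imported from \cite{DLMY}), so there is no in-paper proof to compare against. Your treatment of both sums---geometric summation in $k$ for the source term via the support of $\zeta_k$, and Cauchy--Schwarz in $k$ with the weights $a_k=\log(2+2^k)$ for the data term---is exactly the natural derivation from Proposition~\ref{pro:KGpointwise}.
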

	
	Finally, we recall the extra weighted estimate for the 3D linear Klein-Gordon equation.
	\begin{lemma}\cite{Klainerman93}\label{est:weightKG}
		Suppose that $u(t,x)$ satisfies the equation
		\begin{equation*}
			-\Box u +u=G(t,x).
		\end{equation*}
		Then we have
		\begin{equation}\label{est:KG3}
			\left|\frac{\langle t+r\rangle}{\langle t-r\rangle}u\right|\lesssim\sum_{|J|\leq1}|\partial\Gamma^Ju|+\frac{\langle t+r\rangle}{\langle t-r\rangle}|G|.
		\end{equation}
	\end{lemma}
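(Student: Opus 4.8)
The plan is to reduce \eqref{est:KG3} to the equation itself together with the algebraic decomposition of $\Box$ in terms of the Lorentz fields, the same identity already used in the proof of Lemma~\ref{lem:Hessian}. The weight $\frac{\langle t+r\rangle}{\langle t-r\rangle}$ is comparable to $1$ away from the light cone, so the whole content of the estimate lives in a neighbourhood of $r\approx t$; there it says precisely that the ``good'' operator $(t-r)(t+r)\partial_t^2$ (whose coefficient is $\lesssim \langle t-r\rangle/\langle t+r\rangle$ there) controls $t^2u$ modulo first-order $\Gamma$-derivatives and the source.

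First I would dispose of the region where the weight is bounded. Since $t,r\ge 0$ we always have $0\le (t-r)^2\le (t+r)^2$, hence $1\le \frac{\langle t+r\rangle}{\langle t-r\rangle}$. Writing the equation as $u=G+\Box u=G-\partial_t^2u+\Delta u$ gives the crude bound $|u|\lesssim |G|+\sum_{|J|\le 1}|\partial\Gamma^Ju|$ valid for all $(t,x)$, since $\partial_t^2u$ and $\Delta u$ are sums of terms $\partial_\alpha\partial_\beta u=\partial_\alpha\Gamma^Ju$ with $|J|\le1$. Therefore on the set $\frac{\langle t+r\rangle}{\langle t-r\rangle}\le 10$ we get at once
\[
\frac{\langle t+r\rangle}{\langle t-r\rangle}|u|\le 10|u|\lesssim |G|+\sum_{|J|\le1}|\partial\Gamma^Ju|\le \frac{\langle t+r\rangle}{\langle t-r\rangle}|G|+\sum_{|J|\le1}|\partial\Gamma^Ju|,
\]
which is \eqref{est:KG3}.

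It remains to treat the set $\frac{\langle t+r\rangle}{\langle t-r\rangle}>10$. Squaring, this forces $(t+r)^2>99+100(t-r)^2$, hence $t+r>10|t-r|$ and $t+r>9$; the first inequality gives $r<\tfrac{11}{9}t<2t$ and $t+r<\tfrac{20}{9}t<3t$, and then $t+r>9$ gives $t>4$, so in particular $\langle t+r\rangle\le\sqrt{1+9t^2}<\sqrt{10}\,t$. Thus we are in the regime $r<2t$, $t>1$ where the identity from the proof of Lemma~\ref{lem:Hessian} applies:
\[
-\Box u=\frac{(t+r)(t-r)}{t^2}\partial_t\partial_t u-\sum_{a=1}^{3}\Big(\frac{1}{t}\partial_a L_a u-\frac{x_a}{t^2}\partial_t L_a u+\frac{x_a}{t^2}\partial_a u\Big)+\frac{3}{t}\partial_t u .
\]
Substituting $\Box u=u-G$ and solving for $u$, then multiplying by $\frac{\langle t+r\rangle}{\langle t-r\rangle}$, I would bound the coefficients one by one. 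The only genuinely weighted coefficient is $\frac{\langle t+r\rangle}{\langle t-r\rangle}\cdot\frac{|t+r|\,|t-r|}{t^2}=\frac{\langle t+r\rangle\,|t+r|}{t^2}\cdot\frac{|t-r|}{\langle t-r\rangle}\lesssim 1$, using $\frac{|t-r|}{\langle t-r\rangle}\le1$ and $\langle t+r\rangle\,|t+r|\lesssim t^2$ in this regime; the remaining coefficients $\frac{\langle t+r\rangle}{t\langle t-r\rangle}$, $\frac{\langle t+r\rangle|x_a|}{t^2\langle t-r\rangle}$, $\frac{3\langle t+r\rangle}{t\langle t-r\rangle}$ are all $\lesssim1$ since $|x_a|\le r<2t$, $\langle t+r\rangle\lesssim t$ and $\langle t-r\rangle\ge1$. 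As each of $\partial_t^2u$, $\partial_aL_au$, $\partial_tL_au$, $\partial_au$, $\partial_tu$ is of the form $\partial\Gamma^Ju$ with $|J|\le1$, summing the bounds yields \eqref{est:KG3} on this set too.

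The only delicate point is the elementary region analysis in the last step — verifying that a large weight ratio confines $(t,r)$ to a neighbourhood of the forward light cone with $r<2t$ and $t$ bounded below, so that the $\Box$-decomposition is licit and the $\tfrac1t$, $\tfrac{x_a}{t^2}$ prefactors are harmless. Everything after that is bookkeeping. (Alternatively, \eqref{est:KG3} is the Klein--Gordon counterpart of Lemma~\ref{lem:Hessian}, and one may simply invoke \cite{Klainerman93}.)
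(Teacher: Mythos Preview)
Your proof is correct and follows essentially the same approach as the paper's: both split into a region where the weight $\frac{\langle t+r\rangle}{\langle t-r\rangle}$ is harmless (the paper uses $r\ge 2t$, you use the weight $\le 10$) and a near-cone region where the identity expressing $-\Box u$ through $\partial_t^2 u$ and $L_a$-derivatives does the work. Your region analysis is slightly more explicit about forcing $t$ to be bounded below, but the substance is identical.
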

	\begin{proof}
		%From the equation, we have $|u|\lesssim|\Box u|+|G|$.
		
		\textbf{Case I.} Let $r\geq 2t.$ From the equation of $u$, we can easily calculate that
		\begin{equation}\label{KGouter}
			|u|\lesssim|\Box u|+|G|\lesssim\frac{\langle t-r\rangle}{\langle t+r\rangle}\sum_{|J|\leq1}|\partial\Gamma^Ju|+|G|.
		\end{equation}
		\textbf{Case II.} Let $r\leq 2t$. By the definition of vector fields, we have
		\begin{equation*}
			-\Box u=\frac{(t+r)(t-r)}{t^2}\partial_t\partial_t u-\sum_{a=1}^{3}\left(\frac{1}{t}\partial_a L_a u-\frac{x_a}{t^2}\partial_t L_a u+\frac{x_a}{t^2}\partial_a u\right)+\frac{3}{t}\partial_t u,
		\end{equation*}
		which implies
		\begin{equation*}
			|\Box u|\lesssim\frac{\langle t-r\rangle}{\langle t+r\rangle}\sum_{|J|\leq1}|\partial\Gamma^Ju|.
		\end{equation*}
		Then using again the equation of $u$, we can deduce
		\begin{equation}\label{KGinner}
			|u|\lesssim|\Box u|+|G|\lesssim\frac{\langle t-r\rangle}{\langle t+r\rangle}\sum_{|J|\leq1}|\partial\Gamma^Ju|+|G|.
		\end{equation}
		We see that~(\ref{est:KG3}) follows from~(\ref{KGouter}) and~(\ref{KGinner}).
	\end{proof}
	
	\subsection{Estimates for the 3D linear  Dirac equation} In this subsection, we present some estimates related to the Dirac equation, including the ghost weight energy estimates and extra decay inside the light cone. 							
	We first recall the weighted energy estimates for the Dirac equation, which are inspired by Alinhac's ghost weight method. Such estimates have been proved in \cite{DLMY}. For convenience, we denote
	\begin{align*}
		\mathcal{E}_{D}(t,\phi)=\int_{\R^3}|\phi(t,x)|^2\d x + \int_{0}^t\int_{\R^3}\frac{|[\phi]_{-}|^2}{\langle s-r \rangle^{1+2\delta} } \d x \d s
	\end{align*}
	where $\delta >0$ is a small constant.
	
	\begin{lemma}\label{le:EneryDirac}
		Let $\phi=\phi(t,x)$ be the solution to the Cauchy problem of linear Dirac equation
		\begin{equation}\label{equ:Dirac}
			-i\gamma^{\mu}\partial_{\mu}\phi=F,\quad \quad \phi (0,x)=\phi_{0}(x),
		\end{equation} 
		then the following estimates are true.
		\begin{enumerate}
			\item {\rm{Ghost weight estimate.}} It holds 
			%\begin{equation}\label{est:Energyphi1}
			%\begin{aligned}
			%\mathcal{E}^{D}(t,\phi)^{\frac{1}{2}}&\lesssim \left(\int_{\R^{3}}|\phi_{0}(x)|^{2}\d x\right)^{\frac{1}{2}}+\int_{0}^{t}\|F(s,x)\|_{L_{x}^{2}} \d s, 
			%\end{aligned}
			%\end{equation}
			\begin{equation}\label{est:Energyphi2}
				\begin{aligned}
					\mathcal{E}_{D}(t,\phi)&\lesssim \int_{\R^{3}}|\phi_{0}(x)|^{2}\d x+\int_{0}^{t}\int_{\R^{3}}\left|\phi^{*}\gamma^{0}F(s,x)-F^*\gamma^0\phi(s,x) \right|\d x \d s.
				\end{aligned}
			\end{equation}
			
			\item {\rm{Energy estimate on an exterior region.}} It holds
			\begin{equation}\label{est:cutenergyphi}
				\begin{aligned}
					&\left\|\langle r-t\rangle\chi(r-2t)\phi(t,x)\right\|^2 \\
					&\lesssim \left\|\langle r\rangle\phi_{0}(x)\right\|^2+\int_{0}^{t} \int_{\R^3} \langle r-s\rangle^2\chi(r-2s)^2|\phi^*\gamma^0 F - F^*\gamma^0 \phi|\, \d x \d s.
				\end{aligned}
			\end{equation}
		\end{enumerate}
	\end{lemma}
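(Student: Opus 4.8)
The plan is to run Alinhac's ghost weight method directly on the Dirac equation; part (i) is essentially the estimate established in \cite{DLMY}, and the same scheme gives part (ii). \textbf{Step 1 (multiplier identity).} With $q=q(r-t)=\int_{-\infty}^{r-t}\langle s\rangle^{-1-2\delta}\,\d s$ — so that $0\le q\lesssim_{\delta}1$, $\pt q=-\langle r-t\rangle^{-1-2\delta}$, and $\del_{a}q=-\omega_{a}\pt q$ — I would multiply \eqref{equ:Dirac} on the left by $e^{q}\phi^{*}\gamma^{0}$, subtract the conjugate transpose of the resulting scalar identity, and use $(\gamma^{0})^{*}=\gamma^{0}$ together with $(\gamma^{0}\gamma^{\mu})^{*}=\gamma^{0}\gamma^{\mu}$ (both from \eqref{equ:gamma}) to obtain $-ie^{q}\del_{\mu}(\phi^{*}\gamma^{0}\gamma^{\mu}\phi)=e^{q}(\phi^{*}\gamma^{0}F-F^{*}\gamma^{0}\phi)$.

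\textbf{Step 2 (the positive ghost term and part (i)).} Since $\gamma^{0}\gamma^{0}=I_{4}$, the $\mu=0$ piece of $\phi^{*}\gamma^{0}\gamma^{\mu}\phi$ equals $|\phi|^{2}$; and a short computation from \eqref{equ:gamma} (via $\gamma^{0}\gamma^{a}\gamma^{0}=-\gamma^{a}$ and $\omega_{a}\omega_{b}\gamma^{a}\gamma^{b}=-I_{4}$) yields the key identity $|[\phi]_{-}|^{2}=2(|\phi|^{2}-\omega_{a}\phi^{*}\gamma^{0}\gamma^{a}\phi)$. Multiplying the identity of Step 1 by $i$ and pulling out the derivative of $e^{q}$, the term $-(\del_{\mu}e^{q})\phi^{*}\gamma^{0}\gamma^{\mu}\phi$ collapses, using $\del_{a}q=-\omega_{a}\pt q$, to $-e^{q}\pt q\,(|\phi|^{2}-\omega_{a}\phi^{*}\gamma^{0}\gamma^{a}\phi)=\tfrac{e^{q}}{2\langle r-t\rangle^{1+2\delta}}|[\phi]_{-}|^{2}\ge0$, so that
\[
\pt\big(e^{q}|\phi|^{2}\big)+\del_{a}\big(e^{q}\phi^{*}\gamma^{0}\gamma^{a}\phi\big)+\frac{e^{q}}{2\langle r-t\rangle^{1+2\delta}}\big|[\phi]_{-}\big|^{2}=ie^{q}\big(\phi^{*}\gamma^{0}F-F^{*}\gamma^{0}\phi\big).
\]
Integrating over $\R^{3}\times[0,t]$, discarding the spatial divergence by decay at spatial infinity, and using $1\le e^{q}\lesssim1$ gives \eqref{est:Energyphi2}.

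\textbf{Step 3 (part (ii)).} I would repeat Steps 1--2 with $e^{q}$ replaced by $W:=\langle r-t\rangle^{2}\chi(r-2t)^{2}$, arriving at $\pt(W|\phi|^{2})+\del_{a}(W\phi^{*}\gamma^{0}\gamma^{a}\phi)=(\del_{\mu}W)\phi^{*}\gamma^{0}\gamma^{\mu}\phi+iW(\phi^{*}\gamma^{0}F-F^{*}\gamma^{0}\phi)$. By \eqref{def:chi}, $W$ and its first derivatives are supported in $\{r\ge2t+1\}$, where $r-t\ge t+1>0$; writing $W=gh$ with $g=\langle r-t\rangle^{2}$, $h=\chi(r-2t)^{2}$, using $\del_{a}g=-\omega_{a}\pt g$ and $\del_{a}h=-\tfrac12\omega_{a}\pt h$ together with the identity of Step 2, I expect
\[
(\del_{\mu}W)\phi^{*}\gamma^{0}\gamma^{\mu}\phi=\tfrac12(\pt g)h\big|[\phi]_{-}\big|^{2}+\tfrac14 g(\pt h)\big|[\phi]_{-}\big|^{2}+\tfrac12 g(\pt h)|\phi|^{2}\le0
\]
on that set, since there $\pt g=-2(r-t)<0$ and $\pt h=-4\chi\chi'\le0$ by \eqref{def:chi}. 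Integrating over $\R^{3}\times[0,t]$ and bounding $W|_{t=0}=\langle r\rangle^{2}\chi(r)^{2}\le\langle r\rangle^{2}$ then yields \eqref{est:cutenergyphi}.

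\textbf{Main obstacle.} The proof is mostly bookkeeping; the two points needing care are the gamma-matrix identity $|[\phi]_{-}|^{2}=2(|\phi|^{2}-\omega_{a}\phi^{*}\gamma^{0}\gamma^{a}\phi)$, which is exactly what turns the ghost weight into a genuinely nonnegative spacetime term, and verifying that $\pt W$ keeps the favorable sign throughout $\mathrm{supp}\,W$ in Step 3 so that no uncontrolled error term is generated there.
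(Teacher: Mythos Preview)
Your proposal is correct and follows precisely the ghost weight multiplier approach that the paper cites from \cite{DLMY}; indeed, the paper itself omits all details and simply refers to that reference, so you have supplied the standard argument. Both the gamma-matrix identity $|[\phi]_{-}|^{2}=2(|\phi|^{2}-\omega_{a}\phi^{*}\gamma^{0}\gamma^{a}\phi)$ in Step~2 and the sign analysis of $\partial_{\mu}W$ in Step~3 check out.
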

	\begin{proof}
		We omit the details and one can refer to \cite{DLMY} for the proof.
	\end{proof}
	
	We will use the following extra decay in $\langle t-r \rangle $ for Dirac field $\phi$ inside the cone,  which was  proved in \cite{DongZoeDKG} in the two dimensional case.
	\begin{lemma}\label{le:Diracdecay}
		Suppose $\phi=\phi(t,x)$ is the solution to the equation
		\begin{equation}\label{equ:dirac}
			-i\gamma^{\mu}\partial_{\mu}\phi(t,x)=F(t,x),
		\end{equation}
		then we have 
		\begin{equation}\label{est:decaypphi}
			\left|\partial \phi\right|\lesssim\frac{1}{\langle t-r\rangle}\big(\big|\widehat{\Gamma}\phi\big|+\left|\phi\right|\big)+\frac{t}{\langle t-r\rangle}|F|,\quad \quad  \ r\le 3t+3.
		\end{equation}
	\end{lemma}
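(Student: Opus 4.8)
The strategy is to square the Dirac equation to recover a wave (Klein-Gordon type) structure, then apply the Hessian-type pointwise estimate in the spirit of Lemma \ref{lem:Hessian} and the first-order decay estimate of Lemma \ref{lem:partial}. First I would observe that applying $-i\gamma^{\nu}\partial_{\nu}$ to both sides of \eqref{equ:dirac} and using the anticommutation relations \eqref{equ:gamma}, namely $\gamma^{\mu}\gamma^{\nu}+\gamma^{\nu}\gamma^{\mu}=-2\eta_{\mu\nu}I_4$, yields $-\Box \phi = -i\gamma^{\nu}\partial_{\nu}F$; so each component of $\phi$ solves a wave equation with source $-i\gamma^{\nu}\partial_{\nu}F$. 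However, this would force derivatives onto $F$, which is undesirable. Instead, the cleaner route is to work directly with the first-order equation: from \eqref{equ:dirac} we can solve algebraically for $\partial_t \phi$ in terms of spatial derivatives and $F$, since $\gamma^0$ is invertible — explicitly $\partial_t \phi = (\gamma^0)^{-1}\gamma^a \partial_a \phi + i(\gamma^0)^{-1}F$ (up to constants), so $|\partial \phi| \lesssim |\nabla \phi| + |F|$ trivially, but this gains no $\langle t-r\rangle$ decay on its own.

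The key point is to extract the decay from the hyperbolic rotation/boost structure. Following Lemma \ref{lem:partial}, for any smooth function $u$ one has $\langle t-r\rangle |\partial u| \lesssim |L_0 u| + |\Gamma u|$ inside $r \le 3t+3$ (in fact the interior region is where this is useful). I would apply this componentwise to $\phi$, giving $\langle t-r\rangle|\partial \phi| \lesssim |L_0 \phi| + |\Gamma \phi|$. Now the task reduces to controlling $|L_0 \phi|$. Here is where the equation enters: the scaling vector field $L_0$ does not commute cleanly, but from Lemma \ref{lem:vectorfield}(ii) we have $[-i\gamma^\mu\partial_\mu, L_0] = -i\gamma^\mu\partial_\mu$, so $L_0\phi$ itself satisfies a Dirac equation with a controllable source. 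More directly, I would use the pointwise algebraic identity expressing $\partial_t \phi$ via the equation to rewrite $L_0\phi = t\partial_t\phi + x^a\partial_a\phi$: substituting $\partial_t\phi$ from the equation, $L_0 \phi = t\big((\gamma^0)^{-1}\gamma^a\partial_a\phi + i(\gamma^0)^{-1}F\big) + x^a \partial_a\phi$. The spatial derivatives $\partial_a\phi$ can be re-expressed through $L_a = t\partial_a + x_a\partial_t$ and $\Omega_{ab}$; combining $t\partial_a\phi$ with $x_a\partial_t\phi$ produces boosts $L_a\phi$, and the leftover $\partial_t\phi$ terms are again swapped using the equation. Carrying this out carefully, the $t$-weighted first-order terms collapse into $|\widehat\Gamma\phi| + |\phi|$ (using \eqref{est:hatGa} to pass between $\Gamma$ and $\widehat\Gamma$) plus a term $t|F|$, yielding $|L_0\phi| + |\Gamma\phi| \lesssim |\widehat\Gamma\phi| + |\phi| + t|F|$.

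Combining the two displayed bounds gives
\begin{equation*}
	|\partial\phi| \lesssim \frac{1}{\langle t-r\rangle}\big(|\widehat\Gamma\phi| + |\phi|\big) + \frac{t}{\langle t-r\rangle}|F|, \qquad r \le 3t+3,
\end{equation*}
which is exactly \eqref{est:decaypphi}. For the low-time regime $t \le 1$ (or more generally $r \le 3t+3$ with $t$ bounded) one handles it separately and trivially: there $\langle t-r\rangle \sim \langle r \rangle \sim 1$ and $t \lesssim 1$, so the claimed inequality reduces to $|\partial \phi| \lesssim |\widehat\Gamma \phi| + |\phi| + |F|$, which follows from $|\partial\phi| \lesssim |\Gamma\phi| \lesssim |\widehat\Gamma\phi|$. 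The main obstacle is the careful bookkeeping in the middle step: one must verify that every occurrence of a bare $t\partial_a\phi$ or $t\partial_t\phi$ gets absorbed into a vector field $\widehat\Gamma\phi$ (not leaving a stray $t|\partial\phi|$, which would be circular), and that the constant matrices arising from $(\gamma^0)^{-1}\gamma^a$ and from the difference between $\widehat\Omega, \widehat L$ and $\Omega, L$ only contribute bounded factors — this is exactly the content of \eqref{est:hatGa}, so no derivative loss occurs. Since the two-dimensional analogue is already in \cite{DongZoeDKG}, the argument is structurally identical and I would reference it while spelling out the $\R^{1+3}$ computation.
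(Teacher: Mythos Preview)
Your approach is correct and essentially complete, but it differs from the paper's route. The paper does not pass through Lemma~\ref{lem:partial} and the control of $L_0\phi$; instead it works directly on the Dirac equation. Writing $\partial_a = t^{-1}L_a - (x_a/t)\partial_t$ turns \eqref{equ:dirac} into
\[
	i\Big(\gamma^0 - \tfrac{x_a}{t}\gamma^a\Big)\partial_t\phi = -i\gamma^b\tfrac{L_b}{t}\phi - F,
\]
and then multiplying on the left by $-i(\gamma^0 - \tfrac{x_a}{t}\gamma^a)$ and using the Clifford relations \eqref{equ:gamma} produces the scalar coefficient $(t^2-r^2)/t^2$ in front of $\partial_t\phi$, immediately yielding the $\langle t-r\rangle^{-1}$ gain for $\partial_t\phi$; the spatial derivatives then follow from $\partial_a = t^{-1}L_a - (x_a/t)\partial_t$.

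Your route---apply $\langle t-r\rangle|\partial\phi|\lesssim |L_0\phi|+|\Gamma\phi|$ componentwise, then eliminate $L_0\phi$ by substituting $\partial_t\phi$ from the equation and absorbing the resulting $t\partial_a\phi$ terms into boosts and (via the antisymmetric part of $\gamma^a\gamma^b$) rotations---is perfectly valid; the computation you outline does close, with the leftover term being $(t+r)|F|\lesssim t|F|$ on $r\le 3t+3$, $t\ge 1$. The trade-off: the paper's multiplier trick is shorter and specific to the Dirac structure (the matrix $\gamma^0-\omega_a\gamma^a$ is exactly the object whose square is $(t^2-r^2)/t^2$), while your argument is more modular, reusing Lemma~\ref{lem:partial} and reducing everything to an algebraic identity for $L_0\phi$. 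Either is fine here.
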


	\begin{proof} For $t\in [0,1]$, we can see  
		\begin{equation}\label{est:pphi1}
			\left|\partial \phi\right|\lesssim \big|\widehat{{\Gamma}} \phi\big|\lesssim\frac{1}{\langle t-r\rangle}\big|\widehat{\Gamma}\phi\big|+\frac{t}{\langle t-r\rangle}|F|,\quad \quad \mbox{for}\ r\le 3t+3.
		\end{equation}
		
		Let $t\in (1,\infty)$, using $\partial_a = t^{-1}L_{a} - (x_a/t) \partial_t$, we can rewrite~\eqref{equ:dirac} as 
		\begin{equation*}
			i\left(\gamma^{0}-\frac{x_{a}}{t}\gamma^{a} \right)\partial_t\phi=-i\gamma^{a}\frac{L_{a}}{t}\phi-F.
		\end{equation*}
		Multiplying $-i\left(\gamma^{0}-\frac{x_{a}}{t}\gamma^{a} \right)$ to the above identity and  using~\eqref{equ:gamma}, we get
		\begin{equation*}
			\frac{(t-r)(t+r)}{t^{2}}\pt \phi=-\left(\gamma^{0}-\frac{x_{a}}{t}\gamma^{a} \right)\gamma^{b}\frac{L_{b}}{t}\phi+i\left(\gamma^{0}-\frac{x_{a}}{t}\gamma^{a} \right)F,
		\end{equation*}
		This yields
		\begin{equation}\label{est:pphi2}
			|\pt \phi|\lesssim \frac{1}{\langle t-r\rangle}\big|\Gamma\phi\big|+\frac{t}{\langle t-r\rangle}|F|,\quad \quad \mbox{for}\ r\le 3t+3.
		\end{equation}
		Since  $\partial_a = t^{-1}L_{a} - (x_a/t) \partial_t$, we immediately have
		\begin{equation}\label{est:pphi3}
			\left|\partial_{a} \phi\right|\lesssim \frac{1}{t}|L_{a}\phi|+|\pt \phi|\lesssim \frac{1}{\langle t-r\rangle}\big|\Gamma\phi\big|+\frac{t}{\langle t-r\rangle}|F|,\quad \quad \mbox{for}\ r\le 3t+3.
		\end{equation}
		Noticing that the difference between $\Gamma$ and $\widehat{\Gamma}$ is a constant matrix, the desired result then follows.
	\end{proof}
	
	\subsection{Hidden structures within the Dirac--Klein-Gordon system}\label{Se:Hidden}
	
	Let $(\psi, v)$ be the solution to the Cauchy problem \eqref{eq:DKG-L} with data \eqref{eq:DKG-ID}. In order to gain extra $\langle t - r \rangle $ decay for the Dirac solution $\psi$, we adopt an idea due to Bournaveas \cite{Boura00} and  introduce a wave solution $\Psi$ such that 
	\begin{equation} \label{eq:waverepre}
		\left\{
		\begin{aligned}
			-\Box \Psi & = v\psi,   \\
			\Psi|_{t=0}& =0, \ \partial_t \Psi|_{t=0} = -i \gamma^0 \psi_0,
		\end{aligned}
		\right.
	\end{equation}
	from which one can derive that 
	\begin{align} \label{eq:diffwave}
		i \gamma^{\mu} \partial_{\mu} \Psi = \psi.
	\end{align}
	
	\begin{lemma} \label{lem:ghostrepre}
		Let $\Psi$ satisfy \eqref{eq:diffwave}, then for any multi-index $I$, we have 
		\begin{align}
			[\widehat{\Gamma}^I \psi]_{-} = i(I- \omega_b \gamma^0\gamma^b ) \gamma^a G_a \widehat{\Gamma}^I \Psi,
		\end{align}
		where $\omega_b = x_b /r$ and $G_a = \partial_a + \omega_a \partial_t$ denotes the good derivative.
	\end{lemma}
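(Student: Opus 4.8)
The plan is to reduce the claimed identity to a pointwise algebraic fact about the Dirac matrices, after stripping off the vector fields by a commutation argument. First I would note that each $\widehat{\Gamma}_k$ commutes with the Dirac operator $-i\gamma^{\mu}\partial_{\mu}$ by Lemma~\ref{lem:vectorfield}~(ii), and hence so does $\widehat{\Gamma}^I$ for every multi-index $I$, by iterating the Leibniz-type commutator identity $[-i\gamma^{\mu}\partial_{\mu},AB]=[-i\gamma^{\mu}\partial_{\mu},A]B+A[-i\gamma^{\mu}\partial_{\mu},B]$. Thus applying $\widehat{\Gamma}^I$ to \eqref{eq:diffwave} gives $i\gamma^{\mu}\partial_{\mu}(\widehat{\Gamma}^I\Psi)=\widehat{\Gamma}^I\psi$, i.e. the pair $(\widehat{\Gamma}^I\Psi,\widehat{\Gamma}^I\psi)$ satisfies the same first-order relation as $(\Psi,\psi)$. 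Consequently it suffices to prove the lemma when $|I|=0$, namely that $i\gamma^{\mu}\partial_{\mu}\Psi=\psi$ implies $[\psi]_{-}=i(I-\omega_b\gamma^0\gamma^b)\gamma^a G_a\Psi$; the general statement then follows by replacing $(\Psi,\psi)$ with $(\widehat{\Gamma}^I\Psi,\widehat{\Gamma}^I\psi)$.

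For this base case I would decompose the spatial derivatives using the good derivatives, i.e. $\partial_a=G_a-\omega_a\partial_t$. Substituting into $\psi=i\gamma^{\mu}\partial_{\mu}\Psi=i\gamma^0\partial_t\Psi+i\gamma^a\partial_a\Psi$ and collecting the $\partial_t$ terms gives
$$\psi = i(\gamma^0-\omega_a\gamma^a)\partial_t\Psi + i\gamma^a G_a\Psi .$$
I would then apply the matrix $I-\omega_b\gamma^0\gamma^b$ (here $I$ denotes the $4\times4$ identity), which by \eqref{def:phi-} is precisely the operator sending $\phi$ to $[\phi]_{-}$. The desired identity follows immediately once we know that
$$(I-\omega_b\gamma^0\gamma^b)(\gamma^0-\omega_a\gamma^a)=0 ,$$
because then the $\partial_t\Psi$ term is annihilated and only $i(I-\omega_b\gamma^0\gamma^b)\gamma^a G_a\Psi$ survives.

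It remains to check this algebraic identity, which uses only the Clifford relations \eqref{equ:gamma}. From $\{\gamma^{\mu},\gamma^{\nu}\}=-2\eta_{\mu\nu}I_4$ with $\eta=\mathrm{diag}(-1,1,1,1)$ one reads off $(\gamma^0)^2=I_4$, $(\gamma^a)^2=-I_4$, $\gamma^0\gamma^a=-\gamma^a\gamma^0$, and $\gamma^a\gamma^b=-\gamma^b\gamma^a$ for $a\ne b$; together with $\sum_a\omega_a^2=1$ these yield $\omega_b\gamma^0\gamma^b\gamma^0=-\omega_b\gamma^b$ and $\omega_a\omega_b\gamma^0\gamma^b\gamma^a=-\gamma^0$, the off-diagonal ($a\ne b$) part dropping out because it is antisymmetric in $a,b$ while $\omega_a\omega_b$ is symmetric. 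Hence $\omega_b\gamma^0\gamma^b(\gamma^0-\omega_a\gamma^a)=\gamma^0-\omega_b\gamma^b=\gamma^0-\omega_a\gamma^a$, which is exactly the statement that $I-\omega_b\gamma^0\gamma^b$ annihilates $\gamma^0-\omega_a\gamma^a$. I expect no genuine obstacle: the whole content of the lemma is this clean separation — forced by the algebraic structure of the Dirac operator — of the ``bad'' derivative $\partial_t\Psi$ from the ``good'' derivatives $G_a\Psi$, and the only thing requiring care is the bookkeeping of signs and indices in the Minkowski metric convention.
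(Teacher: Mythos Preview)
Your argument is correct and essentially identical to the paper's proof: both write $\widehat{\Gamma}^I\psi=i\gamma^a G_a\widehat{\Gamma}^I\Psi+i(\gamma^0-\omega_a\gamma^a)\partial_t\widehat{\Gamma}^I\Psi$ and then observe that $(I-\omega_b\gamma^0\gamma^b)$ annihilates the second term by the Clifford relations. You spell out the commutation step $[\,-i\gamma^{\mu}\partial_{\mu},\widehat{\Gamma}^I]=0$ and the gamma-matrix algebra more explicitly than the paper does, but there is no substantive difference.
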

	\begin{proof}
		By \eqref{eq:diffwave}, we can rewrite $\psi$ as 
		\begin{align} \label{eq:repasghost}
			\widehat{\Gamma}^I \psi = i \gamma^0\partial_t \widehat{\Gamma}^I\Psi + i \gamma^a \partial_a \widehat{\Gamma}^I\Psi = i\gamma^a G_a\widehat{\Gamma}^I \Psi + i(\gamma^0 - \omega_a \gamma^a)\partial_t \widehat{\Gamma}^I\Psi.
		\end{align}
		Recall that $[\psi]_{-} = \psi - \omega_b \gamma^0 \gamma^b \psi$, one can derive 
		\begin{align}
			[\widehat{\Gamma}^I \psi]_{-} & = (I - \omega_b \gamma^0\gamma^b) \widehat{\Gamma}^I \psi \nonumber  \\
			& = i (I - \omega_b \gamma^0\gamma^b)\gamma^a G_a\widehat{\Gamma}^I \Psi + i (I - \omega_b \gamma^0\gamma^b)(\gamma^0 - \omega_a \gamma^a)\partial_t \widehat{\Gamma}^I\Psi \nonumber\\
			& = i (I - \omega_b \gamma^0\gamma^b)\gamma^a G_a\widehat{\Gamma}^I \Psi + i(\gamma^0-\omega_a \gamma^a +\omega_b\gamma^b +\omega_a \omega_b\gamma^0\gamma^b\gamma^a) \partial_t \widehat{\Gamma}^I\Psi  \nonumber\\
			& = i (I - \omega_b \gamma^0\gamma^b)\gamma^a G_a\widehat{\Gamma}^I \Psi,
		\end{align}
		in which we use \eqref{eq:repasghost} and \eqref{equ:gamma}. The proof is done.
	\end{proof}
	
	Roughly speaking,  we have $\psi \sim \partial \Psi$, by employing the following decay estimate, one can obtain the desired   $\langle t -r \rangle$ decay.

	As a consequence of \eqref{eq:diffwave} and Lemma \ref{lem:partial}, one can see 
	\begin{align}
		|\psi(t,x)| \lesssim |\partial \Psi| \lesssim \langle t - r \rangle^{-1}  \big(|L_0 \Psi| + |\Gamma \Psi| \big).
	\end{align}
	This motivates us to study the conformal energy for the wave solution $\Psi$. However, the nonlinearity in \eqref{eq:waverepre} decays slowly, which makes it difficult to get a good bound on $L_0 \Psi$. To treat this problem, we perform a nonlinear transform, which is formulated as a lemma.
	
	\begin{lemma} \label{transwavedi}
		Let $(\psi, v)$ be the solution to the Cauchy problem \eqref{eq:DKG-L}-\eqref{eq:DKG-ID} and $\Psi$ solve the wave equation \eqref{eq:waverepre}. We denote $ \widetilde{\Psi}= \Psi + v\psi $, then one can obtain 
		\begin{align}
			-\Box \widetilde{\Psi} = (\psi^* \gamma^0 \psi) \psi + i \gamma^{\mu} v  \partial_{\mu}(v\psi) + 2Q_0(v,\psi),
		\end{align}
		where we recall that $Q_0(v,\psi) = \partial_t v \partial_t \psi - \nabla v \cdot \nabla \psi$.
	\end{lemma}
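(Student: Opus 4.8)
The plan is to compute $-\Box\widetilde{\Psi}$ by direct computation. Since $\widetilde{\Psi}=\Psi+v\psi$ and $\Psi$ solves $-\Box\Psi=v\psi$ by \eqref{eq:waverepre}, we have
\begin{equation*}
-\Box\widetilde{\Psi}=-\Box\Psi-\Box(v\psi)=v\psi-\Box(v\psi),
\end{equation*}
so the entire statement reduces to expanding $\Box(v\psi)$ via the Leibniz rule and rewriting the resulting pieces with the Klein-Gordon and Dirac equations of \eqref{eq:DKG-L}. Note that the relation \eqref{eq:diffwave} plays no role here; only $-\Box\Psi=v\psi$ is needed.

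First I would write, with $\partial^{\mu}=\eta^{\mu\nu}\partial_{\nu}$,
\begin{equation*}
\Box(v\psi)=(\Box v)\,\psi+2\,\partial^{\mu}v\,\partial_{\mu}\psi+v\,\Box\psi .
\end{equation*}
The cross term is, up to sign, a null form: $\partial^{\mu}v\,\partial_{\mu}\psi=-\partial_t v\,\partial_t\psi+\nabla v\cdot\nabla\psi=-Q_0(v,\psi)$, and this accounts for the $2Q_0(v,\psi)$ in the claim. For the first term I would invoke the Klein-Gordon equation $-\Box v+v=\psi^{*}\gamma^0\psi$, i.e.\ $\Box v=v-\psi^{*}\gamma^0\psi$, so that $v\psi-(\Box v)\psi=(\psi^{*}\gamma^0\psi)\psi$, which is the cubic Dirac term. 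Only the term $v\,\Box\psi$ then remains.

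The one computation that requires genuine care --- and the only real obstacle --- is deriving the wave equation obeyed by the Dirac field. I would apply $-i\gamma^{\nu}\partial_{\nu}$ to both sides of $-i\gamma^{\mu}\partial_{\mu}\psi=v\psi$: the left side becomes $-\gamma^{\nu}\gamma^{\mu}\partial_{\nu}\partial_{\mu}\psi$, and since $\partial_{\nu}\partial_{\mu}$ is symmetric only the symmetric part $-\tfrac12\{\gamma^{\mu},\gamma^{\nu}\}=\eta_{\mu\nu}I_4$ of $-\gamma^{\nu}\gamma^{\mu}$ survives, giving $\eta_{\mu\nu}\partial_{\nu}\partial_{\mu}\psi=\Box\psi$; the right side becomes $-i\gamma^{\nu}\partial_{\nu}(v\psi)$. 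Hence $\Box\psi=-i\gamma^{\mu}\partial_{\mu}(v\psi)$, so $v\,\Box\psi=-i\gamma^{\mu}v\,\partial_{\mu}(v\psi)$ (the scalar $v$ commutes past the constant matrix $\gamma^{\mu}$) and $-v\,\Box\psi=i\gamma^{\mu}v\,\partial_{\mu}(v\psi)$. Collecting the three pieces,
\begin{equation*}
-\Box\widetilde{\Psi}=v\psi-(\Box v)\psi-2\,\partial^{\mu}v\,\partial_{\mu}\psi-v\,\Box\psi=(\psi^{*}\gamma^0\psi)\psi+i\gamma^{\mu}v\,\partial_{\mu}(v\psi)+2Q_0(v,\psi),
\end{equation*}
which is exactly the claimed identity. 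Beyond this step the argument is pure sign bookkeeping --- tracking the two factors of $-i$, the index raising in $\partial^{\mu}$, and the Clifford relation \eqref{equ:gamma} --- with no analysis involved.
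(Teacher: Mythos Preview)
Your proof is correct. The paper states this lemma without proof, and your computation---expanding $-\Box(v\psi)$ via Leibniz, invoking the Klein-Gordon equation for the $(\Box v)\psi$ term, and deriving $\Box\psi=-i\gamma^{\mu}\partial_{\mu}(v\psi)$ by applying $-i\gamma^{\nu}\partial_{\nu}$ to the Dirac equation and using the Clifford relation \eqref{equ:gamma}---is exactly the intended argument (the paper performs the analogous manipulation later at \eqref{eq:wavedirac}).
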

	
	At the end of this part,  we recall a hidden structure for the nonlinear term $\psi^{*}\gamma^{0}\psi$; see for instance \cite{DLMY}. 
	\begin{lemma}\label{lem:hidden}
		Let $\Phi_{1}$ and $\Phi_{2}$ be two $\C^{4}$-valued functions on $\R^{1+3}$, then we have 
		\begin{equation}\label{equ:HiddenPhi}
			\Phi_{1}^{*}\gamma^{0}\Phi_{2}=\frac{1}{4}\left([\Phi_{1}]_{-}^{*}\gamma^{0}[\Phi_{2}]_{-}+[\Phi_{1}]_{-}^{*}\gamma^{0}[\Phi_{2}]_{+}+[\Phi_{1}]_{+}^{*}\gamma^{0}[\Phi_{2}]_{-}\right).
		\end{equation}
	\end{lemma}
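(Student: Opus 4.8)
The plan is to prove \eqref{equ:HiddenPhi} by a direct computation using only the Clifford relations \eqref{equ:gamma} and the definitions in \eqref{def:phi-}. I would set $A = A(x) := \omega_a \gamma^0\gamma^a$, so that $[\phi]_{\pm} = (I_4 \pm A)\phi$ for every $\C^4$-valued function $\phi$; here the coefficients $\omega_a = x_a/r$ are real scalars and hence commute with all the Dirac matrices. The first step is to record three algebraic facts about $A$, all immediate consequences of \eqref{equ:gamma}: (i) $A$ is self-adjoint, $A^* = A$, using $(\gamma^0)^* = \gamma^0$, $(\gamma^a)^* = -\gamma^a$, and $\gamma^0\gamma^a = -\gamma^a\gamma^0$; (ii) $A^2 = I_4$, using $(\gamma^0)^2 = I_4$, $\gamma^0\gamma^a\gamma^0 = -\gamma^a$, the identity $\gamma^a\gamma^b + \gamma^b\gamma^a = -2\delta_{ab}I_4$, and $\sum_{a}\omega_a^2 = 1$; and (iii) $A$ anticommutes with $\gamma^0$, that is, $A\gamma^0 = -\gamma^0 A$, again from $\{\gamma^0,\gamma^a\}=0$ together with $(\gamma^0)^2 = I_4$.

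Next I would compute the four sandwiched matrices $(I_4 \pm A)\gamma^0(I_4 \pm A)$. Expanding each product and using (ii) and (iii) --- which give $A\gamma^0 + \gamma^0 A = 0$ and $A\gamma^0 A = -A^2\gamma^0 = -\gamma^0$ --- one finds that the two diagonal products vanish, $(I_4 \mp A)\gamma^0(I_4 \mp A) = 0$, whereas $(I_4 - A)\gamma^0(I_4 + A) = 2\gamma^0(I_4 + A)$ and $(I_4 + A)\gamma^0(I_4 - A) = 2\gamma^0(I_4 - A)$.

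Finally I would combine these. Since $[\Phi_1]_{\pm}^* = \Phi_1^*(I_4 \pm A)$ by (i), the three terms on the right-hand side of \eqref{equ:HiddenPhi} become $[\Phi_1]_-^*\gamma^0[\Phi_2]_- = 0$, $[\Phi_1]_-^*\gamma^0[\Phi_2]_+ = 2\Phi_1^*\gamma^0(I_4 + A)\Phi_2$, and $[\Phi_1]_+^*\gamma^0[\Phi_2]_- = 2\Phi_1^*\gamma^0(I_4 - A)\Phi_2$. Adding them yields $2\Phi_1^*\gamma^0\bigl((I_4 + A) + (I_4 - A)\bigr)\Phi_2 = 4\Phi_1^*\gamma^0\Phi_2$, and dividing by $4$ is exactly \eqref{equ:HiddenPhi}.

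There is no genuine obstacle here beyond careful bookkeeping. The one place to be attentive is the mixed sign convention in \eqref{equ:gamma} --- namely $(\gamma^a)^* = -\gamma^a$ but $(\gamma^0)^* = \gamma^0$ --- when verifying $A^* = A$; and it is worth noticing at the outset that both diagonal terms $[\Phi_i]_-^*\gamma^0[\Phi_i]_-$ and $[\Phi_i]_+^*\gamma^0[\Phi_i]_+$ vanish identically, so that the entire content of \eqref{equ:HiddenPhi} is carried by the two cross terms $[\Phi_1]_-^*\gamma^0[\Phi_2]_+$ and $[\Phi_1]_+^*\gamma^0[\Phi_2]_-$.
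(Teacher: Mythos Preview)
Your proof is correct; the three algebraic facts about $A=\omega_a\gamma^0\gamma^a$ and the ensuing sandwich computations are all accurate, and the final combination yields exactly \eqref{equ:HiddenPhi}. The paper itself does not supply a proof of this lemma but merely cites \cite{DLMY}; your direct algebraic verification is the standard argument behind that citation.
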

	
	\begin{lemma} \label{est:derinull}
		For any multi-index $I\in \mathbb{N}^{10}$, and any smooth $\mathbb{C}^{4}$-valued functions $\Phi_{1}$ and $\Phi_{2}$, we have 
		\begin{equation}\label{est:hatGG}
			\left|\Gamma^{I}\left(\Phi_{1}^{*}\gamma^{0}\Phi_{2}\right)\right|\lesssim \sum_{|I_{1}|+|I_{2}|\le |I|}\big|\big(\widehat{{\Gamma}}^{I_{1}}\Phi_{1}\big)^{*}\gamma^{0}\big(\widehat{{\Gamma}}^{I_{2}}\Phi_{2}\big)\big|.
		\end{equation}
	\end{lemma}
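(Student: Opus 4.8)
The plan is to deduce the estimate from a single Leibniz-type identity expressing that the Dirac bilinear form $\Phi_1^*\gamma^0\Phi_2$ behaves as a scalar under the \emph{modified} vector fields $\widehat{\Gamma}_k$, and then to iterate it, absorbing all combinatorial constants into $\lesssim$.

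\emph{Step 1: the one-field identity.} First I would establish that for every generator $\Gamma_k\in\{\partial_\alpha,\Omega_{ab},L_a\}$ and arbitrary smooth $\C^{4}$-valued functions $\Phi_1,\Phi_2$,
\[
\Gamma_k\big(\Phi_1^*\gamma^0\Phi_2\big)=\big(\widehat{\Gamma}_k\Phi_1\big)^*\gamma^0\Phi_2+\Phi_1^*\gamma^0\big(\widehat{\Gamma}_k\Phi_2\big).
\]
Since $\Gamma_k$ is a first-order differential operator with real coefficients acting componentwise, the ordinary Leibniz rule gives $\Gamma_k(\Phi_1^*\gamma^0\Phi_2)=(\Gamma_k\Phi_1)^*\gamma^0\Phi_2+\Phi_1^*\gamma^0(\Gamma_k\Phi_2)$. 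Writing $\Gamma_k=\widehat{\Gamma}_k+\sigma_k$ with $\sigma_k$ the constant matrix ($\sigma_k=\tfrac12\gamma^a\gamma^b$ for $\Omega_{ab}$, $\sigma_k=\tfrac12\gamma^0\gamma^a$ for $L_a$, and $\sigma_k=0$ for $\partial_\alpha$), the difference between the two displayed sides equals $\Phi_1^*\big(\sigma_k^*\gamma^0+\gamma^0\sigma_k\big)\Phi_2$, so it suffices to check the algebraic cancellation $\sigma_k^*\gamma^0+\gamma^0\sigma_k=0$ for each $k$. This is where the hidden structure of the Dirac quadratic form enters, and it follows solely from \eqref{equ:gamma}: for a rotation, $(\gamma^a\gamma^b)^*=\gamma^b\gamma^a$ and $\gamma^0\gamma^a\gamma^b=\gamma^a\gamma^b\gamma^0$, so $\sigma_k^*\gamma^0+\gamma^0\sigma_k=\tfrac12\{\gamma^a,\gamma^b\}\gamma^0=-\eta_{ab}\gamma^0=0$ because $a\neq b$; for a boost, $(\gamma^0\gamma^a)^*\gamma^0=-\gamma^a$ and $\gamma^0\gamma^0\gamma^a=\gamma^a$ since $(\gamma^0)^2=I_4$, so the sum vanishes; for a translation it is trivially $0$.

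\emph{Step 2: iteration and reordering.} Iterating the identity of Step 1 along the ordered product $\Gamma^I=\Gamma_{j_1}\cdots\Gamma_{j_m}$ with $m=|I|$ yields the exact identity
\[
\Gamma^I\big(\Phi_1^*\gamma^0\Phi_2\big)=\sum\big(\widetilde{\Gamma}'\Phi_1\big)^*\gamma^0\big(\widetilde{\Gamma}''\Phi_2\big),
\]
where the sum runs over the ways of distributing the factors $\widehat{\Gamma}_{j_1},\dots,\widehat{\Gamma}_{j_m}$---in their original order---between the two slots, and $\widetilde{\Gamma}',\widetilde{\Gamma}''$ denote the corresponding ordered products of modified fields, of total order $|I|$. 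Since $\partial_\alpha,\widehat{\Omega}_{ab},\widehat{L}_a$ satisfy the same commutation relations as $\partial_\alpha,\Omega_{ab},L_a$---both realize the Poincar\'e Lie algebra, and $[\widehat{\Gamma}_j,\widehat{\Gamma}_k]$ is again a constant-coefficient combination of modified fields carrying \emph{no} extra matrix factor---any ordered product $\widetilde{\Gamma}'$ of length $p$ can be rewritten as a constant-coefficient linear combination of canonically ordered $\widehat{\Gamma}^{J}$ with $|J|\le p$. Substituting this slot by slot and then taking absolute values and the triangle inequality gives
\[
\big|\Gamma^I\big(\Phi_1^*\gamma^0\Phi_2\big)\big|\lesssim\sum_{|I_1|+|I_2|\le|I|}\big|\big(\widehat{\Gamma}^{I_1}\Phi_1\big)^*\gamma^0\big(\widehat{\Gamma}^{I_2}\Phi_2\big)\big|,
\]
which is the claim.

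\emph{Main obstacle.} The only genuine computation is the cancellation $\sigma_k^*\gamma^0+\gamma^0\sigma_k=0$ of Step 1; everything else is bookkeeping. The single point requiring care is that the reordering in Step 2 must be carried out at the operator level, one slot at a time, so that the paired form $(\cdot)^*\gamma^0(\cdot)$ is never broken---this is legitimate precisely because the commutators of the modified fields carry no additional matrices, so one never has to estimate $(\widehat{\Gamma}^{I_1}\Phi_1)^*\gamma^0(\widehat{\Gamma}^{I_2}\Phi_2)$ crudely by $|\widehat{\Gamma}^{I_1}\Phi_1|\,|\widehat{\Gamma}^{I_2}\Phi_2|$, which would destroy the very null structure the estimate is meant to preserve.
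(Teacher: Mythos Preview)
Your proof is correct. The paper does not provide a proof of this lemma---it is stated as a preliminary fact without argument or citation---so there is no proof in the paper to compare against. Your Step~1 identity
\[
\Gamma_k\big(\Phi_1^*\gamma^0\Phi_2\big)=\big(\widehat{\Gamma}_k\Phi_1\big)^*\gamma^0\Phi_2+\Phi_1^*\gamma^0\big(\widehat{\Gamma}_k\Phi_2\big)
\]
is exactly the compatibility of the Dirac bilinear with the Bachelot-modified fields, and your verification of $\sigma_k^*\gamma^0+\gamma^0\sigma_k=0$ from~\eqref{equ:gamma} is correct for all three cases. The iteration and reordering in Step~2 is also sound: the modified fields $\widehat{\Gamma}_k$ do close under commutators with constant structure coefficients (this is the statement that $\widehat{\Gamma}$ furnishes the spinorial Poincar\'e representation), so any ordered product of length $p$ rewrites as a finite linear combination of canonically ordered $\widehat{\Gamma}^{J}$ with $|J|\le p$, and the paired form is never broken. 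Your remark about why one must reorder slot by slot rather than estimate crudely is apt and captures the point of the lemma.
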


	\subsection{Criteria for scattering} In this subsection, we introduce some lemmas which indicate linearly scattering of solutions to wave-type equations.
	\begin{lemma}\label{lem:scatterWave}
		Suppose that $u(t,x)$ is the solution to the Cauchy problem
		$$\left\{\begin{aligned}
			-\Box u(t,x) &=G(t,x),\\
			(u,\partial_t u)|_{t=t_0}&=(u_0,u_1).
		\end{aligned}\right.$$
		Suppose that 
		\begin{equation*}
			\int_{t_0}^{+\infty}\|G(\tau,x)\|_{\dot{H}^{s-1}}\d \tau<+\infty,
		\end{equation*}
		then there exist a pair $(u_0^+,u_1^+)\in \dot{H}^s(\mathbb{R}^3)\times \dot{H}^{s-1}(\mathbb{R}^3)$ and a constant $C$ such that
		\begin{equation*}
			\|(u,\partial_t u)^T-S_{W}(t-t_0)(u_0^+,u_1^+)^T\|_{\dot{H}^s\times \dot{H}^{s-1}}\leq C\int_{t}^{+\infty}\|G(\tau,x)\|_{\dot{H}^{s-1}}\d \tau.
		\end{equation*}
		In particular,
		\begin{equation*}
			\lim_{t\rightarrow+\infty}\|(u,\partial_t u)^T-S_{W}(t-t_0)(u_0^+,u_1^+)^T\|_{\dot{H}^s\times \dot{H}^{s-1}}=0.
		\end{equation*}
		In the above, $S_{W}(t)$ is denoted by
		\begin{equation*}
			\begin{pmatrix}
				\cos(t|\nabla|) & |\nabla|^{-1}\sin(t|\nabla|)\\
				-|\nabla|\sin(t|\nabla|) & \cos(t|\nabla|)
			\end{pmatrix}.
		\end{equation*}
		Here, the operators mean that
		\begin{equation*}
			\begin{aligned}
				\left(|\nabla|^{-1}f\right)(x)&=\mathcal{F}^{-1}\left(|\xi|^{-1}\hat{f}(\xi)\right)(x),\\
				\left(\cos(t|\nabla|)f\right)(x)&=\mathcal{F}^{-1}\left(\cos(t|\xi|)\hat{f}(\xi)\right)(x),
			\end{aligned}
		\end{equation*}
		and other operators are defined similarly.
		
		In addition, $S_{W}(t-t_0)(u_0^+,u_1^+)^T$ solves
		\begin{equation*}
			-\Box u=0, \quad(u,\partial_t u)|_{t=t_0}=(u_0^+,u_1^+).
		\end{equation*}
	\end{lemma}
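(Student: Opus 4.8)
The plan is the standard Duhamel argument: construct the limiting (scattering) data by evolving the inhomogeneous part of the solution backward to $t_0$ with the free flow, and control the error by the tail of the assumed convergent source integral. First I would record Duhamel's formula. Since $-\Box u = G$ reads $\partial_t^2 u - \Delta u = G$ and, by hypothesis, $S_W(t-t_0)(f,g)^T$ is the free evolution of data $(f,g)$ prescribed at time $t_0$, one has
\begin{equation*}
	\begin{pmatrix} u(t) \\ \partial_t u(t)\end{pmatrix}
	= S_W(t-t_0)\begin{pmatrix} u_0 \\ u_1\end{pmatrix}
	+ \int_{t_0}^{t} S_W(t-s)\begin{pmatrix} 0 \\ G(s,x)\end{pmatrix}\d s .
\end{equation*}
Two elementary properties of $S_W$ are needed. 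First, $S_W(\tau)$ is bounded on $\dot{H}^s(\R^3)\times\dot{H}^{s-1}(\R^3)$ uniformly in $\tau\in\R$: its entries are the Fourier multipliers $\cos(\tau|\xi|)$, $|\xi|^{-1}\sin(\tau|\xi|)$, $-|\xi|\sin(\tau|\xi|)$, $\cos(\tau|\xi|)$, and the factors $|\xi|^{\pm1}$ precisely account for the shift between $\dot{H}^s$ and $\dot{H}^{s-1}$, so in particular there is a universal constant $C$ with
\begin{equation*}
	\Big\| S_W(\tau)\begin{pmatrix} 0 \\ g\end{pmatrix}\Big\|_{\dot{H}^s\times\dot{H}^{s-1}} \le C\,\|g\|_{\dot{H}^{s-1}},\qquad \tau\in\R .
\end{equation*}
Second, $S_W$ obeys the group law $S_W(a)S_W(b)=S_W(a+b)$, which is just the angle-addition identities for sine and cosine applied to these multipliers.

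Next I would define the limiting data by propagating the Duhamel integral back to $t_0$,
\begin{equation*}
	\begin{pmatrix} u_0^+ \\ u_1^+\end{pmatrix}
	:= \begin{pmatrix} u_0 \\ u_1\end{pmatrix}
	+ \int_{t_0}^{+\infty} S_W(t_0-s)\begin{pmatrix} 0 \\ G(s,x)\end{pmatrix}\d s ,
\end{equation*}
where the integral converges absolutely in $\dot{H}^s\times\dot{H}^{s-1}$ by the bound above and the assumption $\int_{t_0}^{+\infty}\|G(\tau,x)\|_{\dot{H}^{s-1}}\,\d\tau<+\infty$; in particular $(u_0^+,u_1^+)\in\dot{H}^s\times\dot{H}^{s-1}$. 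Subtracting $S_W(t-t_0)(u_0^+,u_1^+)^T$ from the Duhamel formula and using $S_W(t-t_0)S_W(t_0-s)=S_W(t-s)$, the two Duhamel integrals combine into a single tail:
\begin{equation*}
	\begin{pmatrix} u(t) \\ \partial_t u(t)\end{pmatrix} - S_W(t-t_0)\begin{pmatrix} u_0^+ \\ u_1^+\end{pmatrix}
	= -\int_{t}^{+\infty} S_W(t-s)\begin{pmatrix} 0 \\ G(s,x)\end{pmatrix}\d s .
\end{equation*}
Taking the $\dot{H}^s\times\dot{H}^{s-1}$ norm and applying the uniform bound yields the claimed estimate $\le C\int_t^{+\infty}\|G(\tau,x)\|_{\dot{H}^{s-1}}\,\d\tau$, and the right-hand side tends to $0$ as $t\to+\infty$ since it is the tail of a convergent integral. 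Finally, $S_W(t-t_0)(u_0^+,u_1^+)^T$ solves $-\Box u=0$ with data $(u_0^+,u_1^+)$ at $t=t_0$ by the very definition of $S_W$, which completes the proof.

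There is no genuine obstacle here: this is the textbook ``source term integrable in the energy space $\Rightarrow$ linear scattering'' mechanism. The only points warranting a line of justification are the uniform boundedness of $S_W$ on the homogeneous Sobolev pair --- a trivial multiplier estimate, since the symbols are explicit and bounded by $1$ --- and the convergence of the improper integral defining $(u_0^+,u_1^+)$, which is immediate from the integrability hypothesis on $G$.
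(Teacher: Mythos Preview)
Your argument is correct and is precisely the standard Duhamel/Cook method for proving linear scattering. The paper, however, states this lemma without proof (it is presented as a well-known criterion alongside the analogous Lemmas for Klein--Gordon and Dirac), so there is no paper-proof to compare against; your write-up supplies exactly the routine justification one would expect.
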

	
	\begin{lemma}\label{lem:scatterKG}
		Suppose that $u(t,x)$ is the solution to the Cauchy problem
		$$\left\{\begin{aligned}
			-\Box u +u&=G(t,x),\\
			(u,\partial_t u)|_{t=t_0}&=(u_0,u_1).
		\end{aligned}\right.$$
		Then if
		\begin{equation*}
			\int_{t_0}^{+\infty}\|G(\tau,x)\|_{H^{s-1}}\d \tau<+\infty,
		\end{equation*}
		there exist a pair $(u_0^+,u_1^+)\in H^s(\mathbb{R}^3)\times H^{s-1}(\mathbb{R}^3)$ and a constant $C$ such that
		\begin{equation*}
			\|(u,\partial_t u)^T-S_{KG}(t-t_0)(u_0^+,u_1^+)^T\|_{H^s\times H^{s-1}}\leq C\int_{t}^{+\infty}\|G(\tau,x)\|_{H^{s-1}}\d \tau.
		\end{equation*}
		In particular,
		\begin{equation*}
			\lim_{t\rightarrow+\infty}\|(u,\partial_t u)^T-S_{KG}(t-t_0)(u_0^+,u_1^+)^T\|_{H^s\times H^{s-1}}=0.
		\end{equation*}
		In the above, $S_{KG}(t)$ is denoted by
		\begin{equation*}
			\begin{pmatrix}
				\cos(t\langle \nabla\rangle) & \langle \nabla\rangle^{-1}\sin(t\langle\nabla\rangle)\\
				-\langle\nabla\rangle\sin(t\langle\nabla\rangle) & \cos(t\langle\nabla\rangle)
			\end{pmatrix}.
		\end{equation*}
		Here, the operators mean that
		\begin{equation*}
			\begin{aligned}
				\left(\langle\nabla\rangle^{-1}f\right)(x)&=\mathcal{F}^{-1}\left(\langle\xi\rangle^{-1}\hat{f}(\xi)\right)(x),\\
				\left(\cos(t\langle\nabla\rangle)f\right)(x)&=\mathcal{F}^{-1}\left(\cos(t\langle\xi\rangle)\hat{f}(\xi)\right)(x),
			\end{aligned}
		\end{equation*}
		and other operators are defined similarly.
		
		In addition, $S_{KG}(t-t_0)(u_0^+,u_1^+)^T$ solves
		\begin{equation*}
			-\Box u+u=0, \quad(u,\partial_t u)|_{t=t_0}=(u_0^+,u_1^+).
		\end{equation*}
	\end{lemma}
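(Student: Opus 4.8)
The plan is to run Duhamel's formula ("Cook's method") against the uniform boundedness of the Klein-Gordon propagator on $H^s\times H^{s-1}$ and its group structure. Writing the equation $-\Box u+u=G$ in first-order form for $U=(u,\partial_t u)^{T}$, Duhamel's formula reads
\[
	\begin{pmatrix} u(t) \\ \partial_t u(t)\end{pmatrix}
	= S_{KG}(t-t_0)\begin{pmatrix} u_0 \\ u_1\end{pmatrix}
	+\int_{t_0}^{t} S_{KG}(t-\tau)\begin{pmatrix} 0 \\ G(\tau)\end{pmatrix}\,\d\tau .
\]
The first step is to record two elementary facts about $S_{KG}(t)$, both immediate from Plancherel's theorem since $S_{KG}(t)$ is a matrix of Fourier multipliers built from $\cos(t\langle\xi\rangle)$ and $\langle\xi\rangle^{\pm1}\sin(t\langle\xi\rangle)$: \emph{(i)} the uniform bound $\|S_{KG}(t)(f,g)^{T}\|_{H^s\times H^{s-1}}\lesssim\|(f,g)^{T}\|_{H^s\times H^{s-1}}$ for all $t\in\R$ (with the conserved Klein-Gordon energy norm, which is equivalent to $H^s\times H^{s-1}$, the constant is in fact $1$); and \emph{(ii)} the group law $S_{KG}(t)S_{KG}(\tau)=S_{KG}(t+\tau)$. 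In particular $\|S_{KG}(t-\tau)(0,G(\tau))^{T}\|_{H^s\times H^{s-1}}\lesssim\|G(\tau)\|_{H^{s-1}}$.

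Using the hypothesis $\int_{t_0}^{\infty}\|G(\tau)\|_{H^{s-1}}\,\d\tau<\infty$ together with \emph{(i)}, the improper integral $\int_{t_0}^{\infty}S_{KG}(t_0-\tau)(0,G(\tau))^{T}\,\d\tau$ converges absolutely in $H^s\times H^{s-1}$, so one may define
\[
	\begin{pmatrix} u_0^{+} \\ u_1^{+}\end{pmatrix}
	:= \begin{pmatrix} u_0 \\ u_1\end{pmatrix}
	+\int_{t_0}^{\infty} S_{KG}(t_0-\tau)\begin{pmatrix} 0 \\ G(\tau)\end{pmatrix}\,\d\tau \ \in\ H^s\times H^{s-1}.
\]
Applying $S_{KG}(t-t_0)$ to this identity and invoking the group law \emph{(ii)} to rewrite $S_{KG}(t-t_0)S_{KG}(t_0-\tau)=S_{KG}(t-\tau)$, then subtracting from Duhamel's formula, yields
\[
	\begin{pmatrix} u(t) \\ \partial_t u(t)\end{pmatrix} - S_{KG}(t-t_0)\begin{pmatrix} u_0^{+} \\ u_1^{+}\end{pmatrix}
	= -\int_{t}^{\infty} S_{KG}(t-\tau)\begin{pmatrix} 0 \\ G(\tau)\end{pmatrix}\,\d\tau .
\]
Taking the $H^s\times H^{s-1}$ norm and using \emph{(i)} gives the stated bound $\le C\int_{t}^{\infty}\|G(\tau)\|_{H^{s-1}}\,\d\tau$, which tends to $0$ as $t\to\infty$ since the left-hand side of the integrability hypothesis is finite. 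Finally, that $S_{KG}(t-t_0)(u_0^{+},u_1^{+})^{T}$ solves $-\Box u+u=0$ with data $(u_0^{+},u_1^{+})$ at $t=t_0$ is immediate from the definition of $S_{KG}$.

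I do not expect a serious obstacle here: the whole argument is the standard scattering-via-Duhamel scheme, and the only points requiring (routine) verification are the mapping property \emph{(i)} and the group law \emph{(ii)}, each of which reduces to a one-line Fourier-multiplier computation; convergence of the improper integrals is then automatic from absolute integrability in a Banach space. The companion wave statement in Lemma~\ref{lem:scatterWave} is proved in exactly the same way, with $\langle\xi\rangle$ replaced by $|\xi|$ and the inhomogeneous Sobolev spaces replaced by the homogeneous ones $\dot H^{s}\times\dot H^{s-1}$.
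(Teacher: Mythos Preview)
Your proof is correct and follows the standard Cook's method/Duhamel approach. The paper in fact states this lemma without proof (it is treated as a well-known scattering criterion), so your argument supplies exactly the routine verification one would expect: Duhamel representation, uniform $H^s\times H^{s-1}$ boundedness and group property of the Klein-Gordon propagator via Plancherel, and absolute convergence of the tail integral defining the scattering data.
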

	
	\begin{lemma} \label{lem:scatdir}
		Let $s\in \mathbb{N}$ and  $\phi$ be the solution to the following  Dirac equation 
		\begin{align}
			-i \gamma^{\mu}\partial_{\mu} \phi = F, \ \ \ \phi(t_0,x) = \phi_0.
		\end{align}
		Suppose that $F$ satisfies 
		\begin{align} \label{scacond}
			\int_{t_0}^{\infty} \|F(\tau)\|_{H^s} \, \d\tau <\infty.
		\end{align}
		Then $\phi$ scatters linearly, more precisely, there exists some $\phi_+ \in H^s$, such that 
		\begin{align}
			\|\phi - e^{-i(t-t_0)\mathcal{D}}\phi_{+}\|_{H^s} \lesssim \int_t^{\infty}\|F(\tau)\|_{H^s} \d\tau,
		\end{align}
		where $\mathcal{D} = - i \gamma^0 \gamma^{a} \partial_{a}$. Additionally, $e^{-i(t-t_0)\mathcal{D}}\phi_{+}$ solves the linear Dirac equation 
		\begin{equation*}
			-i \gamma^{\mu}\partial_{\mu} \phi = 0, \ \ \phi(t_0,x)=\phi_{+}.
		\end{equation*}
	\end{lemma}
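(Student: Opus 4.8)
The plan is to recast the Dirac equation as a first-order Schr\"odinger-type evolution generated by the self-adjoint operator $\mathcal{D}$ and then run the standard Cook--Duhamel scattering argument, in complete parallel with the proofs of Lemmas \ref{lem:scatterWave} and \ref{lem:scatterKG}.

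First I would multiply the equation $-i\gamma^\mu\partial_\mu\phi = F$ on the left by $i\gamma^0$ and use $(\gamma^0)^2 = I_4$ together with $\{\gamma^0,\gamma^a\}=0$ (both consequences of \eqref{equ:gamma}) to rewrite it as the evolution equation
\begin{equation*}
\partial_t\phi + i\mathcal{D}\phi = i\gamma^0 F, \qquad \mathcal{D} = -i\gamma^0\gamma^a\partial_a .
\end{equation*}
Next I would observe that $\mathcal{D}$ is self-adjoint on $L^2(\mathbb{R}^3;\mathbb{C}^4)$: its Fourier symbol is the matrix $\gamma^0\gamma^a\xi_a$, and from $(\gamma^0)^*=\gamma^0$, $(\gamma^a)^*=-\gamma^a$ and $\gamma^a\gamma^0=-\gamma^0\gamma^a$ one checks $(\gamma^0\gamma^a)^*=\gamma^0\gamma^a$, so $\gamma^0\gamma^a\xi_a$ is Hermitian for every $\xi\in\mathbb{R}^3$. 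Hence $e^{-i(t-t_0)\mathcal{D}}$, which on the Fourier side acts by the unitary matrix $e^{-i(t-t_0)\gamma^0\gamma^a\xi_a}$, is an isometry of $H^s$ for every $s\in\mathbb{N}$ and commutes with $\langle\nabla\rangle^s$. Duhamel's formula then reads
\begin{equation*}
\phi(t) = e^{-i(t-t_0)\mathcal{D}}\phi_0 + i\int_{t_0}^{t} e^{-i(t-s)\mathcal{D}}\gamma^0 F(s)\,\d s .
\end{equation*}

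With this formula in hand the rest is routine. Using the hypothesis \eqref{scacond} and the $H^s$-isometry property, the integral $\phi_+ := \phi_0 + i\int_{t_0}^{\infty} e^{-i(t_0-s)\mathcal{D}}\gamma^0 F(s)\,\d s$ converges in $H^s$, so $\phi_+\in H^s$; applying $e^{-i(t-t_0)\mathcal{D}}$ and using the group law $e^{-i(t-t_0)\mathcal{D}}e^{-i(t_0-s)\mathcal{D}}=e^{-i(t-s)\mathcal{D}}$ I would obtain
\begin{equation*}
\phi(t) - e^{-i(t-t_0)\mathcal{D}}\phi_+ = -i\int_{t}^{\infty} e^{-i(t-s)\mathcal{D}}\gamma^0 F(s)\,\d s ,
\end{equation*}
and taking $H^s$ norms, using the isometry of the propagator and the boundedness of the constant matrix $\gamma^0$, yields $\|\phi(t)-e^{-i(t-t_0)\mathcal{D}}\phi_+\|_{H^s}\lesssim\int_t^\infty\|F(s)\|_{H^s}\,\d s$, which tends to $0$ by \eqref{scacond}. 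Finally I would check that $u(t):=e^{-i(t-t_0)\mathcal{D}}\phi_+$ solves the free Dirac equation with data $\phi_+$ by reversing the first step: $u$ satisfies $\partial_t u + i\mathcal{D}u=0$, and left-multiplication by $-i\gamma^0$ turns this into $-i\gamma^\mu\partial_\mu u = 0$.

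I do not expect any genuine obstacle here; the one point that needs a moment of care is the mapping property of the propagator, namely that $e^{-i(t-t_0)\mathcal{D}}$ is bounded --- in fact unitary --- on every $H^s$. This rests entirely on the Hermiticity of the matrix symbol $\gamma^0\gamma^a\xi_a$, a direct consequence of the anticommutation and conjugation identities \eqref{equ:gamma} for the Dirac matrices; once that is noted, the remaining bookkeeping is identical to that in Lemmas \ref{lem:scatterWave}--\ref{lem:scatterKG}.
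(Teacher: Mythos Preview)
Your argument is correct and complete. The paper itself does not supply a proof of Lemma~\ref{lem:scatdir}; like Lemmas~\ref{lem:scatterWave} and~\ref{lem:scatterKG}, it is stated as a standard scattering criterion without proof. Your Cook--Duhamel approach---rewriting the equation as $\partial_t\phi+i\mathcal{D}\phi=i\gamma^0 F$, verifying via \eqref{equ:gamma} that the Fourier symbol $\gamma^0\gamma^a\xi_a$ is Hermitian so that $e^{-i(t-t_0)\mathcal{D}}$ is an $H^s$-isometry, and then defining $\phi_+$ as the $H^s$-limit of the Duhamel profile---is exactly the expected proof and fills in what the paper leaves implicit.
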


	\section{Proof of Theorem~\ref{thm:KGZ-L}}\label{S:proofTh1.3}
	
	\subsection{Bootstrap assumption}  
	From now on, we shall set the initial time $t=t_0=0$.
	Let $N\in \mathbb{N} \ \mbox{with} \ N\geq 10$. Fix $0<\delta\ll 1$. To prove Theorem~\ref{thm:KGZ-L}, we introduce the following bootstrap assumption of $E$: for $C_1\gg 1$ and $0<\epsilon\ll C_1^{-1}$ to be chosen later, 
	\begin{equation}\label{est:BootE}
		\left\{\begin{aligned}
			\mathcal{E}^{\frac{1}{2}}_{gst,1}(t,\Gamma^I\partial^{J} E)&\leq C_1^{1+(|I|+|J|)\delta}\epsilon,\quad\quad \ \ \ \mbox{for}\quad |I|\leq 10, \ |I|+|J|\leq N,\\
			\sup_{x\in \R^3}\langle t+r\rangle^{\frac{3}{2}}|\Gamma^I \partial^{J}E(t,x)|&\leq C_1^{1+(|I|+|J|+13)\delta}\epsilon,\  \ \quad\mbox{for}\quad |I|\leq 5,\ |I|+|J|\leq N-5.
		\end{aligned}\right.
	\end{equation}

	For all initial data $(\vec{E}_{0},\vec{n}_{0})$ satisfying~(\ref{initialdata}), we set
	\begin{equation*}\label{def:T}
		T^{*}(\vec{E}_{0},\vec{n}_{0})=\sup\left\{ t\in [0,+\infty): E\ \mbox{satisfies}~\eqref{est:BootE}\ \mbox{on}\ [0,t]\right\}.
	\end{equation*} 
	Note that we denote the initial data $(E_{0},E_{1},n_{0},n_{1})$ by $(\vec{E}_{0},\vec{n}_{0})$ for simplicity of notation.
	
	The following proposition, which establishes global existence of the system~(\ref{eq:KGZ-L})--(\ref{eq:KGZ-ID}), is part of Theorem~\ref{thm:KGZ-L}.
	\begin{proposition}\label{pro:KGZ-L}
		For all initial data $(\vec{E}_{0},\vec{n}_{0})$ satisfying the conditions~(\ref{initialdata}) in Theorem~\ref{thm:KGZ-L}, we have $T^{*}(\vec{E}_{0},\vec{n}_{0})=+\infty$.
	\end{proposition}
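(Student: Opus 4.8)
The plan is to run a continuity/bootstrap argument, showing that on any interval $[0,T^*)$ on which the bootstrap assumptions \eqref{est:BootE} hold with constant $C_1$, they in fact hold with $C_1$ replaced by $C_1/2$; combined with local well-posedness and the fact that \eqref{est:BootE} holds strictly at $t=0$ (by the data assumption \eqref{initialdata}), this forces $T^* = +\infty$. First I would set up the decomposition $n = n^0 + \Delta n^1$ as in \eqref{eq:KGZ-L2}, so that $n^0$ is a free wave carrying the large data and $n^1$ solves $-\Box n^1 = |E|^2$ with zero data; the Klein--Gordon equation for $E$ then reads $-\Box E + E = -n^0 E - \Delta n^1 E$. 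For the free wave $n^0$, classical linear estimates (the $L^2$ estimate and Proposition-type pointwise bounds for the wave equation, together with Lemma~\ref{lem:partial} and Klainerman--Sobolev \eqref{est:Sobo}) give, for $|I|\le N$, bounds of the form $\|\Gamma^I n^0(t)\| \lesssim K_0$ and the pointwise decay $|n^0(t,x)| \lesssim K_0 \langle t+r\rangle^{-1}\langle t-r\rangle^{-1/2}$, as well as the key integrability $\int_0^\infty \|\langle s-r\rangle^{1/2+\delta} n^0\|_{L^\infty}^2\,\d s \lesssim K_0^2$ coming from the weighted $L^2$-type / $L^\infty$ estimate and the extra $\langle t-r\rangle$ decay of the good derivatives.

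Next I would close the energy estimate for $E$. Applying $\Gamma^I\partial^J$ to the equation and using the commutator bounds in Lemmas~\ref{lem:vectorfield}--\ref{lem:commutators}, the ghost-weight energy estimate \eqref{est:ghost} gives
\begin{align*}
\mathcal{E}_{gst,1}(t,\Gamma^I\partial^J E) \lesssim \mathcal{E}_{gst,1}(0,\Gamma^I\partial^J E) + \int_0^t\!\!\int_{\R^3}\big|\Gamma^I\partial^J(n^0 E + \Delta n^1 E)\,\partial_t \Gamma^I\partial^J E\big|\,\d x\,\d s .
\end{align*}
The term involving $\Delta n^1 E$ is estimated using the wave estimates for $n^1$ (Hessian decay, Lemma~\ref{lem:Hessian}, and the conformal/energy estimates), exploiting that $n^1$ is quadratic in $E$ hence of size $\sim \epsilon^2$, so it contributes only lower-order terms. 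The term involving $n^0 E$ is the crucial one: one splits the $\langle t-r\rangle$ weight, using Cauchy--Schwarz with a small parameter $\eta$ to absorb $\eta\,\mathcal{E}_{gst,1}$ on the left and leaving $C_\eta \int_0^t \|\langle s-r\rangle^{1/2+\delta} n^0\|_{L^\infty}^2 \,\mathcal{E}_{gst,1}(s,\Gamma^I\partial^J E)\,\d s$. Gronwall (Lemma~\ref{lem:gronwall}) then yields $\mathcal{E}_{gst,1}(t,\Gamma^I\partial^J E) \lesssim \big(\epsilon^2 + \epsilon^4\big)\exp\!\big(CK_0^2\big)$, which improves the first bootstrap bound provided $\epsilon$ is chosen small relative to $C_1$ and $\exp(CK_0^2)$ — this is exactly the exponential dependence $\epsilon_0 \lesssim K_0^{-\frac34(N+1)}\exp(-K_0^2)$ advertised in the introduction. (One must be careful here: at top order $|I|+|J|=N$ the lower-order pieces generated by the commutators and the Leibniz expansion of $\Gamma^I\partial^J(n^0E)$ must already be controlled at strictly lower order, so the estimates are done by induction on $|I|+|J|$, which is compatible with the $C_1^{1+(|I|+|J|)\delta}$ structure of the bootstrap.)

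Finally I would recover the pointwise bound for $E$. Using the Klein--Gordon pointwise estimate (Proposition~\ref{pro:KGpointwise} / Corollary~\ref{cor:KGpoint}) applied to $\Gamma^I\partial^J E$ with $|I|\le 5$, $|I|+|J|\le N-5$, the source term $\Gamma^{I'}\partial^{J'}(n^0E+\Delta n^1 E)$ with a few extra vector fields is controlled in the weighted norm $\|\langle s+r\rangle\,\cdot\,\|$ using the already-established energy bounds together with the decay of $n^0$ and $n^1$; this gives $\langle t+r\rangle^{3/2}|\Gamma^I\partial^J E| \lesssim \epsilon$ up to constants depending on $C_1^\delta$ and a power of $K_0$, improving the second bootstrap bound. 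I expect the main obstacle to be the $n^0 E \partial_t E$ term in the energy estimate: since $n^0$ is genuinely large, no naive Grönwall-free bound works, and one must simultaneously (a) extract the sharp $\langle t-r\rangle^{1/2+\delta}$-weighted $L^\infty$ decay of $n^0$ so that its square is time-integrable, and (b) arrange the bootstrap hierarchy so that the small parameter $\eta$ in the Cauchy--Schwarz split can be taken uniformly in $|I|,|J|$ while still closing at top order; the $\epsilon$-versus-$K_0$ bookkeeping at the end (tracking how many powers of $K_0$ enter through $n^0$, $n^1$ and the Sobolev constants) is the other delicate point.
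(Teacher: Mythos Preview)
Your plan is correct and matches the paper's approach essentially step for step: decompose $n=n^0+\Delta n^1$, control $n^0$ linearly, use the ghost-weight estimate on $\Gamma^I\partial^J E$ with a Cauchy--Schwarz split (the paper's $C_2^{-1}$ is your $\eta$) followed by Gronwall for the $n^0E$ piece, treat $\Delta n^1 E$ as $O(\epsilon^2)$ via Hessian estimates, and close the pointwise bound through Corollary~\ref{cor:KGpoint}. The only refinements the paper adds beyond your outline are two auxiliary lemmas you will need when implementing the top-order case: a separate low-order pointwise bound $\langle t+r\rangle^{5/4}|\Gamma^I\partial^J E|\lesssim \epsilon K_0^3$ for $|I|+|J|\le 1$ (Lemma~\ref{lem:Elower}, used when too many derivatives fall on $n^0$ and it must go in $L^2$), and a weighted $L^2$ bound $\|\langle t+r\rangle\langle t-r\rangle^{-1}\Gamma^I\partial^J E\|$ via Lemma~\ref{est:weightKG} (Lemma~\ref{lem:weightE}, used to feed into the $\|\langle s+r\rangle\cdot\|$ norm in Corollary~\ref{cor:KGpoint}); both fit naturally into your scheme.
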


	\subsection{Key estimates}
	In this subsection, we establish some estimates on solution $(E,n^0,n^1)$ that will be used in the proof of Proposition~\ref{pro:KGZ-L}. From now on, the implied constants in $\lesssim$ do not depend on the constants $C_1$ and $\epsilon$ appearing in the bootstrap assumption~(\ref{est:BootE}).
	
	Based on Lemma~\ref{lem:vectorfield} and (\ref{eq:KGZ-L2}), for any $I\in \mathbb{N}^{10}, J\in \mathbb{N}^{4}$, we see that
	\begin{equation}\label{eq:vectorfieldE}
		-\Box\Gamma^I\partial^JE+\Gamma^I\partial^JE=-\Gamma^I\partial^J(n^0E+\Delta n^1E),
	\end{equation}
	\begin{equation}\label{eq:vectorfieldn0}
		-\Box\Gamma^I\partial^Jn^0=0,
	\end{equation}
	\begin{equation}\label{6Jeq:vectorfieldn1}
		-\Box\Gamma^I\partial^Jn^1=\Gamma^I\partial^J|E|^2.
	\end{equation}
	
	First, we introduce estimates of the solution $n^0$ to the homogeneous wave equation in~(\ref{eq:KGZ-L2}).
	\begin{lemma}[Estimates of $n^0$]\label{lem:n0}
		We have the following estimates.
		
		\begin{enumerate}
			\item {\rm {$L^2$ estimate of $\Gamma^I\partial^J n^0$}}. For $|I|\leq 10,|I|+|J|\leq N$, we have
			\begin{equation}\label{est:Gamma_n0L^2}
				\|\Gamma^I\partial^J n^0\|\lesssim K_0.
			\end{equation}
			
			\item {\rm {Pointwise decay estimate of $\Gamma^I\partial^J n^0$}}. For $|I|\leq 8,|I|+|J|\leq N-2$, we have
			\begin{equation}\label{est:Gamma_n0point}
				|\Gamma^I\partial^J n^0|\lesssim \langle t+r\rangle^{-1}\langle t-r\rangle^{-\frac{1}{2}}K_0.
			\end{equation}
			
		\end{enumerate}
	\end{lemma}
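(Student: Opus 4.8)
Both estimates are consequences of the fact that $n^0$ is a free wave, combined with the linear machinery already recorded. By Lemma~\ref{lem:vectorfield}(i) every $\Gamma_k$ and every $\partial_\alpha$ commutes with $\Box$, so each $\Gamma^I\partial^J n^0$ again solves $-\Box(\Gamma^I\partial^J n^0)=0$; moreover, since $[-\Box,L_0]=-2\Box$ and $\Box n^0=0$, each $Z^{I'}\Gamma^I\partial^J n^0$ is a free wave as well. Thus (i) is obtained by bounding $L^2$ norms of free waves in terms of their Cauchy data, and (ii) by feeding (i) into the Klainerman--Sobolev inequality.

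\textbf{Proof of (i).} For $u=\Gamma^I\partial^J n^0$ I would combine two inputs: conservation of the natural energy (the $G\equiv 0$ case of \eqref{est:eneryWave}), which controls $\|\partial u(t)\|$, and the $L^2$ estimate $\|u(t,x)\|\lesssim\|u_0\|+\|u_1\|_{L^1}+\|u_1\|$ for the 3D homogeneous wave equation recalled above. Whenever at least one plain derivative $\partial_\alpha$ is present, the commutator Lemma~\ref{lem:commutators} lets me write $\Gamma^I\partial^J n^0$ as a finite sum of terms $\partial_\beta(\Gamma^K\partial^L n^0)$ with $|K|+|L|\le|I|+|J|-1$; I would then apply the energy identity to the inner free wave $\Gamma^K\partial^L n^0$, giving $\|\Gamma^I\partial^J n^0(t)\|\lesssim\|\partial\,\Gamma^K\partial^L n^0(0)\|$, while the few remaining low-complexity cases (no plain derivative at all) are handled by the $L^2$ estimate directly. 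In all cases one is reduced to $L^2$ and $L^1$ norms of $\Gamma^I\partial^J n^0$ and its time derivative \emph{at} $t=0$. Expanding the vector fields there, where $\Omega_{ab}=x_a\partial_b-x_b\partial_a$, $L_a=x_a\partial_t$, $L_0=x^a\partial_a$, and repeatedly eliminating time derivatives by $\partial_t^{2m}n^0|_{t=0}=\Delta^m n_0$ and $\partial_t^{2m+1}n^0|_{t=0}=\Delta^m n_1$, produces a sum of terms $\langle x\rangle^p\nabla^q n_0$ and $\langle x\rangle^p\nabla^q n_1$ with $p$ at most the number of rotations/boosts/scalings used ($\le 10$) and $q\le|I|+|J|\le N$, one derivative and one unit of order being spent on the $n_1$-pieces — matching the shorter derivative range for $n_1$ in \eqref{initialdata}. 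Bounding the $L^1$ norms by weighted $L^2$ via Cauchy--Schwarz in $\R^3$ ($\|f\|_{L^1}\lesssim\|\langle x\rangle^2 f\|$), all of these are dominated by the left-hand side of \eqref{initialdata}, which gives \eqref{est:Gamma_n0L^2}.

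\textbf{Proof of (ii).} I would apply the Klainerman--Sobolev inequality \eqref{est:Sobo} to $u=\Gamma^I\partial^J n^0$:
\[
|\Gamma^I\partial^J n^0(t,x)|\lesssim\langle t+r\rangle^{-1}\langle t-r\rangle^{-1/2}\sum_{|I'|\le 2}\big\|Z^{I'}\Gamma^I\partial^J n^0(t)\big\|.
\]
Since $Z^{I'}\Gamma^I\partial^J n^0$ is again a free wave, each norm on the right is estimated exactly as in part (i); the two extra vector fields cost two orders, i.e. one needs $|I|+2\le 10$ and $(|I|+|J|)+2\le N$, which is precisely the range $|I|\le 8$, $|I|+|J|\le N-2$ in the statement. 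This yields \eqref{est:Gamma_n0point}.

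\textbf{Main difficulty.} No individual step is deep; the real work is the bookkeeping in part (i) — tracking the exact powers of $\langle x\rangle$ and the orders of derivatives created when all the vector fields are expanded and the time derivatives eliminated, and checking that every weighted $L^1$ and $L^2$ norm of $(n_0,n_1)$ that arises lies among those controlled by \eqref{initialdata}. The weight/derivative pairing in the hypotheses (the extra $\langle x\rangle$ on $n_1$, the shorter derivative range for $n_1$, the cap $\langle x\rangle^i$ with $i\le 10$) is calibrated so that this matching closes.
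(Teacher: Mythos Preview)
Your treatment of part~(ii) is correct and is exactly what the paper does: apply the Klainerman--Sobolev inequality~\eqref{est:Sobo} and feed the resulting $L^2$ norms back into part~(i), at the cost of two orders.

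Part~(i), however, has a genuine gap. You propose to handle the cases with ``no plain derivative at all'' by the $L^2$ lemma
\[
\|u(t)\|\lesssim\|u_0\|+\|u_1\|_{L^1}+\|u_1\|,
\]
but these cases are \emph{not} low-complexity: with $|J|=0$ and $\Gamma^I$ consisting of ten rotations (or boosts), $|I|=10$ is allowed. Take $\Gamma^I=\Omega^{10}$. Then $u_1=\partial_t(\Omega^{10}n^0)(0)=\Omega^{10}n_1$, whose top term is schematically $\langle x\rangle^{10}\nabla^{10}n_1$. Your Cauchy--Schwarz step $\|f\|_{L^1}\lesssim\|\langle x\rangle^2 f\|$ then asks for $\|\langle x\rangle^{12}\nabla^{10}n_1\|$. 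But in~\eqref{initialdata} the $n_1$ norms are $\|\langle x\rangle^{i+1}\nabla^{i+j}n_1\|$ with $i\le 9$, so the weight exponent never exceeds $10$, and more structurally never exceeds the derivative order by more than $1$. The two extra $\langle x\rangle$ powers coming from $L^1\hookrightarrow L^2$ therefore \emph{never} fit the hypothesis, for any nonzero $|I|$ of this type. So the bookkeeping you flag as the main difficulty does not close with your tools.

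The paper avoids the $L^1$ norm entirely by using the \emph{conformal} energy in place of your $L^2$ lemma. For $|J|=0$ and $|I|\ge 1$ one peels off the outermost $\Gamma_k$: if $\Gamma_k=\partial$, then $\|\Gamma^I n^0\|\le\mathcal{E}^{1/2}(t,\Gamma^{K}n^0)$; if $\Gamma_k\in\{\Omega,L\}$, then $\|\Gamma^I n^0\|\le\mathcal{E}_{con}^{1/2}(t,\Gamma^{K}n^0)$, since $\|\Omega u\|,\|Lu\|,\|u\|$ all sit inside $\mathcal{E}_{con}$. Both energies are conserved for free waves (Lemma~\ref{lem:staconene} with $G\equiv 0$), so one is reduced to $L^2$ norms at $t=0$ only, and the weight count then matches~\eqref{initialdata} exactly. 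The base case $|I|=|J|=0$ is handled the same way, since $\|n^0\|\le\mathcal{E}_{con}^{1/2}(t,n^0)$.
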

	\begin{proof}
		\textbf{Proof of (i).} Let $|I|\leq 10,|I|+|J|\leq N$.
		If $|I|=|J|=0$, from the definition of the conformal energy, we have
		\begin{equation*}
			\|n^0\|\lesssim\mathcal{E}_{con}^{\frac{1}{2}}(t,n^0)\lesssim\mathcal{E}_{con}^{\frac{1}{2}}(0,n^0).
		\end{equation*}
		If $|J|=0,1\leq |I|\leq 10$, from the definition of natural energy and conformal energy, we have
		\begin{equation*}
			\begin{aligned}
				\|\Gamma^I n^0\|&\lesssim\sum_{\substack{|K|=|I|-1}}(\mathcal{E}_{con}^{\frac{1}{2}}(t,\Gamma^K n^0)+\mathcal{E}^{\frac{1}{2}}(t,\Gamma^K n^0))\\
				&\lesssim\sum_{\substack{|K|=|I|-1}}(\mathcal{E}_{con}^{\frac{1}{2}}(0,\Gamma^K n^0)+\mathcal{E}^{\frac{1}{2}}(0,\Gamma^K n^0)).
			\end{aligned}
		\end{equation*}
		If $0\leq |I|\leq 10,|J|\geq 1,|I|+|J|\leq N,$ we have
		\begin{equation*}
			\begin{aligned}
				\|\Gamma^I\partial^J n^0\|&\lesssim\sum_{|K_1|\leq |I|,|K_2|=|J|-1}\|\partial\Gamma^{K_1}\partial^{K_2}n^0\|\\
				&\lesssim\sum_{|K_1|\leq |I|,|K_2|=|J|-1}\mathcal{E}^{\frac{1}{2}}(t,\Gamma^{K_1}\partial^{K_2}n^0)\\
				&\lesssim\sum_{|K_1|\leq |I|,|K_2|=|J|-1}\mathcal{E}^{\frac{1}{2}}(0,\Gamma^{K_1}\partial^{K_2}n^0).
			\end{aligned}
		\end{equation*}
		Then the condition in~(\ref{initialdata}) implies~(\ref{est:Gamma_n0L^2}).
		
		\textbf{Proof of (ii).} Using the Klainerman-Sobolev inequality~(\ref{est:Sobo}), for $|I|\leq 8, |I|+|J|\leq N-2$, we can deduce
		\begin{equation*}
			|\Gamma^I\partial^J n^0|
			\lesssim\langle t+r\rangle^{-1}\langle t-r\rangle^{-\frac{1}{2}}\sum_{|K|\leq2}\|Z^K\Gamma^I\partial^J n^0\|.
		\end{equation*}
		Similarly, we can get~(\ref{est:Gamma_n0point}).
	\end{proof}

	Second, we introduce some energy estimates of the solution $n^1$ to the nonhomogeneous wave equation in~(\ref{eq:KGZ-L2}), including standard energy estimates and conformal energy estimates.
	\begin{lemma}\label{lem:energyn1}%[Estimates of $n^1$] 
		Let $|I|\leq 10, |I|+|J|\leq N.$ For all $t\in [0,T^*(\vec{E}_{0},\vec{n}_{0})),$ we have the following estimates.
		\begin{enumerate}
			\item {\rm {Standard energy estimate of $\Gamma^I\partial^J n^1$}}. We have
			\begin{equation}\label{est:energy_n1_1}
				\mathcal{E}^{\frac{1}{2}}(t,\Gamma^I\partial^J n^1)\lesssim \epsilon^2 K_0^{\max\{[\frac{|I|+|J|-1}{2}],0\}}+C_1^{2+(|I|+|J|+13)\delta}\epsilon^2,
			\end{equation}
			in which $[a]$ denotes the maximal integer less than or equal to $a$.
			
			\item {\rm {Standard energy estimate of $\partial\Gamma^I\partial^J n^1$}}. We have
			\begin{equation}\label{est:energy_n1_2}
				\mathcal{E}^{\frac{1}{2}}(t,\partial\Gamma^I\partial^J n^1)\lesssim \epsilon^2 K_0^{[\frac{|I|+|J|}{2}]}+C_1^{2+(|I|+|J|+14)\delta}\epsilon^2.
			\end{equation}
			
			\item{\rm{Conformal energy estimate of $\Gamma^I\partial^J n^1$.}} We have
			\begin{equation}\label{est:conformal_n1}
				\mathcal{E}^{\frac{1}{2}}_{con}(t,\Gamma^I\partial^J n^1)\lesssim \epsilon^2 K_0^{\max\{[\frac{|I|+|J|-1}{2}],0\}}+C_1^{2+(|I|+|J|+13)\delta}\epsilon^2\langle t\rangle^{\frac{1}{2}}.
			\end{equation}
		\end{enumerate}
	\end{lemma}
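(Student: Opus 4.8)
The three estimates for $n^1$ all follow from the wave equation \eqref{6Jeq:vectorfieldn1} by combining the energy inequalities of Lemma~\ref{lem:staconene} (standard energy \eqref{est:eneryWave} and conformal energy \eqref{est:Conformal}) with a careful bound on the $L^2$-norm of $\Gamma^I\partial^J|E|^2$. The natural time-integrand is $\|\Gamma^I\partial^J|E|^2(s)\|$ (and $\|\langle s+r\rangle\Gamma^I\partial^J|E|^2(s)\|$ for the conformal estimate), so the first step is to expand $\Gamma^I\partial^J|E|^2$ by Leibniz into a sum of products $(\Gamma^{I_1}\partial^{J_1}E)(\Gamma^{I_2}\partial^{J_2}E)$ with $|I_1|+|I_2|\le|I|$, $|J_1|+|J_2|\le|J|$, and then split each product by putting the $L^\infty$ factor on the term of lower order and the $L^2$ factor on the other.

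First I would record, as a preliminary, the bound $\|\Gamma^I\partial^J E(s)\|\lesssim \mathcal E_{gst,1}^{1/2}(s,\Gamma^I\partial^J E)\le C_1^{1+(|I|+|J|)\delta}\epsilon$ (from the bootstrap \eqref{est:BootE}), together with the pointwise bound $\langle s+r\rangle^{3/2}|\Gamma^I\partial^J E|\le C_1^{1+(|I|+|J|+13)\delta}\epsilon$ for $|I|\le5$, $|I|+|J|\le N-5$. Since $N\ge10$, in any product of total order $\le N$ at least one factor has order $\le N/2 \le N-5$ and rotation-order $\le 5$, so the $L^\infty$ bound is always available for the smaller factor. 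Hence for $|I|+|J|\le N$,
\begin{align*}
\big\|\Gamma^I\partial^J|E|^2(s)\big\|\lesssim \sum C_1^{1+(\cdot+13)\delta}\epsilon\,\langle s+r\rangle^{-3/2}\cdot C_1^{1+(\cdot)\delta}\epsilon \lesssim C_1^{2+(|I|+|J|+13)\delta}\epsilon^2\langle s\rangle^{-3/2}.
\end{align*}
This is integrable in $s$, so \eqref{est:eneryWave} immediately yields \eqref{est:energy_n1_1}; the same computation with one extra $\partial$ gives \eqref{est:energy_n1_2}; and for \eqref{est:conformal_n1} the weight $\langle s+r\rangle$ only costs one power, leaving $\langle s\rangle^{-1/2}$, whose time integral is $\lesssim\langle t\rangle^{1/2}$, which is the source of the $\langle t\rangle^{1/2}$ factor there.

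The subtler point — and what accounts for the $K_0$-dependent terms $\epsilon^2 K_0^{[\cdot]}$ in the statements — is that not all derivatives hitting $E$ are harmless: when $|J_1|$ or $|J_2|$ exceeds the range where the pointwise bootstrap applies, or when one wants a sharper count, one must instead use that $n^1$ enters the $E$-equation \eqref{eq:vectorfieldE} through $\Delta n^1 E$, and iterate. More precisely, the $K_0^{[\frac{|I|+|J|-1}{2}]}$ factor arises because estimating high derivatives of $E$ ultimately requires controlling $\Gamma^I\partial^J(\Delta n^1 E)$, and each time one peels off a factor of $n$ (via $n^0$ or $\Delta n^1$) one pays a power of $K_0$ from \eqref{est:Gamma_n0L^2}; the exponent $[\frac{|I|+|J|-1}{2}]$ counts how many such factors can accumulate within total order $|I|+|J|$. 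The main obstacle, then, is bookkeeping: one must set up the Leibniz expansion so that the $K_0$-powers are tracked tightly (matching the stated exponents rather than a cruder bound), and one must make sure the high-order $E$-estimates feeding into $\|\Gamma^I\partial^J|E|^2\|$ are already available at the order needed — i.e. this lemma should be proved jointly with, or downstream of, the $E$-energy estimates, so there is an implicit inductive structure on $|I|+|J|$ that has to be made consistent with the bootstrap ordering.
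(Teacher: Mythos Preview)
Your treatment of the time integral is correct and matches the paper: Leibniz on $\Gamma^I\partial^J|E|^2$, place the lower-order factor in $L^\infty$ via the pointwise bootstrap \eqref{est:BootE}, the higher-order one in $L^2$ via the energy bootstrap, and obtain $\|\Gamma^I\partial^J|E|^2(s)\|\lesssim C_1^{2+(|I|+|J|+13)\delta}\epsilon^2\langle s\rangle^{-3/2}$, integrable in $s$; for (iii) one loses one power of $\langle s+r\rangle$ and integrates $\langle s\rangle^{-1/2}$. This is exactly what the paper does.

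Your account of the $K_0$-dependent term is wrong, however. There is no iteration through the $E$-equation, no inductive structure on $|I|+|J|$, and the bootstrap bounds on $E$ (which carry no $K_0$ at all) are the only input used to control the source term. The $\epsilon^2 K_0^{\max\{[(|I|+|J|-1)/2],0\}}$ contribution comes entirely from the \emph{initial data term} $\mathcal{E}^{1/2}(0,\Gamma^I\partial^J n^1)$ in the energy inequality \eqref{est:eneryWave}. Although $(n^1,\partial_t n^1)(0)=(0,0)$, the vector fields $\Gamma^I\partial^J$ can contain time derivatives; using $-\Box n^1=|E|^2$ one finds $\partial_t^k n^1(0)$ for $k\ge2$ is a polynomial in $\partial_t^j E(0)$, and through the $E$-equation $-\Box E+E=-nE$ each $\partial_t^2$ acting on $E$ can bring in one factor of $n_0\sim K_0$. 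This is what produces the power $K_0^{\max\{[(|I|+|J|-1)/2],0\}}$ (compare the analogous initial-data bound \eqref{est:I.D.1} for $E$). Once you recognize this, the proof is immediate from \eqref{est:eneryWave}, \eqref{est:Conformal}, \eqref{initialdata}, and \eqref{est:BootE} with no further structure needed.
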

	
	\begin{proof}
		\textbf{Proof of (i).}
		Using~(\ref{initialdata}), (\ref{est:eneryWave}), (\ref{est:BootE}) and (\ref{6Jeq:vectorfieldn1}), for $|I|\leq 10, |I|+|J|\leq N, N\geq 10$, we can get
		\begin{equation*}
			\begin{aligned}
				&\mathcal{E}^{\frac{1}{2}}(t,\Gamma^I\partial^J n^1)\lesssim \mathcal{E}^{\frac{1}{2}}(0,\Gamma^I\partial^J n^1)+\int_{0}^{t}\|\Gamma^I\partial^J |E|^2\|\d s\\
				&\lesssim\epsilon^2 K_0^{\max\{[\frac{|I|+|J|-1}{2}],0\}}+\sum_{\substack{I_1+I_2= I,J_1+J_2=J\\|I_1|\leq 10,|I_1|+|J_1|\leq N\\|I_2|\leq 5,|I_2|+|J_2|\leq N-5}}\int_{0}^{t}\|\Gamma^{I_1}\partial^{J_1}E\|\|\Gamma^{I_2}\partial^{J_2}E\|_{L^\infty}\d s\\
				&\lesssim\epsilon^2 K_0^{\max\{[\frac{|I|+|J|-1}{2}],0\}}+C_1^{2+(|I|+|J|+13)\delta}\epsilon^2,
			\end{aligned}
		\end{equation*}
		which means~(\ref{est:energy_n1_1}).
		
		\textbf{Proof of (ii).} Similar to (i), for $|I|\leq 10, |I|+|J|\leq N, N\geq 10$, we can deduce
		\begin{equation*}
			\begin{aligned}
				&\mathcal{E}^{\frac{1}{2}}(t,\partial\Gamma^I\partial^J n^1)
				\lesssim\mathcal{E}^{\frac{1}{2}}(0,\partial\Gamma^I\partial^J n^1)+\int_{0}^{t}\|\partial\Gamma^I\partial^J |E|^2\|\d s\\
				&\lesssim\epsilon^2 K_0^{[\frac{|I|+|J|}{2}]}+\sum_{\substack{I_1+I_2=I,J_1+J_2=J\\|I_1|\leq 10,|I_1|+|J_1|\leq N\\|I_2|\leq 5,|I_2|+|J_2|\leq N-5}}\int_{0}^{t}\|\partial\Gamma^{I_1}\partial^{J_1}E\|\|\Gamma^{I_2}\partial^{J_2}E\|_{L^\infty}\d s\\
				&+\sum_{\substack{I_1+I_2=I,J_1+J_2=J\\|I_1|\leq 5,|I_1|+|J_1|\leq N-6\\|I_2|\leq 10,|I_2|+|J_2|\leq N}}\int_{0}^{t}\|\partial\Gamma^{I_1}\partial^{J_1}E\|_{L^\infty}\|\Gamma^{I_2}\partial^{J_2}E\|\d s\\
				&\lesssim\epsilon^2 K_0^{[\frac{|I|+|J|}{2}]}+C_1^{2+(|I|+|J|+14)\delta}\epsilon^2,
			\end{aligned}
		\end{equation*}
		which implies~(\ref{est:energy_n1_2}).
		
		\textbf{Proof of (iii).} Based on~(\ref{initialdata}), (\ref{est:Conformal}), (\ref{est:BootE}) and (\ref{6Jeq:vectorfieldn1}), for $|I|\leq 10, |I|+|J|\leq N, N\geq 10$, we have
		\begin{equation*}
			\begin{aligned}
				&\mathcal{E}^{\frac{1}{2}}_{con}(t,\Gamma^I\partial^J n^1)
				\lesssim\mathcal{E}^{\frac{1}{2}}_{con}(0,\Gamma^I\partial^J n^1)+\int_{0}^{t}\|\langle s+r\rangle \Gamma^I\partial^J |E|^2\| \d s\\
				&\lesssim\epsilon^2 K_0^{\max\{[\frac{|I|+|J|-1}{2}],0\}}+\sum_{\substack{I_1+I_2=I,J_1+J_2=J\\|I_1|\leq 10,|I_1|+|J_1|\leq N\\|I_2|\leq 5,|I_2|+|J_2|\leq N-5}}\int_{0}^{t}\|\Gamma^{I_1}\partial^{J_1}E\|\|\langle s+r\rangle\Gamma^{I_2}\partial^{J_2}E\|_{L^\infty}\d s\\
				&\lesssim\epsilon^2 K_0^{\max\{[\frac{|I|+|J|-1}{2}],0\}}+C_1^{2+(|I|+|J|+13)\delta}\epsilon^2\langle t\rangle^{\frac{1}{2}},
			\end{aligned}
		\end{equation*}
		which yields~(\ref{est:conformal_n1}).
	\end{proof}
	
	Third, we also deduce  the extra decay estimates for Hessian of $\Gamma^I\partial^J n^1$ in the following lemma.
	\begin{lemma}\label{lem:Hessian_n1point}
		Let $|I|\leq 5, |I|+|J|\leq N-5.$ For all $t\in [0,T^*(\vec{E}_{0},\vec{n}_{0}))$, we have the following estimates on $\partial\partial\Gamma^I\partial^J n^1$.
		\begin{enumerate}
			\item {\rm {Let $r\leq \frac{t}{2}$}}. There holds
			\begin{equation}
				|\partial\partial\Gamma^I\partial^J n^1|\lesssim\langle t+r\rangle^{-\frac{3}{4}}\langle t-r\rangle^{-1}\left(\epsilon^2 K_0^{[\frac{|I|+|J|+3}{2}]}+C_1^{2+(|I|+|J|+26)\delta}\epsilon^2\right).
			\end{equation}
			
			\item {\rm {Let $r\geq 2t$}}. There holds
			\begin{equation}
				|\partial\partial\Gamma^I\partial^J n^1|\lesssim\langle t+r\rangle^{-\frac{3}{2}}\left(\epsilon^2 K_0^{[\frac{|I|+|J|+3}{2}]}+C_1^{2+(|I|+|J|+17)\delta}\epsilon^2\right).
			\end{equation}
			
			\item{\rm{Let $\frac{t}{2}\leq r\leq 2t$.}} There holds
			\begin{equation}
				|\partial\partial\Gamma^I\partial^J n^1|\lesssim\langle t+r\rangle^{-1}\langle t-r\rangle^{-1}\left(\epsilon^2 K_0^{[\frac{|I|+|J|+3}{2}]}+C_1^{2+(|I|+|J|+26)\delta}\epsilon^2\right).
			\end{equation}
			
			\item{\rm{}} There holds
			\begin{equation}\label{est:allHessian}
				|\partial\partial\Gamma^I\partial^J n^1|\lesssim\langle t+r\rangle^{-1}\langle t-r\rangle^{-\frac{1}{2}}\left(\epsilon^2 K_0^{[\frac{|I|+|J|+3}{2}]}+C_1^{2+(|I|+|J|+26)\delta}\epsilon^2\right).
			\end{equation}
			
		\end{enumerate}
	\end{lemma}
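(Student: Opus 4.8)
The four assertions split along the three spacetime regions $\{r\le t/2\}$, $\{r\ge 2t\}$ and $\{t/2\le r\le 2t\}$, and the uniform bound \eqref{est:allHessian} in (iv) is their common consequence: away from the light cone $\langle t+r\rangle$ and $\langle t-r\rangle$ are comparable, $\langle t-r\rangle\ge1$ always, and the $C_1$-exponent $+26$ in \eqref{est:allHessian} is the largest of the three. So the plan is to prove (i), (ii), (iii) separately and then read off (iv). Throughout, $\Gamma^I\partial^J n^1$ solves the wave equation with source $\Gamma^I\partial^J|E|^2$ by \eqref{6Jeq:vectorfieldn1}, and the key external inputs are the $L^\infty$ bootstrap on $E$ in \eqref{est:BootE}, the energy bounds of Lemma~\ref{lem:energyn1}, and the commutator estimates of Lemma~\ref{lem:commutators}.

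For (i) and (iii), where $r\le 2t$, the starting point is Lemma~\ref{lem:Hessian} applied to $u=\Gamma^I\partial^J n^1$:
\[
|\partial\partial\Gamma^I\partial^J n^1|\lesssim\frac{t}{\langle t-r\rangle}\,\big|\Gamma^I\partial^J|E|^2\big|+\frac{1}{\langle t-r\rangle}\sum_{|K|\le1}\big|\partial\Gamma^K\Gamma^I\partial^J n^1\big|.
\]
The first term is controlled purely by the pointwise bootstrap: distributing derivatives by Leibniz and noting that, since $|I|\le5$ and $|I|+|J|\le N-5$, every factor $\Gamma^{I_a}\partial^{J_a}E$ still lies in the range of the second line of \eqref{est:BootE}, one gets $|\Gamma^I\partial^J|E|^2|\lesssim C_1^{2+(|I|+|J|+26)\delta}\epsilon^2\langle t+r\rangle^{-3}$, and multiplying by $t/\langle t-r\rangle$ (using $\langle t\rangle\lesssim\langle t+r\rangle$ in these regions) leaves something dominated by the asserted rates. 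The second term requires a pointwise bound for the \emph{first} derivatives of the wave $n^1$, and this must be produced \emph{without} the scaling field $L_0$ — the scheme controls no $L_0$-derivative of $E$, hence none of $n^1$ beyond a single $L_0$, and the conformal energy moreover grows like $\langle t\rangle^{1/2}$. Inside the cone ($r\le t/2$) one invokes the interior Sobolev inequality \eqref{est:GeorSobo}, and in the intermediate region ($t/2\le r\le2t$, where $\langle r\rangle\sim\langle t+r\rangle$) the standard Sobolev inequality \eqref{est:standardSobo}; in both cases, after commuting the explicit $\partial$ to the outside via Lemma~\ref{lem:commutators}, the resulting $L^2$ norms are $\lesssim\mathcal E^{1/2}(t,\Gamma^L\Gamma^I\partial^J n^1)$ with $|L|\le4$, which Lemma~\ref{lem:energyn1}(i) bounds by $\epsilon^2K_0^{[(|I|+|J|+3)/2]}+C_1^{2+(|I|+|J|+17)\delta}\epsilon^2$ (since $\max\{[(|I|+|J|+3)/2],0\}=[(|I|+|J|+3)/2]$). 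Dividing by $\langle t-r\rangle$ and inserting the Sobolev decay factor ($\langle t\rangle^{-3/4}$ versus $\langle t+r\rangle^{-1}$) reproduces (i) and (iii) exactly.

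The \textbf{main obstacle is (ii)}, the far exterior $r\ge2t$, where Lemma~\ref{lem:Hessian} is unavailable: here $\langle t-r\rangle\sim\langle t+r\rangle$, and one must still extract the extra half-power $\langle t+r\rangle^{-3/2}$ for the Hessian of a wave whose source decays only like $\langle t+r\rangle^{-3}$. The plan is to go back to the algebraic decomposition of $-\Box$ in terms of $\partial_t\partial_t$, $\partial_a$ and the Lorentz fields $L_a$ used to prove Lemma~\ref{lem:Hessian} (formulas \eqref{est:partial_tt}, \eqref{equ:partial_at}): for $r\ge2t$ the coefficient $t^2/((t+r)(t-r))$ of $\partial_t\partial_t u$ is now \emph{small}, of size $\lesssim t^2\langle t+r\rangle^{-2}$, so that the $|E|^2$-contribution becomes $\lesssim t^2\langle t+r\rangle^{-2}\cdot\langle t+r\rangle^{-3}C_1^{\ldots}\epsilon^2\lesssim\langle t+r\rangle^{-3}C_1^{\ldots}\epsilon^2$ (using $t\le r$), negligible compared to the target, while the remaining first-derivative terms of $n^1$ are again estimated by the standard Sobolev inequality \eqref{est:standardSobo} and Lemma~\ref{lem:energyn1}(i); this is where the smaller constant $C_1^{2+(|I|+|J|+17)\delta}$ in (ii) comes from, the quadratic-$E$ term no longer being present. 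The delicate book-keeping is that the mixed and spatial components of the Hessian genuinely need the full $\langle t-r\rangle^{-1/2}\sim\langle t+r\rangle^{-1/2}$ gain on the lower-order derivatives of $n^1$ in the exterior; this is supplied by the weighted exterior $L^2$ control of $n^1$ (the wave analogue of \eqref{est:cutenergyv}), which is available because $n^1$ has vanishing Cauchy data and $|E|^2$ decays rapidly (itself improved in $r\gtrsim t$ by the exterior energy estimate for $E$). The bounded-time slab $t\lesssim1$, where the decomposition degenerates, is handled directly from the standard energy estimate \eqref{est:eneryWave} and Sobolev embedding, using $\langle t+r\rangle\sim\langle r\rangle$ there. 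All vector-field counts stay within $|I|\le10$, $|I|+|J|\le N$ thanks to the generous budget $|I|\le5$, $|I|+|J|\le N-5$, and — as in (i), (iii) — the recurring constraint forcing the use of \eqref{est:GeorSobo}/\eqref{est:standardSobo} rather than the sharper Klainerman–Sobolev inequality \eqref{est:Sobo} is the systematic avoidance of $L_0$.
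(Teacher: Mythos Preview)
Your treatment of parts (i), (iii), and (iv) is essentially the paper's: Lemma~\ref{lem:Hessian} for the Hessian, the pointwise bootstrap on $E$ for the source term, and Sobolev plus the standard energy \eqref{est:energy_n1_1} for the first-derivative term. That much is fine.

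Part (ii) is where you diverge, and the divergence stems from a misconception. You assert that the argument ``must be produced without the scaling field $L_0$'' because ``the scheme controls no $L_0$-derivative of $E$, hence none of $n^1$ beyond a single $L_0$, and the conformal energy moreover grows like $\langle t\rangle^{1/2}$.'' But a \emph{single} $L_0$ on $n^1$ is exactly what the conformal energy \eqref{est:conformal_n1} supplies, and that is all the paper needs. The paper's route for (ii) is simply to apply \eqref{est:partial} to $\partial\Gamma^I\partial^J n^1$,
\[
|\partial\partial\Gamma^I\partial^J n^1|\lesssim\langle t-r\rangle^{-1}\bigl(|L_0\partial\Gamma^I\partial^J n^1|+|\Gamma\partial\Gamma^I\partial^J n^1|\bigr),
\]
then use the standard Sobolev inequality \eqref{est:standardSobo} (giving a further $\langle r\rangle^{-1}\sim\langle t+r\rangle^{-1}$) together with commutators. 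The $L_0$-term is bounded by $\mathcal{E}_{con}^{1/2}$ from \eqref{est:conformal_n1}; its $\langle t\rangle^{1/2}$ growth is harmless precisely in the region $r\ge 2t$, where $\langle t\rangle^{1/2}\le\langle t+r\rangle^{1/2}$, so $\langle t+r\rangle^{-2}\langle t\rangle^{1/2}\le\langle t+r\rangle^{-3/2}$. The $\Gamma$-term uses only \eqref{est:energy_n1_1}--\eqref{est:energy_n1_2}. This is what produces the smaller exponent $+17$ in (ii): the quadratic-$E$ contribution never enters.

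Your alternative---reusing the algebraic identity behind Lemma~\ref{lem:Hessian} and noting the coefficient $t^2/((t+r)(t-r))$ is small---runs into trouble because that identity also contains terms like $t^{-1}\partial_a L_a u$ and $t^{-1}\partial_a u$, and $t^{-1}$ is \emph{not} dominated by $\langle t+r\rangle^{-1}$ when $t\ll r$. Your proposed remedy via a ``wave analogue of \eqref{est:cutenergyv}'' is not stated in the paper and would need to be developed from scratch; even if it works, it is considerably more circuitous than the conformal-energy route. In short: drop the self-imposed ban on $L_0$ for $n^1$, invoke \eqref{est:partial} and \eqref{est:conformal_n1}, and (ii) falls out directly.
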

	\begin{proof}
		\textbf{Proof of (i).} Let $|I|\leq 5, |I|+|J|\leq N-5$ with $N\geq 10$. Since $r\leq \frac{t}{2}\leq 2t$, from Lemma~\ref{lem:Hessian} and inequalities~(\ref{est:commutators}),~(\ref{est:globalSobo}),~(\ref{est:BootE}),~(\ref{est:energy_n1_1}), we have
		\begin{equation*}
			\begin{aligned}
				&|\partial\partial\Gamma^I\partial^J n^1|\lesssim\frac{t}{\langle t-r\rangle}\left|\Gamma^I\partial^J |E|^2\right|+\frac{1}{\langle t-r\rangle}\sum_{|K|\leq 1}|\partial\Gamma^K\Gamma^I\partial^J n^1|\\
				&\lesssim\frac{t}{\langle t-r\rangle}\sum_{\substack{I_1+I_2=I\\J_1+J_2=J}}|\Gamma^{I_1}\partial^{J_1}E||\Gamma^{I_2}\partial^{J_2}E|+\langle t+r\rangle^{-\frac{3}{4}}\langle t-r\rangle^{-1}\sum_{|K|\leq 4}\|\partial\Gamma^K\Gamma^I\partial^J n^1\|\\
				%\lesssim&\langle t+r\rangle^{-2}\langle t-r\rangle^{-1}C_1^{2+(|I|+26)\delta} \epsilon^2+\langle t+r\rangle^{-\frac{3}{4}}\langle t-r\rangle^{-1}\left(\epsilon^2\langle K_0\rangle^{[\frac{|I|+3}{2}]}+C_1^{2+(|I|+q+4)\delta}\epsilon^2\right)\\
				&\lesssim\langle t+r\rangle^{-\frac{3}{4}}\langle t-r\rangle^{-1}\left(\epsilon^2 K_0^{[\frac{|I|+|J|+3}{2}]}+C_1^{2+(|I|+|J|+26)\delta}\epsilon^2\right).
			\end{aligned}
		\end{equation*}
		
		\textbf{Proof of (ii).} Let $r\geq 2t$ and $|I|\leq5, |I|+|J|\leq N-5$ with $N\geq10$. By~(\ref{est:commutators}),~(\ref{est:partial}), (\ref{est:standardSobo}), (\ref{est:energy_n1_1}), (\ref{est:conformal_n1}), we deduce
		\begin{equation*}
			\begin{aligned}
				&|\partial\partial\Gamma^I\partial^J n^1|\lesssim\frac{1}{\langle t-r\rangle}\left(\left|L_0\partial\Gamma^I\partial^J n^1\right|+\sum_{|K|=1}\left|\Gamma^K\partial\Gamma^I\partial^J n^1\right|\right) \\
				&\lesssim\langle t+r\rangle^{-2}\sum_{|K|\leq |I|+3}\left\|L_0\Gamma^K\partial\partial^J n^1\right\|\\
				&+\langle t+r\rangle^{-2}\left(\sum_{|K|\leq|I|+2}\|\partial\partial\Gamma^K\partial^J n^1\|+\sum_{|K|\leq |I|+4}\left\|\partial\Gamma^K\partial^J n^1\right\|\right)\\
				&\lesssim\langle t+r\rangle^{-\frac{3}{2}}\left(\epsilon^2 K_0^{[\frac{|I|+|J|+3}{2}]}+C_1^{2+(|I|+|J|+17)\delta}\epsilon^2\right).
			\end{aligned}
		\end{equation*}
		\textbf{Proof of (iii).} Let $\frac{t}{2}\leq r\leq 2t$ and $|I|\leq5, |I|+|J|\leq N-5$ with $N\geq 10$. Based on Lemma~\ref{lem:Hessian} and inequalities~(\ref{est:commutators}),~(\ref{est:standardSobo}),~(\ref{est:BootE}),~(\ref{est:energy_n1_1}), we have
		\begin{equation*}
			\begin{aligned}
				&|\partial\partial\Gamma^I\partial^J n^1|\lesssim\frac{t}{\langle t-r\rangle}\left|\Gamma^I\partial^J |E|^2\right|+\frac{1}{\langle t-r\rangle}\sum_{|K|\leq 1}|\partial\Gamma^K\Gamma^I\partial^J n^1|\\
				&\lesssim\frac{t}{\langle t-r\rangle}\sum_{\substack{I_1+I_2= I\\J_1+J_2=J}}|\Gamma^{I_1}\partial^{J_1}E||\Gamma^{I_2}\partial^{J_2}E|+\langle t+r\rangle^{-1}\langle t-r\rangle^{-1}\sum_{|K|\leq 4}\|\partial\Gamma^K\Gamma^I\partial^{J} n^1\|\\
				%\lesssim&\langle t+r\rangle^{-2}\langle t-r\rangle^{-1}C_1^{2+(|I|+26)\delta} \epsilon^2+\langle t+r\rangle^{-1}\langle t-r\rangle^{-1}\left(\epsilon^2\langle K_0\rangle^{[\frac{|I|+3}{2}]}+C_1^{2+(|I|+17)\delta}\epsilon^2\right)\\
				&\lesssim\langle t+r\rangle^{-1}\langle t-r\rangle^{-1}\left(\epsilon^2 K_0^{[\frac{|I|+|J|+3}{2}]}+C_1^{2+(|I|+|J|+26)\delta}\epsilon^2\right).
			\end{aligned}
		\end{equation*}
		\textbf{Proof of (iv).} When $r\leq\frac{t}{2}$ or $r\geq 2t$, $\langle t+r\rangle\sim\langle t-r\rangle$. Thus from the above results, we have~(\ref{est:allHessian}).
		
		The proof is done.
	\end{proof}
	
	%\begin{lemma}
	%For $|I|\leq N-5$, we have
	%\begin{equation}
	%\left\|\frac{\langle t+r\rangle}{\langle t-r\rangle}\Gamma^I\Delta n^1\right\|_{L^\infty}\lesssim(C_1\epsilon)^2K_0^{|I|+14}\langle t\rangle^{\frac{1}{4}}.
	%\end{equation}
	%\end{lemma}
	%\begin{proof}
	%\textbf{Case I.} Let $r\geq 2t$. Then we can deduce
	%\begin{equation}
	%\left\|\frac{\langle t+r\rangle}{\langle t-r\rangle}\Gamma^I\Delta n^1\right\|_{L^\infty}\lesssim\|\Gamma^I\Delta n^1\|_{L^\infty}\lesssim(C_1\epsilon)^2 K_0^{|I|+11}\langle t\rangle^{-\frac{3}{4}}.
	%\end{equation}
	%\textbf{Case II.} Let $r\leq 2t$, by Lemma, we have
	%\begin{equation}
	%|\partial\partial\Gamma^I n^1|\lesssim\frac{t}{\langle t-r\rangle}\left|\Gamma^I |E|^2\right|+\frac{1}{\langle t-r\rangle}\sum_{|J|\leq1}|\partial\Gamma^J\Gamma^I n^1|.
	%\end{equation}
	%Therefore,
	%\begin{equation}
	%\begin{aligned}
	%&\frac{\langle t+r\rangle}{\langle t-r\rangle}|\partial\partial\Gamma^I n^1|\\
	%\lesssim&\frac{t\langle t+r\rangle}{\langle t-r\rangle^2}\sum_{|I_1|+|I_2|\leq N-5}|\Gamma^{I_1}E||\Gamma^{I_2}E|+\frac{\langle t+r\rangle^\frac{1}{4}}{\langle t-r\rangle^2}\sum_{|J|\leq4}\|\partial\Gamma^J\Gamma^I n^1\|_{L^2}\\
	%\lesssim&(C_1\epsilon)^2K_0^{|I|+14}\langle t+r\rangle^{-1}\langle t-r\rangle^{-2}+(C_1\epsilon)^2K_0^{|I|+11}\langle t\rangle^{\frac{1}{4}}\langle t-r\rangle^{-2}\\
	%\lesssim&(C_1\epsilon)^2K_0^{|I|+14}\langle t\rangle^{\frac{1}{4}}\langle t-r\rangle^{-2}.
	%\end{aligned}
	%\end{equation}
	%\end{proof}
	
	Next, we give another pointwise estimate of $\Gamma^I\partial^J E$.
	%\begin{lemma}
	%We have
	%\begin{equation}
	%\sum_{|I|\leq1}\|\Gamma^I E\|_{L^\infty}\lesssim\langle t\rangle^{-\frac{3}{2}}\left(\epsilon\langle K_0\rangle^3+C_1^{\frac{7}{2}}\epsilon^3\langle K_0\rangle^{20}\right)+\langle t\rangle^{-\frac{3}{2}+\delta_0}\langle K_0\rangle\sum_{|I|\leq 5}\mathcal{E}^{\frac{1}{2}}_{gst,1}(t,\Gamma^I E).
	%\end{equation}
	%\end{lemma}
	%\begin{proof}
	%By Proposition~\ref{est:KGpointwise} and Corollary~\ref{est:KGpoint}, for $N\geq 10$, we can deduce
	%\begin{equation*}
	%\begin{aligned}
	%&\sum_{|I|\leq1}\|\Gamma^IE\|_{L^\infty}\\
	%\lesssim&\langle t\rangle^{-\frac{3}{2}}\left(\sum_{|J|\leq5,|I|\leq 1}\|\langle r\rangle^{\frac{3}{2}+\delta_1}\Gamma^J\Gamma^I E(0,x)\|_{L^2}+\sum_{|J|\leq4,|I|\leq 1}\max_{0\leq s\leq t}\langle s\rangle^{\delta_0}\|\langle s+r\rangle\Gamma^J\Gamma^I(n^0E+\Delta n^1E)\|_{L^2}\right)\\
	%\lesssim&\langle t\rangle^{-\frac{3}{2}}\epsilon\langle K_0\rangle^3+\langle t\rangle^{-\frac{3}{2}}\sum_{|I_1|\leq 5,|I_2|\leq 5}\max_{0\leq s\leq t}\langle s\rangle^{\delta_0}\|\langle s+r\rangle\Gamma^{I_1}n^0\|_{L^\infty}\|\Gamma^{I_2}E\|_{L^2}\\
	%&+\langle t\rangle^{-\frac{3}{2}}\sum_{|I_1|\leq 5,|I_2|\leq 5}\max_{0\leq s\leq t}\langle s\rangle^{\delta_0}\|\Gamma^{I_1}\Delta n^1\|_{L^2}\|\langle s+r\rangle\Gamma^{I_2}E\|_{L^{\infty}}\\
	%\lesssim&\langle t\rangle^{-\frac{3}{2}}\left(\epsilon\langle K_0\rangle^3+C_1^\frac{7}{2}\epsilon^3\langle K_0\rangle^{20}\right)+\langle t\rangle^{-\frac{3}{2}+\delta_0}\langle K_0\rangle\sum_{|I|\leq 5}\mathcal{E}^{\frac{1}{2}}_{gst,1}(t,\Gamma^IE).
	%\end{aligned}
	%\end{equation*}
	%\end{proof}
	
	\begin{lemma}\label{lem:PointE2}
		For all $t\in [0,T^*(\vec{E}_{0},\vec{n}_{0}))$ and $|I|\leq 5, |I|+|J|\leq N-5$, we have
		\begin{equation}\label{est:PointE2}
			|\Gamma^I\partial^J E|\lesssim \langle t+r\rangle^{-\frac{9}{8}}C_1^{1+(|I|+|J|+8)\delta}\epsilon.
		\end{equation}
	\end{lemma}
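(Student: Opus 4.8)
The plan is to deduce~\eqref{est:PointE2} from a simple geometric interpolation between two pointwise bounds on $\Gamma^I\partial^J E$: one with the sharp $\langle t+r\rangle^{-3/2}$ decay but a lossy power of $C_1$, and one with only $\langle t+r\rangle^{-3/4}$ decay but a much cheaper power of $C_1$. The first is immediate: for $|I|\le 5$ and $|I|+|J|\le N-5$ the second line of the bootstrap assumption~\eqref{est:BootE} already reads $|\Gamma^I\partial^J E|\le C_1^{1+(|I|+|J|+13)\delta}\epsilon\,\langle t+r\rangle^{-3/2}$.

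For the second bound I would apply the global Sobolev inequality~\eqref{est:globalSobo} with $u=\Gamma^I\partial^J E$, so that $|\Gamma^I\partial^J E|\lesssim \langle t+r\rangle^{-3/4}\sum_{|K|\le 3}\|\Gamma^K\Gamma^I\partial^J E\|$. Using the commutator estimates~\eqref{est:commutators}, each $\Gamma^K\Gamma^I\partial^J E$ is a finite combination of terms $\Gamma^{I'}\partial^{J'}E$ with $|I'|\le |I|+|K|$, $|J'|\le|J|$ and $|I'|+|J'|\le|I|+|J|+|K|$; for $|K|\le 3$ and $|I|\le 5$ one has $|I'|\le 8\le 10$ and $|I'|+|J'|\le|I|+|J|+3\le N-2\le N$, so $(I',J')$ lies in the index range governed by the ghost-weight energy bootstrap in~\eqref{est:BootE}. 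Since $\|\Gamma^{I'}\partial^{J'}E\|\le \mathcal{E}_{gst,1}^{1/2}(t,\Gamma^{I'}\partial^{J'}E)\le C_1^{1+(|I|+|J|+3)\delta}\epsilon$, this gives
\[
|\Gamma^I\partial^J E|\lesssim \langle t+r\rangle^{-3/4}\,C_1^{1+(|I|+|J|+3)\delta}\epsilon .
\]

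It then remains to multiply the two bounds and take the square root: the time-decay exponent becomes $\tfrac12\cdot\tfrac32+\tfrac12\cdot\tfrac34=\tfrac98$, the exponent of $C_1$ becomes $\tfrac12(|I|+|J|+13)+\tfrac12(|I|+|J|+3)=|I|+|J|+8$, and the powers of $\epsilon$ combine to $\epsilon^{1/2}\cdot\epsilon^{1/2}=\epsilon$, which is precisely~\eqref{est:PointE2}. I do not anticipate any genuine difficulty here; the one point deserving attention is the index bookkeeping, and it works exactly because the hypothesis $|I|\le 5$ leaves three spare $\Gamma$-slots below the cap $10$ (and $|I|+|J|\le N-5$ leaves room below $N$), so that the three rotations/boosts introduced by~\eqref{est:globalSobo} keep us inside the region covered by the bootstrap assumption.
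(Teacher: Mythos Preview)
Your proposal is correct and matches the paper's own proof essentially verbatim: the paper also records the two bounds $|\Gamma^I\partial^J E|\lesssim \langle t+r\rangle^{-3/4}C_1^{1+(|I|+|J|+3)\delta}\epsilon$ (from~\eqref{est:globalSobo} and the energy line of~\eqref{est:BootE}) and $|\Gamma^I\partial^J E|\lesssim \langle t+r\rangle^{-3/2}C_1^{1+(|I|+|J|+13)\delta}\epsilon$ (from the pointwise line of~\eqref{est:BootE}), then interpolates. The only cosmetic difference is that the paper does not spell out the commutator/index bookkeeping for $\Gamma^K\Gamma^I\partial^J E$; your extra sentence on this is fine, though the relevant fact is simply that the $\Gamma_k$ form a Lie algebra so $\Gamma^K\Gamma^I$ is a linear combination of $\Gamma^{I'}$ with $|I'|\le|I|+|K|$ (the lemma~\eqref{est:commutators} is not quite the right reference, but the conclusion you draw is correct).
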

	\begin{proof}
		For $|I|\leq 5, |I|+|J|\leq N-5$, from~(\ref{est:globalSobo}) and~(\ref{est:BootE}), we get
		\begin{equation*}
			\begin{aligned}
				|\Gamma^I\partial^J E|\lesssim&\langle t+r\rangle^{-\frac{3}{4}}C_1^{1+(|I|+|J|+3)\delta}\epsilon,\\
				|\Gamma^I\partial^J E|\lesssim&\langle t+r\rangle^{-\frac{3}{2}}C_1^{1+(|I|+|J|+13)\delta}\epsilon.
			\end{aligned}
		\end{equation*}
		By interpolating the above inequalities, we obtain~(\ref{est:PointE2}).
	\end{proof}
	
	Then, we deduce the lower-order pointwise estimate of $\Gamma^I\partial^J E$.
	\begin{lemma}\label{lem:Elower}
		%For all $t\in [t_0,T^*(\vec{E}_{0},\vec{n}_{0}))$ and $|I|+|J|\leq 1$, we have
		%\begin{equation}\label{est:Elower}
		%\begin{aligned}
		%\langle t+r\rangle^{\frac{3}{2}-\delta_2}|\Gamma^I\partial^J E|
		%\lesssim\epsilon\langle K_0\rangle^{[\frac{|I|+|J|+5}{2}]}+\frac{C_E}{1-2^{-\delta_2}},
		%\end{aligned}
		%\end{equation}
		%with $C_E=C_1^{1+(|I|+|J|+4)\delta}\epsilon\langle K_0\rangle
		%+C_1^{1+(|I|+|J|+17)\delta}\epsilon^3\langle K_0\rangle^{[\frac{|I|+|J|+4}{2}]}+C_1^{3+(|I|+|J|+31)\delta}\epsilon^3$, and $\delta_2>0$ an arbitrarily small number.
		
		For all $t\in [0,T^*(\vec{E}_{0},\vec{n}_{0}))$, we have
		\begin{equation}\label{est:Elower}
			\begin{aligned}
				\sum_{|I|+|J|\leq 1}\langle t+r\rangle^{\frac{5}{4}}|\Gamma^I\partial^J E|
				\lesssim\epsilon K_0^{3}+C_1^{1+5\delta}\epsilon K_0,
			\end{aligned}
		\end{equation}
		where $C_1^{2+27\delta}\epsilon^2<1$.
	\end{lemma}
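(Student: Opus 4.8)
The plan is to apply the pointwise Klein-Gordon estimate of Corollary~\ref{cor:KGpoint} to $\Gamma^I\partial^J E$ for $|I|+|J|\le 1$, exploiting the fact that at these low orders we may afford to let up to $5$ extra vector fields hit $E$ on the right-hand side while still staying within the range $|I|\le 5$, $|I|+|J|\le N-5$ covered by the bootstrap assumption~\eqref{est:BootE} (recall $N\ge 10$). Since $\Gamma^I\partial^J E$ with $|I|+|J|\le 1$ satisfies the Klein-Gordon equation~\eqref{eq:vectorfieldE}, namely $-\Box \Gamma^I\partial^J E + \Gamma^I\partial^J E = -\Gamma^I\partial^J(n^0 E + \Delta n^1 E)$, I would set $G = -\Gamma^I\partial^J(n^0E + \Delta n^1 E)$ and verify the hypothesis~\eqref{est:C_G} of Corollary~\ref{cor:KGpoint}, with $t_0 = 0$ and a small $\delta_0 > 0$ (any $\delta_0 \in (0,\tfrac14)$ works, using the $\langle s+r\rangle^{-9/8}$-type decay below). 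Concretely, I must control
$$\sum_{|K|\le 4}\max_{0\le s\le t} s^{\delta_0}\big\|\langle s+r\rangle \Gamma^K\Gamma^I\partial^J(n^0 E + \Delta n^1 E)(s)\big\|$$
and the data term $\sum_{|I'|\le 5}\|\langle r\rangle^{3/2}\log(2+r)\,\Gamma^{I'}\Gamma^I\partial^J E(0)\|$, the latter being $\lesssim \epsilon$ directly from~\eqref{initialdata}.

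For the nonlinear source I would split into the two pieces. For $\langle s+r\rangle\,\Gamma^K\Gamma^I\partial^J(n^0 E)$ with $|K|+|I|+|J|\le 5$, I use a Leibniz expansion and the commutator estimate~\eqref{est:commutators} to write it as a sum of terms $\langle s+r\rangle \,(\Gamma^{K_1}\partial^{J_1} n^0)(\Gamma^{K_2}\partial^{J_2}E)$; I put the low-order factor in $L^\infty$ and the high-order factor in $L^2$. When $n^0$ carries few derivatives ($|K_1|+|J_1|\le 8$, $|K_1|\le 8$) I apply the pointwise bound~\eqref{est:Gamma_n0point}, giving $|\Gamma^{K_1}\partial^{J_1}n^0|\lesssim \langle s+r\rangle^{-1}\langle s-r\rangle^{-1/2} K_0$, so the $\langle s+r\rangle$ weight is absorbed and the remaining $\langle s-r\rangle^{-1/2}$ kills the $s^{\delta_0}$ after taking $L^2$ of $\Gamma^{K_2}\partial^{J_2}E$ via the ghost energy bound in~\eqref{est:BootE} (at most $C_1^{1+C\delta}\epsilon K_0$); when instead $E$ carries few derivatives I use the sharp pointwise bound~\eqref{est:PointE2} (decay $\langle s+r\rangle^{-9/8}$) together with $\|\Gamma^{K_1}\partial^{J_1} n^0\|\lesssim K_0$ from~\eqref{est:Gamma_n0L^2}, and the surplus $\langle s+r\rangle^{-1/8}$ beats $s^{\delta_0}$. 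For $\langle s+r\rangle\,\Gamma^K\Gamma^I\partial^J(\Delta n^1 E)$ I do the same bookkeeping but now the Hessian of $n^1$ is controlled by~\eqref{est:allHessian}, $|\partial\partial\Gamma^{I'}\partial^{J'} n^1|\lesssim \langle s+r\rangle^{-1}\langle s-r\rangle^{-1/2}(\epsilon^2 K_0^{C}+C_1^{2+C\delta}\epsilon^2)$, which again absorbs the $\langle s+r\rangle$ weight and gives a contribution of size $\lesssim \epsilon^3(\cdots)$, strictly smaller than the target; for the case where $E$ carries few derivatives I pair~\eqref{est:PointE2} with the $L^2$ Hessian bound from~\eqref{est:energy_n1_2}.

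Collecting, $C_G \lesssim \epsilon K_0^{3} + C_1^{1+5\delta}\epsilon K_0 + (\text{terms of order }\epsilon^2\text{ or }\epsilon^3)$, and under the stated restriction $C_1^{2+27\delta}\epsilon^2 < 1$ all the $O(\epsilon^2), O(\epsilon^3)$ terms are dominated by $C_1^{1+5\delta}\epsilon K_0$; the $K_0^3$ exponent is the price of the "few-derivatives-on-$n^0$, two factors of $n^0$-type growth" combinatorics in the lowest-order pieces. Then~\eqref{est:pointwiseKG} yields $\langle t+r\rangle^{3/2}|\Gamma^I\partial^J E|\lesssim \epsilon K_0^3 + C_1^{1+5\delta}\epsilon K_0$, which a fortiori gives the claimed $\langle t+r\rangle^{5/4}$ bound (indeed $\langle t+r\rangle^{5/4}\le \langle t+r\rangle^{3/2}$). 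The main obstacle I anticipate is purely bookkeeping: tracking the powers of $K_0$ and $C_1$ through every Leibniz split so as to land exactly at $K_0^3$ and $C_1^{1+5\delta}$ and no worse—in particular one must be careful that in the $n^0 E$ term one never simultaneously needs both a high-derivative $n^0$ (only $\|\cdot\|\lesssim K_0$ available, no decay) and a high-derivative $E$; the constraint $|K|+|I|+|J|\le 5$ with $|I|+|J|\le 1$ guarantees this never happens, which is exactly why the lemma is restricted to the lowest orders.
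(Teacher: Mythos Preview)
There is a real gap in the argument, and it is precisely the step you flag only as ``bookkeeping''. When you put $n^0$ in $L^\infty$ via \eqref{est:Gamma_n0point} and $E$ in $L^2$, you obtain
\[
\big\|\langle s+r\rangle\,(\Gamma^{K_1}\partial^{J_1}n^0)(\Gamma^{K_2}\partial^{J_2}E)\big\|
\lesssim K_0\,\big\|\langle s-r\rangle^{-1/2}\,\Gamma^{K_2}\partial^{J_2}E\big\|.
\]
The factor $\langle s-r\rangle^{-1/2}$ does \emph{not} provide any decay in $s$: it is bounded by $1$ pointwise, and the ghost energy in \eqref{est:BootE} only controls the time-\emph{integrated} quantity $\int_0^t\|\langle s-r\rangle^{-1/2-\delta}E\|^2\,\d s$, not the supremum in $s$. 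So the hypothesis \eqref{est:C_G} of Corollary~\ref{cor:KGpoint} is not verified by this split. This is exactly why the lemma is stated with exponent $5/4$ rather than $3/2$: the paper does \emph{not} appeal to Corollary~\ref{cor:KGpoint}. Instead it bounds
\[
\sum_{|K|\le 4}\|\langle s+r\rangle\,\Gamma^K\Gamma^I\partial^J(n^0E)\|\lesssim C_1^{1+5\delta}\epsilon K_0
\]
\emph{without} any $s$-decay (putting the full weight $\langle s+r\rangle$ on $n^0$ and using $\|\Gamma^{K_2}\partial^{J_2}E\|$ from the ghost energy), applies Proposition~\ref{pro:KGpointwise} directly, and inserts $\langle s\rangle^{-1/4}\langle s\rangle^{1/4}$ into the dyadic sum. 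The price is a factor $\langle t\rangle^{1/4}$ on the right, which after dividing by $\langle t+r\rangle^{3/2}$ leaves only $\langle t+r\rangle^{5/4}$ decay. Your remark that ``$\langle t+r\rangle^{5/4}\le\langle t+r\rangle^{3/2}$ so the $5/4$ bound follows a fortiori'' has the logic reversed: the $5/4$ is not slack, it is the best this argument gives.

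Two secondary points. First, the data term $\sum_{|I'|\le 5}\|\langle r\rangle^{3/2}\log(2+r)\,\Gamma^{I'}\Gamma^I\partial^J E(0)\|$ is \emph{not} $\lesssim\epsilon$: since $\Gamma$ contains $\partial_t$ and $L_a$, evaluating $\Gamma^{I'}\Gamma^I\partial^J E$ at $t=0$ with $|I'|+|I|+|J|\le 6$ requires up to three iterations of the equation $\partial_t^2 E=\Delta E-E-nE$, each of which brings in a factor of $n_0$; this yields $\lesssim\epsilon K_0^{[6/2]}=\epsilon K_0^3$ (cf.\ \eqref{est:I.D.1}). That is the actual origin of the $K_0^3$ in \eqref{est:Elower}, not any ``two factors of $n^0$'' in the source term. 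Second, even if you repair the $n^0E$ estimate by always putting $E$ in $L^\infty$ via Lemma~\ref{lem:PointE2} (which is legitimate here since all indices are $\le 5$), you pick up the larger exponent $C_1^{1+13\delta}$ from \eqref{est:PointE2}; this would spoil the closing step $S_1^1,\,S_1^3$ in the proof of Proposition~\ref{pro:KGZ-L}, where the gain $C_1^{1+5\delta}$ is needed against $|I|+|J|\ge 9$.
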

	\begin{proof}
		First, for $|I|+|J|\leq 1$, based on the H\"{o}lder inequality, estimates in (\ref{est:BootE}), (\ref{est:Gamma_n0point}), we have
		\begin{equation}
			\begin{aligned}
				&\sum_{|K|\leq 4}\|\langle t+r\rangle\Gamma^K\Gamma^I\partial^J(n^0 E)\|
				\lesssim\sum_{|K|\leq |I|+4}\|\langle t+r\rangle\Gamma^K\partial^J(n^0E)\|\\
				&\lesssim\sum_{\substack{|K|\leq |I|+4\\K_1+K_2=K\\J_1+J_2=J}}\|\langle t+r\rangle\Gamma^{K_1}\partial^{J_1}n^0\|_{L^\infty}\|\Gamma^{K_2}\partial^{J_2}E\|\\
				&\lesssim C_1^{1+(|I|+|J|+4)\delta}\epsilon K_0\lesssim C_1^{1+5\delta}\epsilon K_0,
			\end{aligned}
		\end{equation}
		where we use the fact $|I|+|J|\leq 1.$
		
		Based on the H\"{o}lder inequality, estimates in (\ref{est:commutators}), (\ref{est:BootE}), (\ref{est:Gamma_n0point}) and~(\ref{est:energy_n1_2}), we have
		\begin{equation}
			\begin{aligned}
				&\sum_{|K|\leq 4}\|\langle t+r\rangle\Gamma^K\Gamma^I\partial^J(\Delta n^1 E)\|
				\lesssim\sum_{|K|\leq |I|+4}\|\langle t+r\rangle\Gamma^K\partial^J(\Delta n^1 E)\|\\
				&\lesssim\sum_{\substack{|K|\leq |I|+4\\K_1+K_2=K\\J_1+J_2=J}}\|\Gamma^{K_1}\partial^{J_1}\Delta n^1\|\|\langle t+r\rangle\Gamma^{K_2}\partial^{J_2}E\|_{L^\infty}\\
				&\lesssim\sum_{\substack{|K|\leq |I|+4\\|K_1|+|K_2|\leq |K|\\J_1+J_2=J}}\|\partial\partial\Gamma^{K_1}\partial^{J_1}n^1\|\|\langle t+r\rangle\Gamma^{K_2}\partial^{J_2}E\|_{L^\infty}\\
				&\lesssim C_1^{1+(|I|+|J|+17)\delta}\epsilon^3 K_0^{[\frac{|I|+|J|+4}{2}]}+C_1^{3+(|I|+|J|+31)\delta}\epsilon^3\\
				&\lesssim C_1^{1+18\delta}\epsilon^3 K_0^2+C_1^{1+5\delta}\epsilon\lesssim\epsilon K_0^2+C_1^{1+5\delta}\epsilon,
			\end{aligned}
		\end{equation}
		where we use the fact $|I|+|J|\leq 1$ and choose $C_1$ and $\epsilon$ such that $C_1^{2+27\delta}\epsilon^2<1.$
		%\begin{equation}
		%\begin{aligned}
		%&\sum_{|K|\leq 4}\|\langle t+r\rangle\Gamma^K\Gamma^I\partial^J(n^0 E+\Delta n^1 E)\|_{L^2}\\
		%&\lesssim\sum_{|K|\leq |I|+4}\|\langle t+r\rangle\Gamma^K\partial^J(n^0E+\Delta n^1 E)\|_{L^2}\\
		%&\lesssim\sum_{\substack{|K|\leq |I|+4\\K_1+K_2=K\\J_1+J_2=J}}\left(\|\langle t+r\rangle\Gamma^{K_1}\partial^{J_1}n^0\|_{L^\infty}\|\Gamma^{K_2}\partial^{J_2}E\|_{L^2}\right)\\
		%&+\sum_{\substack{|K|\leq |I|+4\\K_1+K_2=K\\J_1+J_2=J}}\|\Gamma^{K_1}\partial^{J_1}\Delta n^1\|_{L^2}\|\langle t+r\rangle\Gamma^{K_2}\partial^{J_2}E\|_{L^\infty}\\
		%&\lesssim\sum_{\substack{|K|\leq |I|+4\\|K_1|+|K_2|\leq |K|\\J_1+J_2=J}}\left(\|\langle t+r\rangle\Gamma^{K_1}\partial^{J_1}n^0\|_{L^\infty}\|\Gamma^{K_2}\partial^{J_2}E\|_{L^2}+\|\partial\partial\Gamma^{K_1}\partial^{J_1}n^1\|_{L^2}\|\langle t+r\rangle\Gamma^{K_2}\partial^{J_2}E\|_{L^\infty}\right)\\
		%&\lesssim C_1^{1+(|I|+|J|+4)\delta}\epsilon\langle K_0\rangle+C_1^{1+(|I|+|J|+17)\delta}\epsilon^3\langle K_0\rangle^{[\frac{|I|+|J|+4}{2}]}+C_1^{3+(|I|+|J|+31)\delta}\epsilon^3=C_E.
		%\end{aligned}
		%\end{equation}
		
		Recall $N\geq10$ and from Proposition~\ref{pro:KGpointwise}, Corollary~\ref{cor:KGpoint} and~(\ref{initialdata}), for $|I|+|J|\leq 1$, we have
		\begin{equation*}
			\begin{aligned}
				\langle t+r\rangle^{\frac{3}{2}}|\Gamma^I\partial^J E|&\lesssim \sum_{k=0}^{\infty}\sum_{|K|\leq 5}\left\|\langle r\rangle^{\frac{3}{2}}\zeta_{k}(r)\Gamma^K\Gamma^I\partial^J E(0,x)\right\|\\
				&+\sum_{k=0}^{\infty}\sum_{|K|\leq 4}\max_{0\leq s\leq t}\zeta_{k}(s)\|\langle s+r\rangle\Gamma^K\Gamma^I\partial^J (n^0E+\Delta n^1E)\|\\
				&\lesssim\sum_{|K|\leq 5}\left\|\langle r\rangle^{2}\Gamma^K\Gamma^I\partial^J E(0,x)\right\|+C_E\sum_{k=0}^{\infty}\max_{0\leq s\leq t}\frac{\zeta_{k}(s)}{\langle s\rangle^{\frac{1}{4}}}\langle s\rangle^{\frac{1}{4}}\\
				&\lesssim\epsilon K_0^{3}+\frac{C_E}{1-2^{-\frac{1}{4}}}\langle t\rangle^{\frac{1}{4}},
			\end{aligned}
		\end{equation*}
		where $C_E=C_1^{1+5\delta}\epsilon K_0+\epsilon K_0^2.$
		
		The proof of~(\ref{est:Elower}) is done.
	\end{proof}
	
	Also, we give the following weighted $L^2$ estimate for $\Gamma^I\partial^J E$.
	\begin{lemma}\label{lem:weightE}
		For all $t\in [0,T^*(\vec{E}_{0},\vec{n}_{0}))$, $|I|\leq 9, |I|+|J|\leq N-1$, we have %the following weighted estimate
		\begin{equation*}
			\begin{aligned}
				\left\|\frac{\langle t+r\rangle}{\langle t-r\rangle}\Gamma^I\partial^J E\right\|
				%&\lesssim C_1^{1+(|I|+|J|+8)\delta}\epsilon\langle K_0\rangle+C_1^{1+(|I|+|J|+13)\delta}\epsilon^3\langle K_0\rangle^{[\frac{|I|+|J|+3}{2}]}+C_1^{3+(|I|+|J|+27)\delta}\epsilon^3\\
				\lesssim C_1^{1+(|I|+|J|+8)\delta}\epsilon K_0+C_1^{1+(|I|+|J|+13)\delta}\epsilon^3 K_0^{[\frac{|I|+|J|+3}{2}]},
			\end{aligned}
		\end{equation*}
		where $C_1^{2+19\delta}\epsilon^2<1.$
	\end{lemma}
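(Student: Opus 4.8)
The plan is to apply the weighted Klein–Gordon estimate of Lemma~\ref{est:weightKG} to the equation \eqref{eq:vectorfieldE} satisfied by $\Gamma^I\partial^J E$, namely
$-\Box \Gamma^I\partial^J E + \Gamma^I\partial^J E = -\Gamma^I\partial^J(n^0 E + \Delta n^1 E)$.
That lemma gives the pointwise bound
$\big|\tfrac{\langle t+r\rangle}{\langle t-r\rangle}\Gamma^I\partial^J E\big| \lesssim \sum_{|K|\leq 1}|\partial\Gamma^K\Gamma^I\partial^J E| + \tfrac{\langle t+r\rangle}{\langle t-r\rangle}\big|\Gamma^I\partial^J(n^0 E + \Delta n^1 E)\big|$,
so that after taking the $L^2_x$ norm we must control two contributions: the first-order energy-type term $\sum_{|K|\leq 1}\|\partial\Gamma^K\Gamma^I\partial^J E\|$, and the weighted $L^2$ norm of the nonlinearity.

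The first term is immediate from the bootstrap assumption \eqref{est:BootE}: since $|I|\leq 9$ and $|I|+|J|\leq N-1$, the index $\Gamma^K\Gamma^I\partial^J E$ stays within the range $|I|+|K|\leq 10$, $|I|+|K|+|J|\leq N$, hence
$\sum_{|K|\leq 1}\|\partial\Gamma^K\Gamma^I\partial^J E\| \lesssim \mathcal{E}^{1/2}_{gst,1}(t,\cdot) \lesssim C_1^{1+(|I|+|J|+1)\delta}\epsilon$,
which is absorbed into the stated bound. For the nonlinearity I would split $\Gamma^I\partial^J(n^0 E)$ and $\Gamma^I\partial^J(\Delta n^1 E)$ by the Leibniz rule. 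In the $n^0 E$ piece, distribute derivatives as $\Gamma^{I_1}\partial^{J_1} n^0 \cdot \Gamma^{I_2}\partial^{J_2} E$; put the factor with fewer derivatives in $L^\infty$. When $E$ carries fewer derivatives one uses the pointwise bound of Lemma~\ref{lem:PointE2} (giving $\langle t+r\rangle^{-9/8}$, enough to kill the $\langle t+r\rangle/\langle t-r\rangle$ weight after noting $\langle t+r\rangle/\langle t-r\rangle \lesssim \langle t+r\rangle$ only near the light cone — more carefully, one keeps $\langle t+r\rangle/\langle t-r\rangle \leq \langle t+r\rangle$ and pairs with the $L^2$ decay of $n^0$ from \eqref{est:Gamma_n0L^2}), together with the decay $\langle t+r\rangle^{-1}\langle t-r\rangle^{-1/2}$ of $n^0$ from \eqref{est:Gamma_n0point} when $n^0$ carries fewer derivatives, which exactly cancels the growth weight. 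When $n^0$ carries more derivatives, use \eqref{est:Gamma_n0L^2} in $L^2$ and the weighted pointwise bound $\langle t+r\rangle^{3/2}|\Gamma^{I_2}\partial^{J_2}E|\lesssim C_1^{1+\cdots}\epsilon$ from \eqref{est:BootE}, which dominates the $\langle t+r\rangle/\langle t-r\rangle$ factor. This produces the $C_1^{1+(|I|+|J|+8)\delta}\epsilon K_0$ term.

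For the $\Delta n^1 E$ piece, rewrite $\Gamma^{I_1}\partial^{J_1}\Delta n^1$ as $\partial\partial\Gamma^{I_1}\partial^{J_1}n^1$ up to commutators (Lemma~\ref{lem:commutators}); here the key input is the Hessian decay estimate \eqref{est:allHessian}, which gives $|\partial\partial\Gamma^{I_1}\partial^{J_1}n^1|\lesssim \langle t+r\rangle^{-1}\langle t-r\rangle^{-1/2}(\epsilon^2 K_0^{[\cdots]}+C_1^{2+\cdots}\epsilon^2)$ when the derivative count is low enough (i.e.\ $|I_1|\leq 5$, $|I_1|+|J_1|\leq N-5$). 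Pair this with $\|\langle t+r\rangle\Gamma^{I_2}\partial^{J_2}E\|_{L^\infty}\lesssim C_1^{1+\cdots}\epsilon$ from \eqref{est:BootE} when $E$ carries few derivatives; when instead $E$ carries many derivatives, use $\|\Gamma^{I_2}\partial^{J_2}E\|\lesssim C_1^{1+\cdots}\epsilon$ in $L^2$ together with the $L^\infty$ Hessian bound on $n^1$ (the Hessian bound then needs higher weights, supplied by Lemma~\ref{lem:Hessian_n1point}), and the extra weight $\langle t+r\rangle/\langle t-r\rangle$ is again absorbed by the $\langle t-r\rangle^{-1/2}$ in \eqref{est:allHessian} combined with $\langle t+r\rangle/\langle t-r\rangle \cdot \langle t+r\rangle^{-1}\langle t-r\rangle^{-1/2}\lesssim \langle t-r\rangle^{-3/2}$, which is integrable/bounded. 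Collecting powers of $C_1$ and $\epsilon$, and invoking the smallness $C_1^{2+19\delta}\epsilon^2<1$ to absorb cubic-in-$\epsilon$ remainders, yields the claimed estimate. The main obstacle I anticipate is the accounting near the light cone: the factor $\langle t+r\rangle/\langle t-r\rangle$ is large there, so one must be careful that every term multiplying it carries at least a full $\langle t-r\rangle^{-1}$ (or a compensating $\langle t+r\rangle^{-1}$ decay that beats the Sobolev-norm growth); the Hessian estimate \eqref{est:allHessian} and the weighted bootstrap bound are exactly tuned for this, but matching the derivative budgets on both factors so that Lemma~\ref{lem:Hessian_n1point} and Lemma~\ref{lem:PointE2} both apply requires the restriction $|I|\leq 9$, $|I|+|J|\leq N-1$ and a short case split on which factor is the "high" one.
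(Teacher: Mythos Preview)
Your approach matches the paper's: apply Lemma~\ref{est:weightKG} to \eqref{eq:vectorfieldE}, bound the first-order energy term via the bootstrap \eqref{est:BootE}, and split the nonlinearity by Leibniz with a high--low decomposition. The treatment of the $n^0E$ piece is correct (if redundantly stated): the paper puts the weight $\langle t+r\rangle/\langle t-r\rangle$ on whichever factor is in $L^\infty$---using \eqref{est:PointE2} for $E$ when $n^0$ is high, and the pointwise decay \eqref{est:Gamma_n0point} for $n^0$ when $E$ is high---exactly as you outline.

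There is, however, a gap in your $\Delta n^1 E$ analysis. You invoke only the Hessian \emph{pointwise} bound \eqref{est:allHessian}, which is available only when $|I_1|\le 5$ and $|I_1|+|J_1|\le N-5$. In the complementary case where $n^1$ carries the majority of the derivatives (so $|I_1|$ is large and $|I_2|$ is small), that pointwise bound does not apply; moreover, your ``$E$ few derivatives'' case as written pairs two $L^\infty$ estimates and yields no $L^2$ control. The missing ingredient is the $L^2$ energy bound \eqref{est:energy_n1_2}: the paper puts $\|\partial\partial\Gamma^{I_1}\partial^{J_1}n^1\|$ in $L^2$ via Lemma~\ref{lem:energyn1}(ii) and pairs it with $\big\|\tfrac{\langle t+r\rangle}{\langle t-r\rangle}\Gamma^{I_2}\partial^{J_2}E\big\|_{L^\infty}$, controlled through \eqref{est:PointE2}. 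With this one extra input the case split closes, and after using $C_1^{2+19\delta}\epsilon^2<1$ to absorb the $C_1^{3+(|I|+|J|+27)\delta}\epsilon^3$ remainder you recover the stated bound.
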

	\begin{proof}
		Let $|I|\leq 9, |I|+|J|\leq N-1$, with $N\geq 10$.
		First, from Lemma~\ref{lem:n0}, inequalities~(\ref{est:BootE}),~(\ref{est:PointE2}) and the H\"{o}lder inequality, we have
		\begin{equation}
			\begin{aligned}
				\left\|\frac{\langle t+r\rangle}{\langle t-r\rangle}\Gamma^I\partial^J (n^0 E)\right\|&\lesssim\sum_{\substack{I_1+I_2=I,J_1+J_2=J\\|I_1|\leq 9,|I_1|+|J_1|\leq N-1\\|I_2|\leq5,|I_2|+|J_2|\leq N-5}}\|\Gamma^{I_1}\partial^{J_1} n^0\|\left\|\frac{\langle t+r\rangle}{\langle t-r\rangle}\Gamma^{I_2}\partial^{J_2}E\right\|_{L^\infty}\\
				&+\sum_{\substack{I_1+I_2=I,J_1+J_2=J\\|I_1|\leq8,|I_1|+|J_1|\leq N-2\\|I_2|\leq 9,|I_2|+|J_2|\leq N-1}}\left\|\frac{\langle t+r\rangle}{\langle t-r\rangle}\Gamma^{I_1}\partial^{J_1}n^0\right\|_{L^\infty}\|\Gamma^{I_2}\partial^{J_2}E\|\\
				&\lesssim C_1^{1+(|I|+|J|+8)\delta}\epsilon K_0.
			\end{aligned}
		\end{equation}
		Second, based on Lemma~\ref{lem:Hessian_n1point}, inequalities~(\ref{est:commutators}),~(\ref{est:BootE}),~(\ref{est:energy_n1_2}) and the H\"{o}lder inequality, we get
		\begin{equation}
			\begin{aligned}
				&\left\|\frac{\langle t+r\rangle}{\langle t-r\rangle}\Gamma^I\partial^J (\Delta n^1E)\right\|\\
				&\lesssim\sum_{\substack{I_1+I_2=I,J_1+J_2=J\\|I_1|\leq 9,|I_1|+|J_1|\leq N-1\\|I_2|\leq 5,|I_2|+|J_2|\leq N-5}}\|\Gamma^{I_1}\partial^{J_1}\Delta n^1\|\left\|\frac{\langle t+r\rangle}{\langle t-r\rangle}\Gamma^{I_2}\partial^{J_2}E\right\|_{L^\infty}\\
				&+\sum_{\substack{I_1+I_2=I,J_1+J_2=J\\|I_1|\leq 5,|I_1|+|J_1|\leq N-5\\|I_2|\leq 9,|I_2|+|J_2|\leq N-1}}\left\|\frac{\langle t+r\rangle}{\langle t-r\rangle}\Gamma^{I_1}\partial^{J_1}\Delta n^1\right\|_{L^\infty}\|\Gamma^{I_2}\partial^{J_2}E\|\\
				&\lesssim\sum_{\substack{|I_1|+|I_2|\leq|I|\\|I_1|+|J_1|\leq N-1\\|I_2|\leq 5,|I_2|+|J_2|\leq N-5}}\|\partial\partial\Gamma^{I_1}\partial^{J_1}n^1\|\left\|\frac{\langle t+r\rangle}{\langle t-r\rangle}\Gamma^{I_2}\partial^{J_2}E\right\|_{L^\infty}\\
				&+\sum_{\substack{|I_1|+|I_2|\leq|I|\\|I_1|\leq 5,|I_1|+|J_1|\leq N-5\\|I_2|+|J_2|\leq N-1}}\left\|\frac{\langle t+r\rangle}{\langle t-r\rangle}\partial\partial\Gamma^{I_1}\partial^{J_1}n^1\right\|_{L^\infty}\|\Gamma^{I_2}\partial^{J_2}E\|\\
				&\lesssim C_1^{1+(|I|+|J|+13)\delta}\epsilon^3 K_0^{[\frac{|I|+|J|+3}{2}]}+C_1^{3+(|I|+|J|+27)\delta}\epsilon^3\\
				&\lesssim C_1^{1+(|I|+|J|+13)\delta}\epsilon^3 K_0^{[\frac{|I|+|J|+3}{2}]}+C_1^{1+(|I|+|J|+8)\delta}\epsilon,
			\end{aligned}
		\end{equation}
		where we use $C_1^{2+19\delta}\epsilon^2<1.$
		
		Last, from Lemma~\ref{est:weightKG},~(\ref{est:BootE}) and the above inequalities, we obtain
		\begin{equation*}
			\begin{aligned}
				\left\|\frac{\langle t+r\rangle}{\langle t-r\rangle}\Gamma^I\partial^J E\right\|
				&\lesssim\sum_{|K|\leq 1}\|\partial\Gamma^K\Gamma^I\partial^J E\|+\left\|\frac{\langle t+r\rangle}{\langle t-r\rangle}\Gamma^I\partial^J (n^0 E+\Delta n^1E)\right\|\\
				&\lesssim C_1^{1+(|I|+|J|+8)\delta}\epsilon K_0+C_1^{1+(|I|+|J|+13)\delta}\epsilon^3 K_0^{[\frac{|I|+|J|+3}{2}]}.
			\end{aligned}
		\end{equation*}
		Thus the proof is completed.
	\end{proof}

	Finally, we deduce the following weighted $L^2$ estimate of the nonlinear term in~(\ref{eq:vectorfieldE}).
	\begin{lemma}\label{lem:nonlinear}
		For all $t\in [0,T^*(\vec{E}_{0},\vec{n}_{0}))$, $|I|\leq9, |I|+|J|\leq N-1$, we have
		\begin{equation}\label{est:nonlinear}
			\begin{aligned}
				&\|\langle t+r\rangle\Gamma^I\partial^J(n^0E+\Delta n^1E)\|\\
				%&\lesssim\langle t\rangle^{-\frac{1}{8}}\left(C_1^{1+(|I|+|J|+8)\delta}\epsilon\langle K_0\rangle^2+C_1^{3+(|I|+|J|+39)\delta}\epsilon^3\langle K_0\rangle^{[\frac{|I|+|J|+6}{2}]}+C_1^{5+(|I|+|J|+53)\delta}\epsilon^5\right)\\
				&\lesssim\langle t\rangle^{-\frac{1}{8}}\left(C_1^{1+(|I|+|J|+8)\delta}\epsilon K_0^2+C_1^{3+(|I|+|J|+39)\delta}\epsilon^3 K_0^{[\frac{|I|+|J|+6}{2}]}\right),
			\end{aligned}
		\end{equation}
		where $C_1^{2+19\delta}\epsilon^2<1.$
	\end{lemma}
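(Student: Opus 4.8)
The plan is to split $n^0E+\Delta n^1E$ into the two products $n^0E$ and $\Delta n^1 E$ and to estimate each by the Leibniz rule, in every resulting term placing the lower‑order factor in $L^\infty$ and the higher‑order factor in $L^2$ (or weighted $L^2$). The weight $\langle t+r\rangle$ will be absorbed using the pointwise decay of the low‑order factor, leaving a residual $\langle t\rangle^{-1/8}$.

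First I would treat $\langle t+r\rangle\Gamma^I\partial^J(n^0E)=\langle t+r\rangle\sum\Gamma^{I_1}\partial^{J_1}n^0\cdot\Gamma^{I_2}\partial^{J_2}E$. For each split one checks, using $|I|\le 9$, $|I|+|J|\le N-1$ and $N\ge 10$, that at least one of the following holds: (a) $|I_2|\le 5$ and $|I_2|+|J_2|\le N-5$, so $E$ may be put in $L^\infty$ via \eqref{est:PointE2} and $n^0$ in $L^2$ via \eqref{est:Gamma_n0L^2}; (b) $|I_1|\le 8$ and $|I_1|+|J_1|\le N-2$, so $n^0$ may be put in $L^\infty$ via \eqref{est:Gamma_n0point} and $\Gamma^{I_2}\partial^{J_2}E$ estimated by the weighted bound of Lemma~\ref{lem:weightE}. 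In case (a) the weight is absorbed by $\langle t+r\rangle\cdot\langle t+r\rangle^{-9/8}=\langle t+r\rangle^{-1/8}\le\langle t\rangle^{-1/8}$; in case (b) by $\langle t+r\rangle\cdot\langle t+r\rangle^{-1}\langle t-r\rangle^{-1/2}=\langle t-r\rangle^{-1/2}\le\langle t\rangle^{-1/2}\tfrac{\langle t+r\rangle}{\langle t-r\rangle}$, using $\langle t\rangle\langle t-r\rangle\le\langle t+r\rangle^2$, which matches exactly the weighted norm controlled in Lemma~\ref{lem:weightE}. This produces a contribution $\lesssim\langle t\rangle^{-1/8}\big(C_1^{1+(|I|+|J|+8)\delta}\epsilon K_0^2+C_1^{1+(|I|+|J|+13)\delta}\epsilon^3K_0^{[(|I|+|J|+3)/2]+1}\big)$, which is of the claimed form after noting $[(|I|+|J|+3)/2]+1\le[(|I|+|J|+6)/2]$.

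Next I would treat $\langle t+r\rangle\Gamma^I\partial^J(\Delta n^1E)$ by the same scheme, first rewriting $\Gamma^{I_1}\partial^{J_1}\Delta n^1$ via \eqref{est:commutators} as a sum of $|\partial\partial\Gamma^K\partial^{J_1}n^1|$ with $|K|\le|I_1|$. When $\partial\partial n^1$ is placed in $L^\infty$ one uses Lemma~\ref{lem:Hessian_n1point} (valid when $|K|\le 5$, $|K|+|J_1|\le N-5$), when in $L^2$ one uses \eqref{est:energy_n1_2}; the dichotomy between these, and between $E$ in $L^\infty$ (via \eqref{est:PointE2}) or in weighted $L^2$ (via Lemma~\ref{lem:weightE}), is set up exactly as in cases (a)/(b) above and again absorbs $\langle t+r\rangle$ as before. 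The crucial arithmetic is that whenever $|K|$ derivatives act on $n^1$ one has $|K|+|J_1|\le|I_1|+|J_1|$, hence $(|K|+|J_1|)+(|I_2|+|J_2|)\le|I|+|J|$, so all $C_1$‑exponents stay controlled by $|I|+|J|$; combining the resulting $C_1^{2+c\delta}\epsilon^2$ factors with the smallness $C_1^{2+19\delta}\epsilon^2<1$ collapses the cubic pieces into $\langle t\rangle^{-1/8}C_1^{3+(|I|+|J|+39)\delta}\epsilon^3K_0^{[(|I|+|J|+6)/2]}$ (and the few remaining pieces into the first term of \eqref{est:nonlinear}).

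Summing the two parts gives \eqref{est:nonlinear}. The main obstacle is not any single estimate — every analytic ingredient is already in place — but the bookkeeping: one must verify, for each Leibniz term, that the index restrictions of the invoked lemma ($|I|\le 5,8,9,10$ and the corresponding $|I|+|J|$ bounds) are met, and then track the $C_1$‑exponents and $K_0$‑powers so that they remain bounded by $1+(|I|+|J|+8)\delta$ (resp.\ $3+(|I|+|J|+39)\delta$) and by $K_0^2$ (resp.\ $K_0^{[(|I|+|J|+6)/2]}$), the smallness $C_1^{2+19\delta}\epsilon^2<1$ being used repeatedly to demote $C_1^{2+c\delta}\epsilon^2$ factors.
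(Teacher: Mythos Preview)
Your proposal is correct and follows essentially the same approach as the paper: split into $n^0E$ and $\Delta n^1E$, apply Leibniz, and in each summand put the low-order factor in $L^\infty$ (via \eqref{est:Gamma_n0point}, \eqref{est:PointE2}, or Lemma~\ref{lem:Hessian_n1point}) and the high-order factor in $L^2$ or weighted $L^2$ (via \eqref{est:Gamma_n0L^2}, \eqref{est:energy_n1_2}, or Lemma~\ref{lem:weightE}). Your weight-splitting in case~(b), $\langle t-r\rangle^{-1/2}\le\langle t\rangle^{-1/2}\tfrac{\langle t+r\rangle}{\langle t-r\rangle}$, is just a pointwise rephrasing of the paper's decomposition $\langle t+r\rangle=\langle t-r\rangle\cdot\tfrac{\langle t+r\rangle}{\langle t-r\rangle}$ followed by $\|\langle t-r\rangle\Gamma^{I_1}\partial^{J_1}n^0\|_{L^\infty}\lesssim\langle t\rangle^{-1/2}K_0$.

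Two minor remarks. First, for the $\Delta n^1E$ part the paper uses the full bootstrap decay $\langle t+r\rangle^{-3/2}$ from \eqref{est:BootE} (rather than \eqref{est:PointE2}) when $E$ is placed in $L^\infty$, which yields $\langle t\rangle^{-1/2}$ for that block; your choice of \eqref{est:PointE2} gives only $\langle t\rangle^{-1/8}$ there, but since the $n^0E$ block already forces the overall rate to $\langle t\rangle^{-1/8}$, this makes no difference for \eqref{est:nonlinear}. Second, your parenthetical ``and the few remaining pieces into the first term of \eqref{est:nonlinear}'' is not quite right: every contribution from $\Delta n^1E$ carries at least $\epsilon^3$ (because $n^1$ is $O(\epsilon^2)$), so all of them go into the cubic term $C_1^{3+(|I|+|J|+39)\delta}\epsilon^3K_0^{[(|I|+|J|+6)/2]}$, none into $C_1^{1+(|I|+|J|+8)\delta}\epsilon K_0^2$. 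This is harmless for the final bound.
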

	\begin{proof}
		Let $|I|\leq9, |I|+|J|\leq N-1$ with $N\geq10$.
		First, from Lemmas~\ref{lem:n0},~\ref{lem:PointE2},~\ref{lem:weightE} and the H\"{o}lder inequality, we have
		\begin{equation*}
			\begin{aligned}
				&\|\langle t+r\rangle\Gamma^I\partial^J(n^0E)\|\\
				\lesssim&\sum_{\substack{I_1+I_2=I,J_1+J_2=J\\|I_1|\leq 9,|I_1|+|J_1|\leq N-1\\|I_2|\leq 5,|I_2|+|J_2|\leq N-5}}\|\Gamma^{I_1}\partial^{J_1}n^0\|\|\langle t+r\rangle\Gamma^{I_2}\partial^{J_2}E\|_{L^\infty}\\
				+&\sum_{\substack{I_1+I_2=I,J_1+J_2=J\\|I_1|\leq 8,|I_1|+|J_1|\leq N-2\\|I_2|\leq 9,|I_2|+|J_2|\leq N-1}}\|\langle t-r\rangle\Gamma^{I_1}\partial^{J_1}n^0\|_{L^\infty}\left\|\frac{\langle t+r\rangle}{\langle t-r\rangle}\Gamma^{I_2}\partial^{J_2}E\right\|\\
				%\lesssim&C_1^{1+(|I|+8)\delta}\epsilon\langle K_0\rangle\langle t\rangle^{-\frac{1}{8}}\\
				%&+\langle t\rangle^{-\frac{1}{2}}\langle K_0\rangle\left(C_1^{1+(|I|+8)\delta}\epsilon\langle K_0\rangle+C_1^{1+(|I|+13)\delta}\epsilon^3\langle K_0\rangle^{[\frac{|I|+3}{2}]}+C_1^{3+(|I|+27)\delta}\epsilon^3\right)\\
				\lesssim&\langle t\rangle^{-\frac{1}{8}}\left(C_1^{1+(|I|+|J|+8)\delta}\epsilon K_0^2+C_1^{1+(|I|+|J|+13)\delta}\epsilon^3 K_0^{[\frac{|I|+|J|+5}{2}]}\right),
			\end{aligned}
		\end{equation*}
		where we use $C_1^{2+19\delta}\epsilon^2<1.$
		
		Second, from Lemmas~\ref{lem:Hessian_n1point}, \ref{lem:weightE}, inequalities~(\ref{est:commutators}),~(\ref{est:BootE}),~(\ref{est:energy_n1_2}) and the H\"{o}lder inequality, we obtain
		\begin{equation*}
			\begin{aligned}
				&\|\langle t+r\rangle\Gamma^I\partial^J(\Delta n^1E)\|\\
				\lesssim&\sum_{\substack{I_1+I_2=I,J_1+J_2=J\\|I_1|\leq 9,|I_1|+|J_1|\leq N-1\\|I_2|\leq 5,|I_2|+|J_2|\leq N-5}}\|\Gamma^{I_1}\partial^{J_1}\Delta n^1\|\|\langle t+r\rangle\Gamma^{I_2}\partial^{J_2}E\|_{L^\infty}\\
				+&\sum_{\substack{I_1+I_2=I,J_1+J_2=J\\|I_1|\leq 5,|I_1|+|J_1|\leq N-5\\|I_2|\leq 9,|I_2|+|J_2|\leq N-1}}\|\langle t-r\rangle\Gamma^{I_1}\partial^{J_1}\Delta n^1\|_{L^\infty}\left\|\frac{\langle t+r\rangle}{\langle t-r\rangle}\Gamma^{I_2}\partial^{J_2}E\right\|\\
				\lesssim&\sum_{\substack{|I_1|+|I_2|\leq|I|,J_1+J_2=J\\|I_2|\leq 5,|I_2|+|J_2|\leq N-5}}\|\partial\partial\Gamma^{I_1}\partial^{J_1}n^1\|\|\langle t+r\rangle\Gamma^{I_2}\partial^{J_2}E\|_{L^\infty}\\
				+&\sum_{\substack{|I_1|+|I_2|\leq|I|,J_1+J_2=J\\|I_1|\leq 5,|I_1|+|J_1|\leq N-5}}\|\langle t-r\rangle\partial\partial\Gamma^{I_1}\partial^{J_1}n^1\|_{L^\infty}\left\|\frac{\langle t+r\rangle}{\langle t-r\rangle}\Gamma^{I_2}\partial^{J_2}E\right\|\\
				%\lesssim&\left(\epsilon^2\langle K_0\rangle^{[\frac{|I_1|}{2}]}+C_1^{2+(|I_1|+14)\delta}\epsilon^2\right)C_1^{1+(|I_2|+13)\delta}\epsilon\langle t\rangle^{-\frac{1}{2}}\\
				%&+\langle t\rangle^{-\frac{1}{2}}\left(\epsilon^2\langle K_0\rangle^{[\frac{|I_1|+3}{2}]}+C_1^{2+(|I_1|+26)\delta}\epsilon^2\right)\\
				%&\left(C_1^{1+(|I_2|+8)\delta}\epsilon\langle K_0\rangle+C_1^{1+(|I_2|+13)\delta}\epsilon^3\langle K_0\rangle^{[\frac{|I_2|+3}{2}]}+C_1^{3+(|I_2|+27)\delta}\epsilon^3\right)\\
				\lesssim&\langle t\rangle^{-\frac{1}{2}}C_1^{3+(|I|+|J|+39)\delta}\epsilon^3 K_0^{[\frac{|I|+|J|+6}{2}]},
			\end{aligned}
		\end{equation*}
		where we use $C_1^{2+19\delta}\epsilon^2<1.$
		
		We see that~(\ref{est:nonlinear}) follows from the above two inequalities.
	\end{proof}

	\subsection{Proof of Proposition~\ref{pro:KGZ-L}}
	In this subsection, we will complete the proof of Proposition~\ref{pro:KGZ-L} by improving all the estimates of $E$ in~\eqref{est:BootE}.
	\begin{proof}[Proof of Proposition~\ref{pro:KGZ-L}]
		For any initial data $(\vec{E}_{0},\vec{n}_{0})$ satisfying the conditions~(\ref{initialdata}), we consider the corresponding solution $(E,n^0,n^1)$ of~(\ref{eq:KGZ-L2}). From the conditions~(\ref{initialdata}), we observe that 
		\begin{equation}\label{est:I.D.1}
			\begin{aligned}
				\mathcal{E}^{\frac{1}{2}}_{gst,1}(0,\Gamma^I\partial^J E)&\lesssim\epsilon  K_0^{[\frac{|I|+|J|+1}{2}]},\quad\mbox{for}\quad |I|\leq10, |I|+|J|\leq N,\\
				\|\langle r\rangle^{2}\Gamma^I\partial^J E(0,x)\|&\lesssim \epsilon K_0^{[\frac{|I|+|J|}{2}]},\quad \ \ \ \mbox{for}\quad |I|\leq 10, |I|+|J|\leq N.
			\end{aligned}
		\end{equation}
		\textbf{Step 1.} Closing the estimate in $\mathcal{E}_{gst,1}(t,\Gamma^{I}\partial^{J}{E})$.
		%First, for $|I|=|J|=0$, by the H\"{o}lder inequality, Lemma~\ref{lem:ghostKG} and~(\ref{initialdata}), we get
		%\begin{equation}
		%\begin{aligned}
		%\mathcal{E}_{gst,1}(t,E)
		%&\lesssim\mathcal{E}_{gst,1}(t_0, E)+\int_{t_0}^{t}\int_{\R^{3}}\left|(n^0E+\Delta n^1 E)\partial_t E\right|\d x\d s\\
		%&\lesssim\epsilon^2+
		%\end{aligned}
		%\end{equation}
		For $|I|\leq10, |I|+|J|\leq N$, $N\geq 10$, using the H\"{o}lder inequality, Lemma~\ref{lem:ghostKG} and~(\ref{est:I.D.1}), we have
		\begin{equation}\label{est:close-energy}
			\begin{aligned}
				\mathcal{E}_{gst,1}(t,\Gamma^I\partial^J E)
				&\lesssim\mathcal{E}_{gst,1}(0,\Gamma^I\partial^{J} E)+\int_{0}^{t}\int_{\R^{3}}\left|\Gamma^I\partial^J (n^0E+\Delta n^1 E)\partial_t\Gamma^I\partial^J E\right|\d x\d s\\
				&\lesssim\epsilon^2  K_0^{|I|+|J|+1}+S_1+S_2,
			\end{aligned}
		\end{equation}
		where
		\begin{equation*}
			\begin{aligned}
				S_1=&\int_{0}^{t}\|\Gamma^I\partial^J(n^0E)\|\|\partial_t\Gamma^I\partial^J E\| \d s,\\
				S_2=&\int_{0}^{t}\|\Gamma^I\partial^J(\Delta n^1E)\|\|\partial_t\Gamma^I\partial^J E\| \d s.
			\end{aligned}
		\end{equation*}
		To estimate $S_1$, we consider four scenarios. 
		First, if $|I|=|J|=0$, based on~(\ref{est:BootE}),~(\ref{est:Gamma_n0point}), H\"{o}lder inequality and Cauchy inequality, we deduce
		\begin{equation}
			\begin{aligned}
				S_1&\lesssim\int_{0}^{t}\|\langle s-r\rangle^{\frac{1}{2}+\delta}n^0\|_{L^\infty}\left\|\frac{E}{\langle s-r\rangle^{\frac{1}{2}+\delta}}\right\|\|\partial_t E\| \d s\\
				&\lesssim C_2^{-1}\mathcal{E}_{gst,1}(t, E)+C_2\int_{0}^{t}\langle s\rangle^{-2+2\delta} K_0^2\mathcal{E}_{gst,1}(s,E)\d s,
			\end{aligned}
		\end{equation}
		in which $C_2$ is a constant to be chosen later.
		
		Second, if $|I|\leq 8,1\leq |I|+|J|\leq N-2$, 
		\begin{equation}
			\begin{aligned}
				S_1
				&\lesssim\sum_{\substack{I_1+I_2=I\\J_1+J_2=J}}\int_{0}^{t}\|\langle s-r\rangle^{\frac{1}{2}+\delta}\Gamma^{I_1}\partial^{J_1}n^0\|_{L^\infty}\left\|\frac{\Gamma^{I_2}\partial^{J_2}E}{\langle s-r\rangle^{\frac{1}{2}+\delta}}\right\|\|\partial_t\Gamma^I\partial^J E\|_{L^2} \d s\\
				&\lesssim
				\sum_{\substack{|I_2|+|J_2|\leq |I|+|J|}}C_2^{-1}\int_{0}^{t}\left\|\frac{\Gamma^{I_2}\partial^{J_2} E}{\langle s-r\rangle^{\frac{1}{2}+\delta}}\right\|^2\d s\\
				&+\sum_{\substack{|I_1|+|J_1|\leq |I|+|J|}}C_2\int_{0}^{t}\|\langle s-r\rangle^{\frac{1}{2}+\delta}\Gamma^{I_1}\partial^{J_1}n^0\|^2_{L^\infty}\|\partial_t\Gamma^I\partial^J E\|^2\d s\\
				&\lesssim C_2^{-1}\mathcal{E}_{gst,1}(t,\Gamma^I\partial^J E)+C_2^{-1}C_1^{2+2(|I|+|J|-1)\delta}\epsilon^2\\
				&+C_2\int_{0}^{t}\langle s\rangle^{-2+2\delta} K_0^2\mathcal{E}_{gst,1}(s,\Gamma^I\partial^J E)\d s,
			\end{aligned}
		\end{equation}
		in which $C_2$ is a constant to be chosen later.
		
		Next, if $|I|=9$ or $|I|=10$, and $|I|\leq |I|+|J|\leq \max\{|I|,N-2\}$, we obtain
		\begin{equation}
			\begin{aligned}
				&S_1
				\lesssim\sum_{\substack{|I_1|\leq|I|\\|I_1|+|J_1|\leq \max\{|I|,N-2\}\\|I_2|+|J_2|\leq 1}}\int_{0}^{t}\|\Gamma^{I_1}\partial^{J_1}n^0\|\|\Gamma^{I_2}\partial^{J_2}E\|_{L^\infty}\|\partial_t\Gamma^I\partial^J E\|\d s\\
				&+\sum_{\substack{I_1+I_2=I,J_1+J_2=J\\|I_1|\leq 8,|I_1|+|J_1|\leq N-2\\|I_2|\leq |I|\\|I_2|+|J_2|\leq \max\{|I|,N-2\}}}\int_{0}^{t}\|\langle s-r\rangle^{\frac{1}{2}+\delta}\Gamma^{I_1}\partial^{J_1}n^0\|_{L^\infty}\left\|\frac{\Gamma^{I_2}\partial^{J_2}E}{\langle s-r\rangle^{\frac{1}{2}+\delta}}\right\|\|\partial_t\Gamma^I\partial^J E\| \d s\\
				&\lesssim S_1^1+S_1^2.
			\end{aligned}
		\end{equation}
		By Lemmas~\ref{lem:n0},~\ref{lem:Elower} and~(\ref{est:BootE}), we have
		\begin{equation}
			\begin{aligned}
				S_1^1&\lesssim\sum_{|I_2|+|J_2|\leq 1}\int_{0}^{t}C_1^{1+(|I|+|J|)\delta}\epsilon K_0\|\Gamma^{I_2}\partial^{J_2}E\|_{L^\infty}\d s\\
				&\lesssim C_1^{1+(|I|+|J|)\delta}\epsilon^2 K_0^{4}+C_1^{2+(|I|+|J|+5)\delta}\epsilon^2 K_0^{2}\\
				&\lesssim C_1^{2+(|I|+|J|+5)\delta}\epsilon^2 K_0^{4}\\
				&\lesssim C_1^{2+(2|I|+2|J|-4)\delta}\epsilon^2 K_0^{4},
			\end{aligned}
		\end{equation}
		where we use the fact $|I|+|J|\geq 9$ in the last step.
		
		Using again Cauchy inequality, Lemma~\ref{lem:n0} and~(\ref{est:BootE}), we get
		\begin{equation}
			\begin{aligned}
				S_1^2&\lesssim\sum_{\substack{|I_2|+|J_2|\leq |I|+|J|}}C_2^{-1}\int_{0}^{t}\left\|\frac{\Gamma^{I_2}\partial^{J_2}E}{\langle s-r\rangle^{\frac{1}{2}+\delta}}\right\|^2\d s\\
				&+\sum_{\substack{|I_1|\leq 8\\|I_1|+|J_1|\leq N-2}}C_2\int_{0}^{t}\|\langle s-r\rangle^{\frac{1}{2}+\delta}\Gamma^{I_1}\partial^{J_1}n^0\|^2_{L^\infty}\|\partial_t\Gamma^I\partial^J E\|^2\d s\\
				&\lesssim C_2^{-1}\mathcal{E}_{gst,1}(t,\Gamma^I\partial^J E)+C_2^{-1}C_1^{2+2(|I|+|J|-1)\delta}\epsilon^2\\
				&+C_2\int_{0}^{t}\langle s\rangle^{-2+2\delta} K_0^2\mathcal{E}_{gst,1}(s,\Gamma^I\partial^J E)\d s,
			\end{aligned}
		\end{equation}
		in which $C_2$ is a constant to be chosen later.
		
		Finally, let $|I|\leq 10$ and $\max\{N-1,|I|\}\leq |I|+|J|\leq N.$
		\begin{equation}
			\begin{aligned}
				&S_1\lesssim\sum_{\substack{I_1+I_2=I,J_1+J_2=J\\|I_1|\leq 10,|I_1|+|J_1|\leq N\\|I_2|+|J_2|\leq 1}}\int_{0}^{t}\|\Gamma^{I_1}\partial^{J_1}n^0\|\|\Gamma^{I_2}\partial^{J_2}E\|_{L^\infty}\|\partial_t\Gamma^I\partial^J E\|\d s\\
				&+\sum_{\substack{I_1+I_2=I,J_1+J_2=J\\|I_1|\leq 8,|I_1|+|J_1|\leq N-2\\|I_2|\leq 10,|I_2|+|J_2|\leq N}}\int_{0}^{t}\|\langle s-r\rangle^{\frac{1}{2}+\delta}\Gamma^{I_1}\partial^{J_1}n^0\|_{L^\infty}\left\|\frac{\Gamma^{I_2}\partial^{J_2}E}{\langle s-r\rangle^{\frac{1}{2}+\delta}}\right\|\|\partial_t\Gamma^I\partial^J E\| \d s\\
				&\lesssim S_1^3+S_1^4.
			\end{aligned}
		\end{equation}
		%By Lemmas~\ref{lem:n0},~\ref{lem:Elower} and~(\ref{est:BootE}), we have
		Similarly to $S_1^1$, we have
		\begin{equation}
			\begin{aligned}
				S_1^3&\lesssim\sum_{|I_2|+|J_2|\leq 1}\int_{0}^{t}C_1^{1+(|I|+|J|)\delta}\epsilon K_0\|\Gamma^{I_2}\partial^{J_2}E\|_{L^\infty}\d s\\
				&\lesssim C_1^{2+(2|I|+2|J|-4)\delta}\epsilon^2 K_0^{4}.
			\end{aligned}
		\end{equation}
		
		Using again Cauchy inequality, Lemma~\ref{lem:n0} and~(\ref{est:BootE}), we get
		\begin{equation}
			\begin{aligned}
				S_1^4&\lesssim\sum_{\substack{|I_2|+|J_2|\leq |I|+|J|}}C_2^{-1}\int_{0}^{t}\left\|\frac{\Gamma^{I_2}\partial^{J_2}E}{\langle s-r\rangle^{\frac{1}{2}+\delta}}\right\|^2\d s\\
				&+\sum_{\substack{|I_1|\leq 8\\|I_1|+|J_1|\leq N-2}}C_2\int_{0}^{t}\|\langle s-r\rangle^{\frac{1}{2}+\delta}\Gamma^{I_1}\partial^{J_1}n^0\|^2_{L^\infty}\|\partial_t\Gamma^I\partial^J E\|^2\d s\\
				&\lesssim C_2^{-1}\mathcal{E}_{gst,1}(t,\Gamma^I\partial^J E)+C_2^{-1}C_1^{2+2(|I|+|J|-1)\delta}\epsilon^2\\
				&+C_2\int_{0}^{t}\langle s\rangle^{-2+2\delta} K_0^2\mathcal{E}_{gst,1}(s,\Gamma^I\partial^J E)\d s,
			\end{aligned}
		\end{equation}
		in which $C_2$ is a constant to be chosen later.
		
		Thus, the above inequalities imply
		\begin{equation}\label{est:S_1}
			\begin{aligned}
				S_1&\lesssim 
				C_1^{2+(2|I|+2|J|-4)\delta}\epsilon^2 K_0^{4}
				+C_2^{-1}\mathcal{E}_{gst,1}(t,\Gamma^I\partial^J E)\\
				&+C_2^{-1}C_1^{2+2(|I|+|J|-1)\delta}\epsilon^2
				+C_2\int_{0}^{t}\langle s\rangle^{-2+2\delta} K_0^2\mathcal{E}_{gst,1}(s,\Gamma^I\partial^J E)\d s.
			\end{aligned}
		\end{equation}

		Now we estimate $S_2$. For $|I|\leq 10, |I|+|J|\leq N$, based on H\"{o}lder inequality, we infer
		\begin{equation*}
			\begin{aligned}
				S_2
				&\lesssim\sum_{\substack{I_1+I_2=I,J_1+J_2=J\\|I_1|\leq 10,|I_1|+|J_1|\leq N\\|I_2|\leq 5,|I_2|+|J_2|\leq N-5}}\int_{0}^{t}\|\Gamma^{I_1}\partial^{J_1}\Delta n^1\|\|\Gamma^{I_2}\partial^{J_2}E\|_{L^\infty}\|\partial_t\Gamma^I\partial^J E\|\d s\\
				+&\sum_{\substack{I_1+I_2=I,J_1+J_2=J\\|I_1|+|J_1|\leq 4\\|I_2|\leq 10,|I_2|+|J_2|\leq N}}\int_{0}^{t}\|\langle s-r\rangle^{\frac{1}{2}+\delta}\Gamma^{I_1}\partial^{J_1}\Delta n^1\|_{L^\infty}\left\|\frac{\Gamma^{I_2}\partial^{J_2}E}{\langle s-r\rangle^{\frac{1}{2}+\delta}}\right\|\|\partial_t\Gamma^I\partial^J E\|\d s\\
				\lesssim&\ S_2^1+S_2^2.\\
			\end{aligned}
		\end{equation*}
		By inequalities~(\ref{est:commutators}),~(\ref{est:BootE}),~(\ref{est:energy_n1_2}), we deduce
		\begin{equation}
			\begin{aligned}
				S_2^1&\lesssim\sum_{\substack{|I_1|+|I_2|\leq|I|\\J_1+J_2=J\\|I_2|\leq 5,|I_2|+|J_2|\leq N-5}}\int_{0}^{t}\|\partial\partial\Gamma^{I_1}\partial^{J_1}n^1\|\|\Gamma^{I_2}\partial^{J_2}E\|_{L^\infty}\|\partial_t\Gamma^I\partial^J E\|\d s\\
				&\lesssim C_1^{2+(2|I|+2|J|+13)\delta}\epsilon^4 K_0^{[\frac{|I|+|J|}{2}]}+C_1^{4+(2|I|+2|J|+27)\delta}\epsilon^4\\
				&\lesssim C_1^{4+(2|I|+2|J|+27)\delta}\epsilon^4 K_0^{[\frac{|I|+|J|}{2}]}.
			\end{aligned}
		\end{equation}
		By Cauchy inequality, Lemma~\ref{lem:Hessian_n1point} and inequalities~(\ref{est:commutators}),~(\ref{est:BootE}), we deduce
		\begin{equation}
			\begin{aligned}
				S_2^2&\lesssim\sum_{\substack{|I_1|+|I_2|\leq|I|\\J_1+J_2=J\\|I_1|+|J_1|\leq 4}}\int_{0}^{t}\|\langle s-r\rangle^{\frac{1}{2}+\delta}\partial\partial\Gamma^{I_1}\partial^{J_1}n^1\|_{L^\infty}\left\|\frac{\Gamma^{I_2}\partial^{J_2}E}{\langle s-r\rangle^{\frac{1}{2}+\delta}}\right\|\|\partial_t\Gamma^I\partial^J E\|\d s\\
				&\lesssim C_2^{-1}\sum_{|I_2|+|J_2|\leq|I|+|J|}\int_{0}^{t}\left\|\frac{\Gamma^{I_2}\partial^{J_2}E}{\langle s-r\rangle^{\frac{1}{2}+\delta}}\right\|^2\d s\\
				&+C_2\sum_{|I_1|+|J_1|\leq 4}\int_{0}^{t}\|\langle s-r\rangle^{\frac{1}{2}+\delta}\partial\partial\Gamma^{I_1}\partial^{J_1}n^1\|^2_{L^\infty}\|\partial_t\Gamma^I\partial^J E\|^2\d s\\
				%&\lesssim C_2^{-1}C_1^{2+2(|I|+|J|-1)\delta}\epsilon^2
				%+C_2^{-1}\mathcal{E}_{gst,1}(t,\Gamma^I\partial^J E)\\
				%&+C_2\left(C_1^{2+2(|I|+|J|)\delta}\epsilon^6 K_0^7+C_1^{6+(2|I|+2|J|+60)\delta}\epsilon^6+C_1^{4+(2|I|+2|J|+30)\delta}\epsilon^6 K_0^3\right)\\
				&\lesssim C_2^{-1}C_1^{2+2(|I|+|J|-1)\delta}\epsilon^2
				+C_2^{-1}\mathcal{E}_{gst,1}(t,\Gamma^I\partial^J E)
				+C_2C_1^{6+(2|I|+2|J|+60)\delta}\epsilon^6 K_0^7.
			\end{aligned}
		\end{equation}
		From the above two inequalities, we obtain
		\begin{equation}\label{est:S_2}
			\begin{aligned}
				S_2&\lesssim C_1^{4+(2|I|+2|J|+27)\delta}\epsilon^4 K_0^{[\frac{|I|+|J|}{2}]}
				+C_2^{-1}C_1^{2+2(|I|+|J|-1)\delta}\epsilon^2\\
				&+C_2^{-1}\mathcal{E}_{gst,1}(t,\Gamma^I\partial^J E)
				+C_2C_1^{6+(2|I|+2|J|+60)\delta}\epsilon^6 K_0^7.
			\end{aligned}
		\end{equation}
		
		Therefore, for any multi-index $I\in\mathbb{N}^{10},J\in\mathbb{N}^{4}, |I|\leq10, |I|+|J|\leq N, N\geq 10$, from inequalities~(\ref{est:close-energy}),~(\ref{est:S_1}),~(\ref{est:S_2}), we get
		\begin{equation}
			\begin{aligned}
				\mathcal{E}_{gst,1}(t,\Gamma^I\partial^J E)&\lesssim C_2^{-1}\mathcal{E}_{gst,1}(t,\Gamma^I\partial^J E)+C_3\\
				&+C_2\int_{0}^{t}\langle s\rangle^{-2+2\delta} K_0^2\mathcal{E}_{gst,1}(s,\Gamma^I\partial^J E)\d s,
			\end{aligned}
		\end{equation}
		where 
		%\begin{equation*}
		%\begin{aligned}
		%C_3&=\epsilon^2\langle K_0\rangle^{|I|+|J|+1}+C_1^{2+(2|I|+2|J|-4)\delta}\epsilon^2\langle K_0\rangle^4+C_1^{4+(|I|+|J|+32)\delta}\epsilon^4\langle K_0\rangle^3\\
		%&+C_1^{2+(2|I|+2|J|+13)\delta}\epsilon^4\langle K_0\rangle^{[\frac{|I|+|J|}{2}]}+C_1^{4+(2|I|+2|J|+27)\delta}\epsilon^4+C_2^{-1}C_1^{2+2(|I|+|J|-1)\delta}\epsilon^2\\
		%&+C_2\left(C_1^{2+(2|I|+2|J|)\delta}\epsilon^6\langle K_0\rangle^7+C_1^{6+(2|I|+2|J|+60)\delta}\epsilon^6+C_1^{4+(2|I|+2|J|+30)\delta}\epsilon^6\langle K_0\rangle^3\right).
		%\end{aligned}
		%\end{equation*}
		\begin{equation*}
			\begin{aligned}
				C_3&=
				\epsilon^2 K_0^{|I|+|J|+1}+C_2^{-1}C_1^{2+2(|I|+|J|-1)\delta}\epsilon^2
				+C_1^{2+(2|I|+2|J|-4)\delta}\epsilon^2 K_0^{4}\\
				&+C_1^{4+(2|I|+2|J|+27)\delta}\epsilon^4 K_0^{[\frac{|I|+|J|}{2}]}
				+C_2C_1^{6+(2|I|+2|J|+60)\delta}\epsilon^6 K_0^7.
			\end{aligned}
		\end{equation*}
		Additionally, we take $C_2^{-1}$ small enough such that the implicit constant $C_0$ in $\lesssim$ times $C_2^{-1}$ is $\frac{1}{2}$, that is $C_2=2C_0$. We can get further
		\begin{equation*}
			\begin{aligned}
				\mathcal{E}_{gst,1}(t,\Gamma^I\partial^J E)&\leq \frac{1}{2}\mathcal{E}_{gst,1}(t,\Gamma^I\partial^J E)+C_0C_3\\
				&+2C_0^2\int_{0}^{t}\langle s\rangle^{-2+2\delta} K_0^2\mathcal{E}_{gst,1}(s,\Gamma^I\partial^JE)\d s.
			\end{aligned}
		\end{equation*}
		
		Finally, using Gronwall inequality in Lemma~\ref{lem:gronwall}, we obtain
		\begin{equation}
			\mathcal{E}_{gst,1}(t,\Gamma^I\partial^J E)\leq 2C_0C_3 \exp\left(4C_0^2 K_0^2\right)\leq\frac{1}{4}C_1^{2+2(|I|+|J|)\delta}\epsilon^2,
		\end{equation}
		where we take $C_1$ and $\epsilon$ satisfying
		\begin{equation*}
			\begin{aligned}
				&C_1^2>40C_0K_0^{N+1}\exp\left(4C_0^2 K_0^2\right),\\
				&C_1^\delta>20\exp\left(4C_0^2 K_0^2\right),\\
				&C_1^{3+33\delta}\epsilon^2<1.
			\end{aligned}
		\end{equation*}
		This strictly improves the estimate of $\mathcal{E}_{gst,1}(t,\Gamma^I\partial^J E)$ in~(\ref{est:BootE}).
		%$C_1^2\gg\langle K_0\rangle^{N+1}\exp\left(2C_0C_2\langle K_0\rangle^2\right)$, $C_1^\delta\gg\exp\left(2C_0C_2\langle K_0\rangle^2\right)$ and $C_1^{3+33\delta}\epsilon^2\ll 1$ 
		%where 
		%\begin{equation*}
		%\begin{aligned}
		%C_3&=\epsilon^2\langle K_0\rangle^{|I|+1}+C_1^{2+(2|I|-3)\delta}\epsilon^2\langle K_0\rangle^4+C_1^{4+(|I|+32)\delta}\epsilon^4\langle K_0\rangle^3\\
		%&+C_1^{2+(2|I|+13)\delta}\epsilon^4\langle K_0\rangle^{[\frac{|I|}{2}]}+C_1^{4+(2|I|+27)\delta}\epsilon^4+C_2^{-1}C_1^{2+2(|I|-1)\delta}\epsilon^2\\
		%&+C_2\left(C_1^{2+2|I|\delta}\epsilon^6\langle K_0\rangle^7+C_1^{6+(2|I|+60)\delta}\epsilon^6\right).
		%\end{aligned}
		%\end{equation*}

		\smallskip
		
		\textbf{Step 2.} Closing the estimate of $\Gamma^I\partial^J E$. 
		For $|I|\leq5, |I|+|J|\leq N-5$, by Lemma~\ref{lem:nonlinear}, we deduce
		\begin{equation}
			\begin{aligned}
				&\sum_{|K|\leq 4}\|\langle t+r\rangle\Gamma^K\Gamma^I\partial^J(n^0E+\Delta n^1 E)\|\\
				&\lesssim\sum_{|I|\leq|K|\leq |I|+4}\|\langle t+r\rangle\Gamma^K\partial^J(n^0E+\Delta n^1 E)\|\\
				&\lesssim\langle t\rangle^{-\frac{1}{8}}\left(C_1^{1+(|I|+|J|+12)\delta}\epsilon K_0^2+C_1^{3+(|I|+|J|+43)\delta}\epsilon^3 K_0^{[\frac{|I|+|J|+10}{2}]}\right).
			\end{aligned}
		\end{equation}
		
		Then based on the above inequality, Corollary~\ref{cor:KGpoint} and~(\ref{est:I.D.1}), we obtain
		\begin{equation}
			\begin{aligned}
				\langle t+r\rangle^{\frac{3}{2}}|\Gamma^I\partial^J E|
				&\lesssim\epsilon K_0^{[\frac{|I|+|J|+5}{2}]}+
				C_1^{1+(|I|+|J|+12)\delta}\epsilon K_0^2\\
				&+C_1^{3+(|I|+|J|+43)\delta}\epsilon^3 K_0^{[\frac{|I|+|J|+10}{2}]}.
			\end{aligned}
		\end{equation}
		Finally, we can choose $C_1$ and $\epsilon$ satisfying
		\begin{equation*}
			\begin{aligned}
				&C_1^2>40C_0K_0^{N+1}\exp\left(4C_0^2 K_0^2\right),\\
				&C_1^\delta>20\exp\left(4C_0^2 K_0^2\right),\\
				&C_1^{3+33\delta}\epsilon^2<1,
				%&C_1^2\gg K_0^{N+1}\exp\left(4C_0^2 K_0^2\right),\\
				%&C_1^\delta\gg\exp\left(4C_0^2 K_0^2\right),\\
				%&C_1^{3+33\delta}\epsilon^2\ll 1,
			\end{aligned}
		\end{equation*}
		such that
		\begin{equation*}
			\langle t+r\rangle^{\frac{3}{2}}|\Gamma^I\partial^J E|\leq\frac{1}{2}C_1^{1+(|I|+|J|+13)\delta}\epsilon,
		\end{equation*}
		which strictly improves the estimate of $\Gamma^I\partial^J E$ in~(\ref{est:BootE}). Note that $C_0$ is the implicit constant in $\lesssim$.
		
		In conclusion, the proof of Proposition~\ref{pro:KGZ-L} is completed. Besides, estimates in~(\ref{est:pointwise1}) follow from~(\ref{est:BootE}), (\ref{est:Gamma_n0point}), (\ref{est:allHessian}).
		%and we establish global solution $(E,n)$ to the system~(\ref{eq:KGZ-L})--(\ref{eq:KGZ-ID}).
	\end{proof}

	\subsection{End of the proof of Theorem~\ref{thm:KGZ-L}}\label{SS:scatter}
	To complete the proof of Theorem~\ref{thm:KGZ-L}, we need to show that the solution $(E,n)$ scatters linearly. By Lemmas~\ref{lem:scatterWave},~\ref{lem:scatterKG}, we only need to bound
	\begin{equation*}
		\int_{0}^{+\infty}\|nE\|_{H^N(\mathbb{R}^3)}\d s \quad\mbox{and} \quad \int_{0}^{+\infty}\left\|\Delta |E|^2\right\|_{\dot{H}^{N-1}(\mathbb{R}^3)}\d s.
	\end{equation*}
	
	\textbf{Step 1.} Boundedness of $\int_{0}^{+\infty}\|nE\|_{H^N(\mathbb{R}^3)}\d s.$
	
	From the definition $n=n^0+\Delta n^1$ and the H\"{o}lder inequality, we deduce
	\begin{equation}
		\begin{aligned}
			\|nE\|_{H^N(\mathbb{R}^3)}
			&\lesssim\sum_{|I|\leq N}\left(\|\nabla^I(n^0E)\|+\|\nabla^I(\Delta n^1E)\|\right)\\
			&\lesssim S_{3}+S_{4}+S_{5}+S_{6},
		\end{aligned}
	\end{equation}
	where
	\begin{equation}
		\begin{aligned}
			S_{3}&=\sum_{\substack{I_1+I_2= I\\|I_1|\leq N\\|I_2|\leq N-5}}\|\nabla^{I_1}n^0\|\|\nabla^{I_2}E\|_{L^\infty},\\
			S_{4}&=\sum_{\substack{I_1+I_2=I\\|I_1|\leq N-2\\|I_2|\leq N}}\|\langle s-r\rangle^{\frac{1}{2}+\delta}\nabla^{I_1}n^0\|_{L^\infty}\left\|\frac{\nabla^{I_2}E}{\langle s-r\rangle^{\frac{1}{2}+\delta}}\right\|,\\
			S_{5}&=\sum_{\substack{I_1+I_2= I\\|I_1|\leq N\\|I_2|\leq N-5}}\|\nabla^{I_1}\Delta n^1\|\|\nabla^{I_2}E\|_{L^\infty},\\
			S_{6}&=\sum_{\substack{I_1+I_2=I\\|I_1|\leq N-5\\|I_2|\leq N}}\|\langle s-r\rangle^{\frac{1}{2}+\delta}\nabla^{I_1}\Delta n^1\|_{L^\infty}\left\|\frac{\nabla^{I_2}E}{\langle s-r\rangle^{\frac{1}{2}+\delta}}\right\|.
		\end{aligned}
	\end{equation}
	Based on estimates in (\ref{est:BootE}) and (\ref{est:Gamma_n0L^2}), we have
	\begin{equation}
		\begin{aligned}
			S_{3}\lesssim C_1^{1+(N+8)\delta}\epsilon K_0\langle s\rangle^{-\frac{3}{2}}.
		\end{aligned}
	\end{equation}
	Thanks to the Cauchy inequality, we get
	\begin{equation}
		\begin{aligned}
			S_{4}&\lesssim\sum_{|I_1|\leq N-2}\|\langle s-r\rangle^{\frac{1}{2}+\delta}\nabla^{I_1}n^0\|_{L^\infty}^2+\sum_{|I_2|\leq N}\left\|\frac{\nabla^{I_2}E}{\langle s-r\rangle^{\frac{1}{2}+\delta}}\right\|^2\\
			&\lesssim K_0^2\langle s\rangle^{-2+2\delta}+\sum_{|I_2|\leq N}\left\|\frac{\nabla^{I_2}E}{\langle s-r\rangle^{\frac{1}{2}+\delta}}\right\|^2,
		\end{aligned}
	\end{equation}
	where we use (\ref{est:Gamma_n0point}) in the last step.
	
	Based on the estimates in (\ref{est:BootE}), (\ref{est:energy_n1_2}), we obtain
	\begin{equation}
		\begin{aligned}
			S_{5}&\lesssim\sum_{\substack{|I_1|\leq N,|I_2|\leq N-5}}\|\partial\partial\nabla^{I_1} n^1\|\|\nabla^{I_2}E\|_{L^\infty}\\
			&\lesssim(\epsilon^2 K_0^{[\frac{N}{2}]}+C_1^{2+(N+14)\delta}\epsilon^2)C_1^{1+(N+8)\delta}\epsilon\langle s\rangle^{-\frac{3}{2}}.
		\end{aligned}
	\end{equation}
	Using again the Cauchy inequality, from Lemma~\ref{lem:Hessian_n1point} we get
	\begin{equation}
		\begin{aligned}
			S_{6}&\lesssim\sum_{|I_1|\leq N-5}\|\langle s-r\rangle^{\frac{1}{2}+\delta}\partial\partial\nabla^{I_1}n^1\|_{L^\infty}^2+\sum_{|I_2|\leq N}\left\|\frac{\nabla^{I_2}E}{\langle s-r\rangle^{\frac{1}{2}+\delta}}\right\|^2\\
			&\lesssim\left(\epsilon^2 K_0^{[\frac{N-2}{2}]}+C_1^{2+(N+21)\delta}\epsilon^2\right)^2\langle s\rangle^{-\frac{3}{2}}+\sum_{|I_2|\leq N}\left\|\frac{\nabla^{I_2}E}{\langle s-r\rangle^{\frac{1}{2}+\delta}}\right\|^2.
		\end{aligned}
	\end{equation}
	Therefore, we deduce
	\begin{equation*}
		\begin{aligned}
			\int_{0}^{+\infty}\|nE\|_{H^N(\mathbb{R}^3)}\d s
			%\lesssim&\left(\langle K_0\rangle^2+C_1^{1+(N+8)\delta}\epsilon\langle K_0\rangle+C_1^{1+(N+8)\delta}\epsilon^3\langle K_0\rangle^{[\frac{N}{2}]}+C_1^{3+(2N+22)\delta}\epsilon^3+\epsilon^4\langle K_0\rangle^{N-2}+C_1^{4+(2N+42)\delta}\epsilon^4\right)\langle t\rangle^{-\frac{1}{2}}\\
			%+&\sum_{|I_2|\leq N}\int_{t}^{\infty}\left\|\frac{\nabla^{I_2}E}{\langle s-r\rangle^{\frac{1}{2}+\delta}}\right\|_{L^2}^2\d s\\
			\lesssim K_0^2+C_1^{1+(N+8)\delta}\epsilon K_0+C_1^{3+(2N+22)\delta}\epsilon^2.
		\end{aligned}
	\end{equation*}
	
	\textbf{Step 2.} Boundedness of $\int_{0}^{+\infty}\|\Delta |E|^2\|_{\dot{H}^{N-1}(\mathbb{R}^3)}\d s.$
	
	By the H\"{o}lder inequality and estimates in (\ref{est:BootE}), we observe that
	\begin{equation}
		\begin{aligned}
			\left\|\Delta|E|^2\right\|_{\dot{H}^{N-1}(\mathbb{R}^3)}&\lesssim\sum_{|I|=N-1}\left(\|\nabla^{I}(\Delta E\cdot E)\|+\|\nabla^{I}(\nabla E\cdot \nabla E)\|\right)\\
			&\lesssim S_{7}+S_{8}+S_{9},
		\end{aligned}
	\end{equation}
	where
	\begin{equation*}
		\begin{aligned}
			S_{7}&=\sum_{\substack{I_1+I_2= I\\|I_1|\leq N-1,|I_2|\leq N-5}}\|\nabla^{I_1}\Delta E\|\|\nabla^{I_2}E\|_{L^\infty}\lesssim C_1^{2+(N+13)\delta}\epsilon^2\langle s\rangle^{-\frac{3}{2}},\\
			S_{8}&=\sum_{\substack{I_1+I_2= I\\|I_1|\leq N-7,|I_2|\leq N-1}}\|\nabla^{I_1}\Delta E\|_{L^\infty}\|\nabla^{I_2}E\|\lesssim C_1^{2+(N+13)\delta}\epsilon^2\langle s\rangle^{-\frac{3}{2}},\\
			S_{9}&=\sum_{\substack{I_1+I_2= I\\|I_1|\leq N-1,|I_2|\leq N-6}}\|\nabla^{I_1}\nabla E\|\|\nabla^{I_2}\nabla E\|_{L^\infty}\lesssim C_1^{2+(N+13)\delta}\epsilon^2\langle s\rangle^{-\frac{3}{2}}.\\
		\end{aligned}
	\end{equation*}
	Therefore, we obtain
	\begin{equation*}
		\int_{0}^{+\infty}\|\Delta |E|^2\|_{\dot{H}^{N-1}(\mathbb{R}^3)}\d s\lesssim C_1^{2+(N+13)\delta}\epsilon^2.
	\end{equation*}
	The proof of Theorem~\ref{thm:KGZ-L} is completed.

	\section{Proof of Theorem~\ref{thm:KGZ-L2}}\label{S:proofTh1.4}
	
	\subsection{Bootstrap assumption}
	Let $N\in \mathbb{N} \ \mbox{with} \ N\geq 10$. Fix $0<\delta\ll 1$. To prove Theorem~\ref{thm:KGZ-L2}, we introduce the following bootstrap assumption of $E$: for $C_1\gg1$ and $0<\epsilon\ll C_1^{-1}$ to be chosen later, 
	\begin{equation}\label{est:BootE2}
		\left\{\begin{aligned}
			\mathcal{E}^{\frac{1}{2}}_{gst,1}(t,\Gamma^I\partial^{J} E)&\leq C_1^{1+(|I|+|J|)\delta}\epsilon,\quad\quad \ \ \ \mbox{for}\quad |I|\leq 10, \ |I|+|J|\leq N,\\
			\sup_{x\in \R^3}\langle t+r\rangle^{\frac{3}{2}}|\Gamma^I \partial^{J}E(t,x)|&\leq C_1^{1+(|I|+|J|+13)\delta}\epsilon,\  \ \quad\mbox{for}\quad |I|\leq 5,\ |I|+|J|\leq N-5.
		\end{aligned}\right.
	\end{equation}

	For all initial data $(\vec{E}_{0},\vec{n}_{0})$ satisfying~(\ref{initialdata2}) and \eqref{initialdata3}, we set
	\begin{equation*}\label{def:T2}
		T^{*}(\vec{E}_{0},\vec{n}_{0})=\sup\left\{ t\in [0,+\infty): E\ \mbox{satisfies}~\eqref{est:BootE2}\ \mbox{on}\ [0,t]\right\}.
	\end{equation*} 
	Note that we denote $(E_{0},E_{1},n_{0},n_{1})$ by $(\vec{E}_{0},\vec{n}_{0})$.
	
	In this section, we will first prove the following proposition, which is part of Theorem~\ref{thm:KGZ-L2}.
	\begin{proposition}\label{pro:KGZ-L2}
		For all initial data $(\vec{E}_{0},\vec{n}_{0})$ satisfying the conditions~\eqref{initialdata2} and \eqref{initialdata3} in Theorem~\ref{thm:KGZ-L2}, we have $T^{*}(\vec{E}_{0},\vec{n}_{0})=+\infty$.
	\end{proposition}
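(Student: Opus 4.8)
The plan is a bootstrap/continuity argument: assuming \eqref{est:BootE2} holds on $[0,T^*)$ (with $T^*>0$ by local well-posedness), I will strictly improve both families of bounds, forcing $T^*=+\infty$. The overall scheme mirrors the proof of Proposition~\ref{pro:KGZ-L}; since \eqref{initialdata2} is identical to \eqref{initialdata} and \eqref{est:BootE2} has the same form as \eqref{est:BootE}, all the auxiliary estimates of Section~\ref{S:proofTh1.3} (the analogues of Lemmas~\ref{lem:n0}--\ref{lem:nonlinear}) remain valid here without change. The single new ingredient, responsible for upgrading the dependence of $\epsilon_0$ on $K_0$ from exponential to polynomial, is a \emph{modified} ghost-weight energy for each $\Gamma^I\partial^J E$ that incorporates the large, undifferentiated factor $n^0$ as a nonnegative weight.

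First I would use the extra hypotheses \eqref{initialdata3}. Because $n^0$ solves the homogeneous wave equation with data $(n_0,n_1)$ obeying $n_0\ge0$ and $n_1\ge|\nabla n_0|$, Lemma~\ref{lem:homo-positive} yields $n^0\ge0$ on $[0,\infty)\times\R^3$. Next, $\partial_tn^0$ solves the homogeneous wave equation with data $(n_1,\Delta n_0)$, so combining the energy/conformal-energy identities with the Klainerman--Sobolev inequality \eqref{est:Sobo} and the quantitative smallness in the second line of \eqref{initialdata3}, I would obtain the uniform pointwise bound $|\partial_tn^0(t,x)|\le\langle t-r\rangle^{-1-2\delta}$; this is exactly the point at which the explicit constant $71^2\times174\times C_{KS}^3K_0^2$ is used (the bound is tightest near the light cone, where $\langle t-r\rangle\sim1$, and the $K_0^{-2}$ leaves room for later estimates), and I would not attempt to optimize it. Together these two facts give the structural condition \eqref{enerestri}.

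The heart of the argument is then closing the ghost energy. Fixing $|I|\le10$, $|I|+|J|\le N$, I multiply the equation \eqref{eq:vectorfieldE} for $u:=\Gamma^I\partial^J E$ by $e^q\partial_tu$, and I decompose $\Gamma^I\partial^J(n^0E)=n^0u+(\text{terms carrying at least one derivative on }n^0)$. The term $e^qn^0u\,\partial_tu=\tfrac12e^qn^0\partial_t|u|^2$ is integrated by parts; using $\partial_te^q=-\langle r-t\rangle^{-1-2\delta}e^q$, the sign $n^0\ge0$, and \eqref{enerestri}, it produces only the nonnegative bulk terms exhibited before \eqref{enerestri} and a harmless datum term $\|n_0\|_{L^\infty}\|u(0)\|^2\lesssim\epsilon^2K_0^{|I|+|J|+1}$. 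Hence (as $e^q\sim1$)
\begin{equation*}
	\mathcal{E}_{gst,1}(t,u)\lesssim\mathcal{E}_{gst,1}(0,u)+\|n_0\|_{L^\infty}\|u(0)\|^2+R_1+R_2,
\end{equation*}
where $R_2$ collects the $\Delta n^1E$ contribution — treated exactly as $S_2$ in Section~\ref{S:proofTh1.3} via Lemma~\ref{lem:Hessian_n1point}, and higher order in $\epsilon$ — and $R_1=\int_0^t\|(\text{differentiated-}n^0\text{ terms})\|\,\|\partial_tu\|\,\d s$. The crucial point is that, unlike Section~\ref{S:proofTh1.3}, $R_1$ no longer contains the obstruction $n^0\,(\text{top-order }E)\,\partial_t(\text{top-order }E)$: every summand of $R_1$ carries at least one derivative on $n^0$, so either that derivative count is low — in which case $\Gamma^{I_1}\partial^{J_1}n^0$ is small by \eqref{initialdata3} — or it is high, in which case $\Gamma^{I_2}\partial^{J_2}E$ has strictly lower order and the $C_1^{(|I|+|J|)\delta}$ growth in \eqref{est:BootE2} supplies a factor $C_1^{-c\delta}$ absorbing the $K_0$ that comes from $\|\Gamma^{I_1}\partial^{J_1}n^0\|\lesssim K_0$. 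In both regimes, after a Hölder/Cauchy--Schwarz split with the ghost weight $\langle s-r\rangle^{\pm(1/2+\delta)}$ (as in the treatment of $S_1$ in Section~\ref{S:proofTh1.3}, but now \emph{without} any bad $n^0\cdot\partial_tu$ pairing), $R_1$ is bounded by a fixed power of $C_1,\epsilon,K_0$ with \emph{no} Gronwall step. Choosing $C_1$ to be a fixed power of $K_0$ and $\epsilon_0$ a fixed negative power of $K_0$ (which gives $\epsilon_0\lesssim K_0^{-\frac{3}{4}N-48}$) then strictly improves the first line of \eqref{est:BootE2}.

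Closing the pointwise bound in \eqref{est:BootE2} is then routine and identical to Step~2 of the proof of Proposition~\ref{pro:KGZ-L}: apply Proposition~\ref{pro:KGpointwise} and Corollary~\ref{cor:KGpoint} to \eqref{eq:vectorfieldE}, using the analogue of Lemma~\ref{lem:nonlinear} to bound $\langle t+r\rangle\Gamma^K\Gamma^I\partial^J(n^0E+\Delta n^1E)$; the pointwise decay \eqref{est:pointwise2} and linear scattering follow as in Sections~\ref{S:proofTh1.3} and~\ref{SS:scatter}. I expect the main difficulty to be Step~1 in two respects: establishing the uniform-in-spacetime bound on $\partial_tn^0$ from the $L^2$-type smallness of \eqref{initialdata3} (the estimate near the light cone is what forces the stated constant), and then the book-keeping in $R_1$ — verifying that, once the modified energy has removed the unique $n^0\cdot(\text{top-order }E)$ obstruction, every remaining commutator term can indeed be closed with only polynomial loss in $K_0$ and no Gronwall inequality.
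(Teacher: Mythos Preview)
Your overall strategy is the right one and matches the paper's: the modified ghost energy that incorporates $n^0|u|^2$ removes the single obstructive top-order pairing $n^0\,(\Gamma^I\partial^J E)\,\partial_t(\Gamma^I\partial^J E)$, and the rest closes polynomially in $K_0$ with no Gronwall step. Two points, however, are not as you describe them.

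First, the bound $|\partial_t n^0|\le\langle t-r\rangle^{-1-2\delta}$ is \emph{not} obtainable from Klainerman--Sobolev alone. Klainerman--Sobolev \eqref{est:Sobo} only produces $\langle t-r\rangle^{-1/2}$ decay, regardless of how small the data for $\partial_t n^0$ are; this is insufficient for \eqref{enerestri}. What the paper does is combine two bounds of opposite character: the Klainerman--Sobolev bound $|\partial_t n^0|\le 71\,C_{KS}K_0\langle t+r\rangle^{-1}\langle t-r\rangle^{-1/2}$ (large constant, favourable $\langle t-r\rangle$) coming from \eqref{initialdata2}, and a second bound $|\partial_t n^0|\le (71^2 C_{KS}^2 K_0^2)^{-1}\langle t+r\rangle^{-3/8}$ obtained by integrating $|\partial_r\partial_t n^0|$ radially and using the smallness in the second line of \eqref{initialdata3}. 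Interpolating these yields $|\partial_t n^0|\le\langle t+r\rangle^{-1/8+2\delta}\langle t-r\rangle^{-1-2\delta}$, and the odd-looking constant $(71^2\cdot 174\cdot C_{KS}^3 K_0^2)^{-1}$ is tuned precisely so that the interpolation produces constant $\le 1$. You should record this interpolation explicitly.

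Second, your dichotomy for $R_1$ misidentifies the source of the gain. The smallness in \eqref{initialdata3} controls only $\partial_t n^0$ (and a few nearby derivatives); it does \emph{not} make a general $\Gamma^{I_1}\partial^{J_1}n^0$ small when $|I_1|+|J_1|$ is low. Those quantities are still of size $K_0$ via \eqref{est:Gamma_n0point2}. The actual reason every summand of $R_1$ closes is uniform: once $|I_1|+|J_1|\ge 1$ you automatically have $|I_2|+|J_2|<|I|+|J|$, so the $E$-factor is strictly lower order and the $\delta$-ladder in \eqref{est:BootE2} supplies a factor $C_1^{-c\delta}$ that absorbs the $K_0$ coming from $n^0$ (whether one puts $n^0$ in $L^\infty$ or in $L^2$). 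This is also the mechanism in the paper's $R_4$; your ``high'' case description is correct, but the ``low'' case should invoke the same $E$-hierarchy gain, not \eqref{initialdata3}.

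One minor structural difference worth noting: you propose absorbing $\partial_t n^0\,|u|^2$ via \eqref{enerestri} at \emph{every} order, whereas the paper does this only at $|I|=|J|=0$ and, for $|I|+|J|\ge 1$, keeps the term on the right and estimates it (their $R_3$) through the identity $|\Gamma^I\partial^J E|\lesssim\langle t+r\rangle\sum_{|I'|+|J'|<|I|+|J|}|\partial\Gamma^{I'}\partial^{J'}E|$ together with a H\"older split with exponents $\tfrac13,\tfrac23,1$. Either route works once the interpolated pointwise bound on $\partial_t n^0$ is in hand.
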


	\subsection{Key estimates}
	In this subsection, we establish some estimates on solution $(E,n^0,n^1)$ and the nonlinear term in~(\ref{eq:vectorfieldE}) that will be used in the proof of Proposition~\ref{pro:KGZ-L2}. From now on, the implied constants in $\lesssim$ do not depend on the constants $C_1$ and $\epsilon$ appearing in the bootstrap assumption~(\ref{est:BootE2}).		
	
	First, we give estimates of the solution $n^0$ to the homogeneous wave equation in~(\ref{eq:KGZ-L2}).
	\begin{lemma}[Estimates of $n^0$]\label{lem:n0-2}
		We have the following estimates.
		
		\begin{enumerate}
			\item {\rm {$L^2$ estimate of $\Gamma^I\partial^J n^0$}}. For $|I|\leq 10,|I|+|J|\leq N$, we have
			\begin{equation}\label{est:Gamma_n0L^2-2}
				\|\Gamma^I\partial^J n^0\|\lesssim K_0.
			\end{equation}
			
			\item {\rm {Pointwise decay estimate of $\Gamma^I\partial^J n^0$}}. For $|I|\leq 8, |I|+|J|\leq N-2$, we have
			\begin{equation}\label{est:Gamma_n0point2}
				|\Gamma^I\partial^J n^0|\lesssim \langle t+r\rangle^{-1}\langle t-r\rangle^{-\frac{1}{2}}K_0.
			\end{equation}
			
		\end{enumerate}
	\end{lemma}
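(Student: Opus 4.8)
The plan is to repeat, essentially verbatim, the argument already used for Lemma~\ref{lem:n0}, since the two statements are identical and $n^0$ solves the same homogeneous wave equation $-\Box n^0=0$ with the same data $(n_0,n_1)$; indeed the hypotheses on $(n_0,n_1)$ in \eqref{initialdata2} coincide with those in \eqref{initialdata}. The two ingredients are conservation of the natural and conformal energies under the wave flow and the Klainerman--Sobolev inequality \eqref{est:Sobo}.

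For part~(i) I would first note that by Lemma~\ref{lem:vectorfield}(i) every $\Gamma_k$ commutes with $-\Box$, and $[-\Box,L_0]=-2\Box$, so (modulo the commutator estimate \eqref{est:commutators}, which only lowers the order) $\Gamma^I\partial^J n^0$ and $Z^K\Gamma^I\partial^J n^0$ are again solutions of the homogeneous wave equation. I would then split into the cases $|I|=|J|=0$, $|J|=0$ with $1\le|I|\le 10$, and $|J|\ge 1$, exactly as in the proof of Lemma~\ref{lem:n0}: bound $\|n^0\|$ by $\mathcal E_{con}^{1/2}(t,n^0)$; bound $\|\Gamma^I n^0\|$ by $\sum_{|K|=|I|-1}\big(\mathcal E_{con}^{1/2}(t,\Gamma^K n^0)+\mathcal E^{1/2}(t,\Gamma^K n^0)\big)$; and bound $\|\Gamma^I\partial^J n^0\|$ by $\sum_{|K_1|\le|I|,\,|K_2|=|J|-1}\mathcal E^{1/2}(t,\Gamma^{K_1}\partial^{K_2}n^0)$. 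All these energies are conserved in $t$ by \eqref{est:eneryWave} and \eqref{est:Conformal} with vanishing source, so it remains to bound them at $t=0$. Evaluating $\mathcal E$ and $\mathcal E_{con}$ of a vector-field-differentiated $n^0$ at $t=0$ produces only $\langle x\rangle$-weighted spatial derivatives of $n_0$ and $n_1$, since each factor of $L_0$, $L_a$ or $\Omega_{ab}$ contributes at most one power of $\langle x\rangle$ and one derivative; hence the required norms are controlled by the sums in \eqref{initialdata2} (weight $\langle x\rangle^{10}$ with up to $N$ derivatives on $n_0$, weight $\langle x\rangle^{10}$ with up to $N-1$ derivatives on $n_1$ are exactly what is allowed). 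This gives \eqref{est:Gamma_n0L^2-2}.

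For part~(ii) I would apply the Klainerman--Sobolev inequality \eqref{est:Sobo} to $\Gamma^I\partial^J n^0$,
$$|\Gamma^I\partial^J n^0|\lesssim \langle t+r\rangle^{-1}\langle t-r\rangle^{-\frac{1}{2}}\sum_{|K|\le 2}\|Z^K\Gamma^I\partial^J n^0\|,$$
and then invoke part~(i), extended to allow up to two extra $Z$-fields, to bound the right-hand side by $K_0$. The constraints $|I|\le 8$, $|I|+|J|\le N-2$ are imposed precisely so that $Z^K\Gamma^I\partial^J$ has $\Gamma$-order at most $10$ and total order at most $N$, keeping us within the range covered by \eqref{initialdata2}. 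This yields \eqref{est:Gamma_n0point2}.

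Since the argument is a direct transcription of one already carried out, I do not expect any genuine obstacle; the only point requiring a little care is matching the number of $\langle x\rangle$-weights and derivatives generated when the conformal and natural energies of the vector-field-differentiated $n^0$ are evaluated at $t=0$ against the precise indices appearing in \eqref{initialdata2}, particularly at top order.
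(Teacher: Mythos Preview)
Your proposal is correct and follows exactly the same approach as the paper: the paper's own proof of this lemma simply reads ``The proof is similar to Lemma~\ref{lem:n0}, and we omit it,'' and you have faithfully reproduced the argument of Lemma~\ref{lem:n0}. The case split in part~(i) and the use of the Klainerman--Sobolev inequality \eqref{est:Sobo} in part~(ii) match the paper's treatment of Lemma~\ref{lem:n0} exactly.
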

	\begin{proof}
		The proof is similar to Lemma~\ref{lem:n0}, and we omit it.
	\end{proof}
	
	Second, we introduce some energy estimates of the solution $n^1$ to the nonhomogeneous wave equation in~(\ref{eq:KGZ-L2}), including standard energy estimates and conformal energy estimates.
	\begin{lemma}\label{lem:energyn0-2}
		Let $|I|\leq 10, |I|+|J|\leq N.$ For all $t\in [0,T^*(\vec{E}_{0},\vec{n}_{0})),$ we have the following estimates.
		\begin{enumerate}
			\item {\rm {Standard energy estimate of $\Gamma^I\partial^J n^1$}}. We have
			\begin{equation}\label{est:energy_n1_1-2}
				\mathcal{E}^{\frac{1}{2}}(t,\Gamma^I\partial^J n^1)\lesssim \epsilon^2 K_0^{\max\{[\frac{|I|+|J|-1}{2}],0\}}+C_1^{2+(|I|+|J|+13)\delta}\epsilon^2.
			\end{equation}
			
			\item {\rm {Standard energy estimate of $\partial\Gamma^I\partial^J n^1$}}. We have
			\begin{equation}\label{est:energy_n1_2-2}
				\mathcal{E}^{\frac{1}{2}}(t,\partial\Gamma^I\partial^J n^1)\lesssim \epsilon^2 K_0^{[\frac{|I|+|J|}{2}]}+C_1^{2+(|I|+|J|+14)\delta}\epsilon^2.
			\end{equation}
			
			\item{\rm{Conformal energy estimate of $\Gamma^I\partial^J n^1$.}} We have
			\begin{equation}\label{est:conformal_n1-2}
				\mathcal{E}^{\frac{1}{2}}_{con}(t,\Gamma^I\partial^J n^1)\lesssim \epsilon^2 K_0^{\max\{[\frac{|I|+|J|-1}{2}],0\}}+C_1^{2+(|I|+|J|+13)\delta}\epsilon^2\langle t\rangle^{\frac{1}{2}}.
			\end{equation}
		\end{enumerate}
	\end{lemma}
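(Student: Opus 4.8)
The plan is to mimic the proof of Lemma~\ref{lem:energyn1}, since the equation $-\Box\Gamma^I\partial^Jn^1=\Gamma^I\partial^J|E|^2$ from \eqref{6Jeq:vectorfieldn1} and the bootstrap assumption \eqref{est:BootE2} have exactly the same structure as in Section~\ref{S:proofTh1.3}. For \emph{(i)} I would apply the standard energy estimate \eqref{est:eneryWave} directly to \eqref{6Jeq:vectorfieldn1}; for \emph{(ii)} the same estimate after commuting one additional $\partial$ inside (using $[-\Box,\partial]=0$ and \eqref{est:commutators}); and for \emph{(iii)} the conformal energy estimate \eqref{est:Conformal}. In each case this reduces matters to controlling two quantities: the datum term $\mathcal{E}^{\frac12}(0,\cdot)$ (resp.\ $\mathcal{E}^{\frac12}_{con}(0,\cdot)$), and the spacetime integral $\int_0^t\|\langle s+r\rangle^{\ell}\Gamma^I\partial^J|E|^2\|\,\d s$, with $\ell=0$ for \emph{(i)}, \emph{(ii)} and $\ell=1$ for \emph{(iii)}.

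For the datum term, at $t=t_0=0$ I would rewrite every vector field occurring in $\Gamma^I\partial^J$ through the $\partial_\alpha$'s (with coefficients polynomial in $x$) and then use the equation $\partial_t^2n^1=\Delta n^1+|E|^2$ repeatedly to trade each surviving pair of time derivatives acting on $n^1$ for spatial derivatives of $|E|^2$; since $n^1\equiv\partial_tn^1\equiv0$ at $t=0$ only the $|E|^2$-contributions remain. Differentiating $|E|^2$ and invoking the $E$-equation $\partial_t^2E=\Delta E-E-nE$ with $n|_{t=0}=n_0$ (recall $\Delta n^1|_{t=0}=0$), every second time-differentiation spawns at most one factor of $n_0$; counting orders then gives at most $\lfloor(|I|+|J|-1)/2\rfloor$ such factors for $\Gamma^I\partial^Jn^1$ (resp.\ $\lfloor(|I|+|J|)/2\rfloor$ after the extra outer $\partial$). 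Bounding the remaining weighted Sobolev norms of $(E_0,E_1)$ by the $\epsilon$-small part of \eqref{initialdata2}, \eqref{initialdata3} and those of $n_0$ by the $K_0$-part produces the stated $\epsilon^2K_0^{\max\{[\cdot],0\}}$, exactly as for \eqref{est:I.D.1}.

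For the nonlinear integral I would use the Leibniz rule and \eqref{est:commutators} to bound $\|\langle s+r\rangle^{\ell}\Gamma^I\partial^J|E|^2\|$ by a sum of products $\|\Gamma^{I_1}\partial^{J_1}E\|\,\|\langle s+r\rangle^{\ell}\Gamma^{I_2}\partial^{J_2}E\|_{L^\infty}$ over admissible splittings in which the second factor carries few indices, $|I_2|\le5$ and $|I_2|+|J_2|\le N-5$ (possible since $N\ge10$), so that the pointwise part of \eqref{est:BootE2} applies to it. The ghost-energy part of \eqref{est:BootE2} gives $\|\Gamma^{I_1}\partial^{J_1}E\|\lesssim C_1^{1+(|I_1|+|J_1|)\delta}\epsilon$, and the pointwise part gives $\sup_x\langle s+r\rangle^{\ell}|\Gamma^{I_2}\partial^{J_2}E|\lesssim C_1^{1+(|I_2|+|J_2|+13)\delta}\epsilon\langle s\rangle^{-\frac32+\ell}$; multiplying and summing yields $\|\langle s+r\rangle^{\ell}\Gamma^I\partial^J|E|^2\|\lesssim C_1^{2+(|I|+|J|+13)\delta}\epsilon^2\langle s\rangle^{-\frac32+\ell}$, the exponent becoming $2+(|I|+|J|+14)\delta$ in case \emph{(ii)} because the outer $\partial$ raises the order of $\partial^{J}$ by one. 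Integrating in $s$ leaves a finite constant when $\ell=0$, proving \emph{(i)} and \emph{(ii)}, and produces the factor $\langle t\rangle^{\frac12}$ when $\ell=1$, proving \emph{(iii)}.

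The only genuinely delicate point is the bookkeeping in the datum term: verifying that the largest power of $n_0$ generated there is precisely $\lfloor(|I|+|J|-1)/2\rfloor$ (which is exactly why only \emph{every other} $t$-differentiation can trigger the $E$-equation and produce an $n_0$), and checking that the weights $\langle x\rangle^{i}\nabla^{i+j}n_0$ with $i\le10$ together with $\langle x\rangle^{12}\nabla^iE_0$, $\langle x\rangle^{12}\nabla^iE_1$ prescribed in \eqref{initialdata2} are just enough to absorb the polynomial-in-$x$ factors produced by the at most ten boosts and rotations in $\Gamma^I$. Everything else is a routine repetition of the proof of Lemma~\ref{lem:energyn1}.
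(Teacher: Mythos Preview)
Your proposal is correct and follows essentially the same approach as the paper, which simply says ``The proof is similar to Lemma~\ref{lem:energyn1}, and we omit it.'' Your detailed bookkeeping for the datum term (tracking how each pair of time derivatives on $n^1$ at $t=0$ reduces to $|E|^2$-terms and how the $E$-equation then generates at most $\lfloor(|I|+|J|-1)/2\rfloor$ factors of $n_0$) actually fills in a step the paper leaves implicit in both Lemma~\ref{lem:energyn1} and its counterpart here.
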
				
	\begin{proof}
		The proof is similar to Lemma~\ref{lem:energyn1}, and we omit it.
	\end{proof}
	Third, we also deduce  the extra decay estimates for Hessian of $\Gamma^I\partial^J n^1$ in the following lemma.
	\begin{lemma}\label{lem:Hessian_n1point2}
		Let $|I|\leq 5, |I|+|J|\leq N-5.$ For all $t\in [0,T^*(\vec{E}_{0},\vec{n}_{0}))$, we have the following estimates on $\partial\partial\Gamma^I\partial^J n^1$.
		\begin{enumerate}
			\item {\rm {Let $r\leq \frac{t}{2}$}}. We have
			\begin{equation}
				|\partial\partial\Gamma^I\partial^J n^1|\lesssim\langle t+r\rangle^{-\frac{3}{4}}\langle t-r\rangle^{-1}\left(\epsilon^2 K_0^{[\frac{|I|+|J|+3}{2}]}+C_1^{2+(|I|+|J|+26)\delta}\epsilon^2\right).
			\end{equation}
			
			\item {\rm {Let $r\geq 2t$}}. We have
			\begin{equation}
				|\partial\partial\Gamma^I\partial^J n^1|\lesssim\langle t+r\rangle^{-\frac{3}{2}}\left(\epsilon^2 K_0^{[\frac{|I|+|J|+3}{2}]}+C_1^{2+(|I|+|J|+17)\delta}\epsilon^2\right).
			\end{equation}
			
			\item{\rm{Let $\frac{t}{2}\leq r\leq 2t$.}} We have
			\begin{equation}
				|\partial\partial\Gamma^I\partial^J n^1|\lesssim\langle t+r\rangle^{-1}\langle t-r\rangle^{-1}\left(\epsilon^2 K_0^{[\frac{|I|+|J|+3}{2}]}+C_1^{2+(|I|+|J|+26)\delta}\epsilon^2\right).
			\end{equation}
			
			\item{\rm{}} We have
			\begin{equation}\label{est:allHessian2}
				|\partial\partial\Gamma^I\partial^J n^1|\lesssim\langle t+r\rangle^{-1}\langle t-r\rangle^{-\frac{1}{2}}\left(\epsilon^2 K_0^{[\frac{|I|+|J|+3}{2}]}+C_1^{2+(|I|+|J|+26)\delta}\epsilon^2\right).
			\end{equation}
		\end{enumerate}
	\end{lemma}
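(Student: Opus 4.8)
The plan is to follow verbatim the strategy of Lemma~\ref{lem:Hessian_n1point}: the bootstrap assumption \eqref{est:BootE2} carries exactly the same exponents as \eqref{est:BootE}, and the required inputs (Lemmas~\ref{lem:n0-2} and \ref{lem:energyn0-2}) are precisely the analogues already in place, so no new idea is needed. First I would commute $\Gamma^I\partial^J$ through $-\Box n^1=|E|^2$ (using $[-\Box,\Gamma_k]=0$) to get $-\Box\,\Gamma^I\partial^J n^1=\Gamma^I\partial^J|E|^2$, and then split into the three spacetime regions $r\le t/2$, $t/2\le r\le 2t$, $r\ge 2t$, closing with a combination step for part (iv).

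For parts (i) and (iii), i.e. $r\le 2t$, I apply the Hessian decay inequality \eqref{est:Hessian} with $G=\Gamma^I\partial^J|E|^2$. The source term $\frac{t}{\langle t-r\rangle}|\Gamma^I\partial^J|E|^2|$ is handled by Leibniz, estimating both factors of $E$ pointwise via the second line of \eqref{est:BootE2}; this is legitimate since $|I|\le5$ and $|I|+|J|\le N-5$ force every sub-multi-index into the admissible range, and the resulting bound $\frac{t}{\langle t-r\rangle}\langle t+r\rangle^{-3}C_1^{2+(|I|+|J|+26)\delta}\epsilon^2$ is absorbed into the claimed right-hand side using $t\lesssim\langle t+r\rangle$. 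For the term $\frac{1}{\langle t-r\rangle}\sum_{|K|\le1}|\partial\Gamma^K\Gamma^I\partial^J n^1|$ I invoke the global Sobolev inequality \eqref{est:globalSobo} when $r\le t/2$ and the standard Sobolev inequality \eqref{est:standardSobo} when $t/2\le r\le 2t$, commute derivatives past the $\Gamma$'s with \eqref{est:commutators}, and finish with the standard energy bound \eqref{est:energy_n1_1-2}; the at most four extra vector fields produced by Sobolev are exactly what turns \eqref{est:energy_n1_1-2} into the power $K_0^{[(|I|+|J|+3)/2]}$.

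For part (ii), $r\ge 2t$, one has $\langle t-r\rangle\sim\langle t+r\rangle\sim\langle r\rangle$; here I first extract an extra $\langle t-r\rangle^{-1}$ from one derivative via Lemma~\ref{lem:partial}, rewriting $\partial\partial\Gamma^I\partial^J n^1$ in terms of $\langle t-r\rangle^{-1}\big(|L_0\partial\Gamma^I\partial^J n^1|+|\Gamma\partial\Gamma^I\partial^J n^1|\big)$, then apply \eqref{est:standardSobo} twice (using $\langle r\rangle\sim\langle t+r\rangle$) to gain $\langle t+r\rangle^{-2}$, reducing to $L^2$ norms of at most $|I|+4$ vector fields hitting first- and second-order derivatives of $n^1$, which are controlled by \eqref{est:energy_n1_1-2}, \eqref{est:energy_n1_2-2} and the conformal energy estimate \eqref{est:conformal_n1-2}; the factor $\langle t\rangle^{1/2}$ in \eqref{est:conformal_n1-2} is dominated by the $\langle t+r\rangle^{-2}$ weight, leaving the net decay $\langle t+r\rangle^{-3/2}$ of part (ii). Finally, for part (iv) I note that in the regions $r\le t/2$ and $r\ge 2t$ one has $\langle t+r\rangle\sim\langle t-r\rangle$, so the bounds of (i) and (ii) imply in particular $\langle t+r\rangle^{-1}\langle t-r\rangle^{-1/2}(\cdots)$, while in the collar $t/2\le r\le 2t$ the bound of (iii) is already at least this strong; patching these yields \eqref{est:allHessian2}. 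The main obstacle is purely bookkeeping rather than conceptual: one must check that in every Leibniz expansion and every Sobolev embedding each multi-index landing on $E$ stays within $\{|I|\le5,\ |I|+|J|\le N-5\}$ (which is what forces $N\ge10$) and that the exponents propagate to exactly $C_1^{2+(|I|+|J|+26)\delta}$ and $K_0^{[(|I|+|J|+3)/2]}$; since everything is quadratic in the $\epsilon$-small quantities, no smallness is consumed and no Gronwall argument is needed at this stage.
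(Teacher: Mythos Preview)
Your proposal is correct and follows exactly the approach the paper intends: the paper's proof of Lemma~\ref{lem:Hessian_n1point2} consists solely of the sentence ``The proof is similar to Lemma~\ref{lem:Hessian_n1point}, and we omit it,'' and your outline reproduces that argument faithfully, including the region-by-region split, the use of \eqref{est:Hessian} for $r\le 2t$, Lemma~\ref{lem:partial} plus \eqref{est:standardSobo} for $r\ge 2t$, and the patching for part~(iv). One minor wording issue: in part~(ii) you do not apply \eqref{est:standardSobo} twice---rather, one factor of $\langle t+r\rangle^{-1}$ comes from $\langle t-r\rangle^{-1}\sim\langle t+r\rangle^{-1}$ (via Lemma~\ref{lem:partial}) and the other from a single application of \eqref{est:standardSobo}---but the net gain of $\langle t+r\rangle^{-2}$ and the subsequent use of \eqref{est:conformal_n1-2} are as you describe.
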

	\begin{proof}
		The proof is similar to Lemma~\ref{lem:Hessian_n1point}, and we omit it.
	\end{proof}
	Next, we give another pointwise estimate of $\Gamma^I\partial^J E$.
	\begin{lemma}\label{lem:PointE2-2}
		For all $t\in [0,T^*(\vec{E}_{0},\vec{n}_{0})), |I|\leq 5, |I|+|J|\leq N-5$, we have
		\begin{equation}\label{est:PointE3}
			|\Gamma^I\partial^J E|\lesssim \langle t+r\rangle^{-\frac{9}{8}}C_1^{1+(|I|+|J|+8)\delta}\epsilon.
		\end{equation}
	\end{lemma}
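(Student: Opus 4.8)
The plan is to derive two pointwise bounds for $\Gamma^I\partial^J E$ carrying different decay rates and then to interpolate between them; this mirrors the proof of Lemma~\ref{lem:PointE2}, the only change being that the input is now the bootstrap assumption~\eqref{est:BootE2} rather than~\eqref{est:BootE}.

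First I would apply the global Sobolev inequality~\eqref{est:globalSobo} to $u=\Gamma^I\partial^J E$, obtaining
\begin{equation*}
|\Gamma^I\partial^J E(t,x)|\lesssim\langle t+r\rangle^{-\frac34}\sum_{|K|\leq 3}\big\|\Gamma^K\Gamma^I\partial^J E(t,x)\big\|.
\end{equation*}
Using the commutator estimate~\eqref{est:commutators} to rewrite each $\Gamma^K\Gamma^I\partial^J E$ as a sum of terms $\Gamma^{I'}\partial^{J'}E$ with $|I'|\leq|K|+|I|\leq 8$ and $|I'|+|J'|\leq|K|+|I|+|J|\leq N-2$, and then invoking the first line of~\eqref{est:BootE2} together with $\|\,\cdot\,\|\leq\mathcal{E}_{gst,1}^{1/2}(t,\,\cdot\,)$, I arrive at
\begin{equation*}
|\Gamma^I\partial^J E(t,x)|\lesssim\langle t+r\rangle^{-\frac34}C_1^{1+(|I|+|J|+3)\delta}\epsilon.
\end{equation*}
On the other hand, for $|I|\leq5$, $|I|+|J|\leq N-5$ the second line of~\eqref{est:BootE2} gives directly
\begin{equation*}
|\Gamma^I\partial^J E(t,x)|\lesssim\langle t+r\rangle^{-\frac32}C_1^{1+(|I|+|J|+13)\delta}\epsilon.
\end{equation*}

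Finally I would take the geometric mean of the two displayed bounds with weights $\tfrac12$ and $\tfrac12$. The decay exponent becomes $-\tfrac12\big(\tfrac34+\tfrac32\big)=-\tfrac98$, while the exponent of $C_1$ becomes $1+\big(|I|+|J|+\tfrac12(3+13)\big)\delta=1+(|I|+|J|+8)\delta$, which is precisely~\eqref{est:PointE3}. I do not expect any genuine difficulty in this argument; the only point requiring care is to check that applying the three extra vector fields coming from~\eqref{est:globalSobo} keeps the resulting indices within the ranges $|I'|\leq10$, $|I'|+|J'|\leq N$ on which~\eqref{est:BootE2} is available, and this is ensured by the hypotheses $|I|\leq5$ and $|I|+|J|\leq N-5$.
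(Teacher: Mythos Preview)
Your proposal is correct and follows exactly the same approach as the paper: two pointwise bounds (one from the global Sobolev inequality~\eqref{est:globalSobo} and one directly from the second line of the bootstrap~\eqref{est:BootE2}) are interpolated with equal weights to produce the $\langle t+r\rangle^{-9/8}$ decay and the exponent $1+(|I|+|J|+8)\delta$. The paper itself simply says ``The proof is similar to Lemma~\ref{lem:PointE2}, and we omit it,'' so your write-up is in fact more detailed than what the paper provides.
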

	\begin{proof}
		The proof is similar to Lemma~\ref{lem:PointE2}, and we omit it.
		%From~(\ref{est:globalSobo}) and~(\ref{est:BootE2}), for $|I|\leq N-5$, we get
		%\begin{equation}
		%\begin{aligned}
		%|\Gamma^IE|\lesssim&\langle t+r\rangle^{-\frac{3}{4}}C_1^{1+(|I|+3)\delta}\epsilon,\\
		%|\Gamma^IE|\lesssim&\langle t+r\rangle^{-\frac{3}{2}}C_1^{1+(|I|+13)\delta}\epsilon,
		%\end{aligned}
		%\end{equation}
		%which yield~(\ref{est:PointE3}).
	\end{proof}
	The following lemma is a lower-order pointwise decay estimate for $\Gamma^I\partial^J E$.
	\begin{lemma}\label{lem:Elower2}
		For all $t\in [0,T^*(\vec{E}_{0},\vec{n}_{0}))$, we have
		\begin{equation}\label{est:Elower2}
			\begin{aligned}
				\sum_{|I|+|J|\leq 1}\langle t+r\rangle^{\frac{5}{4}}|\Gamma^I\partial^J E|
				\lesssim\epsilon K_0^{3}+C_1^{1+5\delta}\epsilon K_0,
			\end{aligned}
		\end{equation}
		where $C_1^{2+27\delta}\epsilon^2<1$.
	\end{lemma}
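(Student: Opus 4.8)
The plan is to mirror the proof of Lemma~\ref{lem:Elower} essentially line by line, since the bootstrap hypotheses \eqref{est:BootE2} coincide with \eqref{est:BootE} on the ranges that matter, and Lemmas~\ref{lem:n0-2}, \ref{lem:energyn0-2}, \ref{lem:Hessian_n1point2} and \ref{lem:PointE2-2} furnish for $(E,n^{0},n^{1})$ exactly the same bounds as their Section~\ref{S:proofTh1.3} counterparts. Concretely, since $|I|+|J|\le 1$, I would apply Proposition~\ref{pro:KGpointwise} (and its quantitative refinement Corollary~\ref{cor:KGpoint}) to the equation \eqref{eq:vectorfieldE} for $\Gamma^{I}\partial^{J}E$. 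This reduces the pointwise bound to two ingredients: the weighted initial-data norm $\sum_{|K|\le 5}\|\langle r\rangle^{2}\Gamma^{K}\Gamma^{I}\partial^{J}E(0,x)\|$, which is $\lesssim \epsilon K_0^{3}$ by \eqref{initialdata2}; and a uniform-in-time control of $\sum_{|K|\le 4}\|\langle s+r\rangle\,\Gamma^{K}\Gamma^{I}\partial^{J}(n^{0}E+\Delta n^{1}E)\|$.

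For the source term I would split $n^{0}E+\Delta n^{1}E$. On $n^{0}E$: after distributing $\Gamma^{K}\Gamma^{I}\partial^{J}$ by the Leibniz rule, and using that $|I|+|J|\le 1$ keeps all orders low, I put $\langle s+r\rangle$ on the $n^{0}$ factor and bound $\|\langle s+r\rangle\Gamma^{\le 5}\partial^{\le 1}n^{0}\|_{L^\infty}\lesssim K_0$ via \eqref{est:Gamma_n0point2}, pairing with $\|\Gamma^{\le 5}\partial^{\le 1}E\|\lesssim C_1^{1+5\delta}\epsilon$ from \eqref{est:BootE2}; this contributes $\lesssim C_1^{1+5\delta}\epsilon K_0$. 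On $\Delta n^{1}E$: I would first rewrite $\Gamma^{K_1}\Delta n^{1}$ as a sum of terms $\partial\partial\Gamma^{K'}\partial^{J'}n^{1}$ using the commutator estimate \eqref{est:commutators}, then put the $L^{2}$ norm on $\partial\partial\Gamma^{K'}\partial^{J'}n^{1}$, controlled by \eqref{est:energy_n1_2-2}, and the weighted $L^\infty$ norm on $\langle s+r\rangle\,\Gamma^{\le}\partial^{\le}E$, controlled by \eqref{est:BootE2} (equivalently \eqref{est:PointE3}); the smallness condition $C_1^{2+27\delta}\epsilon^{2}<1$ is then used to absorb the surplus powers of $C_1\epsilon$ and collapse this piece to $\lesssim \epsilon K_0^{2}+C_1^{1+5\delta}\epsilon$.

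Feeding the bound $C_E:=C_1^{1+5\delta}\epsilon K_0+\epsilon K_0^{2}$ into the Littlewood--Paley sum in Proposition~\ref{pro:KGpointwise} (using that $\langle s\rangle\sim 2^{k}$ on $\mathrm{supp}\,\zeta_k$, so $\sum_k\zeta_k(s)\langle s\rangle^{-1/4}$ is a convergent geometric series) yields
\[
\langle t+r\rangle^{3/2}\,|\Gamma^{I}\partial^{J}E|\lesssim \epsilon K_0^{3}+C_E\,\langle t\rangle^{1/4},\qquad C_E=C_1^{1+5\delta}\epsilon K_0+\epsilon K_0^{2}.
\]
Dividing by $\langle t+r\rangle^{3/2}$, multiplying by $\langle t+r\rangle^{5/4}$, and using $\langle t\rangle\le\langle t+r\rangle$ then produces exactly the claimed bound $\epsilon K_0^{3}+C_1^{1+5\delta}\epsilon K_0$. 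I expect the main obstacle to be the $\Delta n^{1}E$ term: $\Delta n^{1}$ is a full spatial Hessian of the slowly decaying wave field $n^{1}$, so one must balance the derivative budget carefully (keeping $n^{1}$ at an order where its energy in Lemma~\ref{lem:energyn0-2} and its Hessian decay in Lemma~\ref{lem:Hessian_n1point2} are available) against that slow decay, and it is precisely here that $C_1^{2+27\delta}\epsilon^{2}<1$ must be invoked to keep the powers of $K_0$ and $C_1$ at the stated level.
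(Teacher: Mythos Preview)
Your proposal is correct and is exactly the approach the paper takes: the paper's own proof of Lemma~\ref{lem:Elower2} simply states that it is similar to Lemma~\ref{lem:Elower} and omits the details, so reproducing the argument of Lemma~\ref{lem:Elower} with \eqref{est:BootE2}, \eqref{initialdata2}, and Lemmas~\ref{lem:n0-2}--\ref{lem:PointE2-2} in place of their Section~\ref{S:proofTh1.3} counterparts is precisely what is intended. Your handling of the $n^{0}E$ and $\Delta n^{1}E$ pieces, the use of $C_1^{2+27\delta}\epsilon^2<1$ to absorb the extra $C_1\epsilon$ powers in the latter, and the final Littlewood--Paley summation yielding the $\langle t\rangle^{1/4}$ loss all match the computation in Lemma~\ref{lem:Elower}.
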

	\begin{proof}
		The proof is similar to Lemma~\ref{lem:Elower}, and we omit it.
	\end{proof}
	Also, we give the following weighted $L^2$ estimate for $\Gamma^I\partial^J E$.
	\begin{lemma}\label{lem:weightE2}
		Let $|I|\leq 9, |I|+|J|\leq N-1$. For all $t\in [0,T^*(\vec{E}_{0},\vec{n}_{0})),$ we have the following weighted $L^2$ estimate on $\Gamma^I\partial^J E$.
		\begin{equation}
			\left\|\frac{\langle t+r\rangle}{\langle t-r\rangle}\Gamma^I\partial^J E\right\|\lesssim C_1^{1+(|I|+|J|+8)\delta}\epsilon K_0+C_1^{1+(|I|+|J|+13)\delta}\epsilon^3  K_0^{[\frac{|I|+|J|+3}{2}]},
		\end{equation}
		where $C_1^{2+19\delta}\epsilon^2<1.$	
	\end{lemma}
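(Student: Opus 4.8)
The plan is to run the same argument as in the proof of Lemma~\ref{lem:weightE}. Since $\Gamma^I\partial^J E$ satisfies the Klein--Gordon equation \eqref{eq:vectorfieldE} with source $G = -\Gamma^I\partial^J(n^0 E + \Delta n^1 E)$, the weighted pointwise estimate of Lemma~\ref{est:weightKG} gives
\[
\left|\frac{\langle t+r\rangle}{\langle t-r\rangle}\Gamma^I\partial^J E\right| \lesssim \sum_{|K|\leq 1}\bigl|\partial\Gamma^K\Gamma^I\partial^J E\bigr| + \frac{\langle t+r\rangle}{\langle t-r\rangle}\bigl|\Gamma^I\partial^J(n^0 E + \Delta n^1 E)\bigr|,
\]
and taking $L^2_x$ norms reduces the claim to two tasks: controlling $\sum_{|K|\leq1}\|\partial\Gamma^K\Gamma^I\partial^J E\|$, which is immediate from the ghost energy bound in \eqref{est:BootE2} (since $|K|+|I|\leq 10$ and $|K|+|I|+|J|\leq N$ in the range $|I|\leq 9$, $|I|+|J|\leq N-1$, giving a contribution $\lesssim C_1^{1+(|I|+|J|+1)\delta}\epsilon$); and bounding the weighted $L^2$ norm $\bigl\|\frac{\langle t+r\rangle}{\langle t-r\rangle}\Gamma^I\partial^J(n^0E + \Delta n^1E)\bigr\|$.

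For the term $n^0 E$ I would expand the vector fields by the Leibniz rule and split into the case where the bulk of the derivatives falls on $E$ --- then pair the $L^2$ bound \eqref{est:Gamma_n0L^2-2} on $n^0$ with the weighted pointwise bound \eqref{est:PointE3} on $\Gamma^{I_2}\partial^{J_2}E$ --- and the complementary case where the bulk of the derivatives falls on $n^0$, in which the weight $\tfrac{\langle t+r\rangle}{\langle t-r\rangle}$ is absorbed by the decay estimate \eqref{est:Gamma_n0point2} (leaving an integrable-in-$\langle t-r\rangle$ factor $\langle t-r\rangle^{-3/2}K_0$) and paired with the $L^2$ bound on $\Gamma^{I_2}\partial^{J_2}E$ from \eqref{est:BootE2}. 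For the term $\Delta n^1 E$ I would first rewrite $\Delta \Gamma^{I_1}\partial^{J_1} n^1$ in terms of $\partial\partial\Gamma^{K_1}\partial^{J_1}n^1$ via the commutator estimate \eqref{est:commutators}, then insert the Hessian decay estimates of Lemma~\ref{lem:Hessian_n1point2} together with \eqref{est:energy_n1_2-2}; pairing the resulting $\langle t+r\rangle^{-1}\langle t-r\rangle^{-1/2}$ decay (respectively the $L^2$ energy) with the weighted $L^\infty$ (respectively $L^2$) bound on $E$, and using the smallness $C_1^{2+19\delta}\epsilon^2<1$ to absorb surplus powers of $C_1$, produces a contribution of size $C_1^{1+(|I|+|J|+13)\delta}\epsilon^3 K_0^{[\frac{|I|+|J|+3}{2}]} + C_1^{1+(|I|+|J|+8)\delta}\epsilon$. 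Adding this to the linear contribution gives the stated bound.

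I do not expect a genuine obstacle here: the only non-elementary inputs are already isolated as Lemma~\ref{est:weightKG}, Lemma~\ref{lem:Hessian_n1point2}, and the bootstrap assumption \eqref{est:BootE2}, and the argument is the verbatim analogue of the proof of Lemma~\ref{lem:weightE} in Section~\ref{S:proofTh1.3} --- indeed \eqref{est:BootE2} coincides with \eqref{est:BootE}, and Lemmas~\ref{lem:n0-2}, \ref{lem:energyn0-2}, \ref{lem:Hessian_n1point2}, \ref{lem:PointE2-2} are the exact counterparts used there. The one point requiring care is the bookkeeping of the multi-index ranges, so that in each Leibniz term at least one factor lands in the window where the pointwise and Hessian-decay lemmas are available; this is precisely why the hypothesis is restricted to $|I|\leq 9$, $|I|+|J|\leq N-1$.
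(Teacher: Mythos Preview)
Your proposal is correct and is exactly the approach the paper takes: the paper's proof of Lemma~\ref{lem:weightE2} simply reads ``The proof is similar to Lemma~\ref{lem:weightE}, and we omit it,'' and you have faithfully reproduced that argument using the Section~\ref{S:proofTh1.4} analogues of the inputs. One cosmetic slip: in your description of the $n^0E$ term you have the two Leibniz cases swapped (when the bulk of derivatives falls on $E$ you must put $L^2$ on $E$ and the weighted $L^\infty$ on $n^0$, not the other way around), but the correct pairing is the one actually used in Lemma~\ref{lem:weightE} and your referenced inputs are the right ones.
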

	\begin{proof}
		The proof is similar to Lemma~\ref{lem:weightE}, and we omit it.
	\end{proof}							
	
	Finally, we deduce the following weighted $L^2$ estimate of the nonlinear term in~(\ref{eq:vectorfieldE}).
	\begin{lemma}\label{lem:nonlinear2}
		For $|I|\leq 9, |I|+|J|\leq N-1$, and all $t\in [0,T^*(\vec{E}_{0},\vec{n}_{0}))$, we have
		\begin{equation}
			\begin{aligned}
				&\|\langle t+r\rangle\Gamma^I\partial^J (n^0E+\Delta n^1E)\|\\
				&\lesssim\langle t\rangle^{-\frac{1}{8}}\left(C_1^{1+(|I|+|J|+8)\delta}\epsilon K_0^2+C_1^{3+(|I|+|J|+39)\delta}\epsilon^3 K_0^{[\frac{|I|+|J|+6}{2}]}\right),
			\end{aligned}
		\end{equation}
		where $C_1^{2+19\delta}\epsilon^2<1.$
	\end{lemma}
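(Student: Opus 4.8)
The plan is to bound the two nonlinear pieces $\Gamma^I\partial^J(n^0E)$ and $\Gamma^I\partial^J(\Delta n^1E)$ separately after multiplying by the weight $\langle t+r\rangle$, exactly as in the proof of Lemma~\ref{lem:nonlinear}, but now using the estimates (Lemmas~\ref{lem:n0-2}, \ref{lem:energyn0-2}, \ref{lem:Hessian_n1point2}, \ref{lem:PointE2-2}, \ref{lem:weightE2}) adapted to the bootstrap assumption~\eqref{est:BootE2}. Since~\eqref{est:BootE2} differs from~\eqref{est:BootE} only in the powers of $C_1$ attached to the energies and pointwise bounds, the structure of the argument is unchanged and only the bookkeeping of exponents needs to be redone.

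First I would treat $\|\langle t+r\rangle\Gamma^I\partial^J(n^0E)\|$. Using the Leibniz rule and splitting into the case where most derivatives land on $n^0$ versus on $E$, I bound one factor in $L^\infty$ and the other in $L^2$. For the term with $\Gamma^{I_2}\partial^{J_2}E$ having few derivatives, put it in $L^\infty$ via the second line of~\eqref{est:BootE2}, which absorbs the weight $\langle t+r\rangle$ and leaves a decay factor $\langle t+r\rangle^{-3/2}$; pair this with $\|\Gamma^{I_1}\partial^{J_1}n^0\|\lesssim K_0$ from~\eqref{est:Gamma_n0L^2-2}. For the term with $n^0$ having few derivatives, use the pointwise bound~\eqref{est:Gamma_n0point2} to write $\|\langle t-r\rangle\Gamma^{I_1}\partial^{J_1}n^0\|_{L^\infty}\lesssim\langle t\rangle^{-1+}K_0$ (keeping a $\langle t-r\rangle$ factor to cancel), and pair with the weighted $L^2$ estimate for $E$ from Lemma~\ref{lem:weightE2}. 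In both scenarios one extracts a net decay $\langle t\rangle^{-1/8}$ (wasting a bit of the $\langle t+r\rangle^{-3/2}$ or $\langle t\rangle^{-1+}$ decay against the weight), yielding a contribution $\lesssim\langle t\rangle^{-1/8}\big(C_1^{1+(|I|+|J|+8)\delta}\epsilon K_0^2+C_1^{1+(|I|+|J|+13)\delta}\epsilon^3K_0^{[\frac{|I|+|J|+5}{2}]}\big)$, under the smallness condition $C_1^{2+19\delta}\epsilon^2<1$ used to simplify the lower-order terms.

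Next I would treat $\|\langle t+r\rangle\Gamma^I\partial^J(\Delta n^1E)\|$. Write $\Delta\Gamma^{I_1}\partial^{J_1}n^1$ as $\partial\partial\Gamma^{I_1}\partial^{J_1}n^1$ up to commutators~\eqref{est:commutators}, and invoke the Hessian decay bounds of Lemma~\ref{lem:Hessian_n1point2}, in particular~\eqref{est:allHessian2}. In the scenario where $E$ carries few derivatives, put $E$ in $L^\infty$ using~\eqref{est:BootE2} (which eats the weight $\langle t+r\rangle$) and put $\partial\partial\Gamma^{I_1}\partial^{J_1}n^1$ in $L^2$ via~\eqref{est:energy_n1_2-2}; in the scenario where $n^1$ carries few derivatives, put $\langle t-r\rangle\partial\partial\Gamma^{I_1}\partial^{J_1}n^1$ in $L^\infty$ using~\eqref{est:allHessian2} (gaining $\langle t+r\rangle^{-1}$) and pair with $\big\|\frac{\langle t+r\rangle}{\langle t-r\rangle}\Gamma^{I_2}\partial^{J_2}E\big\|$ from Lemma~\ref{lem:weightE2}. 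Both give a cubic-in-$\epsilon$ contribution with an extra $\langle t\rangle^{-1/2}$ decay, so after degrading to $\langle t\rangle^{-1/8}$ the bound $\lesssim\langle t\rangle^{-1/2}C_1^{3+(|I|+|J|+39)\delta}\epsilon^3K_0^{[\frac{|I|+|J|+6}{2}]}$ results. Adding the two pieces gives the claimed estimate.

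The proof is essentially routine given the lemmas already at hand; the only real care needed is in tracking the exponents of $C_1$ and the powers of $K_0$ so that the stated bound (with exactly $C_1^{1+(|I|+|J|+8)\delta}$ and $C_1^{3+(|I|+|J|+39)\delta}$ and the indicated $K_0$ powers) comes out, and in checking that the smallness hypothesis $C_1^{2+19\delta}\epsilon^2<1$ is exactly what is needed to collapse the various lower-order error terms into the two displayed ones. I expect the main (minor) obstacle to be verifying that the worst combination of indices in the Leibniz split — namely when the derivative count forces the larger $K_0$ exponent in Lemma~\ref{lem:Hessian_n1point2} together with the top-order weighted $L^2$ bound of Lemma~\ref{lem:weightE2} — still fits under the advertised $K_0^{[\frac{|I|+|J|+6}{2}]}$; this is the analogue of the corresponding check in Lemma~\ref{lem:nonlinear} and goes through because $|I|+|J|\le N-1$.
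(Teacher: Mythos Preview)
Your proposal is correct and follows exactly the approach the paper takes: the paper's own proof of Lemma~\ref{lem:nonlinear2} simply reads ``The proof is similar to Lemma~\ref{lem:nonlinear}, and we omit it,'' and what you have written is precisely the argument of Lemma~\ref{lem:nonlinear} transcribed with the Section~\ref{S:proofTh1.4} analogues (Lemmas~\ref{lem:n0-2}, \ref{lem:energyn0-2}, \ref{lem:Hessian_n1point2}, \ref{lem:PointE2-2}, \ref{lem:weightE2}) in place of their Section~\ref{S:proofTh1.3} counterparts.
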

	
	\begin{proof}
		The proof is similar to Lemma~\ref{lem:nonlinear}, and we omit it.
	\end{proof}			
	
	\subsection{Proof of Proposition~\ref{pro:KGZ-L2}}
	In this subsection, we will complete the proof of Proposition~\ref{pro:KGZ-L2} by improving all the estimates of $E$ in~\eqref{est:BootE2}.						
	\begin{proof}[Proof of Proposition~\ref{pro:KGZ-L2}]
		For any initial data $(\vec{E}_{0},\vec{n}_{0})$ satisfying the conditions~(\ref{initialdata2}) and \eqref{initialdata3}, we consider the corresponding solution $(E,n^0,n^1)$ of~(\ref{eq:KGZ-L2}). From the initial conditions~(\ref{initialdata2}), we observe that 
		\begin{equation}\label{est:I.D.2}
			\begin{aligned}
				\mathcal{E}^{\frac{1}{2}}_{gst,1}(0,\Gamma^I\partial^J E)&\lesssim\epsilon K_0^{[\frac{|I|+|J|+1}{2}]},\quad\mbox{for}\quad |I|\leq 10, |I|+|J|\leq N,\\
				\|\langle r\rangle^{2}\Gamma^I\partial^J E(0,x)\|&\lesssim \epsilon K_0^{[\frac{|I|+|J|}{2}]},\quad \ \ \ \mbox{for}\quad |I|\leq 10, |I|+|J|\leq N.
			\end{aligned}
		\end{equation}
		
		\textbf{Step 1.} Closing the estimate in $\mathcal{E}_{gst,1}(t,\Gamma^{I}\partial^{J}{E})$. Let $|I|\leq 10, |I|+|J|\leq N$ with $N\geq 10$.
		
		First, if $|I|=|J|=0,$ multiplying the equation~(\ref{eq:vectorfieldE}) by $e^q\partial_t E$ and integrating with respect to $x$ and $t$, we can obtain
		\begin{equation}\label{est:energy_E}
			\begin{aligned}
				&\frac{1}{2}\int_{\R^{3}}e^q\left(|\partial E|^2+| E|^2+n^0|E|^2\right)\d x(t)\\
				&+\frac{1}{2}\int_{0}^{t}\int_{\R^{3}}e^q\left(\frac{|E|^2}{\langle r-s\rangle^{1+2\delta}}+\frac{n^0|E|^2}{\langle r-s\rangle^{1+2\delta}}+\sum_{a=1}^{3}\frac{|G_a E|^2}{\langle r-s\rangle^{1+2\delta}}-\partial_t n^0|E|^2\right)\d x\d s\\
				&=\frac{1}{2}\int_{\R^{3}}e^q\left(|\partial E|^2+|E|^2+n^0|E|^2\right)\d x(0)
				-\int_{0}^{t}\int_{\R^{3}}e^q(\Delta n^1E)\partial_t E\d x\d s.
			\end{aligned}
		\end{equation}
		By~(\ref{est:Sobo}) and (\ref{initialdata2}), we deduce
		\begin{equation}\label{est:n0_1}
			\begin{aligned}
				|\partial_t n^0|&\leq C_{KS}\langle t+r\rangle^{-1}\langle t-r\rangle^{-\frac{1}{2}}\sum_{|K|\leq2}\|Z^K\partial_t n^0\|\\
				&\leq71C_{KS}K_0\langle t+r\rangle^{-1}\langle t-r\rangle^{-\frac{1}{2}}.
			\end{aligned}
		\end{equation}
		Similarly, we have
		\begin{equation*}
			\begin{aligned}
				\sum_{1\leq a \leq 3}|\partial_a\partial_t n^0|&\leq C_{KS}\langle t+r\rangle^{-1}\langle t-r\rangle^{-\frac{1}{2}}\sum_{1\leq a \leq 3}\sum_{|K|\leq2}\|Z^K\partial_a\partial_t n^0\|\\
				&\leq\frac{174}{71^2\times174C_{KS}^2K_0^2}\langle t+r\rangle^{-1}\langle t-r\rangle^{-\frac{1}{2}}.
			\end{aligned}
		\end{equation*}
		Then, we can get another estimate on $\partial_t n^0$ in the following
		\begin{equation}\label{est:n0_2}
			\begin{aligned}
				|\partial_t n^0|&\leq\int_{0}^{+\infty}|\partial_r\partial_t n^0|\d \rho\leq\sum_{1\leq a \leq 3}\int_{0}^{+\infty}|\partial_a\partial_t n^0|\d \rho\\
				&\leq\frac{1}{71^2C_{KS}^2K_0^2}\langle t+r\rangle^{-\frac{3}{8}}.
			\end{aligned}
		\end{equation}
		By interpolating inequalities~(\ref{est:n0_1}) and~(\ref{est:n0_2}), we obtain
		\begin{equation}
			|\partial_t n^0|\leq\langle t+r\rangle^{-\frac{1}{8}+2\delta}\langle t-r\rangle^{-1-2\delta}<\langle t-r\rangle^{-1-2\delta}.
		\end{equation}
		
		Moreover, by Lemma~\ref{lem:homo-positive} and (\ref{initialdata3}), we see $n^0\geq 0$.
		Thus, from the definition of $\mathcal{E}_{gst,1}(t,E)$ and the above inequalities, we have
		\begin{equation*}
			\begin{aligned}
				\mathcal{E}_{gst,1}(t,E)&\lesssim\mathcal{E}_{gst,1}(0,E)+\|n^0(0,x)\|_{L^\infty}\|E(0,x)\|^2\\
				&+\int_{0}^{t}\|\Delta n^1E\partial_tE\|_{L^1}\d s.
			\end{aligned}
		\end{equation*}
		Based on H\"{o}lder inequality and estimates in Lemma~\ref{lem:Hessian_n1point2} and (\ref{est:BootE2}), we obtain
		\begin{equation*}
			\begin{aligned}
				\int_{0}^{t}\|\Delta n^1E\partial_tE\|_{L^1}\d s&\lesssim\int_{0}^{t}\|\langle s-r\rangle^{\frac{1}{2}+\delta}\Delta n^1\|_{L^\infty}\left\|\frac{E}{\langle s-r\rangle^{\frac{1}{2}+\delta}}\right\|\|\partial_tE\|\d s\\
				&\lesssim C_1\epsilon\left(\int_{0}^{t}\|\langle s-r\rangle^{\frac{1}{2}+\delta}\Delta n^1\|^2_{L^\infty}\d s\right)^{\frac{1}{2}}\left(\int_{0}^{t}\left\|\frac{E}{\langle s-r\rangle^{\frac{1}{2}+\delta}}\right\|^2\d s\right)^{\frac{1}{2}}\\
				&\lesssim C_1^2\epsilon^4K_0+C_1^{4+26\delta}\epsilon^4.
			\end{aligned}
		\end{equation*}
		Finally, by (\ref{initialdata2}),~(\ref{est:I.D.2}) and the above inequalities, we conclude
		\begin{equation}
			\mathcal{E}_{gst,1}(t,E)\lesssim\epsilon^2+\epsilon^2K_0+C_1^2\epsilon^4K_0+C_1^{4+26\delta}\epsilon^4\leq\frac{1}{4}C_1^2\epsilon^2,
		\end{equation}
		where we take $K_0\ll C_1^2$ and $C_1^{2+26\delta}\epsilon^2\ll1.$
		
		If $|I|\leq 10, 1\leq|I|+|J|\leq N,$ multiplying the equation~(\ref{eq:vectorfieldE}) by $e^q\partial_t\Gamma^I\partial^J E$ and integrating with respect to $x$ and $t$, we can obtain
		\begin{equation}\label{eq:ghost}
			\begin{aligned}
				&\frac{1}{2}\int_{\R^{3}}e^q\left(|\partial\Gamma^I\partial^J E|^2+|\Gamma^I\partial^J E|^2+n^0|\Gamma^I\partial^J E|^2\right)\d x(t)\\
				&+\frac{1}{2}\int_{0}^{t}\int_{\R^{3}}e^q\left(\frac{|\Gamma^I\partial^J E|^2}{\langle r-s\rangle^{1+2\delta}}+\frac{n^0|\Gamma^I\partial^J E|^2}{\langle r-s\rangle^{1+2\delta}}+\sum_{a=1}^{3}\frac{|G_a\Gamma^I\partial^J E|^2}{\langle r-s\rangle^{1+2\delta}}\right)\d x\d s\\
				&=\frac{1}{2}\int_{\R^{3}}e^q\left(|\partial\Gamma^I\partial^J E|^2+|\Gamma^I\partial^J E|^2+n^0|\Gamma^I\partial^J E|^2\right)\d x(0)\\
				&+\frac{1}{2}\int_{0}^{t}\int_{\R^{3}}e^q\partial_t n^0|\Gamma^I\partial^J E|^2\d x\d s-\int_{0}^{t}\int_{\R^{3}}e^q\Gamma^I\partial^J(\Delta n^1E)\partial_t\Gamma^I\partial^J E\d x\d s\\
				&-\sum_{\substack{I_1+I_2= I,J_1+J_2=J\\|I_2|+|J_2|< |I|+|J|}}\int_{0}^{t}\int_{\R^{3}}e^q\Gamma^{I_1}\partial^{J_1}n^0\Gamma^{I_2}\partial^{J_2}E\partial_t\Gamma^I\partial^J E\d x\d s.
			\end{aligned}
		\end{equation}
		Thanks to Lemma~\ref{lem:homo-positive} and (\ref{initialdata2}), we see $n^0\geq 0$.
		Thus from the definition of $\mathcal{E}_{gst,1}(t,\Gamma^I\partial^J E)$ and the above identity, we have
		\begin{equation}\label{est:energyE2}
			\begin{aligned}
				\mathcal{E}_{gst,1}(t,\Gamma^I\partial^J E)
				&\lesssim\mathcal{E}_{gst,1}(0,\Gamma^I\partial^J E)+\|n^0(0,x)\|_{L^\infty}\|\Gamma^I\partial^J E(0,x)\|^2\\
				&+\int_{0}^{t}\int_{\R^{3}}|\partial_t n^0||\Gamma^I\partial^J E|^2\d x \d s\\
				&+\sum_{\substack{I_1+I_2=I,J_1+J_2=J\\|I_2|+|J_2|< |I|+|J|}}\int_{0}^{t}\int_{\R^{3}}|\Gamma^{I_1}\partial^{J_1}n^0||\Gamma^{I_2}\partial^{J_2}E||\partial_t\Gamma^I\partial^J E|\d x\d s\\
				&+\sum_{\substack{I_1+I_2=I,J_1+J_2=J}}\int_{0}^{t}\int_{\R^{3}}|\Gamma^{I_1}\partial^{J_1}\Delta n^1||\Gamma^{I_2}\partial^{J_2}E||\partial_t\Gamma^I\partial^J E|\d x\d s\\
				&\lesssim R_1+R_2+R_3+R_4+R_5.
			\end{aligned}
		\end{equation}
		Estimates~(\ref{est:I.D.2}) and~(\ref{est:Gamma_n0point2}) yield
		\begin{equation}\label{est:R1R2}
			R_1+R_2\lesssim\epsilon^2 K_0^{|I|+|J|+1}.
		\end{equation}
		
		Now we estimate $R_3$ in~(\ref{est:energyE2}). If $1\leq |I|+|J|\leq N$, we have 
		$$|\Gamma^I\partial^JE|\lesssim\langle t+r\rangle\sum_{|I'|+|J'|<|I|+|J|}|\partial\Gamma^{I'}\partial^{J'}E|.$$
		
		Based on the above inequality, we get
		\begin{equation}\label{est:R3}
			\begin{aligned}
				&R_3\lesssim\int_{0}^{t}\int_{\R^{3}}|\langle s-r\rangle^{\frac{1}{2}+\delta}\partial_t n^0||\Gamma^{I}\partial^J E|^{\frac{1}{3}}|\Gamma^I\partial^J E|^{\frac{2}{3}}\left|\frac{\Gamma^I\partial^J E}{\langle s-r\rangle^{\frac{1}{2}+\delta}}\right|\d x\d s\\
				&\lesssim\sum_{\substack{|I'|+|J'|\\<|I|+|J|}}\int_{0}^{t}\int_{\R^{3}}|\langle s-r\rangle^{\frac{1}{2}+\delta}\langle s+r\rangle^{\frac{1}{3}}\partial_t n^0||\partial\Gamma^{I'}\partial^{J'} E|^{\frac{1}{3}} |\Gamma^I\partial^J E|^{\frac{2}{3}} \left|\frac{\Gamma^I\partial^J E}{\langle s-r\rangle^{\frac{1}{2}+\delta}}\right| \d x\d s\\
				&\lesssim\sum_{\substack{|I'|+|J'|\\<|I|+|J|}}\int_{0}^{t}\left\|\langle s-r\rangle^{\frac{1}{2}+\delta}\langle s+r\rangle^{\frac{1}{3}}\partial_t n^0\right\|_{L^\infty}\|\partial\Gamma^{I'}\partial^{J'} E\|^{\frac{1}{3}}\|\Gamma^I\partial^J E\|^{\frac{2}{3}}\left\|\frac{\Gamma^I\partial^J E}{\langle s-r\rangle^{\frac{1}{2}+\delta}}\right\|\ d s\\
				&\lesssim C_1^{1+(|I|+|J|-\frac{1}{3})\delta}\epsilon K_0\left(\int_{0}^{t}\langle s\rangle^{-\frac{4}{3}+2\delta}\d s\right)^{\frac{1}{2}}\left(\int_{0}^{t}\left\|\frac{\Gamma^I\partial^J E}{\langle s-r\rangle^{\frac{1}{2}+\delta}}\right\|^2\d s\right)^{\frac{1}{2}}\\
				&\lesssim C_1^{2+(2|I|+2|J|-\frac{1}{3})\delta}\epsilon^2 K_0,
			\end{aligned}
		\end{equation}
		in which we use the H\"{o}lder inequality and estimates in~(\ref{est:BootE2}),~(\ref{est:Gamma_n0point2}).
		
		For $R_4$, we first consider the case where $|I|\leq 8, 1\leq|I|+|J|\leq N-2$. Employing again the H\"{o}lder inequality and estimates in~(\ref{est:BootE2}),~(\ref{est:Gamma_n0point2}), we can deduce
		\begin{equation*}\label{est:R4-1}
			\begin{aligned}
				R_4&\lesssim\sum_{\substack{I_1+I_2=I,J_1+J_2=J\\|I_2|+|J_2|<|I|+|J|}}\int_{0}^{t}\|\langle s-r\rangle^{\frac{1}{2}+\delta}\Gamma^{I_1}\partial^{J_1}n^0\|_{L^\infty}\left\|\frac{\Gamma^{I_2}\partial^{J_2} E}{\langle s-r\rangle^{\frac{1}{2}+\delta}}\right\|\|\partial_t\Gamma^I\partial^J E\|\d s\\
				&\lesssim C_1^{1+(|I|+|J|)\delta}\epsilon K_0\left(\int_{0}^{t}\langle s\rangle^{-2+2\delta}\d s\right)^{\frac{1}{2}}\sum_{|I_2|+|J_2|<|I|+|J|}\left(\int_{0}^{t}\left\|\frac{\Gamma^{I_2}\partial^{J_2}E}{\langle s-r\rangle^{\frac{1}{2}+\delta}}\right\|^2\d s\right)^{\frac{1}{2}}\\
				&\lesssim C_1^{2+(2|I|+2|J|-1)\delta}\epsilon^2 K_0.
			\end{aligned}
		\end{equation*}
		Now let us deal with the case that $|I|=9,10,$ and $|I| \leq|I|+|J|\leq\max\{|I|,N-2\}$. From Lemmas~\ref{lem:n0-2},~\ref{lem:Elower2} and estimates in~(\ref{est:BootE2}), we have
		\begin{equation*}\label{est:R4-2}
			\begin{aligned}
				&R_4\lesssim\sum_{\substack{|I_1|\leq|I|\\|I_1|+|J_1|\leq \max\{|I|,N-2\}\\|I_2|+|J_2|\leq1}}\int_{0}^{t}\|\Gamma^{I_1}\partial^{J_1}n^0\|\|\Gamma^{I_2}\partial^{J_2}E\|_{L^\infty}\|\partial_t\Gamma^I\partial^J E\|\d s\\
				&+\sum_{\substack{|I_1|\leq8\\|I_1|+|J_1|\leq N-2\\|I_2|\leq |I|\\|I_2|+|J_2|<|I|+|J|}}\int_{0}^{t}\|\langle s-r\rangle^{\frac{1}{2}+\delta}\Gamma^{I_1}\partial^{J_1}n^0\|_{L^\infty}\left\|\frac{\Gamma^{I_2}\partial^{J_2}E}{\langle s-r\rangle^{\frac{1}{2}+\delta}}\right\|\|\partial_t\Gamma^I\partial^J E\|\d s\\
				&\lesssim\sum_{|I_2|+|J_2|\leq1}\int_{0}^{t} K_0 C_1^{1+(|I|+|J|)\delta}\epsilon\|\Gamma^{I_2}\partial^{J_2}E\|_{L^\infty}\d s+C_1^{2+(2|I|+2|J|-1)\delta}\epsilon^2 K_0\\
				&\lesssim C_1^{2+(2|I|+2|J|-4)\delta}\epsilon^2 K_0^4+C_1^{2+(2|I|+2|J|-1)\delta}\epsilon^2 K_0,\\
				%\lesssim&C_1^{2+(2|I|-1)\delta}\epsilon^2\langle M_1\rangle^4+C_1^{4+(|I|+32)\delta}\epsilon^4\langle M_1\rangle^3.
			\end{aligned}
		\end{equation*}
		where the H\"{o}lder inequality and $|I|+|J|\geq 9$ are used.
		
		Let $|I|\leq 10,\max\{N-1,|I|\}\leq |I|+|J|\leq N.$ From Lemmas~\ref{lem:n0-2},~\ref{lem:Elower2} and estimates in~(\ref{est:BootE2}), we have
		\begin{equation*}
			\begin{aligned}
				R_4&\lesssim\sum_{\substack{I_1+I_2=I,J_1+J_2=J\\|I_1|\leq10,|I_1|+|J_1|\leq N\\|I_2|+|J_2|\leq1}}\int_{0}^{t}\|\Gamma^{I_1}\partial^{J_1}n^0\|\|\Gamma^{I_2}\partial^{J_2}E\|_{L^\infty}\|\partial_t\Gamma^I\partial^J E\|\d s\\
				&+\sum_{\substack{I_1+I_2=I,J_1+J_2=J\\|I_1|\leq8\\|I_1|+|J_1|\leq N-2\\|I_2|\leq10\\|I_2|+|J_2|<|I|+|J|}}\int_{0}^{t}\|\langle s-r\rangle^{\frac{1}{2}+\delta}\Gamma^{I_1}\partial^{J_1}n^0\|_{L^\infty}\left\|\frac{\Gamma^{I_2}\partial^{J_2}E}{\langle s-r\rangle^{\frac{1}{2}+\delta}}\right\|\|\partial_t\Gamma^I\partial^J E\|\d s\\
				&\lesssim\sum_{|I_2|+|J_2|\leq1}\int_{0}^{t} K_0 C_1^{1+(|I|+|J|)\delta}\epsilon\|\Gamma^{I_2}\partial^{J_2}E\|_{L^\infty}\d s+C_1^{2+(2|I|+2|J|-1)\delta}\epsilon^2 K_0\\
				&\lesssim C_1^{2+(2|I|+2|J|-4)\delta}\epsilon^2K_0^4+C_1^{2+(2|I|+2|J|-1)\delta}\epsilon^2 K_0,
			\end{aligned}
		\end{equation*}
		where we use the fact $|I|+|J|\geq 9$ in the last step.
		
		Hence, from the above three estimates, we conclude that
		\begin{equation}\label{est:R4}
			R_4\lesssim C_1^{2+(2|I|+2|J|-4)\delta}\epsilon^2 K_0^4+C_1^{2+(2|I|+2|J|-1)\delta}\epsilon^2 K_0.
		\end{equation}
		Let $|I|\leq10, 1\leq|I|+|J|\leq N$. By the H\"{o}lder inequality and (\ref{est:commutators}), we obtain
		\begin{equation*}
			\begin{aligned}
				&R_5\lesssim\sum_{\substack{I_1+I_2=I,J_1+J_2=J\\|I_1|\leq10,|I_1|+|J_1|\leq N\\|I_2|\leq5,|I_2|+|J_2|\leq N-5}}\int_{0}^{t}\|\Gamma^{I_1}\partial^{J_1}\Delta n^1\|\|\Gamma^{I_2}\partial^{J_2}E\|_{L^\infty}\|\partial_t\Gamma^I\partial^J E\|\d s\\
				&+\sum_{\substack{I_1+I_2=I,J_1+J_2=J\\|I_1|+|J_1|\leq 4\\|I_2|\leq10,|I_2|+|J_2|\leq N}}\int_{0}^{t}\|\langle s-r\rangle^{\frac{1}{2}+\delta}\Gamma^{I_1}\partial^{J_1}\Delta n^1\|_{L^\infty}
				\left\|\frac{\Gamma^{I_2}\partial^{J_2}E}{\langle s-r\rangle^{\frac{1}{2}+\delta}}\right\|\|\partial_t\Gamma^I\partial^J E\|\d s\\
				&\lesssim R_{5}^1+R_{5}^2,\\
			\end{aligned}
		\end{equation*}
		where
		\begin{equation*}
			\begin{aligned}
				R_{5}^1&\lesssim\sum_{\substack{|I_1|+|I_2|\leq|I|\\|J_1|+|J_2|\leq|J|\\|I_1|\leq10,|I_1|+|J_1|\leq N\\|I_2|+|J_2|\leq N-5}}\int_{0}^{t}\|\partial\partial\Gamma^{I_1}\partial^{J_1}n^1\|\|\Gamma^{I_2}\partial^{J_2}E\|_{L^\infty}\|\partial_t\Gamma^I\partial^J E\|\d s,\\
				%P_{32}=&\sum_{\substack{|J|+|I_2|<|I|\\|J|<N,|I_2|\leq N-5}}\int_{t_0}^{t}\|\partial\partial\Gamma^{J}n^1\|_{L^2}\|\Gamma^{I_2}E\|_{L^\infty}\|\partial_t\Gamma^I E\|_{L^2}\d s\\
				R_{5}^2&\lesssim\sum_{\substack{|I_1|+|I_2|\leq|I|\\|J_1|+|J_2|\leq|J|\\|I_1|+|J_1|\leq 4\\|I_2|\leq10,|I_2|+|J_2|\leq N}}\int_{0}^{t}\|\langle s-r\rangle^{\frac{1}{2}+\delta}\partial\partial\Gamma^{I_1}\partial^{J_1}n^1\|_{L^\infty}
				\left\|\frac{\Gamma^{I_2}\partial^{J_2}E}{\langle s-r\rangle^{\frac{1}{2}+\delta}}\right\|\|\partial_t\Gamma^I\partial^J E\|\d s.\\
				%P_{34}=&\sum_{\substack{|J|+|I_2|<|I|\\|J|<N-5,|I_2|\leq N}}\int_{t_0}^{t}\|\langle s-r\rangle^{\frac{1}{2}+\delta}\partial\partial\Gamma^{J}n^1\|_{L^\infty}
				%\left\|\frac{\Gamma^{I_2}E}{\langle s-r\rangle^{\frac{1}{2}+\delta}}\right\|_{L^2}\|\partial_t\Gamma^I E\|_{L^2}\d s,\\
			\end{aligned}
		\end{equation*}
		By estimates in (\ref{est:BootE2}),~(\ref{est:energy_n1_2-2}), we see that
		\begin{equation*}
			\begin{aligned}
				R_{5}^1%&\lesssim\sum_{|I_1|+|I_2|\leq|I|}\left(\epsilon^2 K_0^{[\frac{|I_1|+|J_1|}{2}]}+C_1^{2+(|I_1|+|J_1|+14)\delta}\epsilon^2\right)C_1^{2+(|I|+|J|+|I_2|+|J_2|+13)\delta}\epsilon^2\\
				&\lesssim C_1^{2+(2|I|+2|J|+13)\delta}\epsilon^4 K_0^{[\frac{|I|+|J|}{2}]}+C_1^{4+(2|I|+2|J|+27)\delta}\epsilon^4.
			\end{aligned}
		\end{equation*}
		According to Lemma~\ref{lem:Hessian_n1point2} and (\ref{est:BootE2}), we observe that
		\begin{equation*}
			\begin{aligned}
				R_{5}^2
				&\lesssim\sum_{\substack{|I_1|+|I_2|\leq|I|\\|J_1|+|J_2|\leq|J|\\|I_1|+|J_1|\leq4}}C_1^{1+(|I|+|J|)\delta}\epsilon\left(\epsilon^2 K_0^{[\frac{|I_1|+|J_1|+3}{2}]}+C_1^{2+(|I_1|+|J_1|+26)\delta}\epsilon^2\right)\\
				&\cdot\left(\int_{0}^{t}\langle s\rangle^{-\frac{3}{2}}\d s\right)^{\frac{1}{2}}\left(\int_{0}^{t}\left\|\frac{\Gamma^{I_2}\partial^{J_2}E}{\langle s-r\rangle^{\frac{1}{2}+\delta}}\right\|^2\d s\right)^{\frac{1}{2}}\\
				&\lesssim C_1^{2+2(|I|+|J|)\delta}\epsilon^4 K_0^{3}+C_1^{4+(2|I|+2|J|+26)\delta}\epsilon^4.
			\end{aligned}
		\end{equation*}
		From the above estimates, we deduce
		\begin{equation}\label{est:R5}
			R_5\lesssim C_1^{2+(2|I|+2|J|+13)\delta}\epsilon^4 K_0^{[\frac{|I|+|J|}{2}]}+C_1^{2+2(|I|+|J|)\delta}\epsilon^4 K_0^{3}+C_1^{4+(2|I|+2|J|+27)\delta}\epsilon^4.
		\end{equation}
		Then by estimates in (\ref{est:energyE2}),~(\ref{est:R1R2}),~(\ref{est:R3}),~(\ref{est:R4}),~(\ref{est:R5}), we obtain
		\begin{equation*}
			\begin{aligned}
				\mathcal{E}_{gst,1}(t,\Gamma^I\partial^J E)
				&\lesssim\epsilon^2 K_0^{|I|+|J|+1}+C_1^{2+(2|I|+2|J|-\frac{1}{3})\delta}\epsilon^2 K_0\\
				&+C_1^{2+(2|I|+2|J|-4)\delta}\epsilon^2 K_0^4+C_1^{2+(2|I|+2|J|+13)\delta}\epsilon^4 K_0^{[\frac{|I|+|J|}{2}]}\\
				&+C_1^{2+2(|I|+|J|)\delta}\epsilon^4K_0^3+C_1^{4+(2|I|+2|J|+27)\delta}\epsilon^4.
			\end{aligned}
		\end{equation*}
		Finally, we take $C_1$ and $\epsilon$ satisfying
		\begin{equation*}
			\begin{aligned}
				&C_1^2\gg K_0^{N+1},\\
				&C_1^{\frac{1}{3}\delta}\gg K_0,\\
				&C_1^{3+31\delta}\epsilon^2\ll 1,
			\end{aligned}
		\end{equation*}
		which imply
		\begin{equation*}
			\mathcal{E}_{gst,1}(t,\Gamma^I\partial^J E)\leq\frac{1}{4}C_1^{2+2(|I|+|J|)\delta}\epsilon^2.
		\end{equation*}
		This strictly improves the estimate of $\mathcal{E}_{gst,1}(t,\Gamma^I\partial^JE)$ in~(\ref{est:BootE2}).
		
		\smallskip
		
		\textbf{Step 2.} Closing the estimate of $\Gamma^I\partial^J E$. 
		Let $|I|\leq 5, |I|+|J|\leq N-5$. By Lemma~\ref{lem:nonlinear2}, we conclude
		\begin{equation}
			\begin{aligned}
				&\sum_{|K|\leq 4}\|\langle t+r\rangle\Gamma^K\Gamma^I\partial^J(n^0E+\Delta n^1 E)\|\\
				&\lesssim\sum_{|I|\leq|K|\leq |I|+4}\|\langle t+r\rangle\Gamma^K\partial^J(n^0E+\Delta n^1 E)\|\\
				&\lesssim\langle t\rangle^{-\frac{1}{8}}\left(C_1^{1+(|I|+|J|+12)\delta}\epsilon K_0^2+C_1^{3+(|I|+|J|+43)\delta}\epsilon^3 K_0^{[\frac{|I|+|J|+10}{2}]}\right).
			\end{aligned}
		\end{equation}
		
		By Corollary~\ref{cor:KGpoint} and~(\ref{est:I.D.2}), we have
		\begin{equation}
			\begin{aligned}
				\langle t+r\rangle^{\frac{3}{2}}|\Gamma^I\partial^J E|
				&\lesssim\epsilon K_0^{[\frac{|I|+|J|+5}{2}]}+
				C_1^{1+(|I|+|J|+12)\delta}\epsilon K_0^2\\
				&+C_1^{3+(|I|+|J|+43)\delta}\epsilon^3 K_0^{[\frac{|I|+|J|+10}{2}]}.
			\end{aligned}
		\end{equation}
		Finally, we take $C_1$ and $\epsilon$ satisfying
		\begin{equation*}
			\begin{aligned}
				&C_1^2\gg K_0^{N+1},\\
				&C_1^{\frac{1}{3}\delta}\gg K_0,\\
				&C_1^{3+31\delta}\epsilon^2\ll 1,
			\end{aligned}
		\end{equation*}
		which imply
		\begin{equation*}
			\langle t+r\rangle^{\frac{3}{2}}|\Gamma^I\partial^J E|\leq\frac{1}{2}C_1^{1+(|I|+|J|+13)\delta}\epsilon.
		\end{equation*}
		This strictly improves the estimate of $\Gamma^I\partial^J E$ in~(\ref{est:BootE2}).
		
		In conclusion, the proof of Proposition~\ref{pro:KGZ-L2} is completed. Besides, estimates in~(\ref{est:pointwise2}) follow from~(\ref{est:BootE2}), (\ref{est:Gamma_n0point2}), (\ref{est:allHessian2}).
		
		Moreover, the solution $(E,n)$ in Theorem~\ref{thm:KGZ-L2} scatters linearly; see Section~\ref{SS:scatter} for details. The proof of Theorem~\ref{thm:KGZ-L2} is done.
	\end{proof}

	\section{Proof of Theorem~\ref{thm:DKG-L}} \label{S:proofdkg}
	
	In this section, we prove global stability result of Dirac-Klein-Gordon system via a continuity argument. Recall that in \eqref{eq:DKG-L2}, we rewrite the Klein-Gordon solution $v$ in \eqref{eq:DKG-L2} as $v= V^0 + V^1$, where  $V^0$ solves the linear Klein-Gordon equation 
	\begin{align}
		-\Box V^0 + V^0 = 0, \ \ \ V^0(0,x) =v_0, \ \ \partial_t V^0(0,x) =v_1,
	\end{align}
	and $V^1$ solves the nonlinear Klein-Gordon equation
	\begin{align}
		-\Box V^1 + V^1 =\psi^*\gamma^0 \psi , \ \ \ V^1(0,x) =0, \ \ \ \partial_t V^1(0,x) = 0.
	\end{align}
	In this way, the Dirac-Klein-Gordon system \eqref{eq:DKG-L} can be reformulated as 
	\begin{equation} \label{DKGequiva}
		\left\{
		\begin{aligned}
			-i \gamma^{\mu} \partial_{\mu} \psi & = (V^0 + V^1) \psi, \\
			-\Box V^0 + V^0 & =0, \\
			-\Box V^1 + V^1 &= \psi^*\gamma^0 \psi.
		\end{aligned}
		\right.
	\end{equation}
	The prescribed initial data at $t=0$ is 
	\begin{align}\label{DKGequinitialdata}
		(\psi, V^0, \partial_t V^0, V^1, \partial_t V^1)|_{t=0}= (\psi_0, v_0, v_1, 0, 0),
	\end{align}
	where $\psi_0, v_0, v_1$ is defined in \eqref{eq:DKG-ID}.

	\subsection{Bootstrap setting and auxiliary estimates}
	Let $N\in \mathbb{N}$ be a large integer and $0<\delta<1$ be a sufficiently small number. In the high regularity setting, the Cauchy problem \eqref{DKGequiva}-\eqref{DKGequinitialdata} is locally well-posed and  there exists some $T>0$ and  constant $C_1>0$, such that for  $0<t<T$, we have the following  estimates:
	\begin{equation} \label{priorienergy}
		\left \{
		\begin{aligned}
			\mathcal{E}_D^{\frac{1}{2}}(t,  \widehat{\Gamma}^I \psi) & \leq C_1 \epsilon^{1-\delta |I|}, \ \ \ |I| \leq N,   \\
			\mathcal{E}_{gst,1}^{\frac{1}{2}}(t,  \Gamma^I V^1) & \leq C_1 \epsilon^{2(1-N\delta)}, \ \ \  |I| \leq N,\\
			%E_{gst,1}^{\frac 12}(t,  \Gamma^I  V^0) & \leq C_1 K_0,   \ \ \quad   |I| \leq N, \\
			\|\langle r-t \rangle \chi(r-2t)  \widehat{\Gamma}^I \psi\| & \leq C_1 \epsilon^{1-\delta |I|}, \qquad \ \ |I| \leq N, \\
			\|\langle r-t \rangle \chi(r-2t)  \Gamma^I v \|  & \leq C_1 K_0, \qquad \ \ |I| \leq N, 
		\end{aligned}
		\right.
	\end{equation}
	and 
	\begin{equation} \label{priorimax}
		\left \{
		\begin{aligned}
			|\widehat{\Gamma}^I \psi(t,x)| & \leq  C_1 \epsilon^{1-\delta(|I|+5)} \langle t+r \rangle^{-\frac{3}{4}+\delta} \langle t-r \rangle^{-1}, \ \ |I| \leq N-5,  \\
			|[\widehat{\Gamma}^I \psi]_{-}| & \leq C_1 \epsilon^{1-\delta (|I|+5)} \langle t+r \rangle^{-\frac{7}{4}+\delta},  \ \  |I| \leq N-5,  \\
			|\Gamma^I V^1(t,x)| & \leq C_1 \epsilon^{(1-\delta N)2} \langle t+r \rangle^{-\frac 32}, \ \  |I|  \leq N- 7. \\
			%|\Gamma^I V^0(t,x)| & \leq C_1 K_0  \langle t+r \rangle^{-\frac 32}, \qquad \  \   |I|  \leq N- 5.
		\end{aligned}
		\right.
	\end{equation}
	We remind one that $v= V^0 + V^1$ and the definition of $[\psi]_{-}$ is given in \eqref{def:phi-}. Let 
	\begin{align}
		T^* = \sup\{T>0| \textrm{The estimates in \eqref{priorienergy} and \eqref{priorimax} hold for all } \ 0<t <T \}.
	\end{align}
	To prove Theorem \ref{thm:DKG-L}, it suffices to show $T^* = \infty$. In view of the method of continuity, the maximal existence time $T^*$ can not be finite  if we refine all the estimates in \eqref{priorienergy} and \eqref{priorimax}. To proceed, we present several estimates as a direct consequence of the bootstrap assumption.
	\begin{proposition} \label{prop:linearkgbou}
		Let \eqref{initikg} hold, then there exists some constant $C$, such that  
		\begin{align}
			\mathcal{E}_{gst,1}^{\frac 12}(t,  \Gamma^I  V^0) & \leq C K_0,   \ \ \quad   |I| \leq N, \label{est:linekge}\\
			|\Gamma^I V^0(t,x)| & \leq C K_0  \langle t+r \rangle^{-\frac 32}, \qquad \  \   |I|  \leq N- 5. \label{est:maxilinekg}
		\end{align} 
	\end{proposition}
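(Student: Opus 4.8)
The proposition concerns only the free linear evolution $V^0$, so the whole point is that \emph{no nonlinear source} enters: everything is driven by the data in \eqref{initikg}. The plan is to commute vector fields through the equation, apply the free ($G\equiv 0$) version of the ghost energy estimate and of Georgiev's pointwise estimate, and then translate the resulting norms of $(V^0,\partial_t V^0)$ on the slice $\{t=0\}$ into the weighted Sobolev norms of $(v_0,v_1)$ in \eqref{initikg}.

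First I would note that each $\Gamma_k\in\{\partial,\Omega,L\}$ commutes with $-\Box$ by Lemma~\ref{lem:vectorfield}~(i) (crucially $L_0$ is \emph{not} among the $\Gamma_k$, so the mass term causes no obstruction), hence $-\Box(\Gamma^I V^0)+\Gamma^I V^0=0$ for every multi-index $I$. Applying \eqref{est:kgenergy} with vanishing source gives, for $|I|\le N$,
\begin{equation*}
\mathcal{E}_{gst,1}^{\frac12}(t,\Gamma^I V^0)\lesssim \mathcal{E}_{gst,1}^{\frac12}(0,\Gamma^I V^0)=\big\|\partial\Gamma^I V^0(0,\cdot)\big\|+\big\|\Gamma^I V^0(0,\cdot)\big\|,
\end{equation*}
the spacetime integral terms in $\mathcal{E}_{gst,1}$ being empty at $t=t_0=0$. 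For the pointwise bound I would apply Corollary~\ref{cor:KGpoint} (equivalently Proposition~\ref{pro:KGpointwise}) to $u=\Gamma^I V^0$ with $|I|\le N-5$ and source $G\equiv 0$, so that condition \eqref{est:C_G} holds with $C_G=0$, yielding
\begin{equation*}
\langle t+r\rangle^{\frac32}\big|\Gamma^I V^0(t,x)\big|\lesssim\sum_{|J|\le5}\big\|\langle r\rangle^{\frac32}\log(2+r)\,\Gamma^J\Gamma^I V^0(0,\cdot)\big\|.
\end{equation*}

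Thus in both cases the matter reduces to estimating $\|\langle x\rangle^{\sigma}\Gamma^K V^0(0,\cdot)\|$ for operators $\Gamma^K$ of order $|K|\le N+1$ and a modest weight $\sigma$ (either $0$ or $\tfrac32$ up to a logarithm). On $\{t=0\}$ the boosts act as $L_a=x_a\partial_t$ and the rotations as $\Omega_{ab}=x_a\partial_b-x_b\partial_a$, so $\Gamma^K V^0(0,\cdot)$ is a finite sum of terms of the form $\langle x\rangle^{\le|K|}\,\partial^{\le|K|}V^0(0,\cdot)$; using the equation repeatedly to substitute $\partial_t^2=\Delta-1$ (and $\partial_t^{2j+1}V^0|_{t=0}=(\Delta-1)^j v_1$) converts each such term into a purely spatial expression built from weighted derivatives of $v_0$ and of $v_1$ of order at most $|K|+1$. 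Matching this against \eqref{initikg} — where one is given the weight $\langle x\rangle^{N+2}$ on up to $N+1$ derivatives of $v_0$ and $\langle x\rangle^{N+1}$ on up to $N$ derivatives of $v_1$ — gives $\|\partial\Gamma^I V^0(0)\|+\|\Gamma^I V^0(0)\|\lesssim K_0$ for $|I|\le N$, which is \eqref{est:linekge}, and, using in addition $\langle r\rangle^{3/2}\log(2+r)\lesssim\langle r\rangle^{2}$ to absorb the extra weight into the surplus of the $\langle x\rangle^{N+2}$ norm, $\sum_{|J|\le5}\|\langle r\rangle^{3/2}\log(2+r)\Gamma^J\Gamma^I V^0(0)\|\lesssim K_0$ for $|I|\le N-5$, which is \eqref{est:maxilinekg}.

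There is no genuine obstacle here: this is the simplest estimate in the paper and its proof is essentially bookkeeping. The only point requiring (mild) care is the last step — counting how many $x$-factors versus how many derivatives each vector-field monomial produces on the initial slice once $\partial_t^2$ has been eliminated via the equation — and the deliberately generous weights in \eqref{initikg} are chosen precisely so that this matching closes without any optimization.
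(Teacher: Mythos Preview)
Your proposal is correct and follows essentially the same approach as the paper: commute $\Gamma^I$ through the free Klein--Gordon equation, apply the ghost energy estimate \eqref{est:kgenergy} and Corollary~\ref{cor:KGpoint} with zero source, and bound the resulting initial-slice norms by $K_0$ via \eqref{initikg}. You in fact supply more detail than the paper does on the bookkeeping step (expressing $\Gamma^K V^0(0,\cdot)$ in terms of weighted spatial derivatives of $v_0,v_1$ using $L_a|_{t=0}=x_a\partial_t$ and $\partial_t^2=\Delta-1$), which the paper simply subsumes into a single $\lesssim K_0$.
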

	\begin{proof}
		First fix  $|I|\leq N$. Noting that 
		\begin{align} \label{eq:linearkge1}
			-\Box \Gamma^I V^0 + \Gamma^I V^0 = 0.
		\end{align}
		Then we apply the energy estimate \eqref{est:kgenergy} to derive 
		\begin{align}
			\mathcal{E}^{\frac 12}_{gst,1}(t, \Gamma^I V^0) \lesssim \mathcal{E}^{\frac 12}_{gst,1}(0, \Gamma^I V^0)\lesssim K_0.
		\end{align}
		This yields \eqref{est:linekge}.
		Next, we take $|I| \leq N- 5$. One can  apply Corollary \ref{cor:KGpoint}  to deduce 
		\begin{align}
			\langle t + r \rangle^{\frac 32} |\Gamma^I V^0(t,x)| \lesssim \sum_{|J|\leq 5} \|\langle x\rangle^{\frac 32} \log(2+|x|) \Gamma^J \Gamma^I V^0(0,x)\| \lesssim K_0.
		\end{align}
		This implies the desired result. The proof is completed.
	\end{proof}
	In \eqref{priorienergy} and \eqref{priorimax}, we first select $C_1>C$, where $C$ is given in Proposition \ref{prop:linearkgbou},  then by bootstrap assumption \eqref{priorienergy}, \eqref{priorimax} and Proposition  \ref{prop:linearkgbou}, we can get the following conclusions. 
	\begin{corollary}\label{l2energy} 
		For all $0<t<T^*$, we have the following estimates.
		\begin{equation*}
			\| \widehat{\Gamma}^I \psi\| + \Big( \int_0^t \bigg\|\frac{[\widehat{\Gamma}^I \psi]_{-}}{\langle s-r\rangle^{\frac 12 +\delta}} \bigg\|^2 \d s \Big)^{\frac 12} \lesssim C_1 \epsilon^{1-\delta |I|}, \quad  \forall \   |I| \leq N, 
		\end{equation*}
		\begin{align*}
			\|\partial  \Gamma^I v(t)\|  + \|\Gamma^I v(t)\| + \Big( \int_0^t \bigg\| \frac{\Gamma^I v}{\langle s-r \rangle^{\frac 12 +\delta}} \bigg\|^2 \d s \Big)^{\frac 12}   \lesssim C_1 K_0,  \quad  \forall \  |I| \leq N. 
		\end{align*} 
	\end{corollary}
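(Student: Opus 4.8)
The plan is to derive both inequalities directly from the definitions of the two ghost weight energy functionals together with the bootstrap bounds \eqref{priorienergy} and the linear estimate in Proposition~\ref{prop:linearkgbou}; no genuinely new analysis is needed, only unpacking of definitions and the decomposition $v=V^0+V^1$.

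First I would treat the Dirac field. Recalling that $\mathcal{E}_{D}(t,\phi)=\int_{\R^3}|\phi(t,x)|^2\,\d x+\int_0^t\int_{\R^3}\langle s-r\rangle^{-1-2\delta}\,|[\phi]_{-}|^2\,\d x\,\d s$, both $\|\widehat{\Gamma}^I\psi(t)\|^2$ and $\int_0^t\big\|\langle s-r\rangle^{-\frac12-\delta}[\widehat{\Gamma}^I\psi]_{-}\big\|^2\,\d s$ are individually bounded by $\mathcal{E}_{D}(t,\widehat{\Gamma}^I\psi)$. The first line of \eqref{priorienergy} gives $\mathcal{E}_{D}^{1/2}(t,\widehat{\Gamma}^I\psi)\le C_1\epsilon^{1-\delta|I|}$ for $|I|\le N$, so taking square roots of the two contributions and adding them (using $\sqrt a+\sqrt b\le 2\sqrt{a+b}$) yields the first claimed estimate, with the implied constant absorbed into $\lesssim$.

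Next I would handle the Klein-Gordon field by writing $\Gamma^I v=\Gamma^I V^0+\Gamma^I V^1$. From the definition of $\mathcal{E}_{gst,1}$ recorded in Lemma~\ref{lem:ghostKG}, for any $u$ one has $\|\partial u(t)\|+\|u(t)\|+\big(\int_0^t\|\langle s-r\rangle^{-\frac12-\delta}u\|^2\,\d s\big)^{1/2}\lesssim\mathcal{E}_{gst,1}^{1/2}(t,u)$, since each of the three quantities on the left is controlled by a summand of $\mathcal{E}_{gst,1}(t,u)$. Applied to $u=\Gamma^I V^1$ with the second line of \eqref{priorienergy} this produces a contribution $\lesssim C_1\epsilon^{2(1-N\delta)}$, while applied to $u=\Gamma^I V^0$ with \eqref{est:linekge} of Proposition~\ref{prop:linearkgbou} it produces a contribution $\lesssim K_0$. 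Because $K_0>1$, $\epsilon<\epsilon_0$ is small, and $1-N\delta>0$ for $\delta$ small, we have $\epsilon^{2(1-N\delta)}<1\le K_0$, so the triangle inequality over the two pieces delivers the second claimed estimate with constant $\lesssim C_1K_0$ (note $C_1>1$ so the universal constant from the $V^0$ part is also absorbed).

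There is essentially no obstacle: the argument is purely a matter of reading off the two energy definitions and invoking the bootstrap bounds. The only point that deserves a line of care is to use the splitting $v=V^0+V^1$ and to absorb the much smaller $V^1$ contribution into the $K_0$ bound via $K_0>1$; the rest is routine.
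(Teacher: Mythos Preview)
Your proposal is correct and follows exactly the approach the paper indicates: the paper states the corollary as a direct consequence of the bootstrap assumptions \eqref{priorienergy}--\eqref{priorimax} together with Proposition~\ref{prop:linearkgbou}, and you have simply written out the routine unpacking of the energy definitions and the splitting $v=V^0+V^1$ that this entails.
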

	\begin{corollary} \label{maxiestimate} 
		For all $0<t<T^*$, we have the following estimates.
		\begin{align}
			| \widehat{\Gamma}^I \psi (t,x)| & \lesssim C_1 \epsilon^{1-\delta (|I|+3)} \langle t+r \rangle ^{-\frac34} , \ \ \forall  \ |I| \leq N-3, \label{est:prioripopsi} \\
			| \Gamma^I v(t,x)| & \lesssim  C_1 K_0 \langle t+ r \rangle^{-\frac 34}, \ \  \ \ |I|  \leq N-3, \label{priorikgpoint} \\
			|\Gamma^I v(t,x)| & \lesssim  C_1 K_0 \langle t + r \rangle^{-\frac 32}, \ \ \  |I| \leq N-7.  \label{est:optidecaykg} 
		\end{align} 
		
	\end{corollary}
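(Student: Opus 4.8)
The plan is to derive Corollary \ref{maxiestimate} as an immediate consequence of the $L^2$-type bounds already obtained in Corollary \ref{l2energy}, the global Sobolev inequality \eqref{est:globalSobo}, and---for the faster $\langle t+r\rangle^{-3/2}$ rate---the decomposition $v=V^0+V^1$ together with the sharp pointwise bound \eqref{est:maxilinekg} for the free Klein--Gordon part and the a priori assumption \eqref{priorimax} for $V^1$. No new idea is needed: the point is only to convert the already-established integrated bounds into uniform pointwise decay while bookkeeping how many derivatives are consumed.

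First I would prove \eqref{est:prioripopsi}. Fix $|I|\le N-3$ and treat each (real and imaginary) component of $\widehat{\Gamma}^I\psi$ as a scalar function on $\R^{1+3}$ with sufficient spatial decay. Applying \eqref{est:globalSobo} and then \eqref{est:hatGa} to trade the ordinary vector fields $\Gamma^J$ for the modified ones $\widehat{\Gamma}^J$, one gets
\[
|\widehat{\Gamma}^I\psi(t,x)|\lesssim \langle t+r\rangle^{-\frac34}\sum_{|J|\le 3}\big\|\Gamma^J\widehat{\Gamma}^I\psi(t)\big\|\lesssim \langle t+r\rangle^{-\frac34}\sum_{|K|\le |I|+3}\big\|\widehat{\Gamma}^K\psi(t)\big\|.
\]
Since $|I|+3\le N$, the last sum is controlled by $C_1\epsilon^{1-\delta(|I|+3)}$ via Corollary \ref{l2energy}, which is precisely \eqref{est:prioripopsi}. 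The bound \eqref{priorikgpoint} for $v$ follows in exactly the same way: for $|I|\le N-3$,
\[
|\Gamma^I v(t,x)|\lesssim \langle t+r\rangle^{-\frac34}\sum_{|K|\le |I|+3}\big\|\Gamma^K v(t)\big\|\lesssim C_1K_0\,\langle t+r\rangle^{-\frac34},
\]
using the $\|\Gamma^K v\|$ estimate of Corollary \ref{l2energy}, valid for $|K|\le N$.

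For the sharp estimate \eqref{est:optidecaykg} I would split $v=V^0+V^1$. For the free part, Proposition \ref{prop:linearkgbou} gives $|\Gamma^I V^0(t,x)|\lesssim K_0\langle t+r\rangle^{-3/2}$ whenever $|I|\le N-5$, which covers the range $|I|\le N-7$. For $V^1$, the bootstrap assumption \eqref{priorimax} gives $|\Gamma^I V^1(t,x)|\lesssim C_1\epsilon^{2(1-\delta N)}\langle t+r\rangle^{-3/2}$ on the same range; since $\delta N<1$ and $\epsilon<1<K_0$, the factor $\epsilon^{2(1-\delta N)}$ can be absorbed into $K_0$. Adding the two contributions yields \eqref{est:optidecaykg}.

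I do not expect a genuine obstacle here. The only things to be careful about are the three-derivative loss built into \eqref{est:globalSobo} (which is exactly why the first two bounds are stated for $|I|\le N-3$ rather than $|I|\le N$), the routine conversion between $\Gamma$ and $\widehat{\Gamma}$ handled by \eqref{est:hatGa}, and the fact---worth emphasising---that the improved rate $\langle t+r\rangle^{-3/2}$ for the Klein--Gordon field is \emph{not} accessible from \eqref{est:globalSobo} alone: it genuinely requires the conformal/weighted information already packaged in Proposition \ref{prop:linearkgbou} for $V^0$ and in the a priori assumption \eqref{priorimax} for $V^1$, which also explains why it holds only on the narrower range $|I|\le N-7$.
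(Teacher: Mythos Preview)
Your proposal is correct and follows essentially the same route as the paper: apply the global Sobolev inequality \eqref{est:globalSobo} together with the $L^2$ bounds (from \eqref{priorienergy} via Corollary \ref{l2energy}) and the $\Gamma$--$\widehat\Gamma$ equivalence \eqref{est:hatGa} for the first two estimates, and split $v=V^0+V^1$ using Proposition \ref{prop:linearkgbou} and \eqref{priorimax} for the sharp $\langle t+r\rangle^{-3/2}$ bound. Your write-up is in fact more detailed than the paper's, which dispatches the corollary in three lines.
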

	\begin{proof}
		To show \eqref{est:prioripopsi}, it suffices to apply \eqref{priorienergy},  \eqref{est:globalSobo} and the relation between $\widehat{\Gamma}$ and $\Gamma$.  For \eqref{priorikgpoint}, noting that $v = V^0 + V^1$, one can also  apply the decay estimate \eqref{est:globalSobo} in Lemma \ref{lem:Sobolev} to get the desired result. Finally, \eqref{est:optidecaykg} can be derived by combining the estimates of $V^0$ and $V^1$ in \eqref{priorimax}. The proof is completed.   
	\end{proof}
	Next, we deduce the decay results away from the light cone, which follow from the weighted $L^2$ estimates. For convenience, we denote 
	\begin{align}
		D_{int} & = \{ (t, x) \in [0,T^*)\times \mathbb{R}^3:  r \leq 3t +3  \}, \\
		D_{ext} & = \{ (t, x) \in [0,T^*)\times \mathbb{R}^3:  r\geq 2 t +3 \}.
	\end{align}
	\begin{corollary}\label{cor:ext}
		Let the bootstrap assumption \eqref{priorienergy}-\eqref{priorimax} hold. Then  for all $0 < t <T^*$, we have 
		\begin{align} 
			|\widehat{\Gamma}^I \psi 1_{D_{ext}}| & \lesssim C_1 \epsilon^{1-\delta(|I|+3)} \langle t + r \rangle^{-2}, \  \ \forall \ |I|\leq N-3, \label{est:pointexterdir}\\
			|\Gamma^{I} v  1_{D_{ext}}| & \lesssim C_1 K_0 \langle t + r \rangle^{-2}, \ \  \ \forall \ |I|\leq N-3. \label{est:pointexterkg}
		\end{align}
	\end{corollary}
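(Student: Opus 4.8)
The plan is to deduce both \eqref{est:pointexterdir} and \eqref{est:pointexterkg} from the weighted $L^2$ bounds in the bootstrap assumption \eqref{priorienergy} (its third and fourth lines), combined with an exterior-localized version of the standard Sobolev inequality \eqref{est:standardSobo}. First I would record the elementary geometric facts on $D_{ext}$: if $r\ge 2t+3$ then $r-2t\ge 3\ge 2$, so $\chi(r-2t)=1$; and $t<r/2$, whence $\tfrac12\langle r\rangle\lesssim\langle r-t\rangle\le\langle r\rangle$ while $\langle r\rangle\le\langle t+r\rangle\le\tfrac32\langle r\rangle$. In particular, on $D_{ext}$ one has $\langle r-t\rangle\sim\langle t+r\rangle\sim\langle r\rangle$, so that $\langle r\rangle^{-1}\langle r-t\rangle^{-1}\lesssim\langle t+r\rangle^{-2}$; this is the mechanism by which we upgrade the naive $\langle r\rangle^{-1}$ decay of \eqref{est:standardSobo} to the claimed $\langle t+r\rangle^{-2}$.

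Next I would use that the proof of \eqref{est:standardSobo} — integrating $\partial_\rho|u|^2$ inward from $\rho=+\infty$ along each ray and invoking $H^2(S^2)\hookrightarrow L^\infty(S^2)$ — in fact yields the sharper localized bound
\[
|u(t,x)|\lesssim\langle r\rangle^{-1}\sum_{k\le 1,\,|J|\le 2}\Big(\int_{|y|\ge|x|}\big|\partial_r^k\Omega^J u(t,y)\big|^2\,\d y\Big)^{1/2},
\]
valid for any $u=u(t,\cdot)$ with sufficient decay at spatial infinity. I would then apply this with $u=\widehat{\Gamma}^I\psi$, $|I|\le N-3$, at a point $(t,x)\in D_{ext}$. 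Every $y$ with $|y|\ge|x|$ again lies in $D_{ext}$, so $\chi(|y|-2t)=1$ and $\langle|y|-t\rangle\ge\langle r-t\rangle$; hence inserting the factor $\langle|y|-t\rangle^2\chi(|y|-2t)^2\ge\langle r-t\rangle^2$ turns the integral into $\langle r-t\rangle^{-2}\,\|\langle r-t\rangle\chi(r-2t)\partial_r^k\Omega^J\widehat{\Gamma}^I\psi\|^2$. Since $\partial_r=\sum_{a=1}^3\omega_a\partial_a$ with $|\omega_a|\le1$ and the difference between $\Omega$ and $\widehat{\Omega}$ is a constant matrix, \eqref{est:hatGa}, \eqref{est:hatparGa} and the commutator algebra of the vector fields give $|\partial_r^k\Omega^J\widehat{\Gamma}^I\psi|\lesssim\sum_{|K|\le|I|+3}|\widehat{\Gamma}^K\psi|$ pointwise, so the third line of \eqref{priorienergy} bounds the weighted norm by $C_1\epsilon^{1-\delta(|I|+3)}$ for $|I|\le N-3$. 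Combining with the geometric facts above yields \eqref{est:pointexterdir}, and running the identical argument with $u=\Gamma^I v$ (noting that $\partial_r$ and $\Omega$ are built from the fields in $\Gamma$) and the fourth line of \eqref{priorienergy} yields \eqref{est:pointexterkg}.

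The one genuinely delicate point is the exterior localization of the Sobolev inequality to $\{|y|\ge|x|\}$: this is precisely what makes it legitimate to use the weighted norm $\|\langle r-t\rangle\chi(r-2t)\,\cdot\,\|$ coming from \eqref{priorienergy}, which carries information only where $\chi$ does not vanish, i.e.\ essentially on $D_{ext}$ itself. Everything else is the routine implication chain Sobolev $\Rightarrow$ weighted $L^2$ control $\Rightarrow$ pointwise decay, so I expect no serious obstacle beyond bookkeeping the vector-field counts so that $|I|+3\le N$, which is exactly the range $|I|\le N-3$ in the statement.
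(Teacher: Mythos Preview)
Your proof is correct and follows essentially the same approach as the paper: both extract the extra $\langle t-r\rangle^{-1}$ from the weighted $L^2$ bootstrap bounds via the standard Sobolev inequality \eqref{est:standardSobo}, then use $\langle r\rangle\sim\langle t-r\rangle\sim\langle t+r\rangle$ on $D_{ext}$. The only technical difference is that where you invoke the exterior-localized form of \eqref{est:standardSobo} and insert the weight afterward, the paper instead applies the full-space \eqref{est:standardSobo} directly to the already-weighted function $\langle r-t\rangle\chi(r-2t-1)\widehat{\Gamma}^I\psi$, using the \emph{shifted} cutoff $\chi(\cdot-1)$ so that when $\partial_r$ lands on it the support still sits where $\chi(r-2t)=1$; this is an equivalent device that avoids restating the localized Sobolev estimate.
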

	\begin{proof}
		In view of \eqref{est:standardSobo}, 
		\begin{align}
			|\langle r-t \rangle \chi(r-2t-1) \widehat{\Gamma}^I \psi | & \lesssim r^{-1} \sum_{k \leq 1, |J|\leq 2} \big\|\partial_r^k\Omega^{J} \big(\langle r-t \rangle \chi(r-2t-1) \widehat{\Gamma}^I \psi   \big)\big\| \nonumber \\
			&  \lesssim r^{-1} \sum_{ |J|\leq 2} \|\langle r - t \rangle (\chi + \chi' )(r-2t-1)\Omega^J  \widehat{\Gamma}^I \psi \| 		    \nonumber \\
			& + r^{-1} \sum_{ k\leq 1, |J|\leq 2} \|\langle r - t \rangle \chi(r-2t-1)\partial_r^k \Omega^J \widehat{\Gamma}^I \psi \|.
		\end{align}
		Noticing that $\chi(r-2t) = 1$ in the region $r \geq 2t +2$ and estimates in \eqref{priorienergy}, we can have 
		\begin{align}
			|\langle r-t \rangle \chi(r-2t-1) \widehat{\Gamma}^I \psi | & \lesssim  r^{-1} \sum_{ k\leq 1, |J|\leq 2} \|\langle r - t \rangle \chi(r-2t)\partial_r^k \Omega^J \widehat{\Gamma}^I \psi \| \nonumber \\
			& \lesssim r^{-1} C_1 \epsilon^{1-\delta(|I|+3)}.
		\end{align}
		Given that $r \sim \langle t + r \rangle \sim \langle t - r \rangle$ in the domain $D_{ext}$, we have 
		\begin{align}
			|\widehat{\Gamma}^I \psi 1_{D_{ext}} | \lesssim C_1 \epsilon^{1-\delta(|I|+3)} \langle t +r \rangle^{-2}.
		\end{align}
		This yields \eqref{est:pointexterdir}.  The proof of \eqref{est:pointexterkg} can be handled similarly and we omit the details.  
	\end{proof}
	
	\subsection{Conformal energy bounds for $\widetilde{\Psi}$} 
	
	Following from Lemma \ref{transwavedi} and \eqref{eq:waverepre}, we know $ \widetilde{\Psi} = \Psi  + v \psi $ and $ \widetilde{\Psi} $ solves 
	\begin{equation} \label{eq:modifiedwave}
		\left \{
		\begin{aligned}
			-\Box \widetilde{\Psi} & = (\psi^* \gamma^0 \psi) \psi + i \gamma^{\mu} v  \partial_{\mu}(v\psi) + 2 Q_0(v,\psi).  \\
			\widetilde{\Psi}|_{t=0} & = v_0 \psi_0, \ \partial_t\widetilde{\Psi}|_{t=0} = -i \gamma^0 \psi_0 + v_1 \psi_0 + v_0(-\gamma^0\gamma^a \partial_a\psi_0 + i \gamma^0 v_0 \psi_0),
		\end{aligned}
		\right.
	\end{equation}
	where we recall $\psi|_{t=0} = \psi_0$ and $(v,\partial_t v)|_{t=0} = (v_0, v_1)$.
	In this section, we aim at obtaining some proper bounds on conformal energy of $\widetilde{\Psi} $ and its lower order derivatives. For this purpose, we first treat the quadratic null term $Q_0(v, \psi)$. 
	\begin{proposition}\label{prop:nonlinear1}
		Suppose that the bootstrap assumption  \eqref{priorienergy}-\eqref{priorimax} hold and $N\geq 13$. Then for $  |I|  \leq N-2$ and $0\leq t<T^*$,  we have 
		\begin{align}
			\int_0^t \|\langle s+r \rangle \Gamma^I Q_0(v, \psi)\|\, \d s \lesssim C_1^3 K_0^2 \epsilon^{1-\delta(|I|+1)},
		\end{align}
		provided $C_1 \epsilon <1$ and $0<\delta <\frac{1}{N+2}$.
	\end{proposition}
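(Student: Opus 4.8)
The plan is as follows. By the Leibniz identity \eqref{est:GamQ0} it suffices to bound $\int_0^t\|\langle s+r\rangle Q_0(\Gamma^{I_1}v,\Gamma^{I_2}\psi)\|\,\d s$ for all $I_1,I_2$ with $|I_1|+|I_2|\le|I|\le N-2$, and I would split the slab $[0,T^*)\times\R^3$ into the interior region $D_{int}=\{r\le 3t+3\}$ and the exterior region $D_{ext}=\{r\ge 2t+3\}$, which together cover it. On $D_{ext}$ the estimate is soft: since $|I_1|+|I_2|\le N-2$, at least one index (say $|I_2|$) is $\le N-4$, so Corollary \ref{cor:ext} gives the rapid decay $|\widehat{\Gamma}^{I_2+1}\psi\,\mathbf 1_{D_{ext}}|\lesssim C_1\epsilon^{1-\delta(|I_2|+4)}\langle t+r\rangle^{-2}$ (and similarly for $v$); using $|Q_0(f,g)|\lesssim|\partial f||\partial g|$, placing this rapid decay on the lower-order factor and controlling the higher-order factor by the exterior weighted $L^2$ bound $\|\langle r-s\rangle\chi(r-2s)\Gamma^I v\|\lesssim C_1K_0$ from \eqref{priorienergy}, one gains an $s$-integrable factor $\langle s\rangle^{-2}$ and closes this region with room to spare.

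On $D_{int}$ I would exploit the null structure of $Q_0$ through \eqref{est:Q0inside}: $|Q_0(\Gamma^{I_1}v,\Gamma^{I_2}\psi)|\lesssim \frac{\langle s-r\rangle}{\langle s+r\rangle}|\partial\Gamma^{I_1}v||\partial\Gamma^{I_2}\psi|+\langle s\rangle^{-1}|\Gamma^{I_1+1}v||\Gamma^{I_2+1}\psi|$, where $\Gamma^{I_k+1}$ denotes one extra vector field. Multiplying by $\langle s+r\rangle$ cancels the $\langle s+r\rangle^{-1}$ in the first term, leaving $\langle s-r\rangle|\partial\Gamma^{I_1}v||\partial\Gamma^{I_2}\psi|$, and (since $\langle s+r\rangle\lesssim\langle s\rangle$ on $D_{int}$) turns the second term into $\lesssim|\Gamma^{I_1+1}v||\Gamma^{I_2+1}\psi|$. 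To remove the weight $\langle s-r\rangle$ from the first term I would apply Lemma \ref{le:Diracdecay} to $\widehat{\Gamma}^{I_2}\psi$, which by Lemma \ref{lem:vectorfield} satisfies $-i\gamma^\mu\partial_\mu\widehat{\Gamma}^{I_2}\psi=\widehat{\Gamma}^{I_2}(v\psi)$, so that $\langle s-r\rangle|\partial\widehat{\Gamma}^{I_2}\psi|\lesssim|\widehat{\Gamma}^{I_2+1}\psi|+|\widehat{\Gamma}^{I_2}\psi|+s\,|\widehat{\Gamma}^{I_2}(v\psi)|$; expanding the cubic source via Lemma \ref{est:hatGfPhi} and inserting the pointwise bounds of Corollary \ref{maxiestimate} and \eqref{priorimax} shows it decays at an $s$-integrable rate. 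Both surviving quadratic products are then weight-free.

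To close these weight-free products I would run a dichotomy on the index split. If $|I_1|\le N-8$, the Klein--Gordon factor ($\partial\Gamma^{I_1}v$, resp. $\Gamma^{I_1+1}v$) is taken pointwise via the sharp decay $|\Gamma^J v|\lesssim C_1K_0\langle t+r\rangle^{-3/2}$ for $|J|\le N-7$, valid by $v=V^0+V^1$ together with \eqref{est:maxilinekg}, \eqref{priorimax} and Lemma \ref{est:weightKG} for the extra Klein--Gordon decay near the light cone, while the Dirac factor stays in $L^2$ by Corollary \ref{l2energy}; the gain $\langle s\rangle^{-3/2}$ then makes the $s$-integral finite. If instead $|I_1|\ge N-7$, then $|I_2|\le 5$ and I would take the Dirac factor pointwise using its $\langle t-r\rangle^{-1}$-improved decay $|\widehat{\Gamma}^{I_2+1}\psi|\lesssim C_1\epsilon^{1-\delta(|I_2|+6)}\langle t+r\rangle^{-3/4+\delta}\langle t-r\rangle^{-1}$ from line one of \eqref{priorimax}, leaving a residual weight $\langle s-r\rangle^{-1}\le\langle s-r\rangle^{-1/2-\delta}$; this is paired with the Klein--Gordon ghost-weight spacetime bound $\int_0^t\|\langle s-r\rangle^{-1/2-\delta}\Gamma^{I_1+1}v\|^2\,\d s\lesssim(C_1K_0)^2$ (a consequence of \eqref{est:ghost}, \eqref{priorienergy} and $v=V^0+V^1$) through Cauchy--Schwarz in $s$, the leftover time factor $\langle s\rangle^{-3/2+2\delta}$ being integrable since $\delta<\frac{1}{N+2}$. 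Summing over the $O(1)$ sub-cases and tracking the powers of $C_1,K_0,\epsilon$ (using $\epsilon^{1-\delta(|I_2|+1)}\le\epsilon^{1-\delta(|I|+1)}$ and $C_1,K_0>1$) yields the asserted bound $C_1^3K_0^2\epsilon^{1-\delta(|I|+1)}$.

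The main obstacle is precisely the borderline case in which all derivatives fall on $v$: since the massive field is only large and is controlled solely in $L^2$ by $C_1K_0$, with no pointwise smallness, while the Dirac field decays only at the slow Klainerman--Sobolev rate $\langle t+r\rangle^{-3/4}$, a naive H\"older estimate produces a time factor $\sim\langle s\rangle^{-3/4+\delta}$ and hence polynomial growth in $t$, which is incompatible with the $t$-uniform bound claimed. Getting a genuinely bounded estimate forces the combination of the extra $\langle t-r\rangle^{-1}$ decay of $\partial\psi$ (from Lemma \ref{le:Diracdecay} and the bootstrapped $\langle t-r\rangle^{-1}$ pointwise bound for $\widehat{\Gamma}\psi$) with the ghost-weight spacetime integral for $v$, via the Cauchy--Schwarz-in-time split above; this pairing is the crux of the argument. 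The remaining work, namely the enumeration of sub-cases and the commutators hidden in $\Gamma^{I_1+1}$ and $\widehat{\Gamma}^{I_2+1}$, is routine though lengthy.
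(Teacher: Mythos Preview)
Your proposal is correct and covers the same structural steps as the paper (interior/exterior split, the null-form inequality \eqref{est:Q0inside}, the Dirac gain from Lemma~\ref{le:Diracdecay}, and the exterior argument via Corollary~\ref{cor:ext}), but it handles the decisive borderline case by a genuinely different mechanism.

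In the paper, when the top-order derivatives fall on the Klein--Gordon factor (their $A_1^2$ and the analogous part of $A_2$), the authors invoke Lemma~\ref{est:weightKG} on $\partial\Gamma^{I_1}v$ (respectively $\Gamma\Gamma^{I_1}v$), trading it for $\tfrac{\langle s-r\rangle}{\langle s+r\rangle}\,\partial\Gamma^{\le I_1+2}v$ plus a cubic remainder; they then place $v$ in the fixed-time $L^2$ energy and put the accumulated $\langle s-r\rangle$-weight on the low-order Dirac factor in $L^\infty$ via Lemma~\ref{le:Diracdecay} and \eqref{priorimax}. Your route instead leaves the high-order $v$ factor untouched, reads off the $\langle t-r\rangle^{-1}$ pointwise bound for $\widehat{\Gamma}^{I_2+1}\psi$ directly from \eqref{priorimax}, and absorbs the residual $\langle s-r\rangle^{-1/2-\delta}$ weight through the Klein--Gordon ghost-weight spacetime integral $\int_0^t\|\langle s-r\rangle^{-1/2-\delta}\Gamma^{\le I_1+1}v\|^2\,\d s\lesssim (C_1K_0)^2$ from Corollary~\ref{l2energy}, closing with Cauchy--Schwarz in time against the integrable factor $\langle s\rangle^{-3/2+2\delta}$. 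Your approach is slightly more economical---it avoids Lemma~\ref{est:weightKG} and needs only $|I_1|+1\le N$ rather than $|I_1|+2\le N$---while the paper's approach keeps all estimates at the level of fixed-time norms. (The reference to Lemma~\ref{est:weightKG} in your first case is superfluous: the sharp $\langle t+r\rangle^{-3/2}$ decay for $|J|\le N-7$ already comes from \eqref{est:maxilinekg} and \eqref{priorimax}.)
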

	\begin{proof}
		Let $|I|\leq N-2$.
		We can compute 
		\begin{align}
			\|\langle s+r \rangle  \Gamma^I Q_0(v, \psi)\| & \leq \|\langle s+r \rangle  \Gamma^I Q_0(v, \psi)1_{D_{int}}\| \nonumber  \\
			& \qquad  + \|\langle s+r \rangle \Gamma^I Q_0(v, \psi)1_{D_{ext}}\|  \leq A + B.  \label{est:inoutsep}
		\end{align}
		Now we apply \eqref{est:GamQ0} and \eqref{est:Q0inside} to get 
		\begin{align}
			A & \lesssim \sum_{\substack{|I_1|+|I_2| \leq |I|}} \|\langle s + r \rangle Q_0(\Gamma^{I_1}v, \Gamma^{I_2}\psi)1_{D_{int}}\| \nonumber \\
			& \lesssim \sum_{\substack{|I_1|+|I_2| \leq |I|}} \|\langle s - r\rangle (\partial \Gamma^{I_1}v) \partial \Gamma^{I_2}\psi 1_{D_{int}} \| \nonumber \\
			& \qquad +   \sum_{\substack{|I_1|+|I_2| \leq |I|}} \|(\Gamma \Gamma^{I_1}v) \Gamma \Gamma^{I_2}\psi 1_{D_{int}}\| \lesssim A_1 + A_2.  \label{ineq:inter0}
		\end{align}
		{\bf Case 1.}  $6 \leq |I| \leq  N-2.$
		We have 
		\begin{align}
			A_1 & \lesssim \sum_{\substack{|I_1| \leq 5   \\ |I_2|  \leq  |I|}} \|\partial \Gamma^{I_1}v\|_{L^{\infty}} \|\langle s - r\rangle \partial \Gamma^{I_2}\psi 1_{D_{int}} \|  \\
			& + \sum_{\substack{|I_2| \leq |I|-6  \\ |I_1|  \leq  |I|}} \big \| (\partial \Gamma^{I_1}v) \langle s - r\rangle \partial \Gamma^{I_2}\psi 1_{D_{int}} \big \|  \lesssim A_1^1 + A_1^2.
		\end{align}
		Thanks to \eqref{est:hatparGa}, \eqref{est:optidecaykg} and Lemma \ref{le:Diracdecay},  one can deduce that 
		\begin{align}
			A_1^1 & \lesssim C_1 K_0 \langle s \rangle^{-\frac 32} \sum_{|I_2| \leq |I|} \|\langle s - r\rangle \partial \widehat{\Gamma}^{I_2}\psi 1_{D_{int}} \| \nonumber \\
			& \lesssim C_1 K_0 \langle s \rangle^{-\frac 32} \sum_{|I_2| \leq |I| } \big( \sum_{|J|\leq 1}\|\widehat{\Gamma}^J \widehat{\Gamma}^{I_2} \psi\| +  s\|\widehat{\Gamma}^{I_2}(v\psi)\|  \big).  \label{ineq:inter1}
		\end{align}
		By Lemma \ref{est:hatGfPhi} and Corollary \ref{maxiestimate}, we have 
		\begin{align}
			\sum_{|I_2| \leq |I|}\|\widehat{\Gamma}^{I_2}(v\psi)\| & \lesssim \sum_{\substack{|K_1| \leq |I|,|K_2| \leq |I|-2 }} \|\Gamma^{K_1} v\| \|\widehat{\Gamma}^{K_2} \psi\|_{L^{\infty}}  \\
			& + \sum_{|K_2| \leq |I|, |K_1| \leq |I|-2} \|\Gamma^{K_1} v\|_{L^{\infty}}\|\widehat{\Gamma}^{K_2} \psi\| \nonumber  \\
			& \lesssim C_1^2 K_0 \epsilon^{1- \delta(|I|+1)} \langle s \rangle^{-\frac 34}.
		\end{align}
		One can then insert the above bounds into \eqref{ineq:inter1} and use Corollary \ref{l2energy} to get 
		\begin{align}
			A_1^1  & \lesssim C_1^2 K_0 \epsilon^{1-\delta(|I|+1)} \langle s \rangle^{-\frac 32} + C_1^3K_0^2 \epsilon^{1-\delta(|I|+1)} \langle s \rangle^{-\frac 54}  \nonumber  \\
			& \lesssim C_1^3K_0^2 \epsilon^{1-\delta(|I|+1)} \langle s \rangle^{-\frac 54}.
		\end{align}
		Regarding $A_1^2$, we first note that
		\begin{align}
			|\partial \Gamma^{I_1} v 1_{D_{int}}| & \lesssim \frac{\langle s -r \rangle}{\langle s +r \rangle} \sum_{|J|\leq 1} |\partial \Gamma^J \partial \Gamma^{I_1} v| + |\partial \Gamma^{I_1} (\psi^* \gamma^0 \psi)|, \label{ineq:inter2} \\
			|\langle s -r \rangle \partial \Gamma^{I_2} \psi 1_{D_{int}}| & \lesssim \sum_{|J|\leq 1}|\widehat{\Gamma}^J \widehat{\Gamma}^{I_2} \psi| + s |\widehat{\Gamma}^{I_2} (v\psi)|. \label{ineq:inter3}
		\end{align}
		Following from \eqref{ineq:inter2} and Corollary \ref{l2energy}, we see 
		\begin{align}
			A_1^2 & \lesssim  \sum_{ |I_2| \leq |I|-6, |I_1| \leq |I|}  C_1 K_0\langle s \rangle^{-1}\|\langle s - r\rangle^2 \partial \Gamma^{I_2}\psi 1_{D_{int}}  \|_{L^{\infty}}  \nonumber  \\
			& +  \sum_{|I_1| \leq |I|, |I_2| \leq |I|-6} \|\partial \Gamma^{I_1} (\psi^* \gamma^0 \psi)\langle s - r\rangle \partial \Gamma^{I_2}\psi 1_{D_{int}}\|.
		\end{align}
		By \eqref{ineq:inter3}, \eqref{priorimax}, (\ref{priorikgpoint}) and $|I_2|\leq |I|-6$, we find 
		\begin{align}
			\|\langle s - r\rangle^2 \partial \Gamma^{I_2}\psi 1_{D_{int}}  \|_{L^{\infty}} 
			& \lesssim \sum_{|J|\leq 1}\|\langle s - r\rangle\widehat{\Gamma}^J \widehat{\Gamma}^{I_2}\psi \|_{L^{\infty}} + \|\langle s -r \rangle s \widehat{\Gamma}^{I_2}(v\psi)\|_{L^{\infty}}  \nonumber  \\
			&  \lesssim C_1 \epsilon^{1-\delta(|I_2|+1+5)} \langle s \rangle^{-\frac 34 + \delta} + C_1^2 K_0\epsilon^{1-\delta(|I_2|+5)}\langle s \rangle^{-\frac 12 + \delta}  \nonumber \\
			& \lesssim C_1^2 K_0 \epsilon^{1-\delta(|I_2|+6)} \langle s \rangle^{-\frac 12 + \delta}.  \label{ineq:linfty1}
		\end{align}
		On the one hand, 
		\begin{align} \label{ineq:l2bounds}
			\sum_{|I_1| \leq |I|} \|\partial \Gamma^{I_1} (\psi^* \gamma^0 \psi)\| \lesssim \sum_{|J_1|+|J_2| \leq |I|+1} \|\widehat{\Gamma}^{J_1} \psi \widehat{\Gamma}^{J_2} \psi\| \lesssim C_1^2 \epsilon^{2-\delta(|I|+4)} \langle s \rangle^{-\frac 34},
		\end{align}
		in which we use Corollaries \ref{l2energy} and \ref{maxiestimate}.  On the other hand,  by \eqref{ineq:inter3} and $|I_2|\leq |I|-3 \leq  N-4$, we have 
		\begin{align}
			\|\langle s - r\rangle \partial \Gamma^{I_2}\psi 1_{D_{int}}\|_{L^{\infty}} & \lesssim \sum_{|J|\leq 1}\|\widehat{\Gamma}^J \widehat{\Gamma}^{I_2} \psi \|_{L^{\infty}} +  \|s\widehat{\Gamma}^{I_2} (v\psi)\|_{L^{\infty}}  \nonumber \\
			& \lesssim C_1 \epsilon^{1-\delta(|I_2|+4)} \langle s \rangle^{-\frac 34}  + C_1^2 K_0\epsilon^{1-\delta(|I_2| +3)} \langle s \rangle^{-\frac 12}  \nonumber \\
			& \lesssim C_1^2 K_0\epsilon^{1-\delta(|I_2| +4)} \langle s \rangle^{-\frac 12}.  \label{ineq:inter4} 
		\end{align}
		Gathering the estimates in \eqref{ineq:l2bounds} and \eqref{ineq:inter4}, one can get
		\begin{align}
			& \sum_{|I_1| \leq |I|, |I_2| \leq |I|-6} \|\partial \Gamma^{I_1} (\psi^* \gamma^0 \psi)\langle s - r\rangle \partial \Gamma^{I_2}\psi 1_{D_{int}}\| \nonumber \\
			& \lesssim \sum_{|I_1| \leq |I|, |I_2| \leq |I|-3} \|\partial \Gamma^{I_1} (\psi^* \gamma^0 \psi)\| \| \langle s - r\rangle \partial \Gamma^{I_2}\psi 1_{D_{int}}\|_{L^{\infty}}  \nonumber \\
			& \lesssim C_1^4 K_0\epsilon^{2-\delta(|I|+4)} \epsilon^{1-\delta(|I| +1)}    \langle s \rangle^{-\frac 54}. \label{ineq:inter5}
		\end{align}
		Owing to bounds in \eqref{ineq:linfty1} and \eqref{ineq:inter5},  we obtain 
		\begin{align}
			A_1^2 & \lesssim C_1^3 K_0^2 \epsilon^{1-\delta(|I| +1)} \langle s \rangle^{-\frac 32 + \delta} + C_1^4 K_0\epsilon^{2-\delta(|I|+4)} \epsilon^{1-\delta(|I| +1)}    \langle s \rangle^{-\frac 54}  \\
			&  \lesssim C_1^3 K_0^2 \epsilon^{1-\delta(|I| +1)} \langle s \rangle^{-\frac 54},
		\end{align}
		where we choose $\epsilon$ small enough such that $C_1 \epsilon <1$ and $0<\delta < 1/(N+2)$. Collecting the estimates of $A_1^1$ and $A_1^2$, one can find 
		\begin{align}
			A_1 \lesssim  C_1^3 K_0^2 \epsilon^{1-\delta(|I| +1)} \langle s \rangle^{-\frac 54}.
		\end{align}
		Next, we turn to the estimate of $A_2$. Indeed, 
		\begin{align}
			A_2 & \lesssim \sum_{|I_1| \leq |I|-6, |I_2| \leq |I|} \|\Gamma \Gamma^{I_1} v\|_{L^{\infty}} \|\widehat{\Gamma}\widehat{\Gamma}^{I_2} \psi\|  \nonumber \\
			& \qquad \qquad + \sum_{|I_2| \leq |I|-6, |I_1|\leq |I|} \|(\Gamma \Gamma^{I_1} v)\widehat{\Gamma}\widehat{\Gamma}^{I_2} \psi 1_{D_{int}}\|.
		\end{align}
		According to \eqref{est:KG3}, 
		\begin{align}
			|\Gamma \Gamma^{I_1} v 1_{D_{int}}| \lesssim \frac{\langle s -r \rangle}{ \langle s + r  \rangle} \sum_{|J|\leq 1} |\partial \Gamma^J \Gamma \Gamma^{I_1} v| + \big|\Gamma\Gamma^{I_1} (\psi^*\gamma^0\psi ) \big|.
		\end{align}
		Then one can compute 
		\begin{align}
			\|(\Gamma \Gamma^{I_1} v)\widehat{\Gamma}\widehat{\Gamma}^{I_2} \psi 1_{D_{int}}\| & \lesssim \langle s \rangle^{-1} \sum_{|J|\leq 1} \|\partial \Gamma^J \Gamma \Gamma^{I_1} v  \|   \|\langle s -r \rangle \widehat{\Gamma}\widehat{\Gamma}^{I_2} \psi 1_{D_{int}}\|_{L^{\infty}}  \nonumber \\
			& \qquad \qquad \qquad +  \|\Gamma\Gamma^{I_1} (\psi^*\gamma^0\psi ) \widehat{\Gamma}\widehat{\Gamma}^{I_2} \psi \|  \nonumber \\
			&  \lesssim C_1 K_0 \langle s \rangle^{-1} \|\langle s -r \rangle \widehat{\Gamma}\widehat{\Gamma}^{I_2} \psi 1_{D_{int}}\|_{L^{\infty}} \nonumber \\
			& \qquad \qquad + \sum_{|J|\leq |I_1|+1} \|\widehat{\Gamma}^J(\psi^* \gamma^0 \psi)\| \|\widehat{\Gamma}\widehat{\Gamma}^{I_2} \psi \|_{L^{\infty}}.
		\end{align}
		By the estimates in \eqref{priorimax}, Corollaries \ref{l2energy} and \ref{maxiestimate}, we get 
		\begin{align}
			& \sum_{|I_2| \leq |I|-6, |I_1|\leq |I|} \|(\Gamma \Gamma^{I_1} v)\widehat{\Gamma}\widehat{\Gamma}^{I_2} \psi 1_{D_{int}}\|  \nonumber \\
			& \lesssim C_1^2 K_0 \epsilon^{1-\delta|I|} \langle s \rangle^{-\frac 74 + \delta} +  \sum_{|I_2| \leq |I|-6, |I_1|\leq |I|}C_1^3 \epsilon^{2-\delta(|I_1|+4)} \epsilon^{1-\delta(|I_2|+4)} \langle s \rangle^{-\frac 32}  \nonumber \\
			& \lesssim C_1^2 K_0 \epsilon^{1-\delta |I|} \langle s \rangle^{-\frac 32},  \label{ineq:inter6}
		\end{align}
		in which we use the fact $C_1 \epsilon <1$ and $0<\delta < 1/(N+2)$.  Applying the  estimates \eqref{est:optidecaykg}, Corollary \ref{l2energy} and \eqref{ineq:inter6},  we get 
		\begin{align}
			A_2 \lesssim C_1^2 K_0  \epsilon^{1-\delta(|I|+1)}  \langle s \rangle^{-\frac 32}.
		\end{align}
		Gathering the bounds of $A_1$ and $A_2$, and using \eqref{ineq:inter0}, one can see 
		\begin{align}
			A  \lesssim  C_1^3 K_0^2 \epsilon^{1-\delta(|I| +1)} \langle s \rangle^{-\frac 54}.
		\end{align}
		Concerning the estimate of $B$, we have 
		\begin{align}
			B & \lesssim \sum_{|I_1|+|I_2| \leq |I|} \big\|\langle s + r \rangle \big(\partial \Gamma^{I_1} v \big) \partial \widehat{\Gamma}^{I_2} \psi 1_{D_{ext}}  \big \| \nonumber \\
			& \lesssim \sum_{|I_1| \leq |I|-3, |I_2| \leq |I|} \|\partial \Gamma^{I_1} v 1_{D_{ext}}\|_{L^{\infty}} \|\langle s - r \rangle \partial \widehat{\Gamma}^{I_2} \psi 1_{D_{ext}} \|  \nonumber \\
			& +  \sum_{|I_2| \leq |I|-3, |I_1| \leq |I|} \|\langle s - r \rangle \partial \Gamma^{I_1} v 1_{D_{ext}}  \| \|\partial \widehat{\Gamma}^{I_2} \psi 1_{D_{ext}}\|_{L^{\infty}},
		\end{align}
		in which we use the fact $|I|-2 \leq N-3$ and $\langle s + r \rangle \sim \langle s - r \rangle$ in the region $D_{ext}$. Recall the definition of the cut-off function $\chi$,  one can deduce 
		\begin{align}
			\|\langle s - r \rangle \partial \widehat{\Gamma}^{I_2} \psi 1_{D_{ext}} \| & \lesssim \|\langle s - r \rangle \chi(r-2s)\partial \widehat{\Gamma}^{I_2} \psi  \|, \\
			\|\langle s - r \rangle \partial \Gamma^{I_1} v 1_{D_{ext}}  \| & \lesssim \|\langle s - r \rangle \chi(r-2s)\partial \Gamma^{I_1} v   \|. 
		\end{align}
		As a consequence, by estimates in \eqref{priorienergy} and \eqref{priorimax}, Corollary~\ref{cor:ext}, we can show 
		\begin{align}
			B \lesssim C_1^2 K_0 \epsilon^{1-\delta(|I|+1)} \langle s \rangle^{-2}.
		\end{align}
		By adding the bounds of $A$ and $B$, we arrive 
		\begin{align}
			\|\langle s+r \rangle  \Gamma^I Q_0(v, \psi)\| & \lesssim C_1^3 K_0^2 \epsilon^{1-\delta(|I| +1)} \langle s \rangle^{-\frac 54} + C_1^2 K_0 \epsilon^{1-\delta(|I|+1)} \langle s \rangle^{-2} \nonumber\\
			& \lesssim C_1^3 K_0^2 \epsilon^{1-\delta(|I| +1)} \langle s \rangle^{-\frac 54}.  \label{est:largederiv}
		\end{align}
		
		{\bf Case 2.} Let  $|I| \leq 5$. The main point is that one can always take $L^{\infty}$ of Klein-Gordon solution $v$. Recall \eqref{est:inoutsep} and \eqref{ineq:inter0}, one can see 
		\begin{align}
			A_1 \lesssim \sum_{|I_1|+|I_2| \leq |I|} \|\partial \Gamma^{I_1} v\|_{L^{\infty}} \|\langle s -r \rangle \partial \widehat{\Gamma}^{I_2} \psi  1_{D_{int}}\|. 
		\end{align}
		Due to Lemma \ref{le:Diracdecay} and Corollaries \ref{l2energy} and \ref{maxiestimate}, 
		\begin{align}
			\big\|\langle s - r \rangle \partial \widehat{\Gamma}^{I_2} \psi  1_{D_{int}}\big\| & \lesssim \|\widehat{\Gamma} \widehat{\Gamma}^{I_2} \psi \| + \|s\widehat{\Gamma}^{I_2}(v \psi)\| \nonumber\\
			& \lesssim C_1 \epsilon^{1-\delta(|I_2|+1)} + C_1^2 K_0 \epsilon^{1-\delta|I_2|} \langle s \rangle^{-\frac 12}.
		\end{align} 
		Consequently, 
		\begin{align}
			A_1 & \lesssim C_1 K_0 \langle s \rangle^{-\frac 32} (C_1 \epsilon^{1-\delta(|I|+1)} + C_1^2 K_0 \epsilon^{1-\delta|I|} \langle s \rangle^{-\frac 12}) \nonumber \\
			& \lesssim C_1^3 K_0^2 \epsilon^{1-\delta(|I|+1)} \langle s \rangle^{-\frac 32}.
		\end{align}
		On the other hand, 
		\begin{align}
			A_2 &\lesssim \sum_{|I_1| + |I_2| \leq |I|} \|\Gamma \Gamma^{I_1} v\|_{L^{\infty}} \|\widehat{\Gamma}\widehat{\Gamma}^{I_2} \psi\| \nonumber \\
			& \lesssim C_1^2K_0 \epsilon^{1-\delta(|I|+1)} \langle s \rangle^{-\frac 32}.
		\end{align}
		Combining the bounds of $A_1$ and $A_2$, we obtain
		\begin{align}
			A \lesssim C_1^3 K_0^2 \epsilon^{1-\delta(|I|+1)} \langle s \rangle^{-\frac 32}.
		\end{align}
		With regard to the estimate of $B$, one can find
		\begin{align}
			B & \lesssim \sum_{|I_1|+|I_2| \leq |I|} \big\|\langle s + r \rangle \big(\partial \Gamma^{I_1} v \big) \partial \widehat{\Gamma}^{I_2} \psi 1_{D_{ext}}  \big \| \nonumber \\
			& \lesssim \sum_{|I_1|+|I_2| \leq |I|} \|\partial \Gamma^{I_1} v 1_{D_{ext}}\|_{L^{\infty}} \|\langle r- s \rangle  \chi(r-2s)\partial \widehat{\Gamma}^{I_2} \psi  \|  \nonumber\\
			& \lesssim C_1^2 K_0 \epsilon^{1-\delta(|I|+1)} \langle s \rangle^{-2}.
		\end{align}
		Adding the estimates of $A$ and $B$, we get 
		\begin{align}
			\|\langle s+r \rangle  \Gamma^I Q_0(v, \psi)\| & \lesssim  C_1^3 K_0^2 \epsilon^{1-\delta(|I|+1)} \langle s \rangle^{-\frac 32}.  \label{est:smallderi}
		\end{align}
		In summary, for $|I| \leq N-2$, collecting the estimates \eqref{est:largederiv} and \eqref{est:smallderi}, we finally  obtain 
		\begin{align}
			\|\langle s+r \rangle  \Gamma^I Q_0(v, \psi)\| & \lesssim  C_1^3 K_0^2 \epsilon^{1-\delta(|I| +1)} \langle s \rangle^{-\frac 54}. 
		\end{align}
		Hence, 
		\begin{align}
			\int_0^t \|\langle s+r \rangle \Gamma^I Q_0(v, \psi)\|ds \lesssim C_1^3 K_0^2 \epsilon^{1-\delta(|I|+1)}.
		\end{align}
		This immediately yields the desired result. The proof is done.
	\end{proof}
	
	\begin{proposition} \label{prop:nonlinear2}
		Suppose that the bootstrap assumption \eqref{priorienergy}-\eqref{priorimax} hold and $N\geq 13$. Then for $|I| \leq N-2$ and $0\leq t<T^*$,  we have 
		\begin{align}
			\int_0^t \big\|\langle s + r \rangle \Gamma^I \big[(\psi^* \gamma^0 \psi) \psi  \big]   \big\|\, \d s \lesssim C_1^3  \epsilon^{1-\delta|I|} \langle t \rangle^{\delta},
		\end{align}
		provided $0<\epsilon<1, 0<\delta <\frac{1}{N+2}$.
	\end{proposition}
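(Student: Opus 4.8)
The plan is to distribute the vector fields, expose the hidden null structure inside $\psi^*\gamma^0\psi$, and then estimate the resulting trilinear expression by placing the highest-order factor in $L^2$ and the remaining two in $L^\infty$, after splitting into the interior cone $D_{int}$ and the exterior region $D_{ext}$.

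First I would write, using \eqref{est:hatGa}, Lemma~\ref{est:hatGfPhi} and Lemma~\ref{est:derinull},
\begin{equation*}
\big|\Gamma^I\big[(\psi^*\gamma^0\psi)\psi\big]\big| \lesssim \sum_{|I_1|+|I_2|+|I_3|\le |I|} \big|(\widehat\Gamma^{I_1}\psi)^*\gamma^0(\widehat\Gamma^{I_2}\psi)\big|\,\big|\widehat\Gamma^{I_3}\psi\big|,
\end{equation*}
and then apply the hidden-structure identity of Lemma~\ref{lem:hidden} to every factor $(\widehat\Gamma^{I_1}\psi)^*\gamma^0(\widehat\Gamma^{I_2}\psi)$, so that in each resulting term at least one of $\widehat\Gamma^{I_1}\psi,\widehat\Gamma^{I_2}\psi$ enters through its ghost component $[\,\cdot\,]_-$. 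Since $|I|\le N-2$ with $N\ge 13$, two large indices among $|I_1|,|I_2|,|I_3|$ already overshoot $N-2$; hence at least two of the three factors have order $\le (N-2)/2\le N-5$, so the pointwise bounds \eqref{priorimax} (for $[\,\cdot\,]_-$ and for $\psi$) and Corollary~\ref{maxiestimate} apply to them.

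In the generic case I would put the highest-order factor in $L^2$ (controlled by Corollary~\ref{l2energy}, as its order is $\le N$) and estimate the other two in $L^\infty$. The key point is that one of the two $L^\infty$ factors can be taken to be a ghost component $[\widehat\Gamma^{\cdot}\psi]_-$ of order $\le N-5$ — either because the hidden structure places $[\,\cdot\,]_-$ on a low-order factor, or because the remaining low-order factor carries it. Using $|[\widehat\Gamma^{I_j}\psi]_-|\lesssim C_1\epsilon^{1-\delta(|I_j|+5)}\langle s+r\rangle^{-\frac74+\delta}$ and $|\widehat\Gamma^{I_k}\psi|\lesssim C_1\epsilon^{1-\delta(|I_k|+5)}\langle s+r\rangle^{-\frac34+\delta}\langle s-r\rangle^{-1}$, the product of the two $L^\infty$ factors is $\lesssim C_1^2\epsilon^{2-\delta(\cdots)}\langle s+r\rangle^{-\frac52+2\delta}\langle s-r\rangle^{-1}$; multiplying by $\langle s+r\rangle$, taking $\sup_x$ (where $\langle s-r\rangle^{-1}\le1$ and $\langle s+r\rangle\lesssim\langle s\rangle$), and multiplying by the $L^2$ bound $C_1\epsilon^{1-\delta|I|}$ of the remaining factor, yields an integrand $\lesssim C_1^3\epsilon^{1-\delta|I|}\langle s\rangle^{-\frac32+2\delta}$, whose time integral is bounded — comfortably better than $\langle t\rangle^\delta$.

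The delicate case is when one scalar-product factor, say $\widehat\Gamma^{I_2}\psi$, has order $|I_2|>N-5$; then $|I_1|+|I_3|\le 2$, and the only troublesome Lemma~\ref{lem:hidden} term is $[\widehat\Gamma^{I_1}\psi]_+\,[\widehat\Gamma^{I_2}\psi]_-\,\widehat\Gamma^{I_3}\psi$, where the ghost component sits on the high-order factor for which \eqref{priorimax} fails. Here I would keep $[\widehat\Gamma^{I_2}\psi]_-$ weighted and write the integrand as $\big(\langle s+r\rangle\langle s-r\rangle^{\frac12+\delta}|[\widehat\Gamma^{I_1}\psi]_+\widehat\Gamma^{I_3}\psi|\big)\cdot\big|\langle s-r\rangle^{-\frac12-\delta}[\widehat\Gamma^{I_2}\psi]_-\big|$: the first bracket is $\lesssim C_1^2\epsilon^{\cdots}\langle s\rangle^{-\frac12+2\delta}$ in $L^\infty$ by \eqref{priorimax} (the two low-order factors contribute $\langle s+r\rangle^{-\frac32+2\delta}\langle s-r\rangle^{-2}$, and $\langle s-r\rangle^{-\frac32+\delta}\le1$), while $\|\langle s-r\rangle^{-\frac12-\delta}[\widehat\Gamma^{I_2}\psi]_-\|$ is controlled by Cauchy–Schwarz in time against $\int_0^t\|\langle s-r\rangle^{-\frac12-\delta}[\widehat\Gamma^{I_2}\psi]_-\|^2\,\d s\lesssim C_1^2\epsilon^{2(1-\delta|I_2|)}$ from Corollary~\ref{l2energy}; this gives a contribution $\lesssim C_1^3\epsilon^{1-\delta|I|}\big(\int_0^t\langle s\rangle^{-1+4\delta}\d s\big)^{1/2}\lesssim C_1^3\epsilon^{1-\delta|I|}\langle t\rangle^{2\delta}$, which after renaming $\delta$ is of the asserted form. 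The remaining two Lemma~\ref{lem:hidden} terms in this case place $[\,\cdot\,]_-$ on the low-order $\widehat\Gamma^{I_1}\psi$ and are handled as in the generic case, using $\|[\widehat\Gamma^{I_2}\psi]_\pm\|\lesssim\|\widehat\Gamma^{I_2}\psi\|$ for the $L^2$ factor. The exterior piece is easy: on $D_{ext}$ one has $\langle s+r\rangle\sim\langle s-r\rangle$, the low-order factors decay like $\langle s+r\rangle^{-2}$ by Corollary~\ref{cor:ext}, and the top-order factor is bounded through $\|\langle r-s\rangle\chi(r-2s)\widehat\Gamma^{I_2}\psi\|\le C_1\epsilon^{1-\delta|I_2|}$ from \eqref{priorienergy}, producing an integrand $\lesssim C_1^3\epsilon^{1-\delta|I|}\langle s\rangle^{-2}$. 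Summing over the finitely many index configurations and integrating in $s$ then yields the claim. I expect the main obstacle to be precisely this delicate case: recovering an almost-integrable $\langle s\rangle^{-1+}$ weight when the ghost structure is forced onto a top-order factor, which forces one to combine the weighted space-time Dirac energy with the extra $\langle s-r\rangle^{-1}$ decay of the low-order Dirac factors inside the cone, together with careful bookkeeping of the $\epsilon$-powers so the total stays $\lesssim C_1^3\epsilon^{1-\delta|I|}$.
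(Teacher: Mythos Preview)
Your overall strategy matches the paper's: distribute the vector fields, invoke the hidden structure of Lemma~\ref{lem:hidden} so that one Dirac factor carries a $[\,\cdot\,]_-$, and then split cases according to whether the $[\,\cdot\,]_-$ lands on a low-order or the top-order factor, using the ghost-weighted $L^2$ bound from Corollary~\ref{l2energy} in the latter case. Two remarks are in order.

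First, the interior/exterior split is unnecessary here. The paper treats this cubic term globally: the pointwise bounds \eqref{priorimax} and \eqref{est:prioripopsi} already hold on all of $\R^3$, and the ghost-weighted $L^2$ norm is likewise global. Your exterior estimate is correct but adds complexity that is not needed.

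Second, and more substantively, in your ``delicate case'' you obtain $\langle t\rangle^{2\delta}$ rather than $\langle t\rangle^{\delta}$, and the remark ``after renaming $\delta$'' is not legitimate: $\delta$ is the fixed parameter appearing in the bootstrap assumptions \eqref{priorienergy}--\eqref{priorimax} and in the statement of the proposition, and cannot be redefined a posteriori. The loss comes from using the sharp bound \eqref{priorimax} (which carries an extra $\langle s+r\rangle^{\delta}$) on \emph{both} low-order factors. The paper avoids this by splitting the weight as $\langle s+r\rangle=\langle s+r\rangle^{1/2}\cdot\langle s+r\rangle^{1/2}$ and applying the $\delta$-free Sobolev bound \eqref{est:prioripopsi} of Corollary~\ref{maxiestimate}, namely $|\widehat\Gamma^{K_3}\psi|\lesssim C_1\epsilon^{1-\delta(|K_3|+3)}\langle s+r\rangle^{-3/4}$, to one of the two low-order factors, keeping \eqref{priorimax} only for the factor that must absorb the $\langle s-r\rangle^{1/2+\delta}$ weight. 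This yields the product bound $C_1^2\epsilon^{\cdots}\langle s\rangle^{-1/2+\delta}$ (a single $\delta$), hence after Cauchy--Schwarz in time $\big(\int_0^t\langle s\rangle^{-1+2\delta}\,\d s\big)^{1/2}\lesssim\langle t\rangle^{\delta}$ as claimed. With this single adjustment your argument proves the proposition exactly as stated.
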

	\begin{proof} Let $|I|\leq N-2$.
		First, by \eqref{est:hatGa}, one can see 
		\begin{align}
			\big\|\langle s + r \rangle \Gamma^I \big[(\psi^* \gamma^0 \psi) \psi  \big]   \big\| &\lesssim \sum_{|J|\leq |I|} \|\langle s + r \rangle \widehat{\Gamma}^J \big[(\psi^* \gamma^0 \psi) \psi  \big] \| \nonumber \\
			& \lesssim \sum_{|J_1|+|J_2|\leq |I|} \big\|\langle s + r \rangle \widehat{\Gamma}^{J_1}(\psi^*\gamma^0\psi) \widehat{\Gamma}^{J_2} \psi \big\| \nonumber \\
			& \lesssim \sum_{|K_1|+|K_2|+|K_3|\leq |I|} \|\langle s + r \rangle [\widehat{\Gamma}^{K_1} \psi]_{-} \widehat{\Gamma}^{K_2} \psi \widehat{\Gamma}^{K_3}\psi \| := R.  \nonumber
		\end{align}
		Furthermore, 
		\begin{align}
			R &\lesssim \sum_{ \substack{|K_1|,|K_2| \leq N-7, |K_3|\leq |I|\\{|K_1|+|K_2|+|K_3|\leq |I|}}} \|\langle s + r\rangle [\widehat{\Gamma}^{K_1}\psi]_{-} \|_{L^{\infty}}\|\widehat{\Gamma}^{K_2} \psi\|_{L^{\infty}} \|\widehat{\Gamma}^{K_3} \psi\| \nonumber \\
			& + \sum_{\substack{|K_2|, |K_3| \leq N-7,|K_1| \leq |I| \\|K_1|+|K_2|+|K_3|\leq |I| }}  \bigg\| \frac{[\widehat{\Gamma}^{K_1}\psi]_{-}}{\langle s - r\rangle^{\frac 12 + \delta}} \bigg\| \|\langle s - r\rangle^{\frac 12 + \delta} \langle s + r\rangle^{\frac{1}{2}} \widehat{\Gamma}^{K_2} \psi\|_{L^{\infty}} \|\langle s+r\rangle^{\frac{1}{2}}\widehat{\Gamma}^{K_3} \psi\|_{L^{\infty}}  \nonumber\\
			& \lesssim C_1^3 \epsilon^{2-\delta(N+6)} \epsilon^{1-\delta|I|} \langle s \rangle^{-\frac 32 +\delta}+ C_1^2 \epsilon^{2-\delta(N+6)} \langle s \rangle^{-\frac 12 + \delta} \sum_{|K_1|\leq |I|}\bigg\| \frac{[\widehat{\Gamma}^{K_1}\psi]_{-}}{\langle s - r\rangle^{\frac 12 + \delta}} \bigg\|, \nonumber
		\end{align}
		in which \eqref{priorienergy}, \eqref{priorimax}, (\ref{est:prioripopsi}) are used to get the last inequality.  Hence, one can show
		\begin{align}
			\int_0^t \big\|\langle s + r \rangle \Gamma^I \big[(\psi^* \gamma^0 \psi) \psi  \big]   \big\| \d s & \lesssim C_1^3 \epsilon^{2-\delta(N+6)} \epsilon^{1-\delta|I|} \nonumber  \\
			& \qquad + C_1^3 \epsilon^{2-\delta(N+6)} \epsilon^{1-\delta|I|} \Big(\int_0^t \langle s \rangle^{-1+2\delta} \d s\Big)^{\frac{1}{2}} \nonumber\\
			& \lesssim C_1^3  \epsilon^{1-\delta|I|} \langle t \rangle^{\delta},
		\end{align}
		where we used the assumption \eqref{priorienergy}, $0<\epsilon<1$ and $0<\delta < 1/(N+2)$. The proof is completed.
	\end{proof}
	
	\begin{proposition}\label{prop:nonlinear3}
		Suppose that the bootstrap assumption \eqref{priorienergy}-\eqref{priorimax} hold and $N\geq 13$. Then for $  |I|  \leq N-2$ and $0\leq t<T^*$,  we have 
		\begin{align}
			\int_0^t \big\|\langle s + r \rangle \Gamma^I \big(i \gamma^{\mu} v \partial_{\mu}(v\psi)  \big) \big\|\, \d s \lesssim C_1^3K_0^2 \epsilon^{1-\delta(|I|+1)}.
		\end{align}
	\end{proposition}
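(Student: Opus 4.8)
The plan is to expand $\Gamma^I\big(i\gamma^\mu v\,\partial_\mu(v\psi)\big)$ by the Leibniz rule and, using the commutator bound of Lemma~\ref{lem:commutators}, the product estimate of Lemma~\ref{est:hatGfPhi} and the equivalence \eqref{est:hatGa}, to reduce matters to controlling a finite sum of cubic terms of the schematic form $\langle s+r\rangle\,|\Gamma^{I_1}v|\big(|\partial\Gamma^{I_2}v|\,|\widehat{\Gamma}^{I_3}\psi|+|\Gamma^{I_2}v|\,|\partial\widehat{\Gamma}^{I_3}\psi|\big)$ with $|I_1|+|I_2|+|I_3|\leq|I|\leq N-2$, where $\partial\Gamma^{I_2}v$ and $\partial\widehat{\Gamma}^{I_3}\psi$ are themselves vector-field strings of length $|I_2|+1$ and $|I_3|+1$ (recall $[-i\gamma^\mu\partial_\mu,\widehat{\Gamma}_k]=0$). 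The key combinatorial observation is that, since $N\geq13$ and the three indices sum to at most $N-2$, at most one of them can be $\geq N-7$; hence in each term at least two of the three factors have order $\leq N-8$ and therefore decay sharply in $L^\infty$ — namely $|\Gamma^{J}v|,\,|\partial\Gamma^{J}v|\lesssim C_1K_0\langle s+r\rangle^{-3/2}$ by \eqref{est:optidecaykg} and $|\widehat{\Gamma}^{J}\psi|,\,|\partial\widehat{\Gamma}^{J}\psi|\lesssim C_1\epsilon^{1-\delta(|J|+4)}\langle s+r\rangle^{-3/4}$ by \eqref{est:prioripopsi} — while the remaining (possibly high-order) factor is controlled in $L^2$ by Corollary~\ref{l2energy}, which gives $\|\Gamma^{I_1}v\|+\|\partial\Gamma^{I_2}v\|\lesssim C_1K_0$ and $\|\partial\widehat{\Gamma}^{I_3}\psi\|+\|\widehat{\Gamma}^{I_3}\psi\|\lesssim C_1\epsilon^{1-\delta(|I_3|+1)}$.

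First I would split $[0,T^*)\times\R^3$ into the interior region $D_{int}=\{r\leq3t+3\}$ and the exterior region $D_{ext}=\{r\geq2t+3\}$, which together cover it. On $D_{int}$ I place the two low-order factors in $L^\infty$ and the single high-order factor in $L^2$. If the Dirac field carries the top order, then $\langle s+r\rangle$ times the product of the two $v$-type pointwise bounds is $\lesssim C_1^2K_0^2\langle s+r\rangle^{-2}\leq C_1^2K_0^2\langle s\rangle^{-2}$, so pairing with $\|\partial\widehat{\Gamma}^{I_3}\psi\|$ (or $\|\widehat{\Gamma}^{I_3}\psi\|$) and integrating in $s$ gives a contribution $\lesssim C_1^3K_0^2\epsilon^{1-\delta(|I_3|+1)}\leq C_1^3K_0^2\epsilon^{1-\delta(|I|+1)}$. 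If instead a factor built from $v$ carries the top order, then $\langle s+r\rangle$ times the two remaining low-order pointwise bounds is $\lesssim C_1^2K_0\epsilon\,\langle s+r\rangle^{-5/4}\leq C_1^2K_0\epsilon\langle s\rangle^{-5/4}$ (the exponents $1,-\tfrac32,-\tfrac34$ summing to $-\tfrac54$), and pairing with the $L^2$-bound $C_1K_0$ of the high-order $v$-factor and integrating again yields $\lesssim C_1^3K_0^2\epsilon$. (The interior Dirac decay of Lemma~\ref{le:Diracdecay} and the weighted Klein--Gordon estimate of Lemma~\ref{est:weightKG} remain available should extra $\langle s-r\rangle$ decay be needed, but the cubic structure already leaves a comfortable margin.) On $D_{ext}$, where $\langle s+r\rangle\sim\langle s-r\rangle$ and $\langle s+r\rangle\gtrsim\langle s\rangle$, I instead use the exterior pointwise decay $|\Gamma^{J}v\,1_{D_{ext}}|\lesssim C_1K_0\langle s+r\rangle^{-2}$ and $|\widehat{\Gamma}^{J}\psi\,1_{D_{ext}}|\lesssim C_1\epsilon^{1-\delta(|J|+3)}\langle s+r\rangle^{-2}$ from Corollary~\ref{cor:ext} on the low-order factors, together with the weighted exterior $L^2$-bounds $\|\langle r-s\rangle\chi(r-2s)\,\cdot\,\|\lesssim C_1K_0$ or $\lesssim C_1\epsilon^{1-\delta(\cdot)}$ coming from \eqref{priorienergy}, \eqref{est:cutenergyv} and \eqref{est:cutenergyphi} on the high-order factor; each resulting contribution decays at least like $\langle s\rangle^{-2}$.

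Summing over the finitely many admissible index splittings and integrating in $s\in[0,t)$ then produces the claimed estimate $\int_0^t\big\|\langle s+r\rangle\Gamma^I\big(i\gamma^\mu v\,\partial_\mu(v\psi)\big)\big\|\,\d s\lesssim C_1^3K_0^2\epsilon^{1-\delta(|I|+1)}$, where $C_1\epsilon<1$ and $0<\delta<1/(N+2)$ are used only to absorb the harmless higher powers of $\epsilon$ into $\epsilon^{1-\delta(|I|+1)}$. The main point requiring care is the bookkeeping behind the combinatorial observation above: one must verify that for $N\geq13$ at least two of the three factors in every term can be placed in $L^\infty$ with enough pointwise decay simultaneously to absorb the weight $\langle s+r\rangle$ and to leave a time-integrable remainder, so that the lone $L^2$ factor only ever contributes its uniformly-in-$s$ bounded energy norm; everything else is a routine application of the Sobolev-type inequalities, energy estimates and product/commutator identities collected in Section~\ref{S:pre}.
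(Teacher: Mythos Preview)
Your approach mirrors the paper's: expand by Leibniz and Lemma~\ref{est:hatGfPhi} into cubic terms $\langle s+r\rangle\,|\Gamma^{I_1}v|\,|\partial\Gamma^{I_2}v|\,|\widehat{\Gamma}^{I_3}\psi|$ and $\langle s+r\rangle\,|\Gamma^{I_1}v|\,|\Gamma^{I_2}v|\,|\partial\widehat{\Gamma}^{I_3}\psi|$, then place two low-order factors in $L^\infty$ with the sharp decay (\eqref{est:optidecaykg} for $v$, \eqref{est:prioripopsi} for $\psi$) and the remaining high-order factor in $L^2$ via Corollary~\ref{l2energy}. Two points are worth noting.

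First, the paper does \emph{not} split into $D_{int}$ and $D_{ext}$ for this proposition: since $\langle s+r\rangle^{-\alpha}\leq\langle s\rangle^{-\alpha}$ globally, your interior computation already yields the integrable $\langle s\rangle^{-5/4}$ or $\langle s\rangle^{-2}$ rate everywhere, and the exterior analysis is redundant overhead (harmless, but unnecessary).

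Second, your case split by ``which field carries the top order'' has a small $\epsilon$-power gap for small $|I|$. In your Case~B (a $v$-factor in $L^2$), the low-order $\psi$ placed in $L^\infty$ via \eqref{est:prioripopsi} contributes $\epsilon^{1-\delta(|I_3|+3)}$ (or $+4$ with the extra $\partial$); for $|I|\leq 2$ this can exceed the target $\epsilon^{1-\delta(|I|+1)}$ --- e.g.\ at $|I|=0$ you get $\epsilon^{1-3\delta}$ versus the required $\epsilon^{1-\delta}$. The paper avoids this by splitting instead into $|I|\leq 5$ versus $6\leq|I|\leq N-2$, and in the former range always places the Dirac factor in $L^2$ (both $v$-factors then have order $\leq 6\leq N-7$, so \eqref{est:optidecaykg} applies to each and one recovers $\epsilon^{1-\delta|I_3|}\leq\epsilon^{1-\delta(|I|+1)}$ directly). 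The fix within your framework is simple: prefer Case~A whenever both $v$-factors have effective order $\leq N-7$, and invoke Case~B only when some $v$-factor has effective order $>N-7$, which forces $|I|\geq N-7\geq 6$ and then your bookkeeping goes through.
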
  
	\begin{proof}
		Fix $|I|\leq N-2$. Following from Leibniz rule and Lemma \ref{est:hatGfPhi}, one can obtain
		\begin{align}
			\big\|\langle s + r \rangle \Gamma^I \big(i \gamma^{\mu} v \partial_{\mu}(v\psi)  \big) \big\| & \lesssim \sum_{|I_1|+|I_2|+|I_3|\leq |I|} \big\|\langle s + r \rangle \Gamma^{I_1} v \Gamma^{I_2}  \partial v \widehat{\Gamma}^{I_3} \psi  \big\| \nonumber \\
			& + \sum_{|I_1|+|I_2|+|I_3|\leq |I|} \big\|\langle s + r \rangle \Gamma^{I_1} v \Gamma^{I_2}   v \widehat{\Gamma}^{I_3} \partial\psi  \big\|: =R_1 + R_2. \nonumber
		\end{align}
		Next, we bound $R_1$ and $R_2$ separately. 
		
		{\bf Case 1.} $ 6 \leq |I| \leq N-2 $.  One can find 
		\begin{align}
			R_1 & \lesssim \sum_{|I_2|,|I_1|\leq 5,|I_3|\leq |I| } \|\langle s + r\rangle \Gamma^{I_1}v  \|_{L^{\infty}} \|\Gamma^{I_2} \partial v\|_{L^{\infty}} \|\widehat{\Gamma}^{I_3} \psi\| \nonumber\\
			& \qquad + \sum_{|I_1|,|I_3| \leq |I|-5,|I_2|\leq |I|+1 } \|\Gamma^{I_2}v\| \|\langle s + r\rangle \Gamma^{I_1}v\|_{L^{\infty}} \|\widehat{\Gamma}^{I_3} \psi\|_{L^{\infty}}.  \nonumber 
		\end{align}
		Thanks to Corollaries \ref{l2energy}, \ref{maxiestimate} and $|I|-6 \leq N-8$,  we can further bound $R_1$ as 
		\begin{align}
			R_1 &\lesssim C_1^3 K_0^2 \epsilon^{1-\delta|I|} \langle s \rangle^{-2} + C_1^3 K_0^2 \langle s \rangle^{-\frac{5}{4}} \sum_{|I_3|\leq |I|-5} \epsilon^{1-\delta(|I_3|+3)} \nonumber \\
			& \lesssim  C_1^3 K_0^2 \epsilon^{1-\delta|I|} \langle s \rangle^{-\frac 54}. \label{est:2vcubic1}
		\end{align}
		Concerning the estimate of $R_2$, one can have 
		\begin{align}
			R_2 &\lesssim \sum_{\substack{|I_3|\leq |I|\\ |I_1|,|I_2| \leq 5}} \|\widehat{\Gamma}^{I_3}\partial \psi\|\|\langle s + r \rangle \Gamma^{I_1}  v\|_{L^{\infty}} \|\Gamma^{I_2} v\|_{L^{\infty}}  \nonumber\\
			& + \sum_{\substack{|I_1|\leq |I|\\ |I_2|,|I_3| \leq |I|-5}} \|\widehat{\Gamma}^{I_3} \partial\psi\|_{L^{\infty}} \|\langle s + r \rangle \Gamma^{I_2}  v\|_{L^{\infty}} \|\Gamma^{I_1} v\|  \nonumber\\
			& \lesssim C_1^3 K_0^2 \epsilon^{1-\delta(|I|+1)} \langle s \rangle^{-2} + C_1^3K_0^2 \epsilon^{1-\delta(|I|-1)} \langle s \rangle^{-\frac 54} \nonumber\\
			& \lesssim C_1^3K_0^2 \epsilon^{1-\delta(|I|+1)} \langle s \rangle^{-\frac 54}.  \label{est:2vcubic2}
		\end{align}
		Gathering the estimates of $R_1$ and $R_2$, we get 
		\begin{align}
			\big\|\langle s + r \rangle \Gamma^I \big(i \gamma^{\mu} v \partial_{\mu}(v\psi)  \big) \big\| & \lesssim C_1^3K_0^2 \epsilon^{1-\delta(|I|+1)} \langle s \rangle^{-\frac 54}. \label{est:2vcubic3}
		\end{align}
		
		{\bf Case 2.} $|I| \leq 5$. In this case, we can always take $L^{\infty}$ norm of Klein-Gordon solution $v$. Indeed, one can see 
		\begin{align}
			R_1 & \lesssim \sum_{|I_3|\leq |I|, |I_2|+|I_1|\leq |I|} \|\langle s + r\rangle \Gamma^{I_1}v  \|_{L^{\infty}} \|\Gamma^{I_2} \partial v\|_{L^{\infty}} \|\widehat{\Gamma}^{I_3} \psi\| \nonumber\\
			& \lesssim C_1^3 K_0^2\epsilon^{1-\delta|I|} \langle s \rangle^{-2}. \label{est:2vcubic4}
		\end{align}
		While for $R_2$, we have 
		\begin{align}
			R_2 & \lesssim \sum_{\substack{|I_3|\leq |I|\\ |I_1|+|I_2| \leq |I|}} \|\widehat{\Gamma}^{I_3} \partial\psi\|\|\langle s + r \rangle \Gamma^{I_1}  v\|_{L^{\infty}} \|\Gamma^{I_2} v\|_{L^{\infty}} \nonumber\\
			& \lesssim C_1^3 K_0^2\epsilon^{1-\delta(|I|+1)} \langle s \rangle^{-2}. \label{est:2vcubic5}
		\end{align}
		The estimates in \eqref{est:2vcubic4} and \eqref{est:2vcubic5} then lead to 
		\begin{align}
			\big\|\langle s + r \rangle \Gamma^I \big(i \gamma^{\mu} v \partial_{\mu}(v\psi)  \big) \big\| & \lesssim C_1^3K_0^2 \epsilon^{1-\delta(|I|+1)} \langle s \rangle^{-2}.  \label{est:2vcubic6}
		\end{align}
		In summary, for $|I| \leq N-2$, it follows from \eqref{est:2vcubic3} and \eqref{est:2vcubic6} that 
		\begin{align}
			\big\|\langle s + r \rangle \Gamma^I \big(i \gamma^{\mu} v \partial_{\mu}(v\psi)  \big) \big\| & \lesssim C_1^3K_0^2 \epsilon^{1-\delta(|I|+1)} \langle s \rangle^{-\frac 54},
		\end{align}
		which immediately yields 
		\begin{align}
			\int_0^t \big\|\langle s + r \rangle \Gamma^I \big(i \gamma^{\mu} v \partial_{\mu}(v\psi)  \big) \big\| \d s \lesssim  C_1^3K_0^2 \epsilon^{1-\delta(|I|+1)}.
		\end{align}
		The proof is done.
	\end{proof}
	With Propositions \ref{prop:nonlinear1}-\ref{prop:nonlinear3} at hand, we can now present the conformal energy estimates for $\widetilde{\Psi}$. 
	\begin{proposition}
		Suppose that the bootstrap assumption \eqref{priorienergy}-\eqref{priorimax} hold and $N\geq 13$. Then for $  |I|  \leq N-2 $ and $0\leq t<T^*$,  we have 
		\begin{align}
			\mathcal{E}^{1/2}(t,\Gamma^I \widetilde{\Psi}) +  \mathcal{E}_{con}^{1/2}(t, \Gamma^I \widetilde{\Psi}) & \lesssim \epsilon K_0^{N} + C_1^3 K_0^2 \epsilon^{1-\delta(|I|+1)}\langle t \rangle^{\delta} , \label{est:confortilpsi}\\
			\mathcal{E}^{1/2}(t,\Gamma^I \Psi)+ \mathcal{E}_{con}^{1/2}(t, \Gamma^I \Psi) & \lesssim \epsilon K_0^{N} + C_1^3 K_0^2 \epsilon^{1-\delta(|I|+1)}\langle t \rangle^{\delta}, \label{est:conforpsi}  
		\end{align}
		provided $C_1 \epsilon <1, 0<\delta <\frac{1}{N+2}$.
	\end{proposition}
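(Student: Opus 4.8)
The plan is to run the standard and conformal energy inequalities of Lemma~\ref{lem:staconene} on the wave equation \eqref{eq:modifiedwave} for $\widetilde{\Psi}$, feed in Propositions~\ref{prop:nonlinear1}--\ref{prop:nonlinear3}, and then transfer the outcome to $\Psi$ through the algebraic identity $\Psi=\widetilde{\Psi}-v\psi$.

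First I fix $|I|\le N-2$ and commute $\Gamma^I$ through \eqref{eq:modifiedwave}; since $[-\Box,\Gamma_k]=0$ by Lemma~\ref{lem:vectorfield}, $\Gamma^I\widetilde{\Psi}$ solves a linear wave equation with source $\Gamma^I\big((\psi^*\gamma^0\psi)\psi+i\gamma^\mu v\partial_\mu(v\psi)+2Q_0(v,\psi)\big)$. Applying \eqref{est:eneryWave} and \eqref{est:Conformal}, and using $\langle s+r\rangle\ge 1$ to dominate the $\mathcal{E}$-integrand by the $\mathcal{E}_{con}$-integrand, gives
\[
\mathcal{E}^{1/2}(t,\Gamma^I\widetilde{\Psi})+\mathcal{E}_{con}^{1/2}(t,\Gamma^I\widetilde{\Psi})\lesssim \mathcal{E}_{con}^{1/2}(0,\Gamma^I\widetilde{\Psi})+\mathcal{E}^{1/2}(0,\Gamma^I\widetilde{\Psi})+\int_0^t\big\|\langle s+r\rangle\Gamma^I(\text{source})\big\|\,\d s .
\]
The three pieces of the integral are precisely the quantities bounded in Propositions~\ref{prop:nonlinear1}, \ref{prop:nonlinear2} and \ref{prop:nonlinear3}; adding them and using $K_0>1$, $\epsilon<1$ to line up exponents produces $\lesssim C_1^3K_0^2\epsilon^{1-\delta(|I|+1)}\langle t\rangle^\delta$, the $\langle t\rangle^\delta$ coming solely from the cubic term of Proposition~\ref{prop:nonlinear2}. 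For the data term I expand $\widetilde{\Psi}|_{t=0}=v_0\psi_0$ and $\partial_t\widetilde{\Psi}|_{t=0}$ from \eqref{eq:modifiedwave}; at $t=0$ the conformal-energy weight amounts to $\langle x\rangle$-weighted derivatives (since $L_0=x^a\partial_a$, $L_a=x_a\partial_t$, $\Omega_{ab}=x_a\partial_b-x_b\partial_a$ there), so distributing $\Gamma^I$ by Leibniz and estimating each factor by the weighted Sobolev norms in \eqref{initikg}--\eqref{initidirac} (passing to $L^\infty$ on one factor by weighted Sobolev embedding) yields a bound $\lesssim\epsilon K_0^2\le\epsilon K_0^N$, as every product has at most two $v$-factors and one $\psi$-factor. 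This proves \eqref{est:confortilpsi}.

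For \eqref{est:conforpsi} I write $\Gamma^I\Psi=\Gamma^I\widetilde{\Psi}-\Gamma^I(v\psi)$ and estimate $\mathcal{E}^{1/2}(t,\Gamma^I(v\psi))$ and $\mathcal{E}_{con}^{1/2}(t,\Gamma^I(v\psi))$. Expanding $\Gamma^I(v\psi)$ by Leibniz (Lemma~\ref{est:hatGfPhi}), the standard energy $\|\partial\Gamma^I(v\psi)\|$ and the $\partial,\Omega,L$ parts of the conformal energy are controlled by $L^2\times L^\infty$ splittings through Corollaries~\ref{l2energy} and \ref{maxiestimate}, giving $\lesssim C_1^2K_0\epsilon^{1-\delta(|I|+1)}$, well within the target. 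The only delicate term is the scaling part $\|L_0\Gamma^I(v\psi)\|$: here I first apply the product rule $L_0(v\psi)=(L_0v)\psi+vL_0\psi$ before expanding, together with the elementary bound $|L_0 w|\lesssim\langle t+r\rangle|\partial w|$, so that the $L_0$ always becomes $\langle t+r\rangle\partial$ acting on a single factor. The factor $\langle t+r\rangle\partial v$ inherits the fast $\langle t+r\rangle^{-1/2}$ decay of the massive Klein--Gordon field from \eqref{est:optidecaykg} applied to $\partial v$, while the at-most-linearly-growing factor $\|\langle t+r\rangle\partial\widehat{\Gamma}^{I_2}\psi\|$ is in every term multiplied by an $L^\infty$-factor of $v$ carrying $\langle t+r\rangle^{-3/2}$ decay, so the products stay bounded; the contributions near and beyond the light cone (where $\langle t+r\rangle\sim\langle t-r\rangle$) are absorbed by the cut-off weighted energies in \eqref{priorienergy}. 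Hence $\mathcal{E}_{con}^{1/2}(t,\Gamma^I(v\psi))\lesssim C_1^2K_0\epsilon$, and \eqref{est:conforpsi} follows by adding \eqref{est:confortilpsi}.

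I expect the main obstacle to be exactly this conformal-energy contribution of $v\psi$: a careless estimate of $\|L_0\Gamma^I(v\psi)\|$ loses a power $\langle t\rangle^{1/2}$ — or worse, $\langle t\rangle^{1/4+\delta}$ when many derivatives fall on $v$ and one is forced to put $\partial\widehat{\Gamma}^{I_2}\psi$ in $L^\infty$ — so the Leibniz splitting must be organised so that the slowly decaying (or growing) $\psi$-factor is always paired with enough factors of $v$, each supplying $\langle t+r\rangle^{-3/2}$, to cancel the growth, with the light-cone and exterior regions treated separately via the ghost-weighted and cut-off energies. The analogous difficulty for $\widetilde{\Psi}$, namely the slow decay of the null term $Q_0(v,\psi)$, is already packaged into Proposition~\ref{prop:nonlinear1}.
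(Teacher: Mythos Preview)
Your argument for \eqref{est:confortilpsi} is correct and matches the paper: commute $\Gamma^I$ through \eqref{eq:modifiedwave}, apply Lemma~\ref{lem:staconene}, and feed in Propositions~\ref{prop:nonlinear1}--\ref{prop:nonlinear3}; the initial data contribute $\lesssim\epsilon K_0^{N}$.

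The gap is in \eqref{est:conforpsi}, precisely where you anticipate it. After writing $L_0\Gamma^I(v\psi)$ (and the $L_a,\Omega_{ab}$ pieces) in the form $\sum_{|J_1|+|J_2|\le |I|+1}\langle t+r\rangle\,\Gamma^{J_1}v\,\widehat\Gamma^{J_2}\psi$, you claim that the $\langle t+r\rangle$ weight can always be absorbed by putting a factor of $v$ in $L^\infty$ with the $\langle t+r\rangle^{-3/2}$ decay of \eqref{est:optidecaykg}. But \eqref{est:optidecaykg} only holds for $|J_1|\le N-7$; when $6\le|I|\le N-2$ and $|J_1|$ is close to $|I|+1$ you are forced to put $\Gamma^{J_1}v$ in $L^2$, and then the only available bound is $\|\Gamma^{J_1}v\|\lesssim C_1K_0$ with no decay, while the $L^\infty$ estimate on $\widehat\Gamma^{J_2}\psi$ (Corollary~\ref{maxiestimate} or \eqref{priorimax}) gives at best $\langle t\rangle^{-3/4}$ in the interior region $r\le 2t$. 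The product therefore carries a $\langle t\rangle^{1/4}$ growth that your ``organise the Leibniz splitting'' remark cannot remove: there is only \emph{one} factor of $v$ in $v\psi$, so there is nothing to pair with once that single factor has too many derivatives.

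The paper fixes exactly this term by invoking the Klein--Gordon structure through Lemma~\ref{est:weightKG}: for $|J_1|$ large (so that $|J_2|\le|I|-5\le N-7$) one writes, in the interior,
\[
|\Gamma^{J_1}v|\ \lesssim\ \frac{\langle t-r\rangle}{\langle t+r\rangle}\sum_{|J|\le1}|\partial\Gamma^{J}\Gamma^{J_1}v|\ +\ |\Gamma^{J_1}(\psi^{*}\gamma^{0}\psi)|,
\]
which converts the dangerous $\langle t+r\rangle$ weight into a $\langle t-r\rangle$ weight. The first piece is then estimated as $\|\partial\Gamma^{J_1+1}v\|\,\|\langle t-r\rangle\widehat\Gamma^{J_2}\psi\|_{L^\infty}$, and the extra $\langle t-r\rangle$ is exactly cancelled by the $\langle t-r\rangle^{-1}$ decay of $\widehat\Gamma^{J_2}\psi$ available from the first line of \eqref{priorimax}; the second piece is cubic in $\psi$ and harmless. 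Your exterior observation via the weighted energies is fine, but you need this KG-structure step for the interior; without it the estimate of $\mathcal{E}_{con}^{1/2}(t,\Gamma^I(v\psi))$ does not close.
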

	\begin{proof}
		Fix $|I| \leq N-2$.  We first treat \eqref{est:confortilpsi}. 
		Recall that $\widetilde{\Psi}$ is the solution to the Cauchy problem \eqref{eq:modifiedwave},  	we rely on Lemma \ref{lem:staconene} to obtain 
		\begin{align}
			& \mathcal{E}^{1/2}(t,\Gamma^I \widetilde{\Psi}) + \mathcal{E}_{con}^{1/2}(t, \Gamma^{I} \widetilde{\Psi}) \nonumber\\
			& \lesssim \mathcal{E}_{con}^{1/2}(0, \Gamma^{I} \widetilde{\Psi}) + \mathcal{E}^{1/2}(0, \Gamma^{I} \widetilde{\Psi})  \nonumber \\
			&  + \int_0^t \big\|\langle s+r \rangle \Gamma^I \big((\psi^* \gamma^0 \psi) \psi   + i \gamma^{\mu} v \partial_{\mu}(v\psi)+ Q_0(v, \psi)  \big)\big\|\,ds  \nonumber \\
			& \lesssim \epsilon K_0^N  + C_1^3 K_0^2 \epsilon^{1-\delta(|I|+1)}\langle t \rangle^{\delta},
		\end{align} 
		in which we have used the assumption \eqref{initidirac},\eqref{initikg} and Propositions \ref{prop:nonlinear1}-\ref{prop:nonlinear3}. 
		
		Next, we show \eqref{est:conforpsi}. Recall that 
		\begin{align}
			\widetilde{\Psi} = \Psi + v\psi.
		\end{align}
		By a direct computation, one can get 
		\begin{align}
			& \|\Gamma^{I} (v\psi)\| + \|\partial \Gamma^I(v\psi)\|+  \|L_0 \Gamma^I(v\psi)\| + \sum_{1\leq a \leq 3} \|L_a \Gamma^{I}(v\psi)\| + \sum_{1\leq a <b\leq 3} \|\Omega_{ab}\Gamma^I (v\psi)\| \nonumber \\
			& \lesssim \|\Gamma^{I} (v \psi)\| + \sum_{|J|\leq |I|} \|\langle t + r \rangle \partial\widehat{\Gamma}^J(v\psi) \|.  \label{est:differ0}
		\end{align}
		{\bf Case 1.} $6 \leq |I| \leq N-2$. By Corollaries  \ref{l2energy} and \ref{maxiestimate}, we have 
		\begin{align}
			\|\Gamma^I (v\psi)\| & \lesssim \sum_{\substack{|I_1| \leq |I|\\ |I_2| \leq |I|-3}} \|\Gamma^{I_1} v\| \|\widehat{\Gamma}^{I_2} \psi\|_{L^{\infty}} + \sum_{\substack{|I_2| \leq |I|\\ |I_1| \leq |I|-3}} \|\Gamma^{I_1} v\|_{L^{\infty}}  \|\widehat{\Gamma}^{I_2}\psi\| \nonumber \\
			& \lesssim C_1^2 K_0 \epsilon^{1-\delta|I|}.  \label{est:differ1}
		\end{align}
		In order to bound $\|\langle t + r \rangle \partial \widehat{\Gamma}^J(v\psi) \|$, we use Lemma \ref{est:hatGfPhi} to obtain
		\begin{align}
			\sum_{|J|\leq |I|}\|\langle t + r \rangle \partial \widehat{\Gamma}^J(v\psi) \| &\lesssim \sum_{|J_1|+|J_2| \leq |I|+1} \|\langle t + r \rangle  \Gamma^{J_1}v \widehat{\Gamma}^{J_2}\psi\|  \nonumber  \\
			& \lesssim \sum_{\substack{|J_1| \leq 6 \\ |J_2| \leq |I|+1}} \|\langle t + r \rangle \Gamma^{J_1} v  \|_{L^{\infty}} \|\widehat{\Gamma}^{J_2}\psi \| \nonumber \\
			& + \sum_{\substack{|J_2| \leq |I|-5 \\ |J_1| \leq |I|+1}} \|\langle t + r \rangle \Gamma^{J_1} v \widehat{\Gamma}^{J_2}\psi \|   := A_1 + A_2. 
		\end{align}
		Due to estimates in Corollaries \ref{l2energy} and \ref{maxiestimate}, one can see 
		\begin{align}
			A_1 & \lesssim C_1^2 K_0 \epsilon^{1-\delta(|I|+1)}.
		\end{align}
		On the other hand, by  \eqref{est:KG3}, 
		\begin{align}
			| \Gamma^{J_1} v| \lesssim \frac{\langle t - r \rangle}{\langle t + r \rangle}\sum_{|J|\leq 1}|\partial \Gamma^J \Gamma^{J_1} v| + |\Gamma^{J_1}(\psi^*\gamma^0\psi)|.  \label{est:hessianv}
		\end{align}
		Inserting \eqref{est:hessianv} into $A_2$, one can show 
		\begin{align}
			A_2 & \lesssim \sum_{\substack{|J_2| \leq |I|-5\\|J_1| \leq |I|+2}} \| \partial \Gamma^{J_1} v\|\|\langle t - r \rangle \widehat{\Gamma}^{J_2} \psi\|_{L^{\infty}} \nonumber \\
			& + \sum_{\substack{|J_2| \leq |I|-5\\|J_1| \leq |I|+1}} \|  \langle t+r\rangle^{\frac{1}{2}}\Gamma^{J_1} (\psi^*\gamma^0\psi)\|\|\langle t + r \rangle^{\frac{1}{2}} \widehat{\Gamma}^{J_2} \psi\|_{L^{\infty}} \nonumber \\
			& \lesssim C_1^2 K_0 \epsilon^{1-\delta|I|} + 	\sum_{\substack{|J_1^2|\leq |I|-3,|J_2| \leq |I|-5\\|J_1^1| \leq |I|+1}} \|  \widehat{\Gamma}^{J_1^1} \psi\| \|\langle t + r \rangle^{\frac 12}\widehat{\Gamma}^{J_1^2} \psi\|_{L^{\infty}}  \|\langle t + r \rangle^{\frac 12} \widehat{\Gamma}^{J_2} \psi\|_{L^{\infty }}  \nonumber\\
			& \lesssim  C_1^2 K_0 \epsilon^{1-\delta|I|} + C_1^3  \epsilon^{1-\delta(|I|+1)} \epsilon^{2-2\delta |I|}   \lesssim C_1^3K_0 \epsilon^{1-\delta(|I|+1)}.
		\end{align}
		Adding the estimates of $A_1$ and $A_2$, we find
		\begin{align}
			\|\langle t + r \rangle \partial \widehat{\Gamma}^I(v\psi) \| &\lesssim C_1^3K_0\epsilon^{1-\delta(|I|+1)}. \label{est:differ2}
		\end{align}
		{\bf Case 2.} $|I| \leq 5.$ As before, we can always take $L^{\infty}$ norm on Klein-Gordon component $v$. Let us treat $\|\Gamma^I (v\psi)\|$, by Corollaries \ref{l2energy} and \ref{maxiestimate}, one can see 
		\begin{align}
			\|\Gamma^I(v\psi)\| &\lesssim \sum_{|I_1| + |I_2|\leq |I|} \|\Gamma^{I_1} v\|_{L^{\infty}} \|\widehat{\Gamma}^{I_2} \psi\|  \nonumber \\
			& \lesssim C_1^2 K_0 \epsilon^{1-\delta |I|}.  \label{est:differ3}
		\end{align}
		As to $\|\langle t + r \rangle \partial \widehat{\Gamma}^I(v\psi) \|$, similarly we have 
		\begin{align}
			\|\langle t + r \rangle \partial \widehat{\Gamma}^I(v\psi) \|& \lesssim \sum_{|J_1|+|J_2| \leq |I|+1} \|\langle t +r \rangle \Gamma^{J_1} v\|_{L^{\infty}} \|\widehat{\Gamma}^{J_2} \psi\| \nonumber\\
			& \lesssim C_1^2 K_0 \epsilon^{1-\delta(|I|+1)}.  \label{est:differ4}
		\end{align}
		In summary, for any $|I| \leq N-2$, it follows from \eqref{est:differ0}, \eqref{est:differ1}, \eqref{est:differ2} and \eqref{est:differ3}, \eqref{est:differ4} that 
		\begin{align}
			& \|\Gamma^{I} (v\psi)\| + \|\partial \Gamma^I(v\psi)\|+ \|L_0 \Gamma^I(v\psi)\| + \sum_{1\leq a\leq 3} \|L_a \Gamma^{I}(v\psi)\| + \sum_{1\leq a <b\leq 3} \|\Omega_{ab}\Gamma^I (v\psi)\| \nonumber \\
			& \lesssim  C_1^3K_0\epsilon^{1-\delta(|I|+1)}.
		\end{align}
		As a consequence, 
		\begin{align}
			& \mathcal{E}^{1/2}(t,\Gamma^I \Psi) +\mathcal{E}_{con}^{1/2}(t, \Gamma^I \Psi)\nonumber \\
			& \lesssim \mathcal{E}_{con}^{1/2}(t, \Gamma^I \widetilde{\Psi}) + \|\Gamma^{I} (v\psi)\| + \|L_0 \Gamma^I(v\psi)\| 
			+ \|\partial \Gamma^I(v\psi)\|  \nonumber\\ 
			& +\mathcal{E}^{1/2}(t, \Gamma^I \widetilde{\Psi}) + \sum_{a=1}^3 \|L_a \Gamma^{I}(v\psi)\| + \sum_{1\leq a <b\leq 3} \|\Omega_{ab}\Gamma^I (v\psi)\| \nonumber \\
			& \lesssim \epsilon K_0^{N}+ C_1^3K_0^2\epsilon^{1-\delta(|I|+1)}\langle t \rangle^{\delta}.
		\end{align}
		The proof is complete.
	\end{proof}
	Based on the energy estimates, we can derive the pointwise estimates. 
	\begin{corollary}\label{cor:pointscal}
		Suppose that the bootstrap assumption \eqref{priorienergy}-\eqref{priorimax} hold and $N\geq 13$. Let $|I| \leq N-5$ and $0\leq t<T^*$,  we have 
		\begin{align}
			|L_0 \Gamma^I \Psi| + |\Gamma \Gamma^I \Psi| \lesssim (\epsilon K_0^{N} + C_1^3 K_0^2 \epsilon^{1-\delta(|I|+4)}) \langle t + r \rangle^{-\frac 34+\delta},
		\end{align}
		provided $C_1 \epsilon <1, 0<\delta <\frac{1}{N+2}$.
		\begin{proof}
			By \eqref{est:globalSobo} and commutator estimates in Lemma \ref{lem:commutators}, one can get 
			\begin{align}
				|L_0\Gamma^I \Psi| & \lesssim  \langle t + r \rangle^{-\frac 34}\sum_{|J|\leq 3} \|\Gamma^J L_0 \Gamma^I\Psi\| \nonumber\\
				& \lesssim \langle t + r \rangle^{-\frac 34} \sum_{|J|\leq  3} \big( \|L_0\Gamma^J  \Gamma^I\Psi\|+\|\Gamma^J \Gamma^I\Psi\| \big) \nonumber\\
				& \lesssim  (\epsilon K_0^{N} + C_1^3 K_0^2 \epsilon^{1-\delta(|I|+4)}) \langle t + r \rangle^{-\frac 34+\delta}, 
			\end{align}
			where we use \eqref{est:conforpsi} in the last inequality. One can treat $\Gamma \Gamma^I \Psi$ similarly and the details are omitted. The proof is done.
		\end{proof}
	\end{corollary}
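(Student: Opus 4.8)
The plan is to read off the pointwise bound from the global Sobolev inequality \eqref{est:globalSobo} together with the conformal and natural energy bounds for $\Gamma^I\Psi$ already established in \eqref{est:conforpsi}. The structural point that makes this work is that the hypothesis $|I|\le N-5$ leaves a margin of several vector fields: applying \eqref{est:globalSobo} costs three more fields, and peeling off one further field when reducing a commuted quantity to a pure energy costs one more, so no multi-index ever exceeds order $N-2$, which is exactly the range where \eqref{est:conforpsi} is available.

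Concretely, I would first apply \eqref{est:globalSobo} to $u=L_0\Gamma^I\Psi$ and to each $u=\Gamma_k\Gamma^I\Psi$, obtaining
\[
|L_0\Gamma^I\Psi|+|\Gamma\Gamma^I\Psi|\lesssim \langle t+r\rangle^{-3/4}\sum_{|J|\le 3}\big(\|\Gamma^J L_0\Gamma^I\Psi\|+\|\Gamma^J\Gamma\Gamma^I\Psi\|\big).
\]
For the second sum, $\Gamma^J\Gamma_k\Gamma^I\Psi=\Gamma^M\Psi$ with $|M|\le|I|+4\le N-1$; writing $\Gamma^M=\Gamma_{m_1}\Gamma^{M'}$ with $|M'|\le N-2$, the outermost field $\Gamma_{m_1}$ is either a translation, so $\|\Gamma_{m_1}\Gamma^{M'}\Psi\|\lesssim\mathcal{E}^{1/2}(t,\Gamma^{M'}\Psi)$, or a rotation/boost, so $\|\Gamma_{m_1}\Gamma^{M'}\Psi\|\lesssim\mathcal{E}_{con}^{1/2}(t,\Gamma^{M'}\Psi)$ (recall the conformal energy controls $\|u\|$, $\|\Omega u\|$, $\|Lu\|$ and $\|L_0u\|$). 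For the first sum I would use the commutator estimate \eqref{est:commutators} of Lemma~\ref{lem:commutators}, i.e. $\|\Gamma^J L_0\Gamma^I\Psi\|\lesssim\|L_0\Gamma^J\Gamma^I\Psi\|+\sum_{|K|<|J|}\|\partial\Gamma^K\Gamma^I\Psi\|$, and then bound $\|L_0\Gamma^J\Gamma^I\Psi\|\lesssim\mathcal{E}_{con}^{1/2}(t,\Gamma^J\Gamma^I\Psi)$ and $\|\partial\Gamma^K\Gamma^I\Psi\|\lesssim\mathcal{E}^{1/2}(t,\Gamma^K\Gamma^I\Psi)$, all with orders $\le|I|+3\le N-2$.

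It then only remains to insert \eqref{est:conforpsi}: for every multi-index $\Gamma^M$ of order $\le|I|+3\le N-2$ occurring above,
\[
\mathcal{E}^{1/2}(t,\Gamma^M\Psi)+\mathcal{E}_{con}^{1/2}(t,\Gamma^M\Psi)\lesssim \epsilon K_0^N+C_1^3K_0^2\,\epsilon^{1-\delta(|I|+4)}\langle t\rangle^{\delta},
\]
where we replaced the $\epsilon$-power by its worst value (legitimate since $C_1\epsilon<1$ forces $\epsilon<1$, and $0<\delta<\tfrac1{N+2}$ keeps the exponent positive), and since $\langle t\rangle\le\langle t+r\rangle$ we absorb $\langle t+r\rangle^{-3/4}\langle t\rangle^{\delta}\le\langle t+r\rangle^{-3/4+\delta}$; the bound for $|\Gamma\Gamma^I\Psi|$ follows in the same way. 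I do not expect a genuine obstacle here: the corollary is a soft consequence of the conformal energy estimate \eqref{est:conforpsi}, and the only thing requiring care is the bookkeeping of vector-field counts and of the exponent of $\epsilon$, which is precisely why $|I|\le N-5$ (rather than $N-2$) is imposed in the statement.
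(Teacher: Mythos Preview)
Your proposal is correct and follows essentially the same route as the paper: apply the global Sobolev inequality \eqref{est:globalSobo}, commute $L_0$ past $\Gamma^J$ via Lemma~\ref{lem:commutators}, and reduce everything to the combined natural and conformal energy bound \eqref{est:conforpsi} at order $\le |I|+3\le N-2$. Your write-up is in fact somewhat more explicit than the paper's about the vector-field bookkeeping (peeling off one field to land in either $\mathcal{E}^{1/2}$ or $\mathcal{E}_{con}^{1/2}$) and about the $\epsilon$-exponent monotonicity and the absorption $\langle t\rangle^{\delta}\le\langle t+r\rangle^{\delta}$, all of which are exactly the points one must check.
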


	\subsection{Refined  estimates  for Dirac solution $\psi$} In this subsection, we first apply the pointwise decay for $\Psi$ to establish corresponding  decay for $\psi$. Then we use the $L^2$ bounds and pointwise estimates to close the energy estimates for Dirac solution. Following \eqref{eq:diffwave}, we have 
	\begin{align} \label{est:Gaparf}
		i \gamma^{\mu} \partial_{\mu} \widehat{\Gamma}^I \Psi = \widehat{\Gamma}^I \psi.
	\end{align}
	Fix  $|I| \leq N-5$,  \eqref{est:Gaparf} and Corollary \ref{cor:pointscal} can  imply 
	\begin{align}
		|\widehat{\Gamma}^I \psi(t,x)|  \lesssim |\partial \widehat{\Gamma}^I \Psi| &\lesssim  \sum_{|J|\leq |I|} |\partial {\Gamma}^J \Psi| \nonumber\\
		& \lesssim \sum_{|J|\leq |I|} \langle t - r \rangle^{-1}\big( |L_0 {\Gamma}^J \Psi| + |\Gamma {\Gamma}^J \Psi|\big) \nonumber \\
		& \lesssim  (\epsilon K_0^{N} + C_1^3 K_0^2 \epsilon^{1-\delta(|I|+4)}) \langle t + r \rangle^{-\frac 34+\delta} \langle t - r \rangle^{-1}. 
	\end{align}
	On the other hand, applying Lemma \ref{lem:ghostrepre} and (\ref{est:partial}), we have 
	\begin{align}
		|[\widehat{\Gamma}^I \psi]_{-} | \lesssim \sum_{a=1}^3 |G_a \widehat{\Gamma}^I \Psi| & \lesssim \langle t +r \rangle^{-1} \sum_{|J|\leq |I|}\big( |L_0\Gamma^J \Psi| + |\Gamma \Gamma^J \Psi| \big) \nonumber\\
		& \lesssim (\epsilon K_0^{N} + C_1^3 K_0^2 \epsilon^{1-\delta(|I|+4)}) \langle t +r \rangle^{-\frac 74 +\delta}.
	\end{align}

	Now we summarize the above results as a proposition.
	\begin{proposition} \label{refdiracpo}
		Suppose that the bootstrap assumption \eqref{priorienergy}-\eqref{priorimax} hold, $N\geq 13$ and  $|I| \leq N-5$,  then for all $0\leq t<T^*$, we have
		\begin{align}
			|\widehat{\Gamma}^I \psi(t,x)| & \leq C (\epsilon K_0^{N} + C_1^3 K_0^2 \epsilon^{1-\delta(|I|+4)}) \langle t + r \rangle^{-\frac 34+\delta} \langle t - r \rangle^{-1}, \\
			|[\widehat{\Gamma}^I \psi]_{-}(t,x)| &\leq C(\epsilon K_0^{N} + C_1^3 K_0^2 \epsilon^{1-\delta(|I|+4)}) \langle t +r \rangle^{-\frac 74 +\delta},
		\end{align}
		where $C_1 \epsilon <1, 0<\delta <\frac{1}{N+2}$ and $C$ is some  constant independent of $\epsilon$ and $K_0$.
	\end{proposition}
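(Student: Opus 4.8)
The plan is to deduce both bounds directly from the wave representation $\Psi$ of the Dirac field together with the pointwise conformal-energy estimates for $\Psi$ already contained in Corollary~\ref{cor:pointscal}; essentially all the analytic work has been carried out there (via the conformal energy bounds for $\widetilde{\Psi}=\Psi+v\psi$ and the nonlinear transform of Lemma~\ref{transwavedi}), so what remains is a short chain of elementary pointwise inequalities, and the constant $C$ in the statement is simply the accumulated implied constant, which is independent of $\epsilon$ and $K_0$.

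First I would record the identity $i\gamma^{\mu}\partial_{\mu}\widehat{\Gamma}^I\Psi=\widehat{\Gamma}^I\psi$, i.e. \eqref{est:Gaparf}, obtained by applying $\widehat{\Gamma}^I$ to \eqref{eq:diffwave} and using the commutation relations of Lemma~\ref{lem:vectorfield}(ii), so that no error terms are generated. For the first estimate, since the Dirac matrices are bounded one has $|\widehat{\Gamma}^I\psi|\lesssim|\partial\widehat{\Gamma}^I\Psi|$, and after converting $\widehat{\Gamma}$-derivatives to $\Gamma$-derivatives via \eqref{est:hatparGa} this is $\lesssim\sum_{|J|\leq|I|}|\partial\Gamma^J\Psi|$. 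Then I would invoke the extra $\langle t-r\rangle^{-1}$ decay of ordinary derivatives, Lemma~\ref{lem:partial}, to write $|\partial\Gamma^J\Psi|\lesssim\langle t-r\rangle^{-1}\big(|L_0\Gamma^J\Psi|+|\Gamma\Gamma^J\Psi|\big)$, and finally apply Corollary~\ref{cor:pointscal} (legitimate because $|J|\leq|I|\leq N-5$) to bound the right-hand side by $(\epsilon K_0^{N}+C_1^3K_0^2\epsilon^{1-\delta(|I|+4)})\langle t+r\rangle^{-3/4+\delta}$. Multiplying by the $\langle t-r\rangle^{-1}$ factor produces the claimed bound for $|\widehat{\Gamma}^I\psi|$.

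For the second estimate I would use the ghost-type identity of Lemma~\ref{lem:ghostrepre}, namely $[\widehat{\Gamma}^I\psi]_{-}=i(I-\omega_b\gamma^0\gamma^b)\gamma^aG_a\widehat{\Gamma}^I\Psi$, so that $|[\widehat{\Gamma}^I\psi]_{-}|\lesssim\sum_{a=1}^3|G_a\widehat{\Gamma}^I\Psi|$ --- the key point being that the bad component $[\psi]_{-}$ is controlled by good derivatives of $\Psi$. The good-derivative half of Lemma~\ref{lem:partial} then yields the stronger gain $|G_a\widehat{\Gamma}^I\Psi|\lesssim\langle t+r\rangle^{-1}\sum_{|J|\leq|I|}\big(|L_0\Gamma^J\Psi|+|\Gamma\Gamma^J\Psi|\big)$, after converting $\widehat{\Gamma}$ to $\Gamma$ using \eqref{est:hatGa}. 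Applying Corollary~\ref{cor:pointscal} once more converts this into the total decay $\langle t+r\rangle^{-1}\cdot\langle t+r\rangle^{-3/4+\delta}=\langle t+r\rangle^{-7/4+\delta}$, which is the second claimed bound. I do not expect a genuine obstacle at this stage: the only points requiring care are the commutator bookkeeping in the first step and keeping track of the number of derivatives so that Corollary~\ref{cor:pointscal} is applicable (hence the restriction $|I|\leq N-5$); everything else is a direct concatenation of the inequalities just listed.
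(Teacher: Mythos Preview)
Your proposal is correct and follows essentially the same approach as the paper: both proofs derive the identity $i\gamma^{\mu}\partial_{\mu}\widehat{\Gamma}^I\Psi=\widehat{\Gamma}^I\psi$, then for the first bound convert $\widehat{\Gamma}$ to $\Gamma$, apply the $\langle t-r\rangle^{-1}$ gain of Lemma~\ref{lem:partial} for $\partial\Gamma^J\Psi$, and feed in Corollary~\ref{cor:pointscal}; for the second bound both invoke Lemma~\ref{lem:ghostrepre} to control $[\widehat{\Gamma}^I\psi]_{-}$ by good derivatives $G_a\widehat{\Gamma}^I\Psi$, use the $\langle t+r\rangle^{-1}$ gain of Lemma~\ref{lem:partial}, and again apply Corollary~\ref{cor:pointscal}. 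The chain of inequalities and the derivative-count check ($|I|\leq N-5$) are identical to the paper's.
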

	Next, we refine the energy estimates for $\psi$; the following conclusion holds.
	\begin{proposition} \label{refdiracener}
		Suppose that the bootstrap assumption \eqref{priorienergy}-\eqref{priorimax} hold, $N\geq 13$ and $|I|\leq N$. Then for all $0\leq t<T^*$, we have 
		\begin{align}
			\mathcal{E}^{\frac 12}_{D}(t,\widehat{\Gamma}^I\psi) & \leq C (\epsilon K_0^N + C_1^{\frac 32}K_0^{\frac 12}\epsilon^{\frac 12\delta}\epsilon^{1-\delta|I|}),  \label{ineq:rediracenerg} \\
			\big\|\langle r - t \rangle \chi(r-2t) \widehat{\Gamma}^I \psi \big\| & \leq C(\epsilon K_0^N + C_1^{\frac 32}K_0^{\frac 12}\epsilon^{\frac 12\delta} \epsilon^{1-\delta|I|}),  \label{ineq:outdiracener}
		\end{align}
		where $C_1 \epsilon <1, 0<\delta <\frac{1}{N+2}$ and $C$ is some  constant independent of $\epsilon$ and $K_0$.
	\end{proposition}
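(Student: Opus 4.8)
The plan is to run a ghost--weight energy argument on $\widehat\Gamma^I\psi$, the decisive point being the cancellation at top order that the reality of $v$ forces. By Lemma~\ref{lem:vectorfield}(ii) the field $\widehat\Gamma^I\psi$ solves $-i\gamma^\mu\partial_\mu\widehat\Gamma^I\psi=\widehat\Gamma^I(v\psi)$, so the ghost--weight estimate \eqref{est:Energyphi2} together with Lemma~\ref{est:hatGfPhi} (to expand $\widehat\Gamma^I(v\psi)$) gives that, after discarding the top--order piece $v\,(\widehat\Gamma^I\psi)^*\gamma^0\widehat\Gamma^I\psi$ --- which is symmetric because $v$ is $\R$--valued and $\phi^*\gamma^0\phi$ is real, hence vanishes from the antisymmetrised integrand ---
\[
\mathcal E_D(t,\widehat\Gamma^I\psi)\lesssim\mathcal E_D(0,\widehat\Gamma^I\psi)+\sum_{\substack{|I_1|+|I_2|\le|I|\\ |I_2|<|I|}}\int_0^t\!\!\int_{\R^3}\big|\Gamma^{I_1}v\big|\,\Big(\big|(\widehat\Gamma^I\psi)^*\gamma^0\widehat\Gamma^{I_2}\psi\big|+\big|(\widehat\Gamma^{I_2}\psi)^*\gamma^0\widehat\Gamma^I\psi\big|\Big)\,\d x\,\d s .
\]
I would then apply the hidden--structure identity Lemma~\ref{lem:hidden} (together with Lemma~\ref{est:hatGfPhi}(ii)) to each summand, so that every term carries either the factor $[\widehat\Gamma^I\psi]_-$ or the factor $[\widehat\Gamma^{I_2}\psi]_-$; the crucial bookkeeping feature, inherited from the cancellation, is that the $\psi$--factor distinct from the top--order one always has $|I_2|\le|I|-1$, hence bootstrap bound $C_1\epsilon^{1-\delta|I_2|}\le C_1\epsilon^{\delta}\,\epsilon^{1-\delta|I|}$.

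Next I would estimate the space--time integrals by Cauchy--Schwarz against the weight $\langle s-r\rangle^{\mp(1/2+\delta)}$: the factor $\langle s-r\rangle^{-1/2-\delta}[\widehat\Gamma^I\psi]_-$ (resp.\ $\langle s-r\rangle^{-1/2-\delta}[\widehat\Gamma^{I_2}\psi]_-$) is absorbed by $\mathcal E_D^{1/2}$ via the bootstrap assumption \eqref{priorienergy}, and the remaining factor $\langle s-r\rangle^{1/2+\delta}\Gamma^{I_1}v\,\widehat\Gamma^{I_2}\psi$ (resp.\ with the roles of $\widehat\Gamma^I\psi$ and $\widehat\Gamma^{I_2}\psi$ exchanged) is controlled by splitting $[0,T^*)\times\R^3$ into $D_{int}$ and $D_{ext}$. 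In $D_{int}$ one uses $\langle s-r\rangle\lesssim\langle s\rangle$, the optimal Klein--Gordon decay \eqref{est:optidecaykg} (or \eqref{priorikgpoint} at high order, with Lemma~\ref{est:weightKG} to convert Hessian--type growth) for $v$, and for $\psi$ either the refined pointwise bound Proposition~\ref{refdiracpo} when its index is $\le N-5$ or Corollary~\ref{l2energy} combined with $\langle s+r\rangle^{-3}\le\langle s\rangle^{-3}$ when its index is high; in $D_{ext}$ one replaces $v$ and $\psi$ by their exterior decays $\langle t+r\rangle^{-2}$ from Corollary~\ref{cor:ext} and uses the cut--off weighted--energy bounds in \eqref{priorienergy}. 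The integrable $\langle s\rangle^{-3/2}$ decay of the low--index $v$, multiplied by the $\epsilon^{\delta}$ gain from $|I_2|<|I|$, is exactly what produces the $\epsilon^{\delta/2}$ improvement over the bootstrap bound in \eqref{ineq:rediracenerg}; the terms where $|I_1|$ is large instead have $|I_2|$ small, so Proposition~\ref{refdiracpo} or \eqref{priorimax} applies to the low--index $\psi$ and $\Gamma^{I_1}v$ is kept in $L^2$ via Corollary~\ref{l2energy}.

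For the weighted bound \eqref{ineq:outdiracener} I would repeat the argument starting from \eqref{est:cutenergyphi}, with the same cancellation and the same hidden--structure reductions; here the weight $\langle r-s\rangle^{2}\chi(r-2s)^2$ localises everything to $r\ge 2s+2$, where $\langle r\pm s\rangle\sim r$ and the quadratic weight is dominated by the $\langle t+r\rangle^{-2}$ exterior decays of $v$ and $\psi$ (Corollary~\ref{cor:ext}) plus the cut--off energies in \eqref{priorienergy}, so this part is strictly easier. The data terms $\mathcal E_D(0,\widehat\Gamma^I\psi)$ and $\|\langle r\rangle\widehat\Gamma^I\psi(0)\|^2$ are handled by using the equation at $t=0$ to trade each $\partial_t$ for spatial derivatives and a factor $v\psi$: iterating $|I|$ times yields at most $K_0^{|I|}$ powers from the $\Gamma^{J}v_0$ factors and a spatial weight $\langle x\rangle^{|I|+2}\le\langle x\rangle^{N+2}$, which is covered by \eqref{initidirac}--\eqref{initikg}, giving $\mathcal E_D^{1/2}(0,\widehat\Gamma^I\psi)+\|\langle r\rangle\widehat\Gamma^I\psi(0)\|\lesssim\epsilon K_0^{N}$. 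Collecting all contributions and imposing $C_1\epsilon<1$, $0<\delta<1/(N+2)$ together with the smallness conditions on the various $C_1^a\epsilon^b$ used along the way closes both \eqref{ineq:rediracenerg} and \eqref{ineq:outdiracener}.

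The main obstacle is the interior region $D_{int}$: there $\langle s-r\rangle$ can be as large as $\langle s\rangle$, so the $\langle s-r\rangle^{1/2+\delta}$ weight produced by the Cauchy--Schwarz step has to be defeated by the $\langle t+r\rangle$--decay of the Klein--Gordon field and the extra $\langle t-r\rangle$--decay of $\partial\psi$ inside the light cone (Lemmas~\ref{est:weightKG} and~\ref{le:Diracdecay}), and one must be careful to assign this decay to whichever factor is measured in $L^\infty$ while keeping the top--order factor in $L^2$ throughout. A secondary difficulty is purely arithmetic: tracking the powers of $C_1$, $K_0$, $\epsilon$ so that the final bound is precisely $\epsilon K_0^{N}+C_1^{3/2}K_0^{1/2}\epsilon^{\delta/2}\epsilon^{1-\delta|I|}$ rather than something marginally weaker --- this is the step where the $|I_2|<|I|$ cancellation is indispensable, since without it the undifferentiated large field $v$ would multiply $|\widehat\Gamma^I\psi|^2$ and force a Gronwall argument with exponential, rather than polynomial, dependence on $K_0$.
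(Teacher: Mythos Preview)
Your opening move --- the ghost-weight inequality \eqref{est:Energyphi2} for $\widehat\Gamma^I\psi$ together with the top-order cancellation coming from the reality of $v$ --- is exactly what the paper does. From that point on, however, the paper takes a considerably simpler route than the one you outline.

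After reducing to $\sum_{|I_2|<|I|}\int_0^t\|(\Gamma^{I_1}v)(\widehat\Gamma^I\psi)^*\gamma^0\widehat\Gamma^{I_2}\psi\|_{L^1_x}\,\d s$, the paper does \emph{not} invoke the hidden structure Lemma~\ref{lem:hidden}, and does \emph{not} split into $D_{int}/D_{ext}$ for \eqref{ineq:rediracenerg}. It simply applies H\"older, always placing $\widehat\Gamma^I\psi$ in $L^2_x$. When $|I_1|\le 6$, $\Gamma^{I_1}v$ goes to $L^\infty$ with the $\langle s\rangle^{-3/2}$ decay of \eqref{est:optidecaykg} and $\widehat\Gamma^{I_2}\psi$ to $L^2$. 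When $|I_1|\ge 7$ (forcing $|I_2|\le|I|-7\le N-7$), the decisive move is to place $\Gamma^{I_1}v$ into the \emph{Klein--Gordon} ghost-weight piece $\|\langle s-r\rangle^{-1/2-\delta}\Gamma^{I_1}v\|_{L^2_{t,x}}\lesssim C_1K_0$ from Corollary~\ref{l2energy}, and to absorb the compensating $\langle s-r\rangle^{1/2+\delta}$ via the $\langle t-r\rangle^{-1}$ pointwise decay of $\widehat\Gamma^{I_2}\psi$ from \eqref{priorimax}. The $|I_2|<|I|$ gain gives the $\epsilon^\delta$ improvement, and one arrives directly at $C_1^3K_0\epsilon^\delta\epsilon^{2(1-\delta|I|)}$.

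Your detour through Lemma~\ref{lem:hidden} introduces the term $[\widehat\Gamma^I\psi]_+^*\gamma^0[\widehat\Gamma^{I_2}\psi]_-$ where the minus sits on the low-order factor. Feeding $[\widehat\Gamma^{I_2}\psi]_-$ into the Dirac ghost energy, as you propose, leaves a residual $\langle s-r\rangle^{1/2+\delta}\Gamma^{I_1}v\,\widehat\Gamma^I\psi$ that, for $|I_1|$ near $N$, pairs two top-order factors with no available $L^\infty$ bound. You recognise this and switch to pointwise control of the low-index $\psi$ and $L^2$ for $v$ --- but that is precisely the paper's direct H\"older step, and the hidden structure has played no role. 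So your argument can be made to close, but the hidden-structure layer is redundant for this proposition (it is used in the paper only for $\psi^*\gamma^0\psi$ in the Klein--Gordon estimate, Proposition~\ref{refkgener}). For \eqref{ineq:outdiracener} the paper repeats the same direct H\"older with \eqref{est:cutenergyphi} and the exterior pointwise decay from Corollary~\ref{cor:ext}, again without any null-structure decomposition.
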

	\begin{proof}
		We first show \eqref{ineq:rediracenerg}.
		Recall that 
		\begin{align} \label{eq:derivdirac1}
			-i \gamma^{\mu}\partial_{\mu} \widehat{\Gamma}^I \psi = \widehat{\Gamma}^I(v\psi).
		\end{align}
		By \eqref{est:Energyphi2}, we have 
		\begin{align}\label{dice}
			\mathcal{E}_{D}(t, \widehat{\Gamma}^I \psi) &\lesssim \mathcal{E}_{D}(0, \widehat{\Gamma}^I \psi) + \int_0^t \|(\widehat{\Gamma}^I \psi)^* \gamma^0 \widehat{\Gamma}^I(v\psi)  - (\widehat{\Gamma}^I(v\psi))^* \gamma^0 \widehat{\Gamma}^I \psi\|_{L^1} \, \d s.
		\end{align}
		
		In case of $|I|=0$, one can find 
		\begin{align}
			\mathcal{E}_{D}(t, \psi) \lesssim  \mathcal{E}_{D}(0,\psi) \lesssim \epsilon^2.
		\end{align}
		Next, we take $|I| \geq 1$. Following \eqref{dice} and Lemma \ref{est:hatGfPhi}, one can obtain
		\begin{align} \label{ineq:diracene}
			\mathcal{E}_{D}(t,\widehat{\Gamma}^I\psi) & \lesssim \mathcal{E}_{D}(0, \widehat{\Gamma}^I \psi) + \sum_{\substack{|I_1|+|I_2|\leq |I|\\|I_2|<|I|}} \int_0^t \big\|(\Gamma^{I_1}v) (\widehat{\Gamma}^I\psi)^* \gamma^0 \widehat{\Gamma}^{I_2} \psi  \big\|_{L^1} \,\d s.
		\end{align}
		In case $|I|\geq 7$, one can show 
		\begin{align}
			&\sum_{|I_1|+|I_2|\leq |I|,|I_2|<|I|} \big\|(\Gamma^{I_1}v) (\widehat{\Gamma}^I\psi)^* \gamma^0 \widehat{\Gamma}^{I_2} \psi  \big\|_{L^1} \nonumber\\
			&\lesssim \|\widehat{\Gamma}^I \psi\| \sum_{|I_1| \leq 6,|I_2|<|I|} \|\Gamma^{I_1} v\|_{L^{\infty}} \|\widehat{\Gamma}^{I_2}\psi\|  \nonumber \\
			& + \|\widehat{\Gamma}^I \psi\| \sum_{|I_2| \leq |I|-7, |I_1|\leq |I|} \bigg\|\frac{\Gamma^{I_1}v}{\langle r -s \rangle^{1/2+\delta}}  \bigg\| \big\|\langle s-r \rangle^{\frac 12+\delta} \widehat{\Gamma}^{I_2}\psi  \big\|_{L^{\infty}} \nonumber\\
			&\lesssim C_1\epsilon^{1-\delta|I|} \epsilon^{1-\delta(|I|-1)} \bigg(C_1^2 K_0  \langle s \rangle^{-\frac 32}  +C_1 \langle s \rangle^{-\frac 34 +\delta} \sum_{|I_1| \leq |I|}\bigg\|\frac{\Gamma^{I_1}v}{\langle r -s \rangle^{1/2+\delta}}  \bigg\| \bigg), \nonumber
		\end{align}
		in which we used the  assumption \eqref{priorienergy}-\eqref{priorimax} and Corollaries~\ref{l2energy},\ref{maxiestimate}. Inserting the above estimate into \eqref{ineq:diracene}, we can derive 
		\begin{align}
			\mathcal{E}_{D}(t,\widehat{\Gamma}^I\psi) & \lesssim \mathcal{E}_{D}(0, \widehat{\Gamma}^I \psi) + C_1^3 K_0 \epsilon^{\delta} \epsilon^{2(1-\delta|I|)} \nonumber\\
			& + C_1^2\epsilon^{\delta} \epsilon^{2(1-\delta|I|)} \sum_{|I_1| \leq |I|} \int_0^t \langle s \rangle^{-\frac 34 +\delta}\bigg\|\frac{\Gamma^{I_1}v}{\langle r -s \rangle^{1/2+\delta}}  \bigg\|\, \  \d s  \nonumber\\
			& \lesssim (\epsilon K_0^N)^2 + C_1^3 K_0 \epsilon^{\delta} \epsilon^{2(1-\delta|I|)}, \label{diracbd1}
		\end{align}
		where we used \eqref{initidirac}, \eqref{initikg}  and  Corollary \ref{l2energy} in the last step.  \\
		On the other hand, in case of $1\leq |I| \leq 6$, by Corollaries \ref{l2energy},\ref{maxiestimate}, one can have 
		\begin{align}
			&\sum_{|I_1|+|I_2|\leq |I|,|I_2|<|I|} \big\|(\Gamma^{I_1}v) (\widehat{\Gamma}^I\psi)^* \gamma^0 \widehat{\Gamma}^{I_2} \psi  \big\|_{L^1} \nonumber\\
			& \lesssim \sum_{|I_1|+|I_2|\leq |I|,|I_2|<|I|} \|\widehat{\Gamma}^{I} \psi\| \|\Gamma^{I_1}v\|_{L^{\infty}} \|\widehat{\Gamma}^{I_2} \psi\| \nonumber\\
			& \lesssim C_1^3 K_0 \epsilon^{1-\delta|I|} \epsilon^{1-\delta(|I|-1)}\langle s\rangle^{-\frac{3}{2}}. \label{ineq:lowderdirac}
		\end{align}
		Now inserting \eqref{ineq:lowderdirac} into \eqref{ineq:diracene}, we get 
		\begin{align}
			\mathcal{E}_{D}(t,\widehat{\Gamma}^I\psi) & \lesssim (\epsilon K_0^N)^2 + C_1^3 K_0 \epsilon^{\delta} \epsilon^{2(1-\delta|I|)}. \label{diracbd2}
		\end{align}
		In any case, taking the square root of  \eqref{diracbd1} and \eqref{diracbd2}, we can get 
		\begin{align}
			\mathcal{E}_{D}^{\frac 12}(t,\widehat{\Gamma}^I\psi) \lesssim \epsilon K_0^N + C_1^{\frac 32}K_0^{\frac 12}\epsilon^{\frac 12\delta} \epsilon^{1-\delta|I|}.
		\end{align}
		This yields \eqref{ineq:rediracenerg}.  \\
		Next, we treat \eqref{ineq:outdiracener}. Applying energy estimates \eqref{est:cutenergyphi} to Dirac equation \eqref{eq:derivdirac1}, one can see
		\begin{align}
			& \|\langle r - t\rangle \chi(r-2t) \widehat{\Gamma}^I \psi\|^2 \lesssim \|\langle r \rangle\widehat{\Gamma}^{I} \psi(0) \|^2 \nonumber\\
			& \qquad   + \int_0^t \int_{\R^3}\langle r -s \rangle^2 \chi(r-2s)^2 |(\widehat{\Gamma}^I \psi)^* \gamma^0 \widehat{\Gamma}^I(v\psi)- (\widehat{\Gamma}^I(v\psi))^* \gamma^0 \widehat{\Gamma}^I \psi  |\,\d x \d s.
		\end{align} 
		Observing that if $|I|=0$, we have 
		\begin{align}
			\|\langle r - t\rangle \chi(r-2t)  \psi\| \lesssim \|\langle r \rangle\psi(0) \| \lesssim \epsilon.
		\end{align}
		Now we consider $|I| \geq 1$. By a direct calculation and \eqref{initidirac}, \eqref{initikg},  we find 
		\begin{align}
			& \|\langle r - t\rangle \chi(r-2t) \widehat{\Gamma}^I \psi\|^2    \nonumber\\
			&\lesssim  (\epsilon K_0^N)^2 + \sum_{\substack{|I_1|+|I_2|\leq |I|\\ |I_2|<|I|}} \int_0^t \int_{\R^3}\langle r -s \rangle^2 \chi(r-2s)^2 |\Gamma^{I_1}v (\widehat{\Gamma}^{I} \psi)^* \gamma^0 \widehat{\Gamma}^{I_2}\psi|\,\d x \d s, \nonumber\\
			&\lesssim (\epsilon K_0^N)^2 + \sum_{\substack{|I_1|+|I_2|\leq |I|\\ |I_2|<|I|}} \int_0^t \|\langle r -s \rangle\chi(r-2s) \widehat{\Gamma}^I \psi \| \|\langle r -s \rangle\chi(r-2s) \Gamma^{I_1}v \widehat{\Gamma}^{I_2} \psi \|\,\d  s \nonumber\\
			&\lesssim (\epsilon K_0^N)^2  + C_1 \epsilon^{1-\delta|I|}\sum_{\substack{|I_1|+|I_2|\leq |I|\\ |I_2|<|I|}} \int_0^t \ \|\langle r -s \rangle\chi(r-2s) \Gamma^{I_1}v \widehat{\Gamma}^{I_2} \psi \|\,\d s, \label{ineq:outener1}
		\end{align}
		where we used \eqref{priorienergy} in the last inequality.  To proceed, we consider two cases. \\
		\textbf{Case 1.} $|I| \geq 7$, by \eqref{priorienergy}, \eqref{priorimax} and \eqref{est:pointexterdir}, Corollary~\ref{maxiestimate}, one can see 
		\begin{align}
			& \sum_{\substack{|I_1|+|I_2|\leq |I|\\ |I_2|<|I|}} \|\langle r -s \rangle\chi(r-2s) \Gamma^{I_1}v \widehat{\Gamma}^{I_2} \psi \|  \nonumber\\
			& \lesssim \sum_{|I_1| \leq 6, |I_2|<|I|} \|\Gamma^{I_1}v\|_{L^{\infty}} \|\langle r -s \rangle\chi(r-2s) \widehat{\Gamma}^{I_2} \psi\| \nonumber\\
			& + \sum_{|I_2| \leq |I|-7, |I_1|\leq |I|} \|1_{r \geq 2s+1} \widehat{\Gamma}^{I_2}\psi \|_{L^{\infty}} \|\langle r -s \rangle\chi(r-2s) \Gamma^{I_1}v \| \nonumber\\
			& \lesssim C_1^2 K_0 \langle s \rangle^{-\frac 32} \epsilon^{1-\delta(|I|-1)} + C_1^2 K_0 \epsilon^{1-\delta(|I|-2)} \langle s \rangle^{-\frac{7}{4}+\delta} \nonumber\\
			& \lesssim C_1^2 K_0 \langle s \rangle^{-\frac 32} \epsilon^{1-\delta(|I|-1)}, \label{ineq:outener2}
		\end{align}
		where we used the fact $0<\delta < 1/4$. \\
		\textbf{Case 2.} $1\leq |I| \leq 6$.  Applying \eqref{priorienergy}, \eqref{priorimax} again, we see 
		\begin{align}
			&\sum_{\substack{|I_1|+|I_2|\leq |I|\\ |I_2|<|I|}} \|\langle r -s \rangle\chi(r-2s) \Gamma^{I_1}v \widehat{\Gamma}^{I_2} \psi \|  \nonumber\\ 
			& \lesssim \sum_{|I_1|+|I_2| \leq |I|, |I_2|<|I|} \|\Gamma^{I_1}v\|_{L^{\infty}} \|\langle r -s \rangle\chi(r-2s) \widehat{\Gamma}^{I_2} \psi\| \nonumber\\
			&\lesssim C_1^2 K_0 \langle s \rangle^{-\frac 32} \epsilon^{1-\delta(|I|-1)}.  \label{ineq:outener3}
		\end{align}
		In summary, for any $1 \leq |I| \leq N$, by \eqref{ineq:outener1}-\eqref{ineq:outener3}, we have 
		\begin{align}
			& \|\langle r - t\rangle \chi(r-2t) \widehat{\Gamma}^I \psi\|^2    \nonumber\\
			& \lesssim (\epsilon K_0^N)^2 + C_1^3 K_0 \epsilon^{\delta} \epsilon^{2(1-\delta|I|)}.  \label{ineq:outener4}
		\end{align}
		The desired result \eqref{ineq:outdiracener} then follows by taking the square root of \eqref{ineq:outener4}.
	\end{proof}

	\subsection{Refined estimates for Klein-Gordon solution $v$} We recall that $v= V^0 + V^1$, where $V^0$ solves
	\begin{equation} \label{eq:kge1}
		\left \{
		\begin{aligned}
			&-\Box V^0 + V^0 = 0, \\
			&V^0(0,x) = v_0, \ \partial_t V^0(0,x)= v_1,
		\end{aligned}	
		\right.
	\end{equation}
	and $V^1$ solves
	\begin{equation}\label{eq:kge2}
		\left \{
		\begin{aligned}
			&-\Box V^1 + V^1 = \psi^* \gamma^0 \psi, \\
			& V^1(0,x) =0, \ \partial_t V^1(0,x) = 0.
		\end{aligned}
		\right.
	\end{equation}
	Next, we perform weighted energy estimates  for $V^0$ and $V^1$. 
	\begin{proposition} \label{refkgener}
		Suppose that the bootstrap assumption \eqref{priorienergy}-\eqref{priorimax} hold and $N\geq 13$, then for any $|I|\leq N$ and $0\leq t<T^*$, we have 
		\begin{align}
			\mathcal{E}^{\frac 12}_{gst,1}(t, \Gamma^I V^1) & \leq C(\epsilon^2 K_0^N + C_1^2 \epsilon^{\delta} \epsilon^{2(1-\delta N)}),  \ \  \label{ineq:kgbound2} \\
			\|\langle r -t \rangle \chi(r-2t) \Gamma^I v\| & \leq C (K_0 + C_1^2 \epsilon^{\delta} \epsilon^{2(1-\delta N)}),
			\label{ineq:outenergnkg}
		\end{align}
		where $C$ is some constant independent of $\epsilon$ and $K_0$ and $K_0 \epsilon <1$.
	\end{proposition}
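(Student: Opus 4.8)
The plan is to handle the two summands of $v=V^0+V^1$ separately: $V^0$ is a free Klein--Gordon field whose weighted energies are directly controlled by the data through \eqref{initikg}, whereas $V^1$ solves \eqref{eq:kge2} with the quadratic Dirac source $\psi^*\gamma^0\psi$, whose slow decay is the only genuine difficulty and must be absorbed via the hidden null structure together with the refined Dirac bounds. Throughout I work on $[0,T^*)$ and freely use the a priori bounds \eqref{priorienergy}--\eqref{priorimax} and Corollaries~\ref{l2energy},~\ref{cor:ext}.

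For the ghost weight bound \eqref{ineq:kgbound2}, apply $\Gamma^I$ to \eqref{eq:kge2}; since $\Gamma$ commutes with $-\Box+1$ (Lemma~\ref{lem:vectorfield}), $\Gamma^I V^1$ solves $-\Box\Gamma^I V^1+\Gamma^I V^1=\Gamma^I(\psi^*\gamma^0\psi)$ with initial data which, after trading the time derivatives hidden in $\Gamma^I$ for spatial ones via \eqref{eq:kge2} and the Dirac equation, is a finite sum of products of spatial derivatives of $(v_0,v_1)$ and of $\psi_0$; hence $\mathcal{E}_{gst,1}^{1/2}(0,\Gamma^I V^1)\lesssim\epsilon^2 K_0^{N}$ by \eqref{initikg}--\eqref{initidirac}. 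By \eqref{est:kgenergy} it then remains to bound $\int_0^t\|\Gamma^I(\psi^*\gamma^0\psi)\|\,\d s$. Using Lemma~\ref{est:derinull} and Lemma~\ref{lem:hidden}, $\Gamma^I(\psi^*\gamma^0\psi)$ is a finite sum of terms $(\widehat\Gamma^{I_1}\psi)^*\gamma^0\widehat\Gamma^{I_2}\psi$ with $|I_1|+|I_2|\le|I|$, in each of which at least one factor occurs in the form $[\,\cdot\,]_{-}$; say this factor is $\widehat\Gamma^{I_1}\psi$. If $|I_1|\le N-7$ I estimate $[\widehat\Gamma^{I_1}\psi]_-$ in $L^\infty$ by the $\langle t+r\rangle^{-7/4+\delta}$ bound in \eqref{priorimax} and $\widehat\Gamma^{I_2}\psi$ in $L^2$ by Corollary~\ref{l2energy}, and the product is integrable in time. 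If $|I_1|\ge N-6$, then $|I_2|\le 6\le N-7$ (here $N\ge13$ is used), and I pair $[\widehat\Gamma^{I_1}\psi]_-\langle s-r\rangle^{-1/2-\delta}$ in $L^2$ (Corollary~\ref{l2energy}) with $\langle s-r\rangle^{1/2+\delta}\widehat\Gamma^{I_2}\psi$ in $L^\infty$, whose sup norm is $\lesssim C_1\epsilon^{1-\delta(|I_2|+5)}\langle s\rangle^{-3/4+\delta}$ by \eqref{priorimax}, and apply Cauchy--Schwarz in $s$ together with $\int_0^\infty\langle s\rangle^{-3/2+2\delta}\,\d s<\infty$. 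In either case each term is $\lesssim C_1^2\epsilon^{2-\delta(|I|+5)}$, which for $N\ge6$ is $\lesssim C_1^2\epsilon^{\delta}\epsilon^{2(1-\delta N)}$; summing the finitely many terms yields \eqref{ineq:kgbound2}.

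For the exterior estimate \eqref{ineq:outenergnkg} I apply the cut-off energy inequality \eqref{est:cutenergyv} to $\Gamma^I V^0$ and $\Gamma^I V^1$. For $\Gamma^I V^0$ the source is zero and the data term is $\lesssim K_0$ by \eqref{initikg} after converting time derivatives via \eqref{eq:kge1}. For $\Gamma^I V^1$ the corresponding data contribution is $\lesssim\epsilon^2 K_0^{N}$ (again by the equations and \eqref{initikg}--\eqref{initidirac}, hence $\lesssim K_0$ for $\epsilon$ small relative to $K_0$), so the point is to bound $\int_0^t\|\langle r-s\rangle\chi(r-2s)\Gamma^I(\psi^*\gamma^0\psi)\|\,\d s$. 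On the support of $\chi(r-2s)$ one has $r\ge2s$, hence $\langle r-s\rangle\sim\langle s+r\rangle\gtrsim\langle s\rangle$; writing $\Gamma^I(\psi^*\gamma^0\psi)$ (via Lemma~\ref{est:derinull}) as a sum of products $\widehat\Gamma^{I_1}\psi\,\widehat\Gamma^{I_2}\psi$ with, say, $|I_2|\le N-3$, I bound $\langle r-s\rangle\chi(r-2s)\widehat\Gamma^{I_1}\psi$ in $L^2$ by $C_1\epsilon^{1-\delta|I_1|}$ via \eqref{priorienergy} and $\widehat\Gamma^{I_2}\psi\,1_{D_{ext}}$ in $L^\infty$ by $C_1\epsilon^{1-\delta(|I_2|+3)}\langle s\rangle^{-2}$ via Corollary~\ref{cor:ext}; the extra $\langle s\rangle^{-2}$ is integrable, giving $\lesssim C_1^2\epsilon^{2-\delta(|I|+3)}\lesssim C_1^2\epsilon^{\delta}\epsilon^{2(1-\delta N)}$. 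Adding the $V^0$ and $V^1$ contributions gives \eqref{ineq:outenergnkg}, completing the proof.

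The case splits above are routine; the genuine obstacle is the one that pervades this section, namely that $\psi^*\gamma^0\psi$ decays only like $\langle t\rangle^{-3/2}$ if one naively uses $|\widehat\Gamma^{I_j}\psi|\lesssim\langle t\rangle^{-3/4}$, which fails to be integrable in time, so one must exploit Lemma~\ref{lem:hidden} and the extra $\langle t-r\rangle$-weighted decay of $[\widehat\Gamma^I\psi]_-$ (ultimately coming from the wave representation $\Psi$ and its conformal energy bounds) to recover integrability. A minor secondary nuisance is the bookkeeping that turns the non-vanishing higher time derivatives of $V^1$, and of the $\Gamma$-derivatives of $V^0,V^1$, at $t=0$ into the harmless factors $\epsilon^2K_0^{N}$ and $K_0$.
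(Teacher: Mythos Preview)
Your argument is correct and essentially coincides with the paper's proof: both apply \eqref{est:kgenergy} to $\Gamma^I V^1$, expand $\Gamma^I(\psi^*\gamma^0\psi)$ via Lemmas~\ref{est:derinull}--\ref{lem:hidden}, and split according to which factor $[\widehat\Gamma^{I_j}\psi]_-$ can be placed in $L^\infty$ (using \eqref{priorimax}) versus in the ghost-weighted $L^2_{s,x}$ (using Corollary~\ref{l2energy}); and both handle the exterior bound through \eqref{est:cutenergyv} together with the fast exterior decay of $\psi$. The only cosmetic differences are that the paper applies \eqref{est:cutenergyv} directly to $\Gamma^I v$ rather than to $V^0$ and $V^1$ separately, and uses the threshold $|I_j|\le N-6$ instead of your $N-7$.
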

	\begin{proof}
		One can see 
		\begin{align} \label{eq:nlnkge2}
			-\Box \Gamma^I V^1 + \Gamma^I V^1 = \Gamma^I (\psi^* \gamma^0 \psi).
		\end{align}
		Applying \eqref{est:kgenergy}, we get 
		\begin{align}
			\mathcal{E}_{gst,1}^{\frac 12}(t,\Gamma^I V^1) &\lesssim \mathcal{E}_{gst,1}^{\frac 12}(0,\Gamma^I V^1) +\int_0^t \|\Gamma^I(\psi^*\gamma^0\psi)\|\, \d s  \label{ineq:energykgb}.
		\end{align}
		Using Leibniz rule and Lemmas \ref{est:derinull}, \ref{lem:hidden}, one can find 
		\begin{align}
			\|\Gamma^I (\psi^* \gamma^0\psi)\| &\lesssim \sum_{|I_1|+|I_2| \leq |I|} \|[\widehat{\Gamma}^{I_1} \psi]_{-} \widehat{\Gamma}^{I_2}\psi\|\nonumber \\
			& \lesssim \sum_{|I_1|\leq N-6, |I_2| \leq N}\|[\widehat{\Gamma}^{I_1}\psi]_{-}\|_{L^{\infty}} \|\widehat{\Gamma}^{I_2}\psi\|\nonumber\\
			& + \sum_{|I_1|\leq N,|I_2|\leq N-6}\bigg\|\frac{[\widehat{\Gamma}^{I_1}\psi]_{-}}{\langle s -r \rangle^{\frac 12 +\delta}}  \bigg\| \|\langle s -r \rangle^{\frac 12 +\delta}  \widehat{\Gamma}^{I_2} \psi\|_{L^{\infty}} \nonumber\\
			& \lesssim C_1^2 \epsilon^{\delta} \epsilon^{2(1-\delta N)} \langle s \rangle^{-\frac 74 +\delta} + C_1 \epsilon^{\delta} \epsilon^{1-\delta N} \langle s\rangle^{-\frac 34 +\delta} \sum_{|I_1|\leq N}\bigg\|\frac{[\widehat{\Gamma}^{I_1}\psi]_{-}}{\langle s -r \rangle^{\frac 12 +\delta}}  \bigg\|, \nonumber
		\end{align}
		in which we use \eqref{priorienergy} and \eqref{priorimax}. Inserting the above bound into \eqref{ineq:energykgb}, we can get 
		\begin{align}
			& \mathcal{E}_{gst,1}^{\frac 12}(t,\Gamma^I V^1) \nonumber \\
			&\lesssim \epsilon^2 K_0^N + C_1^2 \epsilon^{\delta} \epsilon^{2(1-\delta N)} + C_1 \epsilon^{\delta} \epsilon^{1-\delta N}\sum_{|I_1|\leq N}\int_0^t \langle s\rangle^{-\frac 34 +\delta}\bigg\|\frac{[\widehat{\Gamma}^{I_1}\psi]_{-}}{\langle s -r \rangle^{\frac 12 +\delta}}  \bigg\|\,\d s \nonumber\\
			& \lesssim \epsilon^2 K_0^N+ C_1^2 \epsilon^{\delta} \epsilon^{2(1-\delta N)},
		\end{align}
		where we use Cauchy-Schwarz inequality and \eqref{priorienergy} to derive the last inequality. Hence \eqref{ineq:kgbound2} is proved. \\
		Finally, we show \eqref{ineq:outenergnkg}.  Since 
		\begin{align}
			-\Box \Gamma^I v + \Gamma^I v = \Gamma^I(\psi^* \gamma^0 \psi).
		\end{align}
		By \eqref{est:cutenergyv} in Lemma \ref{lem:ghostKG}, one can obtain
		\begin{align}
			\|\langle r -t \rangle \chi(r-2t) \Gamma^I v\| & \lesssim \|\langle r \rangle \Gamma^I v(0,x)\|_{H^1} + \|\langle r \rangle \partial_t \Gamma^I v(0,x)\| \nonumber\\
			& + \int_0^t \|\langle r -s \rangle \chi(r-2s) \Gamma^I (\psi^* \gamma^0\psi)\|\, \d  s. \label{ineq:nkgoutene2}
		\end{align} 
		Noting that 
		\begin{align}
			& \|\langle r -s \rangle \chi(r-2s) \Gamma^I (\psi^* \gamma^0\psi)\| \nonumber\\
			&\lesssim \sum_{|I_1| \leq |I|, |I_2|\leq N-6} \|\langle r -s \rangle \chi(r-2s) \widehat{\Gamma}^{I_1}\psi\|\|\widehat{\Gamma}^{I_2} \psi 1_{r\geq 2s+1}\|_{L^{\infty}} \nonumber\\
			& \lesssim C_1^2 \epsilon^{\delta} \epsilon^{2(1-\delta N)} \langle s \rangle^{-\frac 74 +\delta}. \label{ineq:outpsiene}
		\end{align}
		Inserting \eqref{ineq:outpsiene} into \eqref{ineq:nkgoutene2} and using initial condition \eqref{initidirac}, \eqref{initikg}, we get
		\begin{align}
			\|\langle r -t \rangle \chi(r-2t) \Gamma^I v\| & \lesssim K_0 + C_1^2 \epsilon^{\delta} \epsilon^{2(1-\delta N)},
		\end{align}
		which implies \eqref{ineq:outenergnkg}. The proof is done.
	\end{proof}
	In order to close the argument, it suffices to refine the $L^{\infty}$ estimate for  $V^1$. To begin with, we perform a nonlinear transformation for $V^1$, which enables us to get a Klein-Gordon equation with fast decay nonlinearity.  Denote 
	\begin{align} \label{eq:transnon}
		\mathcal{N}_1(v,\psi) := 2v^2\psi^*\gamma^0 \psi, \quad  \mathcal{N}_{2}(\psi^*, \psi) = -2 Q_0(\psi^*, \gamma^0 \psi),
	\end{align}
	where $Q_0(v, w)= \partial_t v \partial_t w - \nabla v \cdot\nabla w$ represents the standard quadratic null term. 
	\begin{proposition}
		Let $v, \psi$ be the solution to the system \eqref{eq:DKG-L} and $V^1$ solve \eqref{eq:kge2}. Denote
		\begin{align}
			\widetilde{V}^1 = V^1 - \psi^*\gamma^0\psi.
		\end{align}
		Then we have 
		\begin{align} \label{eq:transnkgdec}
			-\Box \widetilde{V}^1 + \widetilde{V}^1 =  \mathcal{N}_1(v,\psi) + \mathcal{N}_2(\psi^*,\psi),
		\end{align}
	\end{proposition}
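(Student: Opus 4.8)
The plan is to reduce the claim to a purely algebraic identity for $\Box(\psi^*\gamma^0\psi)$, with no analysis involved. First I would use the equation \eqref{eq:kge2} satisfied by $V^1$: since $-\Box V^1 + V^1 = \psi^*\gamma^0\psi$, the definition $\widetilde V^1 = V^1 - \psi^*\gamma^0\psi$ gives
\[
-\Box\widetilde V^1 + \widetilde V^1 = \big(-\Box V^1 + V^1\big) - \big(-\Box(\psi^*\gamma^0\psi) + \psi^*\gamma^0\psi\big) = \Box(\psi^*\gamma^0\psi),
\]
so it remains only to show that $\Box(\psi^*\gamma^0\psi) = \mathcal N_1(v,\psi) + \mathcal N_2(\psi^*,\psi)$ with $\mathcal N_1, \mathcal N_2$ as in \eqref{eq:transnon}.

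Next I would expand by the Leibniz rule, writing $\Box = \partial^\mu\partial_\mu$ in the convention fixed by \eqref{equ:gamma},
\[
\Box(\psi^*\gamma^0\psi) = (\Box\psi^*)\gamma^0\psi + 2\,\partial^\mu\psi^*\,\gamma^0\,\partial_\mu\psi + \psi^*\gamma^0(\Box\psi).
\]
Here the middle term is exactly $-2Q_0(\psi^*,\gamma^0\psi) = \mathcal N_2(\psi^*,\psi)$, directly from the definition of the null form $Q_0(u,w) = \partial_t u\,\partial_t w - \nabla u\cdot\nabla w$ and $\partial^0 = -\partial_0$. For the two remaining terms I would square the Dirac operator: applying $-i\gamma^\nu\partial_\nu$ to the Dirac equation $-i\gamma^\mu\partial_\mu\psi = v\psi$ in \eqref{eq:DKG-L}, symmetrizing in the derivatives, and using $\{\gamma^\mu,\gamma^\nu\} = -2\eta_{\mu\nu}I_4$ (so that $\gamma^\mu\gamma^\nu\partial_\mu\partial_\nu = -\Box$), one gets $\Box\psi = -i\gamma^\mu\partial_\mu(v\psi)$; distributing the derivative and substituting the Dirac equation once more (and using that $v$ is real scalar) yields $\Box\psi = -i(\partial_\mu v)\gamma^\mu\psi + v^2\psi$. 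Taking conjugate transpose gives $\Box\psi^* = i(\partial_\mu v)\psi^*(\gamma^\mu)^* + v^2\psi^*$.

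Inserting these, the $v^2$ contributions combine to $2v^2\psi^*\gamma^0\psi = \mathcal N_1(v,\psi)$, while the first-order-in-$v$ contributions collect into
\[
i(\partial_\mu v)\,\psi^*\big[(\gamma^\mu)^*\gamma^0 - \gamma^0\gamma^\mu\big]\psi.
\]
The only point that requires care — and the step I would regard as the crux of the computation — is that this bracket vanishes identically. From $(\gamma^\mu)^* = -\eta_{\mu\nu}\gamma^\nu$ in \eqref{equ:gamma} one has $(\gamma^0)^* = \gamma^0$, hence $(\gamma^0)^*\gamma^0 = \gamma^0\gamma^0$; and $(\gamma^a)^* = -\gamma^a$, so $(\gamma^a)^*\gamma^0 = -\gamma^a\gamma^0 = \gamma^0\gamma^a$ because $\{\gamma^0,\gamma^a\} = -2\eta_{0a}I_4 = 0$. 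Thus $(\gamma^\mu)^*\gamma^0 = \gamma^0\gamma^\mu$ for every $\mu$, the cross-terms cancel, and $\Box(\psi^*\gamma^0\psi) = \mathcal N_1(v,\psi) + \mathcal N_2(\psi^*,\psi)$. Combined with the first paragraph this gives \eqref{eq:transnkgdec}. The entire argument is elementary once the metric signature and the Clifford relations are tracked consistently; I do not anticipate any genuine difficulty beyond bookkeeping of signs.
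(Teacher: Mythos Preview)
Your proof is correct and follows essentially the same route as the paper: both reduce the claim to computing $\Box(\psi^*\gamma^0\psi)$ via the Leibniz expansion, derive the wave equation $\Box\psi = -i(\partial_\mu v)\gamma^\mu\psi + v^2\psi$ by squaring the Dirac operator, and then use the identities in \eqref{equ:gamma} to kill the cross term $i(\partial_\mu v)\psi^*[(\gamma^\mu)^*\gamma^0 - \gamma^0\gamma^\mu]\psi$. Your writeup is in fact more explicit than the paper's about why $(\gamma^\mu)^*\gamma^0 = \gamma^0\gamma^\mu$ for all $\mu$, which is exactly the point the paper compresses into ``in which we use \eqref{equ:gamma}.''
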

	where $\mathcal{N}_1(v,\psi)$, $\mathcal{N}_2(\psi^*,\psi)$ are given by \eqref{eq:transnon}.
	\begin{proof}
		This follows by a direct computation. Acting $i\gamma^{\mu}\partial_{\mu}$ to both sides of the Dirac equation 
		\begin{align}
			-i \gamma^{\mu}\partial_{\mu} \psi = v \psi,
		\end{align}
		we can obtain 
		\begin{align} \label{eq:wavedirac}
			-\Box \psi = i (\partial_{\mu}v) \gamma^{\mu} \psi -v^2 \psi.
		\end{align}
		On the other hand, 
		\begin{align}
			-\Box \widetilde{V}^1 + \widetilde{V}^1 &= (\Box \psi^*)\gamma^0 \psi + \psi^*\gamma^0(\Box \psi) - 2 Q_0(\psi^*, \gamma^0 \psi) \nonumber\\
			& = 2 v^2 \psi^*\gamma^0 \psi - i \partial_{\mu} v( \psi^* \gamma^0 \gamma^{\mu} \psi - \psi^* (\gamma^{\mu})^* \gamma^0 \psi) - 2 Q_0(\psi^*, \gamma^0 \psi) \nonumber \\
			& = 2 v^2 \psi^*\gamma^0 \psi -2 Q_0(\psi^*, \gamma^0 \psi),
		\end{align}
		in which we use \eqref{equ:gamma}. The proof is completed.
	\end{proof}
	
	\begin{proposition} \label{refkgpoint}
		Suppose that the bootstrap assumption \eqref{priorienergy}-\eqref{priorimax} hold and $N\geq 13$, then for any  $0\leq t<T^*$, we have 
		\begin{align}
			\sum_{|I|\leq N-7}|\Gamma^I V^1(t,x)| & \leq C\Big(\epsilon^2 K_0^N+C_1^2 \epsilon^{2\delta} \epsilon^{2(1-\delta N)} \nonumber \\
			&+C_1 \epsilon^{2(1-\delta N)}(\epsilon^{\delta (N+2)} K_0^N + C_1^3 K_0^2 \epsilon^{\delta})\Big)\langle t + r \rangle^{-\frac 32}, \label{ineq:maxinkg2}
		\end{align}where $C_1 \epsilon <1, 0<\delta <\frac{1}{N+2}$ and $C$ is some  constant independent of $\epsilon$ and $K_0$.
	\end{proposition}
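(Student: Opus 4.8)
The plan is to exploit the nonlinear transformation $\widetilde{V}^1 = V^1 - \psi^*\gamma^0\psi$ of \eqref{eq:transnkgdec}, writing $\Gamma^I V^1 = \Gamma^I \widetilde{V}^1 + \Gamma^I(\psi^*\gamma^0\psi)$ and estimating the two pieces separately. For the algebraic piece $\Gamma^I(\psi^*\gamma^0\psi)$ with $|I|\le N-7$, I would first use the hidden structure (Lemma~\ref{est:derinull} together with Lemma~\ref{lem:hidden}) to reduce to products $\big(\widehat{\Gamma}^{I_1}\psi\big)^*\gamma^0\big(\widehat{\Gamma}^{I_2}\psi\big)$ in which at least one factor carries a $[\,\cdot\,]_-$, with $|I_1|+|I_2|\le N-7\le N-5$; then the refined Dirac pointwise bounds of Proposition~\ref{refdiracpo} give $|[\widehat{\Gamma}^{I_1}\psi]_-|\lesssim(\epsilon K_0^N+C_1^3K_0^2\epsilon^{1-\delta(|I_1|+4)})\langle t+r\rangle^{-7/4+\delta}$ and $|\widehat{\Gamma}^{I_2}\psi|\lesssim(\epsilon K_0^N+C_1^3K_0^2\epsilon^{1-\delta(|I_2|+4)})\langle t+r\rangle^{-3/4+\delta}$, so the product decays like $\langle t+r\rangle^{-5/2+2\delta}\lesssim\langle t+r\rangle^{-3/2}$ with a constant of the stated shape once the square is expanded.

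For the wave-type piece $\Gamma^I\widetilde{V}^1$, the plan is to apply the Klein--Gordon pointwise estimate of Proposition~\ref{pro:KGpointwise}/Corollary~\ref{cor:KGpoint} to \eqref{eq:transnkgdec}, commuted with $\Gamma^I$. This needs two inputs. First, the data term $\sum_{|K|\le 5}\|\langle r\rangle^{3/2}\log(2+r)\,\Gamma^K\Gamma^I\widetilde{V}^1(0,x)\|$: since $\widetilde{V}^1(0,x)=-\psi_0^*\gamma^0\psi_0$ and $\partial_t\widetilde{V}^1(0,x)$ is built from $\psi_0,v_0,v_1$ via the Dirac equation, the weighted smallness \eqref{initidirac}--\eqref{initikg} makes this $\lesssim\epsilon^2 K_0^N$. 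Second, a bound of the form $\sum_{|K|\le 4}\max_{0\le s\le t}\langle s\rangle^{\delta_0}\|\langle s+r\rangle\,\Gamma^K\Gamma^I(\mathcal{N}_1(v,\psi)+\mathcal{N}_2(\psi^*,\psi))\|\le C_G$ for some $\delta_0>0$, with $C_G$ polynomial in $C_1,K_0$ and carrying the correct powers of $\epsilon$. The quartic term $\mathcal{N}_1=2v^2\psi^*\gamma^0\psi$ is the benign one: distributing the (at most $N-3$) derivatives, I would keep the top-order factor in $L^2$ (via Corollaries~\ref{l2energy},~\ref{maxiestimate} and Proposition~\ref{refdiracener}) and the other three factors in $L^\infty$ --- the two $v$-factors through the optimal Klein--Gordon decay \eqref{est:optidecaykg}, and the Dirac factor through Proposition~\ref{refdiracpo} (again after invoking Lemma~\ref{lem:hidden} so that a $[\psi]_-$ appears) --- which yields abundant $\langle s\rangle$-decay, so the time supremum is harmless.

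The hard part will be the quadratic null term $\mathcal{N}_2(\psi^*,\psi)=-2Q_0(\psi^*,\gamma^0\psi)$, because $\psi$ is not a wave field and a crude bound on $\langle s+r\rangle Q_0(\psi^*,\gamma^0\psi)$ does not decay fast enough. Here the plan is to split $\mathbb{R}^3$ into the interior region $r\le 3s+3$ and the exterior region $r\ge 2s+3$. In the interior, after commuting derivatives through $Q_0$, I would use the interior null estimate \eqref{est:Q0inside}, which bounds $|\Gamma^K\Gamma^I Q_0(\psi^*,\gamma^0\psi)|$ by a sum of $\frac{\langle s-r\rangle}{\langle s+r\rangle}|\partial\widehat{\Gamma}^{J_1}\psi||\partial\widehat{\Gamma}^{J_2}\psi|$ and $\langle s\rangle^{-1}|\widehat{\Gamma}\widehat{\Gamma}^{J_1}\psi||\widehat{\Gamma}\widehat{\Gamma}^{J_2}\psi|$, and then insert the extra $\langle s-r\rangle^{-1}$ decay for $\partial\psi$ inside the cone from Lemma~\ref{le:Diracdecay} (which also produces a $\frac{s}{\langle s-r\rangle}|v\psi|$ term, handled with Proposition~\ref{refdiracpo} and \eqref{est:optidecaykg}); the $\langle s-r\rangle$ in the numerator cancels that singular weight, and $\langle s+r\rangle\sim\langle s\rangle$ supplies the time decay. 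In the exterior, $\langle s+r\rangle\sim\langle s-r\rangle$, so one inserts the cut-off $\chi(r-2s)$ and uses the weighted exterior energies in \eqref{priorienergy}, Proposition~\ref{refdiracener}, and the exterior pointwise bounds of Corollary~\ref{cor:ext}. Adding the interior and exterior contributions produces $C_G\lesssim C_1^2\epsilon^{2\delta}\epsilon^{2(1-\delta N)}+C_1\epsilon^{2(1-\delta N)}(\epsilon^{\delta(N+2)}K_0^N+C_1^3K_0^2\epsilon^{\delta})$, which combined with the data term and the $\psi^*\gamma^0\psi$ contribution from the first step yields exactly \eqref{ineq:maxinkg2}.
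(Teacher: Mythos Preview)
Your plan is correct and would close the proposition, but the paper's own proof takes a different, shorter route for the null term $\mathcal{N}_2(\psi^*,\psi)=-2Q_0(\psi^*,\gamma^0\psi)$. Instead of an interior/exterior split with \eqref{est:Q0inside} and the Dirac $\partial$-decay of Lemma~\ref{le:Diracdecay}, the paper applies the \emph{global} null estimate \eqref{est:Q0fg}, which produces a factor $|L_0\widehat{\Gamma}^{I_1}\psi|$; this is then rewritten via the wave representation \eqref{eq:diffwave} as $|L_0\Gamma^{I_1}\partial\Psi|$ and controlled directly by the conformal energy bound \eqref{est:conforpsi} and the pointwise Corollary~\ref{cor:pointscal}. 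Since the conformal energy of $\Psi$ has already been established for precisely this kind of application, the paper's argument is a one-step reuse of that machinery, whereas your interior/exterior argument essentially reproves a variant of Proposition~\ref{prop:nonlinear1} with $\psi$ in both slots. Your approach has the advantage of being self-contained (it does not need the auxiliary $\Psi$ at this stage), but it is longer and, as you noted, the hardest part of the whole proof.

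For the remaining pieces your outline matches the paper, with one minor over-elaboration: for the algebraic term $\Gamma^I(\psi^*\gamma^0\psi)$ the paper does not invoke the hidden structure or Proposition~\ref{refdiracpo} at all---it simply uses the crude bootstrap bound \eqref{est:prioripopsi}, $|\widehat{\Gamma}^{J}\psi|\lesssim C_1\epsilon^{1-\delta(|J|+3)}\langle t+r\rangle^{-3/4}$, on both factors to get $\langle t+r\rangle^{-3/2}$ directly. Similarly, for $\mathcal{N}_1=2v^2\psi^*\gamma^0\psi$ the paper does not use Lemma~\ref{lem:hidden}; the quartic structure already gives more than enough decay from Corollaries~\ref{l2energy}--\ref{maxiestimate} alone. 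Your use of the refined bounds is not wrong, just unnecessary, and it is the reason your asserted constant carries the $\epsilon^{\delta(N+2)}K_0^N$ term in a slightly different place than the paper's derivation (there it comes from the $L_0\Psi$ contribution to $\mathcal{N}_2$, not from $\mathcal{N}_1$ or the algebraic piece).
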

	\begin{proof}
		We fix $|I|\leq N-7$  and  treat $\widetilde{V}^1$ first. Performing $\Gamma^I$ to both sides of  \eqref{eq:transnkgdec}, one can get
		\begin{align}
			-\Box \Gamma^I \widetilde{V}^1 + \Gamma^I \widetilde{V}^1 = \Gamma^I  \mathcal{N}_1(v,\psi) + \Gamma^I \mathcal{N}_2(\psi^*,\psi).
		\end{align} 
		By Corollaries \ref{l2energy}, \ref{maxiestimate},  we have  
		\begin{align}
			& \sum_{|I|\leq N}\|\langle s + r \rangle \Gamma^I \mathcal{N}_1(v,\psi)  \| \nonumber \\
			&\lesssim \sum_{\substack{|I_1|\leq N \\ |I_2|,|I_3|,|I_4|\leq N-4}} \|\Gamma^{I_1} v\| \|\langle s +r \rangle^{\frac 12} \Gamma^{I_2}v\|_{L^{\infty}} \|\langle s +r \rangle^{\frac 12} \widehat{\Gamma}^{I_3}\psi\|_{L^{\infty}}\|\widehat{\Gamma}^{I_4}\psi\|_{L^{\infty}} \nonumber\\
			&+ \sum_{\substack{|I_1|\leq N \\ |I_2|,|I_3|,|I_4|\leq N-4}} \|\widehat{\Gamma}^{I_1} \psi\| \|\langle s +r \rangle^{\frac 12} \widehat{\Gamma}^{I_2}\psi\|_{L^{\infty}} \|\langle s +r \rangle^{\frac 12} \Gamma^{I_3}v\|_{L^{\infty}}\|\Gamma^{I_4}v\|_{L^{\infty}} \\
			& \lesssim C^4_1 K_0^2 \epsilon^{\delta} \epsilon^{2(1-\delta N)} \langle s \rangle^{-\frac 54 }.
		\end{align}
		Owing to \eqref{est:Q0fg} and \eqref{eq:diffwave}, one can get
		\begin{align}
			& \sum_{|I|\leq N-3} \|\langle s + r \rangle \Gamma^I \mathcal{N}_2(\psi^*,\psi)  \| \nonumber \\
			& \lesssim \sum_{|I_1|+|I_2| \leq N-3} \|(|L_0 \widehat{\Gamma}^{I_1} \psi|+|\widehat{\Gamma} \widehat{\Gamma}^{I_1}\psi|) \widehat{\Gamma} \widehat{\Gamma}^{I_2} \psi\| \nonumber \\
			& \lesssim \sum_{|I_1|+|I_2| \leq N-3} \||L_0\Gamma^{I_1} \partial \Psi| |\widehat{\Gamma} \widehat{\Gamma}^{I_2} \psi|\| + \sum_{\substack{|I_1|\leq N-4\\|I_2|\leq N-3}} \|\widehat{\Gamma} \widehat{\Gamma}^{I_1} \psi\|_{L^{\infty}}\|\widehat{\Gamma} \widehat{\Gamma}^{I_2} \psi\| \nonumber\\
			&\lesssim \sum_{\substack{|I_1|\leq N-4\\|I_2|\leq N-3}} \|\widehat{\Gamma} \widehat{\Gamma}^{I_1} \psi\|_{L^{\infty}}\|\widehat{\Gamma} \widehat{\Gamma}^{I_2} \psi\|  +\sum_{\substack{|I_1|\leq N-3\\|I_2|\leq N-6}} \|L_0\Gamma^{I_1} \partial \Psi\|\|\widehat{\Gamma}\widehat{\Gamma}^{I_2} \psi\|_{L^{\infty}} \nonumber \\
			& \qquad \qquad \qquad + \sum_{\substack{|I_1|\leq N-6\\|I_2|\leq N-3}} \|L_0\Gamma^{I_1} \partial \Psi\|_{L^{\infty}}\|\widehat{\Gamma} \widehat{\Gamma}^{I_2} \psi\|  \nonumber\\
			& \lesssim C_1^2 \epsilon^{2\delta} \epsilon^{2(1-\delta N)} \langle s \rangle^{-\frac 34} + C_1 \epsilon^{1-\delta(N-2)}(\epsilon K_0^N + C_1^3 K_0^2 \epsilon^{1-\delta(N-1)}) \langle s \rangle^{-\frac 34 +\delta}, 
		\end{align}
		in which we used Corollaries \ref{l2energy}, \ref{maxiestimate} and \eqref{est:conforpsi}, Corollary \ref{cor:pointscal}. Now we can apply Corollary \ref{cor:KGpoint} to derive
		\begin{align}
			&\sum_{|I|\leq N-7} \langle t + r \rangle^{\frac 32}|\Gamma^I \widetilde{V}^1(t,x)| \nonumber\\
			& \lesssim \sum_{|J|\leq N-2} \|\langle x \rangle^{\frac 32}\log(2+|x|) \Gamma^J  \widetilde{V}^1(0,x)\|+C_1^2 \epsilon^{2\delta} \epsilon^{2(1-\delta N)} \nonumber\\
			& \qquad +C_1 \epsilon^{1-\delta(N-2)}(\epsilon K_0^N + C_1^3 K_0^2 \epsilon^{1-\delta(N+1)}) \nonumber\\
			&\lesssim \epsilon^2 K_0^N +C_1^2 \epsilon^{2\delta} \epsilon^{2(1-\delta N)}+C_1 \epsilon^{2\delta}\epsilon^{1-\delta N}(\epsilon K_0^N + C_1^3 K_0^2 \epsilon^{1-\delta(N+1)}).
		\end{align}
		Additionally, 
		\begin{align}
			&\sum_{|I|\leq N-7} \langle t+r \rangle^{\frac 32}|\Gamma^I(\psi^*\gamma^0 \psi)| \nonumber\\
			& \lesssim \sum_{|I_1|+|I_2|\leq N-7} \langle t+r \rangle^{\frac 32}|\widehat{\Gamma}^{I_1}\psi||\widehat{\Gamma}^{I_2}\psi| \nonumber\\
			& \lesssim C_1^2 \epsilon^{2(1-\delta(N-4))}.
		\end{align}
		Recall that $V^1 = \widetilde{V}^1 + \psi^*\gamma^0\psi $, we thus obtain
		\begin{align}
			&\sum_{|I|\leq N-7} \langle t + r \rangle^{\frac 32}|\Gamma^I V^1(t,x)| \nonumber\\
			& \lesssim \epsilon^2K_0^N +C_1^2 \epsilon^{2\delta} \epsilon^{2(1-\delta N)}+C_1 \epsilon^{2(1-\delta N)}(\epsilon^{\delta (N+2)} K_0^N + C_1^3 K_0^2 \epsilon^{\delta}).
		\end{align}
		This concludes the proof of \eqref{ineq:maxinkg2}.
	\end{proof}
	Last, we can refine all of the bootstrap assumption \eqref{priorienergy}-\eqref{priorimax} by selecting appropriate $C_1$ and $\epsilon$, thus concluding the proof of the main result.
	
	\begin{proof} [Proof of Theorem \ref{thm:DKG-L}] Let $C>1$ be the maximal constant appearing in Propositions \ref{refdiracpo}, \ref{refdiracener}, \ref{refkgener} and \ref{refkgpoint}, we first set $C_1$ by letting
		\begin{align} \label{choicec1}
			CK_0^N \leq \frac 18 C_1.
		\end{align}
		Then fix such $C_1$, and choose $\epsilon_0$ so small that  
		\begin{align}
			CC_1^3 K_0^2 \epsilon_0^{\delta} \leq \frac{1}{16}, \label{choiceep}
		\end{align}
		where we recall $K_0>1, 0<\delta< 1/(N+2)$ and $N\geq 13$. By making such choices of $C_1$ and $\epsilon_0$, for any $0\leq t<T^*$ and $\epsilon<\epsilon_0$, we get from Propositions \ref{refdiracpo}, \ref{refdiracener}, \ref{refkgener} and \ref{refkgpoint} that 
		\begin{equation} \label{priorienergy11}
			\left \{
			\begin{aligned}
				\mathcal{E}_D^{\frac{1}{2}}(t,  \widehat{\Gamma}^I \psi) & \leq \frac 12 C_1 \epsilon^{1-\delta |I|}, \ \ \ |I| \leq N,   \\
				\mathcal{E}_{gst,1}^{\frac{1}{2}}(t,  \Gamma^I V^1) & \leq \frac 12C_1 \epsilon^{2(1-N\delta)}, \ \ \  |I| \leq N,\\
				\|\langle r-t \rangle \chi(r-2t)  \widehat{\Gamma}^I \psi\| & \leq \frac 12C_1 \epsilon^{1-\delta |I|}, \qquad \ \ |I| \leq N, \\
				\|\langle r-t \rangle \chi(r-2t)  \Gamma^I v \|  & \leq \frac 12C_1 K_0, \qquad \ \ |I| \leq N, 
			\end{aligned}
			\right.
		\end{equation}
		and 
		\begin{equation} \label{priorimax22}
			\left \{
			\begin{aligned}
				|\widehat{\Gamma}^I \psi(t,x)| & \leq  \frac 12C_1 \epsilon^{1-\delta(|I|+5)} \langle t+r \rangle^{-\frac{3}{4}+\delta} \langle t-r \rangle^{-1}, \ \ |I| \leq N-5,  \\
				|[\widehat{\Gamma}^I \psi]_{-}| & \leq \frac 12C_1 \epsilon^{1-\delta (|I|+5)} \langle t+r \rangle^{-\frac{7}{4}+\delta},  \ \  |I| \leq N-5,  \\
				|\Gamma^I V^1(t,x)| & \leq \frac 12 C_1 \epsilon^{(1-\delta N)2} \langle t+r \rangle^{-\frac 32}, \ \  |I|  \leq N- 7.
			\end{aligned}
			\right.
		\end{equation}
		This improves the previous bootstrap assumption \eqref{priorienergy}-\eqref{priorimax}, and consequently we have $T^*= \infty$.  Now we can apply Lemma \ref{lem:mksdecay} to get
		\begin{align}
			|\psi(t,x)| & \lesssim \langle t +|x|\rangle^{-1} \sup_{0\leq s\leq 2t,|I|\leq 3} \|\widehat{\Gamma}^I \psi(s)\| \nonumber\\
			& \lesssim \langle t +|x|\rangle^{-1} \epsilon^{1-3\delta}.
		\end{align}
		To conclude the proof, it suffices to show the linear scattering for the system. For this purpose, we rely on Lemmas \ref{lem:scatdir}, \ref{lem:scatterKG}.  Indeed, fix $|I|\leq N$, one can see 
		\begin{align}
			\int_0^{\infty} \|\nabla^I(v\psi)(\tau) \|\, \d\tau & \lesssim \sum_{|I_1|+|I_2|\leq |I|} \int_0^{\infty}\|\nabla^{I_1} v \nabla^{I_2} \psi(\tau)\|\,\d\tau \nonumber\\
			& \lesssim \sum_{|I_1|\leq N-7,|I_2|\leq N} \int_0^{\infty}\|\nabla^{I_1} v\|_{L^{\infty}} \|\nabla^{I_2} \psi(\tau)\| \,\d\tau \nonumber \\
			& + \sum_{|I_1|\leq N,|I_2|\leq N-7}\int_0^{\infty} \left\|\frac{\nabla^{I_1} v}{\langle \tau -r \rangle^{\frac 12 +\delta}} \right\| \big\|\langle \tau -r \rangle^{\frac 12 +\delta} \nabla^{I_2} \psi \big\|_{L^\infty}\d\tau \nonumber\\
			& \lesssim C_1^2 K_0 \epsilon^{1-\delta N}, \label{sca:cond1}
		\end{align}
		where we used \eqref{priorienergy} and \eqref{priorimax} in the last step.
		Additionally, we can compute 
		\begin{align}
			\int_0^{\infty} \|\nabla^I (\psi^* \gamma^0 \psi)\| \,d\tau & \lesssim \sum_{|I_1|+|I_2|\leq |I|}\int_0^{\infty} \big \|[\nabla^{I_1} \psi]_{-} \nabla^{I_2}\psi \big\|\,\d\tau \nonumber\\
			& \lesssim \sum_{|I_1|\leq N-5,|I_2|\leq |I|} \int_0^{\infty}\|[\nabla^{I_1}\psi]_{-}\|_{L^{\infty}} \|\nabla^{I_2}\psi\| \,\d\tau \nonumber\\
			&+ \sum_{|I_2|\leq N-5,|I_1|\leq |I|} \int_0^{\infty} \left\|\frac{[\nabla^{I_1}\psi]_{-}}{\langle \tau -r \rangle^{\frac 12+\delta}} \right\| \|\langle \tau -r \rangle^{\frac 12+\delta}\nabla^{I_2}\psi\|_{L^\infty}\d\tau \nonumber\\
			& \lesssim C_1^2 \epsilon^{2(1-\delta N)}, \label{sca:cond2}
		\end{align}
		in which \eqref{priorienergy} and \eqref{priorimax} are used.  Following \eqref{sca:cond1} and \eqref{sca:cond2}, we can assert that $(\psi, v)$ scatters linearly.
		The proof is completed.
	\end{proof}

	%\section*{Acknowledgment}

\end{document}